\numberwithin{equation}{section}
\newtheorem{conj}{Conjecture}[section]
\newtheorem{theo}{Theorem}[section]
\newtheorem{lem}{Lemma}[section]
\newtheorem{cl}{Claim}[section]
\newtheorem{prop}{Proposition}[section]
\newtheorem{cor}{Corollary}[section]
\newcommand{\eps}{\varepsilon}
\newcommand{\R}{\mathbb{R}}
\begin{document}

\title[]{Geometric spectral optimization on surfaces}
\author{Romain Petrides} 
\address{Romain Petrides, Universit\'e Paris Cit\'e, Institut de Math\'ematiques de Jussieu - Paris Rive Gauche, b\^atiment Sophie Germain, 75205 PARIS Cedex 13, France}
\email{romain.petrides@imj-prg.fr}

\begin{abstract}
We prove the existence of optimal metrics for a wide class of combinations of Laplace eigenvalues on closed orientable surfaces of any genus. The optimal metrics are explicitely related to Laplace minimal eigenmaps, defined as branched minimal immersions into ellipsoids parametrized by the eigenvalues of the critical metrics whose coordinates are eigenfunctions with respect to these eigenvalues. In particular, we prove existence of maximal metrics for the first Laplace eigenvalue on orientable surfaces of any genus. In this case, the target of eigenmaps are spheres. This completes a broad picture, first drawn by J. Hersch, 1970 (sphere), 
%P. Li, S.T.Yau, 1982 (projective plane), 
M. Berger 1973, N. Nadirashvili 1996 (tori). 

Our result is based on the combination of accurate constructions of Palais-Smale-like sequences for spectral functionals and on techniques by M. Karpukhin, R. Kusner, P. McGrath, D. Stern 2024, developped in the case of an equivariant optimization of the first Laplace and Steklov eigenvalues. Their result is significantly extended for two reasons: specific equivariant optimizations are not required anymore to obtain existence of maximizers of the first eigenvalue for any topology and our technique also holds for combinations of eigenvalues.
\end{abstract}

\maketitle

A classical goal of spectral geometry is 
%to understand how the spectrum of operators determines the Riemannian geometry of a manifold. It is also devoted 
to understand behaviours of the eigenvalues of operators with respect to the ambiant geometry. 
Indeed, estimates on eigenvalues of key geometric operators arise naturally when studying non linear PDEs that parametrize surfaces or describe a physical system via linearisations. 
This is a reason why looking for sharp bounds on the bottom of the spectrum of the Laplacian depending on the Riemannian metric was early questioned in seminal papers by Hersch \cite{hersch}, Yang-Yau \cite{yangyau}, Li-Yau \cite{liyau}, Berger \cite{berger}. The fundamental result by Hersch stated that the round metric is the only maximizer of the first eigenvalue among Riemannian metrics of fixed area on the sphere.
It is similar to the classical minimization of the first Laplace eigenvalue on domains in $\R^n$ of fixed volume with Dirichlet boundary conditions, conjectured by Rayleigh \cite{rayleigh}  and solved by Faber \cite{faber} and Krahn \cite{krahn}, or the maximization of the first Laplace eigenvalue on domains in $\R^n$ of fixed volume with Neumann boundary conditions (Szeg\"o \cite{szego}, Weinberger \cite{weinberger}).
In the context of the Riemannian shape optimization, the main difference is that the topology of the ambiant surface is fixed and the optimization among Riemannian metrics gives a richer geometry to the extremal metrics.

More precisely, Nadirashvili \cite{nadirashvili} discovered that extremal metrics of the first renormalized (by the area) Laplace eigenvalue on a closed surface $\Sigma$ directly
correspond to a significant geometric non-linear PDE: there is a family of first eigenfunctions associated to the optimal metrics that minimally immerse $\Sigma$ into a sphere. 
It gave a new light to the known optimizers (the round metric on the sphere \cite{hersch} and projective plane \cite{liyau} correspond to minimal embeddings into $\mathbb{S}^2$ and $\mathbb{S}^4$) and provided new techniques to obtain the unique optimizers on the torus \cite{nadirashvili} and the Klein bottle \cite{jnp}\cite{esgj} (corresponding to minimal embeddings into $\mathbb{S}^5$ and $\mathbb{S}^4$). In addition, Nadirashvili's result is related to Takahashi's characterization of isometric immersions into Euclidean spaces $x : \Sigma \to \R^n$ \cite{takahashi}: there is $\lambda > 0$ $\Delta x = \lambda x$ if and only if $x$ is a minimal immersion into a sphere. Indeed he also noticed that if there is a minimal immersion from $\Sigma$ into a sphere, its induced metric on $\Sigma$ has to be a \textit{critical metric} with respect to one renormalized Laplace eigenvalue functional on $\Sigma$.

Since then, many other outstanding critical metrics emerge for various geometric spectral functionals: one Laplace eigenvalue \cite{nadirashvili}\cite{EI3} \cite{EI4},  one Steklov eigenvalue \cite{fraserschoen2} \cite{karpukhinmetras}, one eigenvalue of the conformal Laplacian \cite{ammannhumbert}\cite{gurskyperez}, of the Paneitz operator in dimension 4 \cite{perez}, of the Dirac operator in dimension 2 \cite{ammann} \cite{karpukhinmetraspolterovich}, eigenvalues in K\"ahler geometry \cite{ajk}, combinations of Laplace or Steklov eigenvalues \cite{petrides-4} \cite{petrides-5}, Robin eigenvalues \cite{limamenezes} \cite{medvedev} and spectral functionals associated to various other operators \cite{pt}. The latter work unifies the previous ones, provides many other examples and gives a systematic way to compute these critical metrics by a Euler-Lagrange equation written via the theory of differentiation of locally-Lipschitz functionals with the concept of subdifferential.

All these works suggest a surprising way to build solutions of non-linear PDEs only focusing on the optimization of spectral functionals associated to linear operators. For instance, nodal solutions of the Yamabe equation can be built by optimization of eigenvalues of the conformal Laplacian \cite{ammannhumbert} \cite{gurskyperez}, harmonic maps into 2-spheres by minimization of the first Dirac eigenvalue \cite{ammann} or into larger spheres by maximization of the first Laplace eigenvalue \cite{petrides}, \cite{KS}, and in dimension $n\geq 3$ \cite{KS2} for harmonic maps and \cite{petrides-6} for n-harmonic maps. This approach also proved its efficiency for the construction of new minimal surfaces into specific target manifolds (spheres and ellipsoids), initiated by Fraser and Schoen \cite{fraserschoen}. In \cite{petrides-7} and \cite{petrides-8}, we proved the existence of embedded non planar minimal spheres into ellipsoids of $\R^4$ (resp. embedded non planar free boundary minimal disks into ellipsoids of $\R^3$) by equivariant optimization of combinations of renormalized first and second Laplace (resp. Steklov) eigenvalues. In \cite{kkms}, the authors constructed various examples of embedded minimal surfaces into $\mathbb{S}^3$ (resp. free boundary minimal surfaces into $\mathbb{B}^3$) of any topology by equivariant optimization of the first renormalized eigenvalue.

\subsection*{Main existence results}

In the current paper, we go back to the original fundamental problem: are there  maximal metrics for the first renormalized eigenvalue functional $\bar{\lambda}_1$
? 
Here, for $k \geq 1$
\begin{equation} \label{def:barlambdak} g \mapsto \bar{\lambda}_k(\Sigma,g) := \lambda_k(\Sigma,g)A_g(\Sigma) \end{equation}
denotes the $k$-th renormalized eigenvalue of a closed connected surface $\Sigma$ where for a Riemannian metric $g$, $\lambda_k(\Sigma,g)$ denotes the $k$-th non-zero Laplace eigenvalue of the surface and $A_g(\Sigma)$ its area.
In the current paper, we show that the answer is yes for orientable surfaces:

\begin{theo} \label{theomain1}
On any closed orientable surface, $\bar{\lambda}_1$ realizes a maximum at a smooth (outside a finite number of conical singularities) metric. 
\end{theo}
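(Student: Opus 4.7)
The plan is to realize the maximum as the induced metric of a weakly conformal harmonic map (i.e.\ a branched minimal immersion) into a round sphere whose components are $\lambda_1$-eigenfunctions, in the spirit of the Nadirashvili--Takahashi picture recalled above. I would encode this through a coupled variational problem: fix a background metric $g_0$ and write candidate metrics as $g = e^{2\omega} g_0$ of unit area; for maps $\Phi \in H^1(\Sigma, \mathbb{R}^{N+1})$ with $|\Phi|=1$ a.e.\ and $\int_\Sigma \Phi\, dv_g = 0$, the Rayleigh quotient $E_g(\Phi)/\|\Phi\|_{L^2(g)}^2$ bounds $\lambda_1(\Sigma,g)$ from below and is saturated by an $L^2(g)$-orthonormal frame of $\lambda_1$-eigenfunctions. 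Joint critical pairs $(g,\Phi)$ of this coupled functional are precisely those where $\Phi$ is conformal harmonic into $\mathbb{S}^N$ and $g = c\, \Phi^\star g_{\mathbb{S}^N}$, with critical value $\bar\lambda_1(\Sigma,g) = 2\,\mathrm{Area}_{g_{\mathbb{S}^N}}(\Phi)$.

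Along a maximizing sequence $(g_n)$ with $\bar\lambda_1(g_n) \to \Lambda^\star := \sup \bar\lambda_1(\Sigma,\cdot)$, I would attach eigenmaps $\Phi_n$ spanning the $\lambda_1(g_n)$-eigenspace and then, following the strategy of Karpukhin--Kusner--McGrath--Stern, upgrade $(g_n,\Phi_n)$ by a small variational perturbation into a Palais--Smale-like sequence for the coupled functional, with vanishing obstruction to being a Takahashi pair (in appropriate dual norms). Using $\varepsilon$-regularity for almost-harmonic maps into spheres, conformal invariance, and moduli compactness, I would extract a bubble tree: after reparametrizing by diffeomorphisms, the conformal classes $[g_n]$ either remain in a compact subset of moduli space or degenerate along a multicurve, and in both cases I would obtain a base weakly conformal harmonic map $\Phi_\infty$ on the (possibly nodal) limit together with finitely many spherical bubbles $\psi_j : \mathbb{S}^2 \to \mathbb{S}^{N_j}$, all weakly conformal harmonic, with the energy identity
\begin{equation*}
\Lambda^\star \;=\; E(\Phi_\infty) \;+\; \sum_j E(\psi_j).
\end{equation*}

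The hard part is to exclude both spherical bubbling and conformal degeneration, so that the limit is in fact supported on $\Sigma$ with full energy. Each non-constant spherical bubble is a branched minimal $\mathbb{S}^2$ in a round sphere and therefore has energy at least $8\pi$; likewise, any component lost to pinching contributes a similar discrete deficit. To rule out such losses I would establish strict topological inequalities of the form $\Lambda^\star(\Sigma_\gamma) > \Lambda^\star(\Sigma_{\gamma'}) + 8\pi m$ for every surface $\Sigma_{\gamma'}$ with $\gamma' \leq \gamma$ produced by pinching and every $m \geq 1$ (the borderline case $\gamma'=0$ reducing to Hersch's value $8\pi$), produced by explicit competitor constructions via branched covers, conformal gluing, or degenerate test metrics in the spirit of Nadirashvili, Jakobson--Nadirashvili--Polterovich, and Karpukhin--Kusner--McGrath--Stern. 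This strict-inequality step, together with the accurate choice of Palais--Smale-like sequence, is the decisive obstacle.

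Once the limit is anchored on $\Sigma$ with no loss of energy, criticality of $(g_\infty, \Phi_\infty)$ for the coupled functional means that $\Phi_\infty$ is a weakly conformal harmonic map into a round sphere and $g_\infty = c\, \Phi_\infty^\star g_{\mathbb{S}^N}$ for a positive constant $c$. Standard regularity theory for harmonic maps into spheres gives smoothness of $\Phi_\infty$ away from finitely many branch points; at a branch point of order $k$ the pullback metric has the form $|z|^{2k}|dz|^2$, i.e.\ a conical singularity. This yields the smooth-with-conical-singularities maximizer claimed in Theorem~\ref{theomain1}.
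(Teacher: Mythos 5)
Your overall target is the right one: reducing to the strict monotonicity $\Lambda_1(\gamma) > \Lambda_1(\gamma-1)$ and combining this with the conformal/Teichm\"uller compactness theory of \cite{petrides} is exactly how Theorem~\ref{theomain1} is deduced, and the philosophy of working with a joint Palais--Smale-like sequence in $(g,\Phi)$ whose limit is a conformal harmonic eigenmap is close to what the paper actually does. But there is a decisive gap at your step establishing the strict inequality, and it is precisely the point the paper is written to overcome.

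You propose to prove $\Lambda^\star(\Sigma_\gamma) > \Lambda^\star(\Sigma_{\gamma'}) + 8\pi m$ ``by explicit competitor constructions via branched covers, conformal gluing, or degenerate test metrics.'' First, this inequality as stated cannot hold: for a fixed $\gamma$ it cannot be true for every $m\geq 1$ because $\Lambda_1(\gamma)$ is bounded by the Yang--Yau estimate; the obstruction to compactness in \cite{petrides} is only the single inequality $\Lambda_1(\gamma) > \Lambda_1(\gamma-1)$. More importantly, direct glueing / test-metric strategies for this strict inequality are exactly what has been tried and has failed in general: the paper discusses at length that the handle-attachment asymptotics in the works of Fraser--Schoen, Matthiesen--Siffert and Matthiesen--Petrides all contain gaps that prevent the intended conclusion, and that it is genuinely unclear whether the glueing of a small handle to an arbitrary extremal $(\Sigma,g)$ strictly increases $\bar\lambda_1$. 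Appealing to those constructions as a black box does not close this step.

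The paper's proof takes a contradiction route instead. Assuming $\Lambda_1(\gamma) = \Lambda_1(\gamma-1)$ with a maximizer $g$ on $\Sigma$ of genus $\gamma-1$, one glues a handle at $p,q$ and then, crucially, replaces the na\"ive test metric by a sequence $(g_\eps,\beta_\eps)$ produced by Ekeland's variational principle in a carefully chosen Banach space of $(Met_0\times \overline X)$. The resulting quasi-Euler--Lagrange equations (Proposition~\ref{prop:eullag}) furnish a sphere-valued family whose limit on $\Sigma$ is both \emph{harmonic} (as in KKMS) and, in addition, \emph{conformal}, with $\Phi(p)=\Phi(q)$. This extra conformality is exactly what is missing from \eqref{eq:KKMS} and from your outline, and it is what allows the argument to close for arbitrary genus: letting $q\to p$ along a direction $X$ yields $D\Phi(p)\cdot X=0$, and conformality then forces $\nabla\Phi(p)=0$, so every $p\in\Sigma$ is a branch point, which is absurd. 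Without the conformality output there is no contradiction for general $\Sigma$ (which is why KKMS could only treat small genus or equivariant maximizers), and without the Ekeland step on the full metric variable you would not obtain conformality. Your proposal is therefore missing the key new technical idea of the paper, and its replacement (direct strict inequalities via test constructions) is known to be problematic.
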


We completely extend a result that was explicitely known for the round sphere \cite{hersch}, the projective plane \cite{liyau}, the torus \cite{nadirashvili}, the Klein bottle \cite{jnp} \cite{esgj} \cite{ckm}, the orientable surfaces of genus two \cite{jnp} \cite{nayatanishoda} to all the orientable surfaces

With the characterization of \cite{nadirashvili} mentioned above, up to a dilatation, the maximal metrics of $\bar{\lambda}_1$ in Theorem \ref{theomain1} are induced metrics of (possibly branched) minimal immersions by first eigenfunctions into spheres. 
Notice that the regularity result is optimal: the maximizers on surfaces of genus 2 must have conical singularities corresponding to the branched points of the minimal immersions. 

As we shall explain later (see also \cite{petrides}, \cite{MS1}), this existence result is deeply related to the monotonicity of the topological invariant defined as the supremum of $\bar{\lambda}_1$ with respect to the topology:
$$ \Lambda_1(\gamma) := \sup_{g \in Met(\Sigma_\gamma)} \bar{\lambda}_1(\Sigma_\gamma,g) $$
if $\Sigma_\gamma$ is an orientable surface of genus $\gamma$ and Euler characteristic $2-2\gamma$ 
%and
%$$ \Lambda_1^K(\delta) := \sup_{g \in Met(\Sigma_\delta^K)} \bar{\lambda}_1(\Sigma_\delta^K,g) $$
%if $\Sigma_\delta^K$ is a non-orientable surface of non-orientable genus $\delta$ and Euler characteristic $2-\delta$, where $Met(\Sigma)$ stands for the set of smooth metrics on a surface $\Sigma$. Following the general topological characterization of surfaces, $\gamma$ is nothing but the number of tori $\mathbb{T}^2$ that appear in the connected sum
%$$ \Sigma_\gamma \simeq \mathbb{S}^2 \sharp \mathbb{T}^2 \sharp \cdots \sharp \mathbb{T}^2 $$
%while $\delta$ is the number of projective planes $\mathbb{RP}^2$ that appear in the connected sum
%$$ \Sigma_\delta^K \simeq \mathbb{S}^2 \sharp \mathbb{RP}^2 \sharp \cdots \sharp \mathbb{RP}^2. $$
In the current paper, we prove the following monotonicity results that imply Theorem \ref{theomain1} by \cite{petrides}:
\begin{theo} \label{theomonotonicity} For all $\gamma \geq 1$ 
%and $\delta \geq 1$
,
$$ \Lambda_1(\gamma) > \Lambda_1(\gamma-1) $$
%$$ \Lambda_1^K(\delta) >  \Lambda_1^K(\delta-1) $$
%$$ \Lambda_1^K(\delta) > \Lambda_1\left(\left\lfloor\frac{\delta-1}{2}\right\rfloor \right) $$
\end{theo}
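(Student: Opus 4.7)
The plan is to argue by contradiction. Suppose that $\Lambda_1(\gamma) = \Lambda_1(\gamma-1)$ for some $\gamma \geq 1$, and exhibit a smooth metric on $\Sigma_\gamma$ whose normalized first eigenvalue strictly exceeds $\Lambda_1(\gamma-1)$, contradicting the definition of $\Lambda_1(\gamma)$ as a supremum.

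\textbf{Step 1 (Palais-Smale maximizing sequence).} Rather than work with an arbitrary maximizing sequence, I would invoke the accurate Palais-Smale-type construction advertised in the abstract and inspired by Karpukhin-Kusner-McGrath-Stern. The idea is to relax $\bar{\lambda}_1$ (which is only locally Lipschitz) by an auxiliary harmonic-map-type / Ginzburg-Landau energy on maps into a sphere, perform a standard min-max or negative gradient flow at the relaxed level, and then pass to the limit as the relaxation parameter tends to zero while keeping quantitative control. This produces a sequence $(g_n)$ of smooth unit-area metrics on $\Sigma_\gamma$ with $\bar{\lambda}_1(\Sigma_\gamma,g_n) \to \Lambda_1(\gamma)$ and for which the Clarke subdifferential of $\bar{\lambda}_1$ at $g_n$ shrinks at a quantitative rate.

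\textbf{Step 2 (bubble analysis under the equality assumption).} Study the limit of $(g_n)$. If the sequence subconverges, up to diffeomorphisms, to a smooth limit metric $g_\infty$ on $\Sigma_\gamma$, then by Nadirashvili's characterization $g_\infty$ is the induced metric of a branched minimal immersion of $\Sigma_\gamma$ into a sphere by first eigenfunctions, and a direct handle-free argument gives the strict inequality. If instead the conformal structure of $(g_n)$ pinches along simple closed curves and/or the first eigenfunctions concentrate, the Palais-Smale control forces the limit to decompose into finitely many branched minimal immersions by first eigenfunctions on closed surfaces of total genus at most $\gamma-1$, together with possible sphere bubbles each contributing $8\pi$. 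Under the working hypothesis, the dominant piece is a branched minimal eigenmap $\phi : \Sigma_{\gamma'} \to \mathbb{S}^N$ with $\gamma' \leq \gamma - 1$ realizing $\Lambda_1(\gamma-1)$.

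\textbf{Step 3 (quantitative surgery).} Starting from $\phi$, perform a handle attachment: excise conformal disks of radius $\epsilon$ around two well-chosen points of $\Sigma_{\gamma'}$ and glue in a thin catenoidal neck, iterating if necessary so that the resulting surface has genus exactly $\gamma$. A careful Rayleigh quotient estimate, using cut-offs of the first eigenfunctions of $\phi$ matched with explicit logarithmic test functions on the neck, is meant to produce a metric $\tilde g_\epsilon$ on $\Sigma_\gamma$ with $\bar\lambda_1(\Sigma_\gamma,\tilde g_\epsilon) \geq \Lambda_1(\gamma-1) + c\,\epsilon^{\alpha}$ for some $c,\alpha > 0$ independent of $\epsilon$. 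This contradicts the hypothesis $\Lambda_1(\gamma) = \Lambda_1(\gamma-1)$.

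\textbf{Main obstacle.} The critical difficulty is Step 3: extracting a \emph{strict} gain from the surgery. The classical Colbois-El Soufi gluing only gives the non-strict bound $\Lambda_1(\gamma) \geq \Lambda_1(\gamma-1)$, since a thin handle always costs area and typically only preserves $\lambda_1$. To upgrade to a strict inequality one must simultaneously (a) exclude that the neck itself introduces a spurious eigenvalue below the level $\Lambda_1(\gamma-1)/\text{area}$, which would degrade $\lambda_1$ after renormalization, and (b) exploit the precise branched-minimal-immersion structure of the limit $\phi$ provided by Step 2 to show that the Dirichlet contribution of the cut-off eigenfunctions strictly improves at the attachment points. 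Both tasks hinge on the quantitative regularity of the limit encoded by the Palais-Smale bounds of Step 1, which is precisely what replaces the equivariant setup used in the Karpukhin-Kusner-McGrath-Stern framework and makes the estimate work for arbitrary genus.
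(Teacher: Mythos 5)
Your Step 3 is where the argument breaks down, and it is precisely the step the paper goes out of its way to \emph{avoid}. You propose to produce a metric on $\Sigma_\gamma$ with $\bar\lambda_1 \geq \Lambda_1(\gamma-1) + c\,\eps^\alpha$ by a quantitative neck-gluing estimate. This is the same approach attempted in \cite{MS3}, \cite{MP} and \cite{fraserschoen} for the Steklov analogue; the paper states explicitly that ``all these papers contain a gap'' and that after those attempts ``it is not clear that such a monotonicity result holds true for any choice of $(\Sigma,g)$.'' No one has succeeded in extracting a strict gain from a handle attachment in this generality, and your proposal does not supply a new mechanism to do so — ``careful Rayleigh quotient estimates'' and ``exploit the branched-minimal-immersion structure'' are precisely the ingredients that repeatedly fail to close the gap, because the neck costs area at the same order as any eigenvalue improvement one can prove.

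The paper's actual argument avoids any quantitative gain. It proceeds by contradiction but does not then try to build a better metric: instead, assuming $\Lambda_1(\gamma)=\Lambda_1(\gamma-1)$ and letting $g$ be a maximizer on $\Sigma$ of genus $\gamma-1$, it shows (via the Ekeland principle applied directly to pairs $(g,\beta)\in Met_0(\Sigma)\times\overline X$, not a Ginzburg--Landau relaxation as in your Step 1) that for \emph{every} pair $p,q\in\Sigma$ there is a possibly branched \emph{conformal} minimal immersion $\Phi_{p,q}:\Sigma\to\mathcal E_\Lambda$ by first eigenfunctions with $\Phi_{p,q}(p)=\Phi_{p,q}(q)$. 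The decisive word is ``conformal,'' which upgrades the KKMS-type conclusion \eqref{eq:KKMS} (harmonic only): letting $q\to p$ along $X\in T_p\Sigma$ gives a branched conformal minimal immersion $\Phi$ with $D\Phi(p)\cdot X=0$, and conformality then forces $|D\Phi(p)\cdot X^\perp|=|D\Phi(p)\cdot X|=0$, hence $\nabla\Phi(p)=0$. Thus every point of $\Sigma$ would be a branch point — a contradiction. Your write-up does not capture that conformality is the new ingredient (it is why the PS analysis must vary $g$ over \emph{all} metrics and not just a conformal class), nor that the argument is entirely constraint-based rather than gain-based; without both, the plan cannot succeed.
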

The large inequalities were already proved in \cite{ces} 
%and \cite{MS1} 
by a standard glueing method. It was already known from \cite{yangyau}, (for $\Lambda_1$) 
%and \cite{karpukhin} (for $\Lambda_1^K$) 
that these supremum are finite 
%for any topology 
and bounded by constants depending linearly on the genuses. It is proved in \cite{karpukhin-2}, that the bound by Yang and Yau for $\Lambda_1$ is never sharp except in genuses $0$ and $2$. The actual best asymptotic of $\Lambda_1(\gamma)$ as $\gamma \to +\infty$ is proved in \cite{karpukhinvinokurov} and \cite{ros}. 

\medskip

In the current paper, we also give a generalization of Theorems \ref{theomain1} and \ref{theomonotonicity} to positive combinations of eigenvalues. Many works in spectral geometry ask for the optimization of combinations of eigenvalues because it is more related to physical systems that require more than information on the ground state. We also extract more geometric information from the whole spectrum (since it is related to Riemannian invariants) or interactions between eigenvalues (e.g gap estimates or bounds of one eigenvalue by another) than from its very bottom: see e.g in very various contexts \cite{yangyau} \cite{OPS} \cite{andrewsclutterbuck} \cite{bmpv} etc. 

As first noticed in \cite{petrides-4}, \cite{petrides-5} and then in \cite{pt}, extremal metrics for combinations of eigenvalues also enjoy a remarkable geometric property that can be used for the optimization: they are explicitely written with respect to minimal branched immersions into an ellipsoid paramatrized by the eigenvalues associated with the critical metric that appear in the combination. The coordinates of the branched immersion are eigenfunctions with respect to these eigenvalues.
A variational method based on the construction of almost extremal maximizing sequences is then available in \cite{petrides6} \cite{petrides-6}, is independently used in the current paper and is promising to be generalized to other contexts of eigenvalue optimization.

Let's set the combinations that appear in our general result. Let $F : \left(\R_+\right)^m \to \R \cup \{+\infty\}$ be a continuous map on $\left(\R_+\right)^m$, a $\mathcal{C}^1$ map on $int\left(\left(\R_+\right)^m\right)$ such that $F(x)=+\infty \Rightarrow x\in \partial \left(\R_+\right)^m$ and $\partial_i F : \left(\R_+\right)^m \to \R \cup \{+\infty\}$ is continuous. We assume that for all coordinate $x_i$, $F$ satisfies that either $F$ is independent of $x_i$ or strictly decreasing with respect to $x_i$ in the following sense:
%$$(H) \hspace{10mm} \forall \eps >0, \exists c>0, \forall i \in \{1,\cdots,m\}, \begin{cases} \forall x \in \left(\R_+\right)^m,  \vert x \vert \geq \eps \Rightarrow \partial_i F(x) \leq -c \\
%\text{ or } \forall x \in int\left( \left(\R_+\right)^m\right), \partial_i F(x)=0 \end{cases}.$$
$$(H) \hspace{10mm} \forall i \in \{1,\cdots,m\}, \begin{cases} \forall x \in int\left( \left(\R_+\right)^m\right), \partial_i F(x) < 0 \\
\text{ or } \forall x \in int\left( \left(\R_+\right)^m\right), \partial_i F(x)=0 \end{cases}.$$
Examples of such functions could be for $a_1,\cdots,a_m \in \R_+$
$$ F(\lambda_1,\cdots,\lambda_m) = \sum_{i=1}^m a_i f(\lambda_i)$$
where $f : \R_+ \to \R$ is a $\mathcal{C}^1$ function such that $f'(\lambda)<0$ if $\lambda>0$, e.g 
$$ f(\lambda) = e^{- \lambda t}, t>0 \text{ or } f(\lambda) = \lambda^{-s}, s>0 \text{ or } f(\lambda) = - \ln \lambda. $$
that can be used for partial sums of the trace of the heat kernel, the zeta function or the determinant.
We set for a closed surface $\Sigma$
%in accordance with the context: $\partial\Sigma = \emptyset$ (Laplace case) or $\partial \Sigma \neq \emptyset$ (Steklov case),
$$ E(\Sigma,g) = F(\bar{\lambda}_1(g),\cdots,\bar{\lambda}_m(g)) .
%\text{ or }E(\Sigma,g)=F(\bar{\sigma}_1(g),\cdots,\bar{\sigma}_m(g))
$$
We also set
$$ E_0(\Sigma,g) = F(0,\bar{\lambda}_2(g),\cdots,\bar{\lambda}_m(g)) .
%\text{ or }E_0(\Sigma,g)=F(0,\bar{\sigma}_2(g),\cdots,\bar{\sigma}_m(g)) 
$$

\begin{theo} \label{theomain2}
Let $\Sigma$ be a closed orientable surface. If
$$ \inf_{g} E(\Sigma,g) < \inf_g E_0(\Sigma,g) $$
%\begin{itemize}
%\item 
then $E(\Sigma, \cdot)$ realizes a maximum at a smooth (outside a finite number of conical singularities) metric. 
%\item for a surface with boundary $\Sigma$, then $E(\Sigma,\cdot)$ realizes its maximum at a smooth metric.
%\end{itemize}
\end{theo}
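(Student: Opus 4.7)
The strategy mirrors the proof of Theorem~\ref{theomain1}, applied to the combination $E$ in place of $\bar\lambda_1$, and relies on three ingredients: the geometric characterization of critical metrics of combinations of eigenvalues as (branched) minimal immersions into ellipsoids parametrized by the weights $(\partial_i F)(\bar\lambda_i)$, from \cite{petrides-4,petrides-5,pt}; the Palais--Smale-type construction of almost-extremal sequences developed in \cite{petrides6,petrides-6}; and the bubble-tree compactness analysis that extends the Karpukhin--Kusner--McGrath--Stern argument from single eigenvalues to the functional $E$.

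First, fix a maximizing sequence $(g_n)$ of unit-area metrics with $E(\Sigma,g_n) \to \sup_g E(\Sigma,g)$. Combining Ekeland's variational principle applied to $F$ composed with the (locally Lipschitz) renormalized eigenvalues with the subdifferential calculus for critical metrics of \cite{pt}, refine $(g_n)$ into a Palais--Smale-like sequence: to each $g_n$ is attached a family of orthonormal eigenfunctions $(\varphi^n_i)_{i \in I}$, with $I = \{i : \partial_i F \not\equiv 0\}$, assembling into a map $\Phi_n : \Sigma \to \mathcal E_n$ into the ellipsoid $\mathcal E_n$ whose semi-axes are determined by the weights $(\partial_i F)(\bar\lambda_1(g_n), \ldots, \bar\lambda_m(g_n))$, such that $\Phi_n$ satisfies an approximate Takahashi identity and is therefore almost harmonic into $\mathcal E_n$ in a quantitative $L^2$ sense.

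Next, I perform the bubble-tree analysis. By conformal invariance and standard compactness for almost-harmonic maps, after extraction and reparametrization by the M\"obius group, $\Phi_n$ converges in $W^{1,2}_{loc}$ away from a finite concentration set to a weak limit $\Phi_\infty$, modulo a finite bubble tree of harmonic spheres. An energy identity decomposes the total spectral energy into a limit contribution plus bubble contributions; each bubble is a harmonic sphere whose associated eigenvalue configuration carries a first eigenvalue degenerating to zero, and so contributes to an $E_0$-type configuration rather than to $E$. The same $E_0$-reduction occurs if the conformal class $[g_n]$ escapes to the boundary of moduli space, the residual metric components along any developing neck again having vanishing first eigenvalue.

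The crucial use of the strict gap hypothesis $\inf_g E < \inf_g E_0$, via the monotonicity (H), is precisely to rule out any such bubbling or moduli degeneration: were a bubble or a degenerating neck to form, the extremum of $E$ would be attained through an $E_0$-type configuration, contradicting the strict inequality. Hence the refined sequence is precompact in the smooth topology outside finitely many conical singularities (the branch points of $\Phi_\infty$); the limit metric $g_\infty$, defined as the Takahashi conformal factor of $\Phi_\infty$, realizes the extremum of $E$. Regularity outside the branch points follows from standard harmonic map theory into ellipsoids, as in \cite{petrides-4,petrides-5}. The main technical obstacle is the bubble-tree analysis itself together with the precise identification of each bubble's contribution as an $E_0$-term; this accounting, straightforward for a single eigenvalue in Theorem~\ref{theomain1}, requires in general a careful use of the subdifferential framework of \cite{pt} to track how the bubble splits the weights $(\partial_i F)$ between limit and bubbles, so that the strict gap condition can be effectively invoked.
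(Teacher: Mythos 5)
Your proposal contains a genuine gap, and it sits exactly where the main difficulty of the theorem lies. You claim that both bubbling and escape of the conformal classes to the boundary of moduli space are ruled out by the single hypothesis $\inf_g E(\Sigma,\cdot) < \inf_g E_0(\Sigma,\cdot)$, on the grounds that ``the residual metric components along any developing neck again have vanishing first eigenvalue.'' This is only true for \emph{separating} degenerations (and for point-concentration of the measure, which produces sphere bubbles). If a \emph{non-separating} cycle pinches, the limiting surface is a connected surface $\hat\Sigma$ of genus $\gamma-1$, its first eigenvalue does not vanish, and the value picked up in the limit is $\inf E(\hat\Sigma,\cdot)$, not an $E_0$-type quantity. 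To exclude this scenario one needs the strict monotonicity $\inf E(\Sigma_\gamma,\cdot) < \inf E(\Sigma_{\gamma-1},\cdot)$, which is not an assumption of the theorem and cannot be extracted from the $E_0$ gap. Proving that strict inequality is precisely the new content of the paper (and of Theorem \ref{theomonotonicity} in the case $m=1$); the compactness theory in a conformal class plus Teichm\"uller space that you invoke, including the $E_0$-reduction for bubbles and separating necks, is exactly the part already available from \cite{petrides-4}, \cite{petrides-5}.

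The paper's actual route is an induction on the genus: assuming for contradiction that $\inf E(\widetilde\Sigma,\cdot)=\inf E(\Sigma,\cdot)$ with $\Sigma$ of one lower genus carrying a minimizer $g$ (which exists by the induction hypothesis, since the $E_0$ gap propagates to lower topologies when the infima coincide), one glues a thin flat handle at two arbitrary points $p,q$, checks via Neumann eigenvalue comparison that this produces an almost-minimizing sequence for the higher genus, and then applies the Ekeland principle in the space $Met_0(\widetilde\Sigma_\eps)\times\overline{X}$ to obtain a Palais--Smale-like sequence whose almost-harmonic, almost-conformal maps $\Phi_\eps$ have quantitatively small energy in the handle. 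Passing to the limit yields, for every pair $(p,q)$, a branched conformal minimal immersion $\Phi_{p,q}:\Sigma\to\mathcal E_{\Lambda(\Sigma,g)}$ with $\Phi_{p,q}(p)=\Phi_{p,q}(q)$ (this is \eqref{eq:mainprop}); letting $q\to p$ and using conformality forces $\nabla\Phi_{p,v}(p)=0$, so every point of $\Sigma$ would be a branch point, a contradiction. None of this appears in your argument: you have no induction, no use of the lower-genus minimizer, no gluing construction, and no mechanism to derive a contradiction from equality of the infima across genera. As written, your proof would at best re-establish the conditional existence result of \cite{petrides-4}, \cite{petrides-5}, not Theorem \ref{theomain2}.
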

The gap assumption in Theorem \ref{theomain2} ensures that the first renormalized eigenvalue of minimizing sequences does not converge to $0$, preventing from disconnection of minimizing sequences as explained in \cite{petrides-4}. Such an assumption is necessary because of the example of maximization of one eigenvalue $\bar{\lambda}_k$ on spheres or projective planes.
%($\bar{\sigma}_k$ on disks). 
Indeed, it was proved by \cite{knpp21} with the combination of \cite{petrides-2} (see also \cite{knpp19}) and \cite{Ejiri} that $\Lambda_k(0) := \sup \bar{\lambda}_k(\mathbb{S}^2,\cdot) = 8\pi k$, corresponding to the $k$-th renormalized eigenvalue of $k$ disjoint spheres of same area, is never realized by a metric on the sphere for $k\geq 2$. 
%Analogously, $\Lambda_k^K(1) := \sup \bar{\lambda}_k(\mathbb{RP}^2,\cdot) = 12\pi + 8\pi (k-1) $, corresponding to the $k$-th renormalized eigenvalue of disjoint unions of a projective plane and $k-1$ spheres of same first eigenvalue, is never realized by a metric on $\mathbb{RP}^2$ for $k\geq 2$ (see \cite{karpukhin-3}).

In the previous examples of combinations $F$, this gap assumption is automatically satisfied if $a_1 >0$ and $f(0) = +\infty$, e.g for $f(\lambda) = \lambda^{-s}$ or $f(\lambda) = -\ln \lambda$. In particular the functionals $ \sum_{i=1}^m a_i \left(\bar{\lambda}_i\right)^{-1} $ for $a_1 >0$
have a minimizer for any topology. For $m=1$, Theorem \ref{theomain1} is nothing but a corollary of Theorem \ref{theomain2}. For $m=3$ and $a_i:=1$, we minimize Yang-Yau's functional \cite{yangyau} (modelled on Hersch's result on the sphere \cite{hersch}) for any topology. In \cite{petrides-7}, we proved that for $m=2$, $a_1 := 1$ and $a_2 := t$ large enough on the sphere, the maximizers correspond to critical metrics associated to non planar minimal immersed spheres into rotational ellipsoids of $\R^4$.

One interesting question among many others would be to know the geometry of optimizers of $ \sum_{i=1}^m \left(\bar{\lambda}_i\right)^{-1} $ (or other finite combinations) as $m\to +\infty$. We early knew from \cite{hersch} that for $m=1,2,3$, the round sphere is the unique minimizer among spheres and from \cite{berger} that for $m=6$, the flat equilateral torus (minimizer for $m=1$) cannot be a minimizer among tori.

\subsection*{Overview of variational approaches}
The first natural functionals early studied are for $k\geq 1$, $\bar{\lambda}_k$ (see \eqref{def:barlambdak}) and $\bar{\sigma}_k$ defined as
\begin{equation} \label{def:barsigmak} g \mapsto \bar{\sigma}_k(\Sigma,g) := \sigma_k(\Sigma,g)L_g(\partial\Sigma) \end{equation}
the $k$-th renormalized Steklov eigenvalues of a compact connected surface with boundary $\Sigma$ where for a Riemannian metric $g$, $\sigma_k(\Sigma,g)$ denotes the $k$-th non-zero Steklov eigenvalue of the surface and $L_g(\partial\Sigma)$ the length of the boundary. These functionals are known to be bounded in the set of Riemannian metrics on a fixed surface: \cite{yangyau} \cite{liyau} for $\bar{\lambda}_1$, \cite{korevaar} for $\bar{\lambda}_k$ and see also \cite{hassannezhad} \cite{kokarev2} for $\bar{\lambda}_k$ and $\bar{\sigma}_k$.

After the seminal papers by Nadirashvili \cite{nadirashvili} for $\bar{\lambda}_1$ on tori and Fraser and Schoen \cite{fraserschoen} for $\bar{\sigma}_1$ on surfaces with boundary of genus zero, several works looked for a systematic variational method to understand whether eigenvalue functionals admit extremal metrics or not (e.g \cite{petrides}, \cite{petrides-2}, \cite{petrides-3}, \cite{knpp19}, \cite{KS}, \cite{petrides6}  etc). One important first step was the optimization in a conformal class of metrics. It is very convenient since there is a family of first eigenfunctions associated to the extremal metrics that are coordinates of a harmonic map into a sphere (or free boundary harmonic maps into spheres). After this step, the maximization among conformal classes is reduced to a maximization on the finite dimensional Teichm\"uller space of the surface. In \cite{petrides}, we proved that $\bar{\lambda}_1$ always realizes a maximum among metrics in a given conformal class and we gave a gap assumption for the existence of a maximizer of $\bar{\lambda}_1$ on the set of metrics on an orientable surface $\Sigma_{\gamma}$ of genus $\gamma$:
$$ \Lambda_1(\gamma) > \Lambda_1(\gamma-1) \Rightarrow \text{Existence of a maximum for } \Lambda_1(\gamma)$$
where $ \Lambda_1(\gamma) := \sup \bar{\lambda}_1(\Sigma_\gamma,\cdot) $. A similar formula holds for non-orientable surfaces (see \cite{MS1}). Such a gap assumption prevents maximizing sequences from degenerating to lower topologies in the Teichmuller space. In other words, all the work in \cite{petrides} consisted in proving that it is the only obstruction for convergence of the maximizing sequences we constructed. The analogous gap assumption for the existence of a maximizer of $\bar{\sigma}_1$ on the set of metrics on an orientable surface $\Sigma_{\gamma,b}$ of genus $\gamma$ with $b$ boundary components was proved in \cite{fraserschoen} (genus 0) and \cite{petrides-3} for any topology (and higher eigenvalues):
$$ \begin{cases} \sigma_1(\gamma,b) > \sigma_1(\gamma,b-1)  \text{ for } \gamma \geq 0, b\geq 2 \\
 \sigma_1(\gamma,b) > \sigma_1(\gamma-1,b+1) \text{ for } \gamma \geq 1, b\geq 1
\end{cases} \Rightarrow \text{Existence of a maximum for } \sigma_1(\gamma,b) $$
where $\sigma_1(\gamma,b):=\sup \bar{\sigma}_1(\Sigma_{\gamma,b},\cdot)$.

These crucial strict inequalities were left to be proved in order to obtain existence of maximizers for any topology. The main attempt was to prove that the following topological perturbations of a Riemannian surface $(\Sigma,g)$ strictly increase the first renormalized eigenvalue of a new surface $(\tilde{\Sigma}_\eps,\tilde{g}_\eps)$ obtained from $(\Sigma,g)$ by "increasing" the topology:
\begin{itemize}
\item Gluing a small handle at a small neighborhood of two points of $\Sigma$ for $\bar{\lambda}_1$ or $\bar{\sigma}_1$ of area $\eps \to 0$. ($\tilde{\Sigma}_\eps$ is the connected sum of $\Sigma$ and a torus or a Klein bottle depending on orientation-preserving gluing or not)
\item Making a cross-cap at a small neighborhood of one point of $\Sigma$ for $\bar{\lambda}_1$ or $\bar{\sigma}_1$ of area $\eps \to 0$ ($\tilde{\Sigma}_\eps$ is the connected sum of $\Sigma$ and a projective plane)
\item Making a small hole in $\Sigma$ for $\bar{\sigma}_1$ by removing a disk of boundary length $\eps \to 0$ (the boundary of $\tilde{\Sigma}_\eps$ has one more connected component)
\item Gluing a small strip at a small neighborhood of two points of the boundary of $\Sigma$ for $\bar{\sigma}_1$ of length $\eps \to 0$ (There are four cases of new topology for $\tilde{\Sigma}_\eps$ depending on gluing along two points of the same boundary component or not and gluing preserving the orientation or not)
\end{itemize}
In all these cases, while we know that $\bar{\lambda}_1(\tilde{\Sigma}_\eps,\tilde{g}_\eps) \to \bar{\lambda}_1(\Sigma,g) $ or $\bar{\sigma}_1(\tilde{\Sigma}_\eps,\tilde{g}_\eps) \to \bar{\sigma}_1(\Sigma,g)$ as $\eps\to 0$,  we aimed at proving:
\begin{equation} \label{eq:monotonicity}\bar{\lambda}_1(\tilde{\Sigma}_\eps,\tilde{g}_\eps) > \bar{\lambda}_1(\Sigma,g)  \text{ or } \bar{\sigma}_1(\tilde{\Sigma}_\eps,\tilde{g}_\eps) > \bar{\sigma}_1(\Sigma,g) \end{equation}
for $\eps$ small enough with a fine asymptotic analysis. With such monocity results, we could use $(\tilde{\Sigma}_\eps,\tilde{g}_\eps)$ as a test Riemannian surface for the variational problem $\Lambda_1(\gamma)$ or $\sigma_1(\gamma,b)$, assuming that $(\Sigma,g)$ is a maximizer for a lower topology. We recently used such a glueing argument in the context of maximization of linear combinations of first and second eigenvalues in \cite{petrides-7} and \cite{petrides-8}. In \cite{fraserschoen}, \cite{MS3} and \cite{MP}, the authors betted that the monotonicity results \eqref{eq:monotonicity} should occur and developped subtle original techniques for the asymptotic analysis on eigenvalues under topological perturbation. All these papers contain a gap that prevents from the result they intended, but are still interesting for the accuracy of the asymptotic expansion of eigenvalues. After all these investigations it is not clear that such a monotonicity result holds true for any choice of $(\Sigma,g)$. 

\medskip

In \cite{kkms}, the authors managed to use in addition that $(\Sigma,g)$ is also a maximizer for the lower topology to obtain new existence results in various situations. In their proof, they replace the Riemannian surface $(\tilde{\Sigma}_\eps,\tilde{g}_\eps)$ by $(\tilde{\Sigma}_\eps,g_\eps)$, where $g_\eps$ is a maximizer of $\bar{\lambda}_1$ (or $\bar{\sigma}_1$) in the conformal class of $\tilde{g}_\eps$. Let's give details on the structure of their proof for $\bar{\lambda}_1$ when $\tilde{\Sigma}_\eps$ is obtained from an orientable surface $\Sigma$ of genus $\gamma$ by gluing a small handle at the neighborhood of $p,q \in \Sigma$. We obtain a maximizing sequence $(g_\eps)$ for $\Lambda_1(\gamma+1)$. In addition, for any $\eps$, $g_\eps$ is a maximizer of $\bar{\lambda}_1$ in its conformal class so that there is a sequence of harmonic maps $\Phi_\eps : (\tilde{\Sigma}_\eps,g_\eps) \to \mathbb{S}^{n_\eps}$ whose coordinates are first eigenfunctions. Assuming by contradiction that $\Lambda_1(\gamma+1) = \Lambda_1(\gamma)$, we obtain
$$\bar{\lambda}_1(\Sigma,g) = \Lambda_1(\gamma) = \Lambda_1(\gamma+1) \geq  \bar{\lambda}_1(\tilde{\Sigma}_{\eps},g_\eps) \geq  \bar{\lambda}_1(\tilde{\Sigma}_{\eps},\tilde{g}_\eps) \to \bar{\lambda}_1(\Sigma,g) $$
as $\eps\to 0$. Thanks to this strong property, they quantified the smallness of the energy of $\Phi_\eps$ inside the added handle, so that the limit $\Phi : (\Sigma,g) \to \mathbb{S}^n $ of $\Phi_\eps$ as $\eps\to 0$ on $\Sigma$ must satisfy $\Phi(p) = \Phi(q)$. It led to the following result
\begin{equation} \label{eq:KKMS} \forall p,q \in \Sigma, \exists \Phi_{p,q} : (\Sigma,g) \to \mathbb{S}^n, \text{ s.t } \Delta_g \Phi_{p,q} = \lambda_1(\Sigma,g)\Phi_{p,q} \text{ and } \Phi_{p,q}(p) = \Phi_{p,q}(q). \end{equation}
A similar property holds in the context of Steklov eigenvalues. In their proof by contradiction, the authors deduced that such a strong information is not possible if the multiplicity of the first eigenvalue is known to be small. This is the case for surfaces of small genus by classical topological bounds on the multiplicity \cite{cheng} \cite{besson} \cite{nadirashvili2} \cite{kkp}, or if we know that the maximal metric $g$ is equivariant. They also gave an argument that proves that $\Lambda_1(\gamma) > \Lambda_1(\gamma-2)$. However, although they conjectured Theorem \ref{theomain1} by conjecturing that \eqref{eq:KKMS} leads to a contradiction, we show in the current paper that something natural is missing in their approach (see \eqref{eq:mainprop}).

Notice also that they use a technology that first appeared in \cite{KS} for the maximization of $\bar{\lambda}_1$ in a conformal class which is very different from \cite{petrides-4} and \cite{petrides6}. While it brings more information, their technology is very specific to the maximization of the first eigenvalue and cannot be used for Theorem \ref{theomain2}.

\medskip

Let's generalize to other spectral functionals. Similar gap assumptions are given for the existence of any higher eigenvalue in \cite{petrides-2} and \cite{petrides-3} (see also an alternative proof for Laplace eigenvalues \cite{knpp19}). As already said, the sphere and projective plane for Laplace eigenvalues and the disk for Steklov eigenvalues do not realize the supremum of the $k$-th eigenvalues ($k\geq 2$) so that the strict inequalities do not occur. In \cite{petrides-4}, \cite{petrides-5} we give the gap assumption for general positive combinations of eigenvalues that can simply be stated as 
$$ \forall \hat{\Sigma} \in \mathcal{LT}(\Sigma), \inf E(\Sigma,\cdot) < \inf E(\hat{\Sigma},\cdot) \Rightarrow \text{ Existence of a minimum of } E(\Sigma,\cdot)  $$
where for closed surfaces $\Sigma$, $\mathcal{LT}(\Sigma)$ is the set of surfaces obtained from $\Sigma$ by cutting $\Sigma$ along a finite number of disjoint closed curves and glue disks along the connected components of the boundary of the (possibly disconnected) surfaces we obtain. The letters $\mathcal{LT}$ stand for "Lower Topologies". If $\Sigma$ is a surface with boundary, $\mathcal{LT}(\Sigma)$ is the set of surfaces obtain by cutting $\Sigma$ along a finite number of disjoint properly embedded curves with endpoints at the boundary. The advantage of this formulation is the unification of all the gap assumptions. In the current paper, we focus on the subset $\mathcal{LTC}(\Sigma)$ of connected surfaces in $\mathcal{LT}(\Sigma)$. We also proved in \cite{petrides-4}, \cite{petrides-5} the following alternative gap assumption:
$$ \begin{cases} 
 \forall \hat{\Sigma} \in \mathcal{LTC}(\Sigma), \inf E(\Sigma,\cdot) < \inf E(\hat{\Sigma},\cdot) \\
 \inf E(\Sigma,\cdot) < \inf E_0(\Sigma,\cdot)
  \end{cases} \Rightarrow \text{ Existence of a minimum of } E(\Sigma,\cdot)  $$
where the second assumption ensures that the first eigenvalue of minimizing sequences does not converge to $0$ so that disconnections cannot occur. Notice that
$$ \forall \Sigma \in \mathcal{LT}(\tilde{\Sigma}), \begin{cases} \inf E(\widetilde{\Sigma},\cdot) < \inf E_0(\widetilde{\Sigma},\cdot) \\  \inf E(\widetilde{\Sigma},\cdot) = \inf E(\Sigma,\cdot) \end{cases} \Rightarrow \begin{cases} \Sigma \in \mathcal{LTC}(\tilde{\Sigma}) \\ \inf E(\Sigma,\cdot) < \inf E_0(\Sigma,\cdot) \end{cases} $$
so that we can prove by induction gaps on $\tilde{\Sigma}$ assuming that the functional admits a minimum for any $\Sigma \in \mathcal{LTC}(\widetilde{\Sigma})$.

In the same spirit as \cite{kkms}, the main step to obtain Theorem \ref{theomain2} (and Theorem \ref{theomain1}) in the orientable closed case is then to prove that if $\inf E(\widetilde{\Sigma},\cdot) = \inf E(\Sigma,\cdot)$ where $\Sigma$ has genus $\gamma$ and $\widetilde{\Sigma}$ has genus $\gamma+1$, and if the minimum of $E(\Sigma,\cdot)$ is realized by $g$, we obtain a contradiction. Under these assumptions, we prove in the current paper:
\begin{equation} \label{eq:mainprop} \forall p,q \in \Sigma, \exists \Phi_{p,q} : (\Sigma,g) \to \mathcal{E}_{\Lambda(\Sigma,g)}, \text{ s.t } \begin{cases} \Delta_g \Phi_{p,q} = \Lambda(\Sigma,g)\cdot \Phi_{p,q} \\
\Phi_{p,q} \text{ is a (possibly branched) conformal map} \\ 
\Phi_{p,q}(p) = \Phi_{p,q}(q) \end{cases} \end{equation}
where $\Lambda(\Sigma,g) := diag\left( \lambda_1(\Sigma,g),\cdots,\lambda_n(\Sigma,g) \right)$ for some $n$ such that $\lambda_n(\Sigma,g) = \lambda_m(\Sigma,g)$ and 
$$\mathcal{E}_{\Lambda(\Sigma,g)} := \left\{ x \in \R^n ; \sum_{i=1}^n \lambda_i(\Sigma,g) x_i^2 = 1 \right\}.$$
Beyond the generalization to combination of eigenvalues, the new information compared with \eqref{eq:KKMS} is the conformality of $\Phi_{p,q}$. This implies that the harmonic map $\Phi_{p,q}$ is a branched conformal minimal immersion. Therefore, given $p\in \Sigma$ and letting $q \to p$ along $v \in T_p \Sigma$ we obtain a branched minimal immersion $\Phi_{p,v} : \Sigma \to \mathcal{E}_{\Lambda(\Sigma,g)}$ such that $D\Phi_{p,v}(v) = 0$. By conformality, we deduce $\nabla \Phi_{p,v} = 0$ and $p$ is a branched point of $\Phi_{p,v}$ and a conical singularity of $g$. Since the result holds for any $p$, we obtain a contradiction. 

\medskip

In order to obtain \eqref{eq:mainprop}, instead of replacing $(\tilde{\Sigma}_\eps,\tilde{g}_\eps)$ by a maximizing sequence $g_\eps$ which is maximal (and then critical) in a fixed conformal class leading to a sequence of harmonic maps, we replace it by a maximizing sequence which is \textit{almost} critical with respect to the \textit{whole} set of metrics, leading to a sequence of a \textit{almost harmonic maps} which are \textit{almost conformal}. We definitely take into consideration that our variational problem holds in the set of all metrics in the computation of the first variations. The selection of this maximizing sequence is based on a Palais-Smale-like trick for locally Lipschitz functionals. The choice of the variational space is crucial: not too weakly regular to be able to define eigenvalues and associated eigenfunctions and to compute the first variations, and not too strongly regular to have a significant Palais-Smale-like condition. This variational space is somewhat included into the space of Radon measures (dual space of continuous measures) studied in \cite{kokarev} and $H^{-1}$ (dual space of $H^1$) used in \cite{petrides} and \cite{knps}, and is not far smaller than their intersection. It is given in Section \ref{sec:1}. It is also used for an alternative (and somewhat simpler than \cite{petrides-4}) proof of optimization of positive combinations of eigenvalues in a conformal class in \cite{petrides6} for a more general result. 

\medskip

\subsection*{Side results and open questions}

\medskip

Our glueing method can be extended in the non orientable closed case with the glueing of a cylinder that reverses the orientation. As explained in \cite{MS1} the existence result on a non-orientable surface $\Sigma_\delta^K$ of non-orientable genus $\delta$ and Euler characteristic $2-\delta$ ($\delta$ is the number of projective planes $\mathbb{RP}^2$ that appear in the connected sum $\Sigma_\delta^K \simeq  \mathbb{RP}^2 \sharp \cdots \sharp \mathbb{RP}^2$) holds if 
$$ \Lambda_1^K(\delta) >  \Lambda_1^K(\delta-1) \text{ and } \Lambda_1^K(\delta) > \Lambda_1\left(\left\lfloor\frac{\delta-1}{2}\right\rfloor \right) $$
where 
$$ \Lambda_1^K(\delta) := \sup_{g \in Met(\Sigma_\delta^K)} \bar{\lambda}_1(\Sigma_\delta^K,g) $$
Notice that the large inequalities are true \cite{ces} \cite{MS1}. Our analysis then gives the following result
\begin{theo} \label{theomain1}
For any $\delta\geq 2$ and for one among the non-orientable surfaces of genus $\delta$ and $\delta+1$, $\bar{\lambda}_1$ realizes a maximum at a smooth (outside a finite number of conical singularities) metric. 
\end{theo}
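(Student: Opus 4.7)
The plan is a strong induction on $\delta \ge 2$, with base case $\delta = 2$ (the Klein bottle, for which existence of a $\bar\lambda_1$-maximizer is classical). For the step, fix $\delta \ge 3$ and suppose the conclusion for every $2 \le \delta' < \delta$. If $\Sigma_\delta^K$ already attains its $\bar\lambda_1$-supremum we are done; otherwise the induction hypothesis applied with $\delta' = \delta - 1$ forces $\Sigma_{\delta-1}^K$ to attain, and we fix a maximizer $g_0$ on it. The remaining task is to show that $\Sigma_{\delta+1}^K$ attains.

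The strategy is to run the Palais-Smale-like scheme developed in the body of the paper directly on the full set of Riemannian metrics on $\Sigma_{\delta+1}^K$, producing an almost-critical maximizing sequence $(g_k)$, and to take a limit in the variational space of Section \ref{sec:1}. Either the limit is a metric on $\Sigma_{\delta+1}^K$ itself, in which case the supremum is attained, or the sequence degenerates by pinching a simple closed curve. On a non-orientable surface of non-orientable genus $\delta+1$ the only lower topologies that can appear as such a limit are: (i) $\Sigma_{\delta-1}^K$ (two-sided non-separating pinching); (ii) $\Sigma_\delta^K$ (one-sided pinching with non-orientable complement); and, only when $\delta$ is even, (iii) the orientable surface $\Sigma_{\delta/2}$ (one-sided pinching with orientable complement, corresponding to $\Sigma_{\delta+1}^K = \Sigma_{\delta/2} \# \mathbb{RP}^2$). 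Disconnection is excluded because $\Lambda_1^K(\delta+1) > 0$. Case (i) is ruled out by the paper's extended orientation-reversing cylinder glueing applied to $g_0$: since $\Sigma_{\delta-1}^K \# K = \Sigma_{\delta+1}^K$, the non-orientable analogue of Theorem \ref{theomonotonicity} yields $\Lambda_1^K(\delta+1) > \Lambda_1^K(\delta-1)$, contradicting the fact that the degenerated limit lives on $\Sigma_{\delta-1}^K$ with $\bar\lambda_1$ equal to $\Lambda_1^K(\delta+1)$. Case (ii) is immediate: the limit would be a $\bar\lambda_1$-maximizer on $\Sigma_\delta^K$ realizing $\Lambda_1^K(\delta) = \Lambda_1^K(\delta+1)$, contrary to our standing assumption that $\Sigma_\delta^K$ has no maximizer.

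The main obstacle is case (iii), a genuine cross-cap rather than a cylinder degeneration, for which no cylinder-type test metric on $\Sigma_{\delta+1}^K$ directly produces the strict inequality $\Lambda_1^K(\delta+1) > \Lambda_1(\delta/2)$. This is closed off using the new structural property \eqref{eq:mainprop} of the paper: the limit $\tilde g$ is necessarily a $\bar\lambda_1$-maximizer on the orientable $\Sigma_{\delta/2}$ realizing the equality $\Lambda_1(\delta/2) = \Lambda_1^K(\delta+1)$ (forced by the large inequality $\Lambda_1^K(\delta+1) \ge \Lambda_1(\lfloor \delta/2 \rfloor)$), so by \eqref{eq:mainprop} applied to $(\Sigma_{\delta/2}, \tilde g)$ there exists, for every pair $p, q \in \Sigma_{\delta/2}$, a first-eigenfunction map $\Phi_{p,q} : (\Sigma_{\delta/2}, \tilde g) \to \mathcal{E}_{\Lambda(\Sigma_{\delta/2}, \tilde g)}$ which is conformal and satisfies $\Phi_{p,q}(p) = \Phi_{p,q}(q)$. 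Letting $q \to p$ along an arbitrary $v \in T_p \Sigma_{\delta/2}$ forces $D\Phi_{p,v}(v) = 0$, and then conformality gives $\nabla \Phi_{p,v} = 0$, making every $p \in \Sigma_{\delta/2}$ a branch point of $\Phi_{p,v}$ and a conical singularity of $\tilde g$; since this holds at every point, we reach a contradiction, finishing the induction step. The "either $\delta$ or $\delta+1$" flexibility of the theorem is precisely what lets the obstructive cross-cap collapse be reduced to the orientable-target instance of \eqref{eq:mainprop} already produced en route to Theorems \ref{theomain1} and \ref{theomain2}.
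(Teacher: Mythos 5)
Your reduction to a single problematic degeneration, and your handling of cases (i) and (ii), track the paper's intent; the gap is in case (iii). You invoke \eqref{eq:mainprop} ``applied to $(\Sigma_{\delta/2},\tilde g)$'' as though it were an intrinsic property of a $\bar\lambda_1$-maximizer $\tilde g$ on $\Sigma_{\delta/2}$ that happens to satisfy $\Lambda_1(\delta/2)=\Lambda_1^K(\delta+1)$. It is not. In the paper, \eqref{eq:mainprop} is the \emph{conclusion} of the glueing analysis of Section~\ref{sec:handle}: one assumes $\inf E(\widetilde\Sigma,\cdot)=\inf E(\Sigma,\cdot)$ for $\widetilde\Sigma$ obtained from $\Sigma$ by attaching a \emph{cylinder} (so $\chi(\widetilde\Sigma)=\chi(\Sigma)-2$, i.e.\ $\widetilde\Sigma=\Sigma\#T^2$ or, via an orientation-reversing identification, $\Sigma\#K$), and then derives the family $\Phi_{p,q}$ from that handle construction. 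In your case (iii) the relevant topological relation is $\Sigma_{\delta+1}^K=\Sigma_{\delta/2}\#\mathbb{RP}^2$ ($\chi$ drops by $1$), which is a \emph{cross-cap} attachment. The paper is explicit that ``our analysis does not tackle the cross-cap glueing,'' so nothing in the paper produces $\Phi_{p,q}$ for the pair $(\Sigma_{\delta/2},\Sigma_{\delta+1}^K)$, and your contradiction in case (iii) is built on an unproved premise.

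Two corroborating sanity checks. First, if \eqref{eq:mainprop} were available across a cross-cap as you use it, it would rule out the single remaining degeneration of $\Sigma_\delta^K$ itself (the one-sided pinch to $\Sigma_{\delta-1}^K$) in exactly the same way, and one would obtain maximizers on \emph{every} $\Sigma_\delta^K$; the weaker ``one of $\delta,\delta+1$'' form of the statement is precisely the imprint of the missing cross-cap estimate. Second, the ``flexibility'' you credit to the statement does not in fact transfer the cross-cap collapse into an orientable-handle instance of \eqref{eq:mainprop}: it only shifts which of the two consecutive non-orientable genera carries the obstruction, and the induction step you run still lands on a cross-cap when $\delta$ is even.

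A smaller but genuine omission: for $\delta$ odd, the Euler characteristic $\chi(\Sigma_{\delta+1}^K)=1-\delta$ is even, so a two-sided non-separating pinch can have orientable quotient $\Sigma_{(\delta-1)/2}$ in addition to your case (i) quotient $\Sigma_{\delta-1}^K$. That extra case \emph{is} killable with the tools at hand (a cylinder attached to $\Sigma_{(\delta-1)/2}$, which maximizes by the orientable Theorem~\ref{theomain1}, yields $\Sigma_{\delta+1}^K$ and hence $\Lambda_1^K(\delta+1)>\Lambda_1((\delta-1)/2)$), but your case list as written asserts that orientable degenerations arise only for $\delta$ even, which is incorrect and should be repaired. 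The essential defect remains the first one: case (iii) cannot be disposed of by citing \eqref{eq:mainprop}, and without it the induction step fails for even $\delta$.
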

However, our analysis does not tackle the cross-cap glueing. The complete non orientable case is then left open. In addition we would like to give a new conjecture in the Steklov case (more adapted than Conjecture 1.29 in \cite{kkms}) that would imply the very analogous results to Theorem \ref{theomain1} and Theorem \ref{theomonotonicity} in the context of Steklov eigenvalues. This conjecture is true if $\Sigma$ is a surface of genus $0$ (see \cite{kkms}).
\begin{conj} \label{conj} Let $\Sigma$ be a surface with a disconnected boundary $\partial\Sigma$, and $g$ be a maximizer of $\bar{\sigma}_1(\Sigma,\cdot)$ then
$$ \exists (p,q) \in \Delta(\partial \Sigma), \forall  \Phi \in \mathcal{C}(\Sigma,g), \Phi(p)\neq \Phi(q)  $$
where $\Delta(\partial\Sigma)$ denotes the couples of points $(p,q)$ such that $p$ and $q$ belong to disjoint connected components of $\partial\Sigma$ and $\mathcal{C}(\Sigma,g)$ denotes the set of maps $ \Phi : (\Sigma,g) \to \left(\mathbb{B}^n, \mathbb{S}^{n-1} \right)$ such that
\begin{equation} \label{eq:mainpropsteklov}  \begin{cases} \Delta_g \Phi = 0 \text{ in } \Sigma \\
\partial_\nu \Phi = \sigma_1(\Sigma,g)\cdot \Phi \text{ on } \partial \Sigma \\
\Phi \text{ is a (possibly branched) conformal map.} \end{cases} \end{equation} 
\end{conj}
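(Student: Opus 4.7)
\medskip

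\noindent\textbf{Sketch of a possible approach.} The plan is to argue by contradiction: assume that for every $(p,q) \in \Delta(\partial\Sigma)$ there exists $\Phi_{p,q} \in \mathcal{C}(\Sigma,g)$ with $\Phi_{p,q}(p) = \Phi_{p,q}(q)$, and deduce an incompatibility with $g$ being a maximizer. The first step would be to organize $\mathcal{C}(\Sigma,g)$ as a compact finite-dimensional real-analytic set modulo $O(n)$: every $\Phi \in \mathcal{C}(\Sigma,g)$ has its coordinates in the first Steklov eigenspace $V_g$, subject to the pointwise conformality relations and to the boundary condition $\Phi(\partial\Sigma)\subset \mathbb{S}^{n-1}$, and the topological multiplicity bounds of \cite{kkp} give $\dim V_g \leq C(\gamma,b)$, so that $\mathcal{C}(\Sigma,g)/O(n)$ embeds into a finite-dimensional projective variety on which standard compactness for free-boundary branched minimal immersions applies.

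\medskip

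The second step would be a dimension argument on the evaluation
$$\mathrm{ev} : \mathcal{C}(\Sigma,g)/O(n) \times \Delta(\partial\Sigma) \longrightarrow \mathbb{S}^{n-1} \times \mathbb{S}^{n-1},\quad ([\Phi],(p,q))\mapsto (\Phi(p),\Phi(q)),$$
against the diagonal $\Delta_n \subset \mathbb{S}^{n-1}\times \mathbb{S}^{n-1}$. A transversality-type analysis of $\mathrm{ev}^{-1}(\Delta_n)$ should, modulo the degenerate case where some $\Phi \in \mathcal{C}(\Sigma,g)$ collapses a whole boundary component, show that the set of pairs identified by at least one $\Phi$ has real-analytic dimension strictly less than $\dim_{\R}\Delta(\partial\Sigma) = 2$, forbidding its projection to cover $\Delta(\partial\Sigma)$ and thus establishing the conjecture.

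\medskip

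The intermediate step is therefore to exclude the possibility that some $\Phi \in \mathcal{C}(\Sigma,g)$ collapses an entire boundary component to a point. Here the maximality of $g$ is used: such a collapse would produce a branched free-boundary minimal immersion with a whole boundary circle sent to a single point of $\mathbb{S}^{n-1}$, and a Palais--Smale argument of the type developed in the current paper --- applied to the Steklov functional and to a one-parameter deformation opening a boundary strip between the degenerate component and another one --- should yield a first-order variation strictly increasing $\bar{\sigma}_1$, contradicting the criticality of $g$.

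\medskip

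The main obstacle is the absence of a ``$q \to p$'' infinitesimal degeneration analogous to the one that closes the closed Laplace case via \eqref{eq:mainprop}: two points lying on disjoint boundary components cannot collide, so no direct linearization can force a branch point. One therefore has to replace the pointwise argument of \eqref{eq:mainprop} by the global dimension count above, supplemented by an asymptotic expansion of $\bar{\sigma}_1$ along boundary strip gluings sharp enough to remain nondegenerate when $(p,q)$ varies over the torus $\gamma_1 \times \gamma_2$. As recalled in the introduction, it is precisely this asymptotic expansion that contained gaps in \cite{MP} and related works, and it will be the principal technical difficulty; the genus-zero verification \cite{kkms} could serve as the base of an induction on the topology.
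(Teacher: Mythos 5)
The statement you are attempting to prove is presented in the paper as Conjecture~\ref{conj}, not as a theorem: the paper explicitly leaves it open, noting only that the genus-zero case is settled by \cite{kkms}. There is therefore no ``paper's own proof'' to compare your argument against, and any claimed proof would be new.

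Your proposal is, by your own admission, a sketch rather than a proof, and the gaps are real. The dimension/transversality count in your second step is the crux of the whole argument, and nothing is offered to justify that $\mathrm{ev}^{-1}(\Delta_n)$ has real dimension strictly less than $2$: one would need regularity and genericity statements about the evaluation map on the (possibly singular, possibly positive-dimensional) set $\mathcal{C}(\Sigma,g)/O(n)$ that are not available, and a priori nothing excludes that for a large first eigenspace the identified locus is all of $\Delta(\partial\Sigma)$. The exclusion of a collapsed boundary component in your third step is also asserted rather than argued; a Palais--Smale variation ``should yield a first-order increase'' is not a proof, and as you note the fine asymptotics of $\bar\sigma_1$ under strip gluings is precisely where previous attempts (\cite{MP}, \cite{MS3}) contained errors. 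Finally, and most importantly, you have correctly identified the structural obstruction: the $q\to p$ infinitesimal degeneration that closes the Laplace/handle case via \eqref{eq:mainprop} has no analogue when $p$ and $q$ must lie on distinct boundary components, so the argument that forces a branch point at $p$ is simply unavailable here. Your sketch is an honest description of a possible line of attack and of why it is hard, but it does not constitute a proof of the conjecture.
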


\subsection*{Organization of the paper}

The first part of the paper is devoted to explain the transformation of a minimizing sequence $(\tilde{\Sigma}_\eps,\tilde{g}_\eps,\tilde{\beta}_\eps)$ into a Palais-Smale sequence $(\tilde{\Sigma}_\eps,g_\eps,\beta_\eps)$ in the adapted variational space. The second parts consider the necessary topological perturbations that suffice to prove Theorem \ref{theomain2}. Since the proofs are very similar, we chose to develop in detail the case of handle attachment for Laplace eigenvalues on closed surfaces (Section \ref{sec:handle}), while in the case of strip attachments for Steklov eigenvalues (Section \ref{sec:strip}) we only detail what differs much from Section \ref{sec:handle}. In every case we start with the construction of the initial minimizing sequence, then we prove the convergence of the Palais-Smale-like modified sequence and we conclude by the contradiction (up to conjecture \ref{conj} in the Steklov case).

\section{Variational framework} \label{sec:1}

In all the section, let $\Sigma$ be a smooth compact surface $\partial\Sigma =\emptyset$ (resp. $\partial \Sigma\neq \emptyset$) in the context of optimization of Laplace (resp. Steklov) eigenvalues.

\subsection{A distance between continuous Riemannian metrics}

We denote $Met_0(\Sigma)$ the set of continous metrics on $\Sigma$. We endow this set with the following distance between $g_1,g_2 \in Met_0(\Sigma)$
$$ \delta(g_1,g_2):= \max_{x\in \Sigma} \left( \ln\left( \max_{v\in T_x\Sigma \setminus\{0\}} \frac{g_1(x)(v,v)}{g_2(x)(v,v)}  \right)^2 + \ln\left( \max_{v\in T_x\Sigma \setminus\{0\}} \frac{g_2(x)(v,v)}{g_1(x)(v,v)}  \right)^2   \right)^{\frac{1}{2}} $$
where we notice that for $g \in Met_0(\Sigma)$ and a symmetric 2-tensor $h\in S^2_0(\Sigma) = T_g \left( Met_0(\Sigma) \right)$,
\begin{equation} \label{eq:deltadistancetangentspace}
\begin{split} \lim_{t\to 0}\frac{\delta(g,g+th)}{t} = & \max_{x\in \Sigma} \left( \left(\max_{v\in T_x\Sigma \setminus\{0\}} \frac{h(x)(v,v)}{g(x)(v,v)}\right)^2 + \left(\min_{v\in T_x\Sigma \setminus\{0\}} \frac{h(x)(v,v)}{g(x)(v,v)}\right)^2 \right)^{\frac{1}{2}} \\
=: & \max_{x\in \Sigma}  \sqrt{ \langle h,h \rangle_g(x)}
\end{split} \end{equation}
where we define for a local orthonormal frame $(e_1,e_2)$ with respect to $g$ at the neighborhood of $x$ and $h_1,h_2 \in S_0^2(\Sigma) = T_g \left( Met_0(\Sigma)\right)$, 
$$  \langle h_1,h_2 \rangle_g(x) =\sum_{i,j} h_1(x)(e_i(x),e_j(x))\cdot h_2(x)(e_i(x),e_j(x)) $$
a scalar product that is independent of the choice of the orthonormal frame. Notice also that $\left(Met_0(\Sigma),\delta\right)$ is locally complete.

\subsection{First properties of generalized eigenvalues on an adapted variational space}

We endow $\Sigma$ with a continuous metric $g \in Met_0(\Sigma)$. We let $B$ be the Banach space of symmetric continuous bilinear forms $\beta : H^1(\Sigma)\times H^1(\Sigma) \to \R$ endowed with the norms
$$ \left\Vert \beta \right\Vert_g = \sup_{\varphi,\psi \in H^1 \setminus\{0\}} \frac{\left\vert \beta(\varphi,\psi) \right\vert}{\Vert \varphi \Vert_g \Vert \psi \Vert_g}  $$
where we denote $\Vert \varphi \Vert_{H^1(g)}$ the $H^1$ norm of a function $\varphi$ with respect to the metric $g$:
$$ \Vert \varphi \Vert_{H^1(g)}^2 = \int_{\Sigma} \varphi^2 dA_g + \int_{\Sigma} \left\vert \nabla \varphi \right\vert_g^2 dA_g  $$
if $\Sigma$ is a closed surface and if we study Laplace eigenvalues or
$$ \Vert \varphi \Vert_{H^1(g)}^2 = \int_{\partial\Sigma} \varphi^2 dL_g + \int_{\Sigma} \left\vert \nabla \varphi \right\vert_g^2 dA_g  $$
if $\Sigma$ is a compact surface with boundary and if we study Steklov eigenvalues. Notice that the space $H^1$ is independent of the metric $g$ and that all the norms $\Vert \varphi \Vert_{H^1(g)}$ for $g\in Met_0(\Sigma)$ are equivalent. As a consequence, the space $B$ does not depend on the metric $g$ and all the norms $\Vert \beta \Vert_g$ are equivalent.
We denote $B_+$ the subspace of non-negative bilinear forms of $B$. Let $\beta \in B_+$ and $g \in Met_0(\Sigma)$. We set the $k$-th eigenvalue
$$ \lambda_k(g,\beta) = \inf_{V \in \mathcal{G}_k(V_\beta)} \max_{\varphi \in V\setminus\{0\}} \frac{\int_{\Sigma} \left\vert \nabla \varphi \right\vert^2_g dA_g }{ \beta(\varphi,\varphi) } $$
where $\mathcal{G}_k(V_\beta)$ is the set of $k$-dimensional vector subspace of
$$ V_\beta = \{ \varphi \in \mathcal{C}^\infty(\Sigma), \beta(1,\varphi) = 0 \} $$
Notice that we can replace $V_\beta$ by its closure in $H^1$:
$$ \overline{V_\beta} =  \{ \varphi \in H^1(\Sigma), \beta(1,\varphi) = 0 \}$$
in the definition of $\lambda_k(g,\beta)$. Notice also that $\left[0,+\infty \right]$ is the set of admissible values of $\lambda_k$ on $B_+ $. 
Finally, we set the $k$-th renormalized eigenvalue
$$ \bar{\lambda}_k(g,\beta) = \lambda_k(g,\beta) \beta(1,1). $$
and by convention $\bar{\lambda}_k = 0$ if $\beta(1,1)=0$. Notice that in the case of Steklov eigenvalues on compact surfaces with boundary, $\sigma_k(g,\beta)$ enjoys the same definition.

\begin{prop} \label{prop:Lipeigen}
$\lambda_k$ is an upper semi-continuous functional on
$$ G= Met_0(\Sigma) \times  \{ \beta \in B_+ ; \beta(1,1)\neq 0  \} $$
and 
$\lambda_k$ and $\bar{\lambda}_k$ are locally Lipschitz maps on the open set 
$$ F =  \{(g,\beta) \in Met_0(\Sigma) \times B_+ ; \beta(1,1)\neq 0 \text{ and } \lambda_k(g,\beta) < +\infty \} $$
Moreover, for any $\Lambda >0$,
$$  F_\Lambda =  \{\beta \in Met_0(\Sigma) \times  B_+ ; \bar{\lambda}_k(g,\beta) \leq \Lambda \}  $$
is a closed set in $Met_0(\Sigma) \times B$.
\end{prop}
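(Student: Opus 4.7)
The plan is to derive all three assertions from the Courant--Fischer min-max formula together with a few elementary perturbation estimates. I first record three comparison facts whose proofs are essentially definitional. From \eqref{eq:deltadistancetangentspace}, a perturbation of $g$ in the $\delta$-distance yields a multiplicative pointwise control on $|\nabla\varphi|_g^2 dA_g$ by a factor $1+O(\delta(g,g_0))$, so the Dirichlet energy is Lipschitz in $g$. By definition of $\|\cdot\|_g$ on $B$, $|(\beta'-\beta)(\varphi,\psi)|\leq\|\beta-\beta'\|_g\|\varphi\|_{H^1(g)}\|\psi\|_{H^1(g)}$. Finally, the linear projector $P_\beta:H^1\to\overline{V_\beta}$ given by $P_\beta\varphi=\varphi-\beta(1,\varphi)/\beta(1,1)$ preserves gradients ($\nabla P_\beta\varphi=\nabla\varphi$) and depends Lipschitz-continuously on $\beta$ wherever $\beta(1,1)$ stays bounded below, with the key property that $\|P_\beta\varphi-\varphi\|_{H^1(g_0)}=O(\|\beta-\beta_0\|_{g_0}\|\varphi\|_{H^1(g_0)})$ when $\beta_0(1,\varphi)=0$.

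For upper semi-continuity of $\lambda_k$ at $(g_0,\beta_0)\in G$, given $\eps>0$ I would choose a $k$-dimensional subspace $V_0\subset V_{\beta_0}$ of smooth functions (dense in $\overline{V_{\beta_0}}$) whose maximal Rayleigh quotient is within $\eps$ of $\lambda_k(g_0,\beta_0)$. For $(g,\beta)$ in a sufficiently small neighborhood, the image $\tilde V=P_\beta V_0$ is a $k$-dimensional subspace of $\overline{V_\beta}$, admissible in the min-max for $\lambda_k(g,\beta)$. Applying the three comparison estimates uniformly on the finite-dimensional $V_0$ shows that the Rayleigh quotient of $\tilde V$ with respect to $(g,\beta)$ converges to that of $V_0$ with respect to $(g_0,\beta_0)$, giving $\limsup_{(g,\beta)\to(g_0,\beta_0)}\lambda_k(g,\beta)\leq\lambda_k(g_0,\beta_0)+\eps$.

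For local Lipschitz continuity on $F$: when $\lambda_k(g_0,\beta_0)<+\infty$, upper semi-continuity produces a neighborhood on which $\lambda_k$ stays bounded, and the construction above can be made quantitative, yielding $\lambda_k(g,\beta)\leq\lambda_k(g_0,\beta_0)+C(\delta(g,g_0)+\|\beta-\beta_0\|_{g_0})$. The matching lower bound follows from the same argument with the roles of $(g_0,\beta_0)$ and $(g,\beta)$ swapped, both lying in $F$ thanks to the upper bound. Lipschitz continuity of $\bar\lambda_k$ then follows since $\beta\mapsto\beta(1,1)$ is a continuous linear functional on $B$.

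For closedness of $F_\Lambda$ in $Met_0(\Sigma)\times B$: take $(g_n,\beta_n)\in F_\Lambda$ converging to $(g,\beta)$. Since $B_+$ is closed in $B$, $\beta\in B_+$. If $\beta(1,1)=0$ then $\bar\lambda_k(g,\beta)=0\leq\Lambda$ by convention, so I may assume $\beta(1,1)>0$; then $\lambda_k(g_n,\beta_n)\leq\Lambda/\beta_n(1,1)$ is uniformly bounded by some $M$, and $(g_n,\beta_n)\in F$ eventually. The key remaining step -- and the main technical obstacle -- is to show $\lambda_k(g,\beta)\leq M$ so that $(g,\beta)\in F$, after which continuity from the Lipschitz property yields $\bar\lambda_k(g,\beta)=\lim\bar\lambda_k(g_n,\beta_n)\leq\Lambda$. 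For this, I would pick near-optimal $\beta_n$-orthonormal families $\varphi_{n,1},\ldots,\varphi_{n,k}\in\overline{V_{\beta_n}}$; the uniform Dirichlet bound combined with the Poincaré inequality applied to the zero-$g_0$-mean part of each $\varphi_{n,i}$, with the $g_0$-mean controlled through the constraint $\beta_n(1,\varphi_{n,i})=0$ and the uniform lower bound $\beta_n(1,1)\geq c>0$, yields a uniform $H^1(g_0)$-bound. Rellich compactness extracts a weakly convergent subsequence $\varphi_{n,i}\rightharpoonup\varphi_i\in\overline{V_\beta}$ with strong $L^2$-convergence; the strong convergence $\beta_n\to\beta$ in $B$-norm combined with strong $L^2$-compactness allows passage to the limit in $\beta_n(\varphi_{n,i},\varphi_{n,j})=\delta_{ij}$, preserving linear independence in the limit, and weak lower semi-continuity of the Dirichlet energy finishes the proof. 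The delicate point throughout is precisely this limit passage: bounded bilinear forms on $H^1$ are not automatically weakly continuous, and one must exploit both the strong $B$-convergence of $\beta_n$ and the compactness of $H^1\hookrightarrow L^2$ (and, in the Steklov case, of the trace map $H^1\to L^2(\partial\Sigma)$) to close the argument.
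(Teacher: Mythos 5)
Your treatment of upper semi-continuity and local Lipschitz continuity follows the paper's Steps 1 and 3 in essentially the same way: project a near-optimal $k$-dimensional test subspace through the affine map $P_\beta \varphi = \varphi - \beta(1,\varphi)/\beta(1,1)$, control the resulting change in Rayleigh quotients via the Poincar\'e inequality and the operator-norm bound on $\beta-\beta_0$, and run it symmetrically for the two-sided estimate. That part is fine.

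For closedness of $F_\Lambda$, however, your argument diverges from the paper's and has a genuine gap. You propose to extract weak $H^1$-limits $\varphi_{n,i}\rightharpoonup\varphi_i$ of near-optimal $\beta_n$-orthonormal families and pass to the limit in $\beta_n(\varphi_{n,i},\varphi_{n,j})=\delta_{ij}$, and you correctly flag the delicate step: this requires $\beta$ to be continuous along weakly convergent $H^1$-sequences. Strong $L^2$-compactness (Rellich) supplies exactly this when $\beta$ factors through a compact embedding, which is the content of Proposition \ref{prop:compactnessgeneigen} --- but that proposition is proved for $\beta\in\overline{X}$, not for all of $B_+$. Proposition \ref{prop:Lipeigen} is stated on $B_+$, and a general non-negative bounded symmetric bilinear form on $H^1\times H^1$ can involve gradients: for instance $\beta(\varphi,\psi)=\int_\Sigma \varphi\psi\,dA_g + \int_\Sigma \langle\nabla\varphi,\nabla\psi\rangle_g\,dA_g$ lies in $B_+$, has $\beta(1,1)=A_g(\Sigma)>0$ and $\lambda_k(g,\beta)\leq 1$ for every $k$ (hence $(g,\beta)\in F_\Lambda$ with $\Lambda=A_g(\Sigma)$), yet for any $\varphi_n\rightharpoonup 0$ weakly in $H^1$ with $\|\nabla\varphi_n\|_{L^2}=1$ one has $\beta(\varphi_n,\varphi_n)\geq 1$ while $\beta(0,0)=0$. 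Along such sequences the $\beta$-orthonormality does not survive weak convergence, the limiting family can collapse, and the finiteness of $\lambda_k(g,\beta)$ is no longer forced. The paper avoids this entirely by never taking weak limits: its Step 2 projects the near-optimal subspace $V_n\subset\overline{V_{\beta_n}}$ through $P_\beta$ into $\overline{V_\beta}$ (the reverse direction of Step 1), bounds $\beta(P_\beta\varphi,P_\beta\varphi)$ from below by $\beta_n(\varphi,\varphi)$ minus an error controlled by $\|\beta_n-\beta\|\,\|\varphi\|_{H^1}^2$, and then absorbs the factor $\|\varphi\|_{H^1}^2/\beta_n(\varphi,\varphi)$ using the Poincar\'e inequality on $\overline{V_{\beta_n}}$ together with the Dirichlet bound $\lambda_k(g_n,\beta_n)\leq\Lambda+2\delta$. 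This yields $\lambda_k(g,\beta)\leq(\lambda_k(g_n,\beta_n)+\delta)(1+o(1))$ with no compactness hypothesis on $\beta$, from which both continuity on $F$ and closedness of $F_\Lambda$ follow. You should replace the weak-compactness extraction by this purely variational projection argument.
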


\begin{proof}

\textbf{Step 1: $\lambda_k$ is upper semi-continuous on $G$.}

\medskip

Let $\beta , \beta_n \in G$ and $g_n, g \in Met_0(\Sigma)$ such that $\beta_n \to \beta$ in $B$ and $g_n \to g$ in $Met_0(\Sigma)$. If $\lambda_k(g,\beta)=+\infty$, then, there is nothing to prove. We assume that $\lambda_k(g,\beta) < +\infty$. Let $V \in \mathcal{G}_k(V_\beta)$ be such that
$$ \max_{\varphi \in V\setminus\{0\}} \frac{\int_{\Sigma} \left\vert \nabla \varphi \right\vert^2_g dA_g }{ \beta(\varphi,\varphi) } \leq \lambda_k(g,\beta)+\delta $$
Then
$$ \lambda_k(g_n,\beta_n) \leq \max_{\varphi \in V \setminus\{0\}} \frac{\int_{\Sigma} \left\vert \nabla \varphi \right\vert^2_{g_n} dA_{g_n}}{\beta_n\left( \varphi - \frac{\beta_n\left(1,\varphi\right)}{\beta_n(1,1)},\varphi - \frac{\beta_n\left(1,\varphi\right)}{\beta_n(1,1)}  \right)} = \max_{\varphi \in V \setminus\{0\}} \frac{\int_{\Sigma} \left\vert \nabla \varphi \right\vert^2_g dA_g (1 + c_g \delta(g_n,g))}{\beta_n\left( \varphi , \varphi \right) - \frac{\beta_n\left(1,\varphi\right)^2}{\beta_n(1,1)}}  $$
Let $\varphi \in V$ be such that $\Vert \varphi \Vert_{H^1(g)} = 1$
$$ \beta_n\left( \varphi , \varphi \right) - \frac{\beta_n\left(1,\varphi\right)^2}{\beta_n(1,1)} \geq \beta(\varphi,\varphi) - \Vert \beta_n - \beta \Vert_g - \frac{\Vert \beta_n - \beta \Vert_g^2}{\beta(1,1) - \Vert \beta_n - \beta \Vert_g } . $$
Since $\lambda_k(g,\beta) <+\infty$, we know that $\beta(\varphi,\varphi)>0$, and that $V$ is a finite-dimensional set,
$$ \inf_{\varphi \in V, \Vert \varphi \Vert = 1} \beta(\varphi,\varphi) > 0 $$
and since $\beta(1,1)\neq 0$, and $\beta_n \to \beta$, we obtain that
$$ \lambda_k(\beta_n) \leq \lambda_k(\beta) + \delta +o(1) $$
as $n\to +\infty$. Letting $n \to +\infty$ and then $\delta \to 0$, we obtain the property.

\medskip

\textbf{Step 2: $\lambda_k$ is continous on $F$ and $F_{\Lambda}$ is closed}

\medskip

Let $(g,\beta) \in F , (g_n,\beta_n) \in F$ be such that $\beta_n \to \beta$ in $B$ and $g_n \to g $ in $Met_0(\Sigma)$. We assume that
$$ \Lambda := \limsup_{n\to +\infty} \lambda_k(g_n,\beta_n) < + \infty. $$
Let $V_n \in \mathcal{G}_k(V_{\beta_n})$ be such that
$$ \max_{\varphi \in V_n\setminus\{0\}} \frac{\int_{\Sigma} \left\vert \nabla \varphi \right\vert^2_{g_n} dA_{g_n} }{ \beta_n(\varphi,\varphi) } \leq \lambda_k(\beta_n)+\delta \leq \Lambda + 2\delta $$
where the last inequality holds for $n$ large enough. Then
$$ \lambda_k(g,\beta) \leq \max_{\varphi \in V_n \setminus\{0\}} \frac{\int_{\Sigma} \left\vert \nabla \varphi \right\vert^2_g dA_g}{\beta\left( \varphi - \frac{\beta\left(1,\varphi\right)}{\beta(1,1)},\varphi - \frac{\beta\left(1,\varphi\right)}{\beta(1,1)}  \right)} = \max_{\varphi \in V_n \setminus\{0\}} \frac{\int_{\Sigma} \left\vert \nabla \varphi \right\vert^2_g dA_g}{\beta\left( \varphi , \varphi \right) - \frac{\beta\left(1,\varphi\right)^2}{\beta(1,1)}}  $$
In the current step, we denote all the $H^1$, $H^{-1}$ norms and norms on $B$ with respect to the metric $g$. Let $\varphi \in V_n$
$$ \beta\left( \varphi , \varphi \right) - \frac{\beta\left(1,\varphi\right)^2}{\beta(1,1)} \geq \beta_n(\varphi,\varphi) - \left( \Vert \beta_n - \beta \Vert_g - \frac{\Vert \beta_n - \beta \Vert_g^2}{\beta(1,1)  } \right) \left\Vert \varphi \right\Vert_{H^1}^2 $$
 We have the following general Poincar\'e inequality (see e.g \cite{Zie89}, lemma 4.1.3]):
$$ \int_{\Sigma}\left(\varphi - \frac{\beta_n(1,\varphi)}{\beta_n(1,1)} \right)^2dA_g \leq C \left\Vert \frac{\beta_n(1,.)}{\beta_n(1,1)} \right\Vert_{H^{-1}}^2 \int_{\Sigma} \left\vert \nabla \varphi \right\vert_g^2 dA_g $$
and we have that 
$$ \int_{\Sigma} \left\vert \nabla \varphi \right\vert_g^2 dA_g \leq  \int_{\Sigma} \left\vert \nabla \varphi \right\vert_{g_n}^2 dA_{g_n} (1 + c_g \delta(g_n,g)) $$
so that knowing that $\varphi \in V_n$, 
\begin{equation*}
\begin{split}
 \Vert \varphi \Vert_{H^1}^2 \leq & \left( C \left\Vert \frac{\beta_n(1,.)}{\beta_n(1,1)} \right\Vert_{H^{-1}}^2 + 1  \right)(1 + c_g \delta(g_n,g))\left( \lambda_k(\beta_n)+ \delta \right) \beta_n(\varphi,\varphi) \\
 \leq & \left( C \left( \frac{\left\Vert\beta\right\Vert_g + \Vert \beta_n - \beta \Vert_g}{\beta(1,1) - \Vert \beta_n - \beta \Vert_g } \right)^2 + 1  \right)(1 + c_g \delta(g_n,g))\left(\Lambda + 2 \delta \right)\beta_n(\varphi,\varphi)
\end{split} 
  \end{equation*}
and we obtain that
$$ \lambda_k(g,\beta) \leq \left(\lambda_k(g_n,\beta_n) + \delta\right)(1 + c_g \delta(g_n,g))(1+o(1)) $$
so that letting $n\to+\infty$ and then $\delta\to 0$, we obtain the expected result.

\medskip

\textbf{Step 3: $\lambda_k$ is locally Lipschitz on $F$}

\medskip

Let $(g,\beta)\in F$. Without loss of generality, we choose again all $H^1$, $H^{-1}$ norms and norms on $B$ with respect to the metric $g$. We set $\Lambda = \lambda_k(g,\beta)+1$. Let $\eps_0$ and let $(g_1,\beta_1),(g_2,\beta_2) \in F_\Lambda \cap B((g,\beta),\eps_0)$ be such that
$$ \max\left\{\left\Vert \beta_1 - \beta_2 \right\Vert_g , \delta(g_1,g_2)\right\} =: \eps \leq 2\eps_0 \text{ and } \sup_{B((g,\beta),\eps_0)} \lambda_k \leq \Lambda. $$
$\eps_0$ exists by continuity of $\lambda_k$. Let $0<\delta<1$ we shall fix later and let $V \in \mathcal{G}_k(V_{\beta_1})$ be such that
$$ \max_{\varphi \in V\setminus\{0\}} \frac{\int_{\Sigma} \left\vert \nabla \varphi \right\vert^2_{g_1} dA_{g_1} }{ \beta_1(\varphi,\varphi) } \leq \lambda_k(g_1,\beta_1)+\delta $$
Then, we test the space 
$$ \tilde{V} := \left\{ \varphi - \frac{\beta_2(1,\varphi)}{\beta_2(1,1)}  ; \varphi \in V \right\} \in \mathcal{G}_k(V_{\beta_2}) $$
in the variational characterization of $\lambda_k(g_2,\beta_2)$:
$$ \lambda_k(g_2,\beta_2) \leq \max_{\varphi \in V \setminus \{0\}}  \frac{\int_{\Sigma} \left\vert \nabla \varphi \right\vert^2_{g_2} dA_{g_2} }{ \beta_2\left(\varphi - \frac{\beta_2(1,\varphi)}{\beta_2(1,1)},\varphi - \frac{\beta_2(1,\varphi)}{\beta_2(1,1)}\right) } $$
for $\varphi\in V$, we have
\begin{equation*} 
\begin{split} \beta_2\left(\varphi - \frac{\beta_2(1,\varphi)}{\beta_2(1,1)},\varphi - \frac{\beta_2(1,\varphi)}{\beta_2(1,1)}\right) = & \beta_1(\varphi,\varphi) + \left(\beta_1-\beta_2\right)(\varphi,\varphi) - \frac{\left(\beta_2-\beta_1\right)(1,\varphi)^2}{\beta_2(1,1)} \\
\geq & \beta_1(\varphi,\varphi) - \left( \Vert \beta_1-\beta_2 \Vert_g + \frac{ \Vert \beta_1-\beta_2 \Vert_g^2 }{\beta(1,1) - 2\eps_0} \right) \Vert \varphi \Vert_{H^1}^2
\end{split}
 \end{equation*}
 We have the following general Poincar\'e inequality:
$$ \int_{\Sigma}\left(\varphi - \frac{\beta_1(1,\varphi)}{\beta_1(1,1)} \right)^2dA_g \leq C \left\Vert \frac{\beta_1(1,.)}{\beta_1(1,1)} \right\Vert_{H^{-1}}^2 \int_{\Sigma} \left\vert \nabla \varphi \right\vert_g^2 dA_g $$
so that knowing that $\varphi \in V$, 
\begin{equation*}
\begin{split}
 \Vert \varphi \Vert_{H^1}^2 \leq & \left( C \left\Vert \frac{\beta_1(1,.)}{\beta_1(1,1)} \right\Vert_{H^{-1}}^2 + 1  \right)(1+ c_g \delta(g,g_1))\left( \lambda_k(\beta_1)+ \delta \right) \beta_1(\varphi,\varphi) \\
 \leq & A_\Lambda(\eps_0) \beta_1(\varphi,\varphi)
\end{split} 
  \end{equation*}
  where
  $$ A_\Lambda(\eps_0):= \left( C \left( \frac{\left\Vert\beta\right\Vert_g + 2\eps_0}{\beta(1,1) - 2\eps_0} \right)^2 + 1  \right)(1+c_g\eps_0)\left(\Lambda + 1 \right), $$
and gathering all the previous inequalities, we obtain
\begin{equation*}
\begin{split} \lambda_k(g_2,\beta_2) \leq & \max_{\varphi \in V \setminus \{0\}} \frac{\int_{\Sigma}\left\vert \nabla \varphi \right\vert^2_{g_2}dA_{g_2}}{\beta_1(\varphi,\varphi)}  \left(1 - \left(\eps + \frac{\eps^2}{\beta(1,1)-2\eps_0}\right) A_\Lambda(\eps_0) \right)^{-1}\\
\leq & \left(\lambda_k(\beta_1)+\delta\right)(1+c_g \eps) ( 1 -  C_{\Lambda}(\eps_0) \eps)^{-1}
\end{split}\end{equation*}
where $ C_\Lambda(\eps_0) = \left(1 + \frac{2\eps_0}{\beta(1,1)-2\eps_0}\right)A_\Lambda(\eps_0) $. Choosing $\eps_0 < \beta(1,1)$ such that $C_\Lambda(\eps_0)\eps_0 \leq \frac{1}{2}$, we obtain
$$ \lambda_k(g_2,\beta_2) \leq \left(\lambda_k(g_1,\beta_1)+\delta\right)\left(1 + 2 C_\Lambda(\eps_0) \eps \right)(1+c_g \eps) $$
Now, letting $\delta \to 0$, we obtain
$$ \lambda_k(g_2,\beta_2)- \lambda_k(g_1,\beta_1) \leq \Lambda \left(2  C_\Lambda(\eps_0)+2c_g \right) d\left( (g_1,\beta_1),(g_2,\beta_2) \right) $$
Exchanging $\beta_1$ and $\beta_2$, the same argument leads to
$$ \left\vert \lambda_k(g_2,\beta_2)- \lambda_k(g_1,\beta_1) \right\vert \leq  \Lambda \left( 2 C_\Lambda(\eps_0) + 2c_g \right) d\left(g_1, \beta_1), (g_2,\beta_2) \right). $$
\end{proof}

For $g \in Met_{0}(\Sigma)$, we set $\overline{X}$ the closure of $X$ in $B$ where
$$ X = \left\{ (\varphi,\psi)\in H^1\times H^1 \mapsto \int_{\Sigma} e^{2u} \varphi\psi dA_g ; u \in \mathcal{C}^0\left( \Sigma \right) \right\} $$
in the context of Laplace eigenfunctions and
$$ X = \left\{ (\varphi,\psi)\in H^1\times H^1 \mapsto \int_{\partial\Sigma} e^{u} \varphi\psi dL_g ; u \in \mathcal{C}^0\left( \Sigma \right) \right\} $$
in the context of Steklov eigenfunctions. Notice that if $\tilde{g}$ is another metric, $dA_{\tilde{g}}$ is absolutely continuous with respect to $dA_{g}$ with a continuous density and $dL_{\tilde{g}}$ is absolutely continuous with respect to $dL_{g}$ with a continuous density. Therefore, $X$ and $\overline{X}$ are independent of the choice of the metric.

We denote $Q_+$ the set of squares of $H^1$ functions and $Q = Span(Q_+)$. 
One immediate property of $\beta \in \overline{X}$ is that $\beta$ acts as a linear map on $Q$.

\begin{prop} For any $\beta \in \overline{X}$, there is a unique linear map $L_\beta : Q \to \R$ such that
$$ \forall \phi,\psi \in H^1\left(\Sigma\right), L_{\beta}\left( \phi\psi \right) = \beta\left(\phi,\psi\right) $$
and in particular
$$ \forall \phi \in H^1\left(\Sigma\right), L_{\beta}\left( \phi^2 \right) = \beta\left( \phi , \phi \right) \geq 0. $$
In addition, $L_\beta \in H^{-1}$ and $L_\beta \in Mes_+(\Sigma)$, where $Mes_+(\Sigma)$ is the set of non negative Radon measures (dual set of $\mathcal{C}^0(\Sigma)$) in the sense that there is a unique extension of $L_\beta : H^1 \cap \mathcal{C}^0 \to \R$ to $\mathcal{C}^0(\Sigma)$.
\end{prop}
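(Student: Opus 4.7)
The plan is to define $L_\beta$ on $Q$ by $L_\beta\bigl(\sum_i c_i \phi_i \psi_i\bigr) := \sum_i c_i \beta(\phi_i,\psi_i)$, to verify that this is well-defined by approximation from $X$, and then to extract the $H^{-1}$ and Radon-measure statements as two soft consequences. For $\beta \in X$ with density $e^{2u}$, the formula $\sum_i c_i \beta(\phi_i,\psi_i) = \int_\Sigma e^{2u}\bigl(\sum_i c_i \phi_i \psi_i\bigr)\, dA_g$ shows that the sum depends only on $q := \sum_i c_i \phi_i \psi_i$ as an integrable function, so in particular it vanishes whenever $q \equiv 0$ (and is non-negative as soon as $q$ is). For a general $\beta \in \overline{X}$, I pick $\beta_n \in X$ with $\beta_n \to \beta$ in $\|\cdot\|_g$ and use that each evaluation $\beta' \mapsto \beta'(\phi_i,\psi_i)$ is continuous on $B$: the relation $\sum_i c_i \beta_n(\phi_i,\psi_i) = 0$, valid for every $n$ whenever $q \equiv 0$, passes to the limit. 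The polarization identity $\phi\psi = \tfrac14\bigl((\phi+\psi)^2 - (\phi-\psi)^2\bigr)$ makes this definition consistent with the parametrization of $Q_+$ by squares and with linearity on $Q$. Non-negativity on $Q_+$ is obtained in the same way: $\beta_n(\phi,\phi) \ge 0$ for every $\beta_n \in X$, and continuity in $B$-norm gives $L_\beta(\phi^2) = \beta(\phi,\phi) \ge 0$.

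The same polarization identity, applied as $\phi = \tfrac14\bigl((1+\phi)^2 - (1-\phi)^2\bigr)$, embeds $H^1(\Sigma)$ linearly into $Q$. Under this embedding $L_\beta(\phi) = \beta(1,\phi)$ on $H^1(\Sigma)$, and the estimate $|L_\beta(\phi)| \le \|\beta\|_g\,\|1\|_{H^1(g)}\,\|\phi\|_{H^1(g)}$ yields $L_\beta \in H^{-1}$.

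For the Radon measure claim, I would identify $L_\beta$ on $H^1 \cap \mathcal{C}^0(\Sigma)$ with integration against a weak-$*$ limit of the densities $e^{2u_n}$. Each $\beta_n \in X$ comes with a non-negative Radon measure $d\mu_n := e^{2u_n}\, dA_g$ (respectively $e^{u_n}\, dL_g$ in the Steklov case), of total mass $\mu_n(\Sigma) = \beta_n(1,1) \to \beta(1,1)$; this sequence is uniformly bounded, so Banach-Alaoglu in $(\mathcal{C}^0(\Sigma))^*$ provides a subsequential weak-$*$ limit $\mu \in Mes_+(\Sigma)$. For every $\phi \in \mathcal{C}^\infty(\Sigma) \subset H^1 \cap \mathcal{C}^0(\Sigma)$, the two limits $\int \phi\, d\mu_n \to \int \phi\, d\mu$ and $\int \phi\, d\mu_n = \beta_n(1,\phi) \to \beta(1,\phi) = L_\beta(\phi)$ must agree, so $\int \phi\, d\mu = L_\beta(\phi)$ on a dense subset of $\mathcal{C}^0(\Sigma)$; this identifies the unique continuous extension of $L_\beta\big|_{H^1 \cap \mathcal{C}^0}$ to $\mathcal{C}^0(\Sigma)$, and shows that the whole sequence $(\mu_n)$ (not just a subsequence) converges weak-$*$ to $\mu$. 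The main conceptual obstacle in the whole argument is the initial well-definedness on $Q$: for an arbitrary $\beta \in B$ the expression $\sum_i c_i \beta(\phi_i,\psi_i)$ need not depend only on the function $q$, so the hypothesis $\beta \in \overline{X}$ is used essentially through the approximation step; after that, every other assertion reduces to the continuity of $\beta$ on $H^1 \times H^1$ and Banach-Alaoglu.
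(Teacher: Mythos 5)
Your well-definedness argument on $Q$ is essentially the paper's: express the candidate value via $X$-densities $e^{2u_n} \to \beta$, observe that for $\beta_n \in X$ the sum $\sum_i c_i \beta_n(\phi_i,\psi_i)$ depends only on $q$, and pass to the limit using continuity of evaluations on $B$. The polarization step is also shared. For the Radon-measure part you take a genuinely different route. The paper derives the pointwise bound
\[ |L_\beta(\varphi)| = \Bigl| \lim_k \int_\Sigma e^{2u_k}\varphi \Bigr| \le \|\varphi\|_{\mathcal{C}^0} \lim_k \int_\Sigma e^{2u_k} = \|\varphi\|_{\mathcal{C}^0}\, L_\beta(1) \]
\emph{for every} $\varphi \in H^1\cap\mathcal{C}^0$, which gives $\mathcal{C}^0$-boundedness of $L_\beta$ on $H^1\cap\mathcal{C}^0$ at a stroke, and the extension follows from the BLT theorem. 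You instead put $d\mu_n := e^{2u_n}\,dA_g$, use the mass bound $\mu_n(\Sigma) = \beta_n(1,1) \to \beta(1,1)$ and Banach--Alaoglu to extract a weak-$*$ limit $\mu \in Mes_+(\Sigma)$, and then match $L_\beta$ and $\int\cdot\,d\mu$ on smooth functions. Both get there; the paper's version is the shorter path because it never needs to extract a limit object.

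One small gap to flag in your version: you verify $\int\phi\,d\mu = L_\beta(\phi)$ only for $\phi \in \mathcal{C}^\infty$, then claim this ``identifies the unique continuous extension of $L_\beta\big|_{H^1\cap\mathcal{C}^0}$.'' But the assertion being proved is precisely that $L_\beta\big|_{H^1\cap\mathcal{C}^0}$ is $\mathcal{C}^0$-bounded; agreement with $\int\cdot\,d\mu$ on a $\mathcal{C}^0$-dense set (here $\mathcal{C}^\infty$) does not by itself transfer the bound to the larger domain $H^1\cap\mathcal{C}^0$. You need one more line: for $\phi\in H^1\cap\mathcal{C}^0$, mollify to get $\phi_n\in\mathcal{C}^\infty$ converging to $\phi$ \emph{simultaneously} in $H^1$ (since $\phi\in H^1$) and uniformly (since $\phi$ is continuous); then $L_\beta(\phi_n)\to L_\beta(\phi)$ by $H^{-1}$-continuity and $\int\phi_n\,d\mu\to\int\phi\,d\mu$ by dominated convergence, whence $L_\beta(\phi)=\int\phi\,d\mu$ on all of $H^1\cap\mathcal{C}^0$ and the $\mathcal{C}^0$-bound follows. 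This is routine, but without it the extension claim is not closed. Finally, your derivation of $L_\beta\in H^{-1}$ via the embedding $\phi\mapsto\tfrac14\bigl((1+\phi)^2-(1-\phi)^2\bigr)$ is correct but can be shortened: $L_\beta(\phi)=L_\beta(1\cdot\phi)=\beta(1,\phi)$, and $\varphi\mapsto\beta(1,\varphi)$ is manifestly $H^1$-bounded since $\beta\in B$.
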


\begin{proof} Let $\theta \in Q$. Let $\{ \phi_i \}_{i\in I}$ and $\{ \psi_j \}_{j\in J}$ two finite families of $H^1$ functions and $\{ t_{i} \}_{i\in I}$ and $\{ s_j \}_{j\in J}$ associated families of real numbers such that
$$\theta = \sum_{i\in I} t_i \phi_i^2 = \sum_{j\in J} s_j \psi_j^2  $$
Then it is clear that
\begin{equation} \label{eqlinearity} \sum_{i\in I} t_i \beta\left(\phi_i,\phi_i\right) = \sum_{j\in J} s_j \beta\left(\psi_j,\psi_j\right). \end{equation}
Indeed, if $e^{2u_k}$ converges to $\beta$ in $B$. 
$$ \sum_{i\in I} t_i \int_\Sigma e^{2u_k} \phi_i^2 dA_g = \sum_{j\in J} s_j \int_\Sigma e^{2u_k} \psi_j^2 dA_g  $$
and letting $k\to +\infty$, we easily deduce \eqref{eqlinearity} (the Steklov case is analogous). Then we can set a unique linear map $L_\beta : Q \to \R$ such that
$$ \forall \phi \in H^1(\Sigma), L_{\beta}(\phi^2) = \beta(\phi,\phi). $$
Of course, $\beta(\phi,\phi)\geq 0$. More generality, we compute that
$$ L_{\beta}( 4 \phi \psi) = L_{\beta}( (\phi+\psi)^2 - (\phi - \psi)^2 ) = \beta(\phi+\psi,\phi+\psi) - \beta(\phi-\psi,\phi-\psi) = 4\beta(\phi,\psi). $$
It remains to prove that $L_\beta \in Mes_+(\Sigma)$. Let $\varphi \in H^1 \cap \mathcal{C}^0$. Then
$$ \vert L_\beta(\varphi)\vert = \vert\beta(1,\varphi)\vert = \left\vert\lim_{k\to +\infty} \int_{\Sigma} e^{2u_k} \varphi \right\vert \leq \Vert \varphi \Vert_{C^0} \lim_{k\to +\infty} \int_{\Sigma} e^{2u_k} = \Vert \varphi \Vert_{C^0} L_\beta(1).  $$
By unique extension of continuous linear operators, we obtain the expected result.
\end{proof}

We also obtain the immediate corollary for eigenvalues.
\begin{cor}
$$ \sup_{g\in Met_0(\Sigma), \beta\in \overline{X} \setminus \{0\} } \bar{\lambda}_k(g,\beta) = \sup_{g\in Met_0(\Sigma)} \sup_{\beta\in X \setminus \{0\} } \bar{\lambda}_k(g,\beta)  = \sup_{g\in Met_0(\Sigma)} \bar{\lambda}_k(g,1) < + \infty $$
\end{cor}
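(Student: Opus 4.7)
The plan is to prove the two equalities and finiteness in turn, reducing everything to the classical bound on $\sup_g \bar\lambda_k(g,1)$ by exploiting conformal invariance of the Dirichlet energy on surfaces together with the closedness statement of Proposition \ref{prop:Lipeigen}.

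For the middle equality I would use the two-dimensional conformal invariance. Given $\beta \in X$ of the form $\beta(\varphi,\psi) = \int_\Sigma e^{2u}\varphi\psi\, dA_g$ with $u \in \mathcal{C}^0(\Sigma)$, set $\tilde g = e^{2u}g \in Met_0(\Sigma)$. Then $|\nabla\varphi|^2_{\tilde g}\, dA_{\tilde g} = |\nabla\varphi|^2_g\, dA_g$ while $\varphi \, dA_{\tilde g} = e^{2u}\varphi\, dA_g$ and $\varphi^2 \, dA_{\tilde g} = e^{2u}\varphi^2\, dA_g$, so $V_\beta$ is exactly the space of $\tilde g$-mean-zero functions and the two Rayleigh quotients agree pointwise on this space. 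Combined with $\beta(1,1) = A_{\tilde g}(\Sigma)$ this yields $\bar\lambda_k(g,\beta) = \bar\lambda_k(\tilde g, 1)$, and taking suprema gives the middle equality. The Steklov case is handled identically using $L_{\tilde g} = e^u L_g$ on $\partial\Sigma$. Finiteness of the common value is then the Yang--Yau / Korevaar bound for Laplace eigenvalues (respectively its Steklov analog) recalled in the introduction.

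For the leftmost equality, the inequality $\sup_{\overline{X}\setminus\{0\}} \bar\lambda_k \geq \sup_{X\setminus\{0\}} \bar\lambda_k$ is trivial since $X \subset \overline{X}$. For the reverse, set $M := \sup_g \bar\lambda_k(g,1) < +\infty$. Fix $g \in Met_0(\Sigma)$ and $\beta \in \overline{X}\setminus\{0\}$, and by definition of the closure pick $\beta_n \in X$ with $\beta_n \to \beta$ in $B$. By the previous step, $\bar\lambda_k(g,\beta_n) \leq M$ for every $n$, so $(g,\beta_n) \in F_M$. The sub-level set $F_M$ is closed in $Met_0(\Sigma) \times B$ by Proposition \ref{prop:Lipeigen}, and $(g,\beta_n) \to (g,\beta)$ in that space, so $(g,\beta) \in F_M$, i.e.\ $\bar\lambda_k(g,\beta) \leq M$. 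Taking the supremum completes the argument.

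The main delicate point is exactly this passage from $X$ to $\overline{X}$. The upper semi-continuity of $\bar\lambda_k$ from Proposition \ref{prop:Lipeigen} goes the \emph{wrong} way to propagate an upper bound by approximation, since it only yields $\bar\lambda_k(g,\beta) \geq \limsup_n \bar\lambda_k(g,\beta_n)$; the stronger closedness of sub-level sets stated in the same proposition is exactly what lets the bound travel from the dense subspace $X$ to its closure $\overline{X}$.
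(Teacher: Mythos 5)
Your proof is correct and follows the same route the paper compresses into a single line: the middle equality is exactly the conformal-invariance identity $\bar\lambda_k(g,e^{2u}g)=\bar\lambda_k(e^{2u}g,1)$ built into the definition of $X$, the leftmost equality comes from approximating $\beta\in\overline X$ by a sequence in $X$ and invoking the closedness of $F_\Lambda$ from Proposition~\ref{prop:Lipeigen}, and finiteness is Korevaar's bound. You also correctly pinpoint the one subtle point, namely that upper semi-continuity alone would not let an upper bound pass from $X$ to $\overline X$ and that it is the closedness of the sub-level sets, and not the semi-continuity, that does the work.
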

The finiteness can be deduced from Korevaar \cite{korevaar}: he proved the latter strict inequality among smooth metrics $g$ but it is still true for continuous metrics by smooth approximation. The equalities are consequences of Proposition \ref{prop:Lipeigen} and the definition of $X$.

We also have the very useful compactness property of bilinear forms in $\overline{X}$
\begin{prop} \label{prop:compactnessgeneigen}
Let $c,c'>0$. Let $\beta \in \overline{X}$ be such that $\beta(1,1)\neq 0$, then the image of 
$$ S_{c,c'} = \{ (\phi,\psi) \in H^1 \times H^1 ; \Vert \phi \Vert_{H^1}^2 \leq c \text{ and } \Vert \psi \Vert_{H^1}^2 \leq c'   \} $$
and of
$$ \tilde{S}_{\beta,c,c'} = \{ (\phi,\psi) \in \overline{V_{\beta}} \times \overline{V_{\beta}} ; \int_\Sigma \vert \nabla \phi \vert_g^2dA_g \leq c \text{ and } \int_\Sigma \vert \nabla \psi \vert_g^2dA_g \leq c'   \} $$
by $\beta$ is a compact set. More generally if $(\beta_n) \in \overline{X}$ satisfies $\beta_n \to \beta$ in $\overline{X}$ and if $(\phi_n,\psi_n) \in \tilde{S}_{\beta_n ,c,c'}$, then there is a subsequence $(\phi_{j(n)},\psi_{j(n)})$ that converges weakly to $ (\phi,\psi) \in \tilde{S}_{\beta ,c,c'}$ in $H^1 \times H^1$ and such that
$$ \beta_{j(n)}(\phi_{j(n)},\psi_{j(n)}) \to \beta(\phi,\psi) $$
as $n\to +\infty$
\end{prop}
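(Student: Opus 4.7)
The plan is to reduce the first two compactness statements to the general convergence statement by taking $\beta_n \equiv \beta$, so I focus on the latter. Given $(\phi_n,\psi_n) \in \tilde{S}_{\beta_n,c,c'}$ with $\beta_n \to \beta$ in $B$, the first step is to derive uniform $H^1$ bounds. Since $\beta_n(1,\phi_n) = 0$, $\beta_n(1,1) \to \beta(1,1) > 0$, and $\|L_{\beta_n}\|_{H^{-1}} \leq \|\beta_n\|_B \|1\|_{H^1}$ stays bounded, the Poincar\'e-type inequality used in the proof of Proposition \ref{prop:Lipeigen} yields $\|\phi_n\|_{H^1}^2 \leq C c$ uniformly in $n$, and analogously for $\psi_n$ (in the Steklov case one uses the trace-based version with the same argument). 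By reflexivity of $H^1$, a subsequence $(\phi_{j(n)},\psi_{j(n)})$ converges weakly in $H^1 \times H^1$ to some $(\phi,\psi)$, and by Rellich--Kondrachov (together with compactness of the trace in the Steklov case) the convergence is strong in $L^2 \times L^2$ on $\Sigma$ and on $\partial\Sigma$.

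The main estimate is $\beta_{j(n)}(\phi_{j(n)},\psi_{j(n)}) \to \beta(\phi,\psi)$, and this is where the assumption $\beta \in \overline{X}$ is essential. Given $\eta > 0$, I pick $\beta_\eta \in X$ with $\|\beta - \beta_\eta\|_B < \eta$, so $\beta_\eta(\varphi,\chi) = \int_\Sigma e^{2u_\eta}\varphi\chi\, dA_g$ (resp. $\int_{\partial\Sigma} e^{u_\eta}\varphi\chi\, dL_g$) for some continuous $u_\eta$, and I split
\begin{align*}
\beta_{j(n)}(\phi_{j(n)},\psi_{j(n)}) - \beta(\phi,\psi) &= [\beta_{j(n)} - \beta](\phi_{j(n)},\psi_{j(n)}) + [\beta - \beta_\eta](\phi_{j(n)},\psi_{j(n)}) \\
&\quad + \beta_\eta(\phi_{j(n)},\psi_{j(n)}) - \beta_\eta(\phi,\psi) + [\beta_\eta - \beta](\phi,\psi).
\end{align*}
The first, second and fourth terms are bounded by a norm in $B$ times a product of uniform $H^1$ norms, hence arbitrarily small once $\eta$ is fixed small and $n$ is large. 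The third term tends to zero because $e^{2u_\eta}$ is bounded on the compact $\Sigma$ and the strong $L^2$ convergences force $\phi_{j(n)}\psi_{j(n)} \to \phi\psi$ in $L^1$. Letting $n \to +\infty$ and then $\eta \to 0$ gives the claim.

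Finally, I verify $(\phi,\psi) \in \tilde{S}_{\beta,c,c'}$. Weak $H^1$ lower semicontinuity preserves the bounds $\int_\Sigma |\nabla\phi|_g^2\, dA_g \leq c$ and similarly for $\psi$. For $\phi \in \overline{V_\beta}$, i.e.\ $\beta(1,\phi)=0$: since $L_\beta \in H^{-1}$ and $\phi_{j(n)} \rightharpoonup \phi$ weakly in $H^1$, I have $\beta(1,\phi_{j(n)}) = L_\beta(\phi_{j(n)}) \to L_\beta(\phi) = \beta(1,\phi)$, while $|\beta(1,\phi_{j(n)}) - \beta_{j(n)}(1,\phi_{j(n)})| \leq \|\beta - \beta_{j(n)}\|_B \|1\|_{H^1}\|\phi_{j(n)}\|_{H^1} \to 0$ and $\beta_{j(n)}(1,\phi_{j(n)}) = 0$ for every $n$; combining gives $\beta(1,\phi)=0$, and likewise $\beta(1,\psi)=0$.

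The main obstacle is the passage to the limit in the third term of the decomposition: no purely bilinear structure on $B$ ensures any compactness of $\beta$, and it is precisely the possibility of approximating $\beta$ by $\beta_\eta \in X$ with a continuous weight that allows the weak $H^1$ convergence of $(\phi_{j(n)},\psi_{j(n)})$ to be exchanged for strong $L^2$ convergence via Rellich--Kondrachov and thus be cashed in inside the pairing. A generic element of $B$ would not enjoy such a property, which explains why the proposition is formulated on $\overline{X}$.
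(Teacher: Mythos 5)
Your proof is correct and follows essentially the same route as the paper: reduce the constrained sets $\tilde{S}_{\beta_n,c,c'}$ to uniformly $H^1$-bounded sets via the Poincar\'e inequality built from $\beta_n(1,\cdot)$, extract a weakly $H^1$-convergent subsequence, approximate $\beta$ by some $\beta_\eta \in X$ with continuous density, and use the compact embedding $H^1 \hookrightarrow L^2(e^{2u_\eta}\,dA_g)$ (resp.\ the compact trace in the Steklov case) to pass to the limit in the $\beta_\eta$-pairing, absorbing the approximation error in $\|\beta - \beta_\eta\|_B$. You additionally spell out the verification that the weak limit lies in $\tilde{S}_{\beta,c,c'}$, which the paper leaves implicit; the substance of the argument is the same.
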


\begin{proof}
We first notice that if $\phi \in \overline{V_{\beta_n}}$, then by the Poincaré inequality,
$$ \Vert \phi \Vert_{L^2}^2 \leq C \left\Vert \frac{\beta_n(1,.)}{\beta_n(1,1)} \right\Vert_{H^{-1}}^2 \int_{\Sigma}\left\vert \nabla \phi \right\vert^2_g dA_g $$
So that setting $a = \left(1 + C\left(\left\Vert \frac{\beta(1,.)}{\beta(1,1)} \right\Vert_{H^{-1}}^2 +1 \right) \right) c $ and $b = \left(1 + C\left(\left\Vert \frac{\beta(1,.)}{\beta(1,1)} \right\Vert_{H^{-1}}^2 +1\right)\right) c'  $, we obtain that $\tilde{S}_{\beta_n,c,c'} \subset S_{a,b}$ for $n$ large enough.

Let $(\phi_n,\psi_n) \in H^1 \times H^1$ be such that $ \Vert \phi_n \Vert_{H^1} \leq c$ and $\Vert \psi_n\Vert_{H^1} \leq c'$. By the weak compactness of the ball of $H^1$, up to the extraction of a subsequence, we have that $\phi_n$ and $\psi_n$ weakly converge to $\phi$ and $\psi$ in $H^1$. We aim at proving that
$$ \beta_n(\phi_n,\psi_n) \to \beta(\phi,\psi) $$
as $n\to +\infty$. Let $\delta>0$. Since $\beta \in \overline{X}$, there is a smooth positive function $e^{2u}$ such that $ \left\Vert \beta - e^{2u} \right\Vert \leq \delta $. By the compact injection of $W^{1,2} \subset L^{2}(e^{2u}g)$, we have up to the extraction of a subsequence that $\psi_n \to \psi$ and $\phi_n \to \phi$ in $L^2(e^{2u}g)$ so that
$$ \int_{\Sigma} \phi_n\psi_n e^{2u} dA_g \to \int_{\Sigma} \phi\psi e^{2u}dA_g. $$
We obtain that
$$ \left\vert \beta_n(\phi_n,\psi_n) - \beta(\phi,\psi) \right\vert \leq \left\vert \int_{\Sigma} \phi_n\psi_n e^{2u} dA_g - \int_{\Sigma} \phi\psi e^{2u}dA_g \right\vert + \left(  \Vert \beta_n - \beta \Vert + 2 \Vert \beta - e^{2u} \Vert \right) c c' $$
so that passing to the limit as $n \to +\infty$,
$$ \limsup_{n\to +\infty} \left\vert \beta_n(\phi_n,\psi_n) - \beta(\phi,\psi) \right\vert \leq \delta c c' $$
and letting $\delta\to 0$, we obtain the expected result.
\end{proof}

Notice also that all the norms $N_{g,\beta}(\phi)^2 := \int_\Sigma \left\vert \nabla \phi \right\vert^2_g +\beta(\phi,\phi)$ satisfy for $(g,\beta) \in Met_0(\Sigma)\times \overline X$ the existence of an open neighborhood $U_{g,\beta}$ and a constant $C_{g,\beta}$ such that
\begin{equation} \label{eq:equivalencenorms} \forall (\tilde{g},\tilde{\beta}) \in U_{g,\beta}, \forall \phi \in H^1, C_{g,\beta}^{-1} N_{\tilde{g},\tilde{\beta}}(\phi)^2 \leq N_{g,\beta}(\phi)^2 \leq C_{g,\beta} N_{\tilde{g},\tilde{\beta}}(\phi)^2 \end{equation}

\subsection{Existence of eigenfunctions and first variation of generalized eigenvalues}

In \cite{pt}, we obtain from the compactness property (Proposition \ref{prop:compactnessgeneigen}) that the spectrum associated to $(g,\beta)\in Met_0(\Sigma)\times \overline{X}$ is discrete, that is
$$ 0 = \lambda_0 \leq \lambda_1(g,\beta) \leq \lambda_2(g,\beta) \leq \cdots \leq \lambda_k(g,\beta) \to +\infty \text{ as } k\to +\infty $$ 
and in particular that the multiplicity of eigenvalues is finite and that there is a Hilbert basis (with respect to $\beta(\cdot,\cdot)$ or $N_{g,\beta}$) of eigenfunctions. Notice that an equation on eigenfunctions 
$$ \Delta_g \varphi = \lambda \beta(\varphi,\cdot) $$
does not provide more regularity of eigenfunctions than $H^1$ and has to be read in the weak sense with respect to $g$: $\Delta_g \varphi \in H^{-1}$ is the map $\psi \in H^1 \mapsto \int_{\Sigma} \langle\nabla \varphi\nabla\psi\rangle_g dA_g$. The same notation can be used in the Laplace and Steklov case. Notice also that if $\Sigma$ is connected, $\lambda_0 = 0$ is a simple eigenvalue associated to the constant functions.

As soon as $(g,\beta)$ belongs to the interior of $Met_0(\Sigma)\times \overline{X}$ Proposition \eqref{prop:compactnessgeneigen} and the norm equivalence \eqref{eq:equivalencenorms} also provide computations of the directional derivatives, the generalized directional derivatives, the classical subdifferential, and the Clarke subdifferential \cite{clarke} of 
$$E: (g, \beta) \mapsto F(\bar{\lambda}_1(g,\beta),\cdots,\bar{\lambda}_m(g,\beta)) $$
where $F : \left(\mathbb{R}_{+}^\star \right)^m \to \mathbb{R}_{+}^\star $ such that $\partial_i F \leq 0$ for any $i$. 
$$ \partial E(g,\beta) \subset co \left\{ \sum_{i=1}^m d_i(g,\beta) \bar{\lambda}_i(g,\beta) \left((\phi_i,\phi_i) - (1,1) \right) ; (\phi_1,\cdots,\phi_m)\in \mathbf{O}_m(\beta) \right\}$$
where $d_i(g,\beta) = \partial_i F(\bar{\lambda}_1(g,\beta),\cdots,\bar{\lambda}_m(g,\beta))$ and $\mathbf{O}_m(\beta)$ is the set of orthonormal families $(\phi_1,\cdots,\phi_m)$ with respect to $\beta$ such that for all $1\leq i \leq m$, $\phi_i$ is an eigenfunction with respect to $\lambda_i(g,\beta)$.

In the current paper, we will need right directional derivatives on points $(g,\beta)$ that do not belong to the interior of $Met_0(\Sigma)\times \overline{X}$. For that reason, we will not use the Clarke supdifferential in the current paper. However, the abstract analysis in \cite{pt} also works as soon as the variation $(g,\beta)+ t (h,b)$ belongs to the admissible set as $t \searrow 0$. For the sake of completeness, we write this computation in our context:

We denote by 
$$ i(k) := \min\{ i \in \mathbb{N}^* ; \lambda_i = \lambda_k \}$$
$$ I(k) := \max\{ i \in \mathbb{N}^* ; \lambda_i = \lambda_k \}$$

\begin{prop} \label{prop:firstderivative}
For $(g,\beta) \in Met_0(\Sigma)\times \bar{X}$, and $(h,b) \in  S_0^2(\Sigma) \times \bar{X}$,
\begin{equation} \label{eq:firstderivativeminmax}
\begin{split} \lim_{t \searrow 0 } \frac{\bar{\lambda}_k(g+th,\beta+tb) - \bar{\lambda}_k(g,\beta)}{t} = & \min_{ V \in \mathcal{G}_{k-i(k)+1}(E_k(g,\beta)) } \max_{\phi \in V\setminus \{0\}} \mathcal{Q}_{(h,b)}(\phi) \\ 
= & \max_{ V \in \mathcal{G}_{I(k)-k+1}(E_k(g,\beta)) } \min_{\phi \in V\setminus \{0\}}  \mathcal{Q}_{(h,b)}(\phi)
\end{split}
\end{equation}
where 
$$ \mathcal{Q}_{(h,b)}(\phi) =  \int_\Sigma \left( \frac{\left\vert \nabla \phi \right\vert^2_{g}}{2} g - d\phi \otimes d\phi ,h \right)_{g}dA_{g}  +  \bar{\lambda}_k(g,\beta) \left(b( 1,1 ) -  b(\phi,\phi) \right)$$
\end{prop}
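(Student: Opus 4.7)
The plan is to adapt the classical Kato--Rellich-type min-max perturbation theory of eigenvalues to the generalized variational framework of this section, the novelty being that $(g,\beta)$ may lie on the boundary of $Met_0(\Sigma)\times\overline{X}$ and that eigenfunctions are only $H^1$-regular. The two main inputs will be Proposition~\ref{prop:Lipeigen}, which gives $\lambda_k(g_t,\beta_t)=\lambda_k(g,\beta)+O(t)$ as $t\searrow 0$ (hence uniform $H^1$ control of admissible test functions, via the norm equivalence~\eqref{eq:equivalencenorms}), and the compactness Proposition~\ref{prop:compactnessgeneigen}, which allows extraction of weak $H^1$-limits in $\overline{V_\beta}$ out of bounded sequences in $\overline{V_{\beta_t}}$. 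Throughout I set $\lambda=\lambda_k(g,\beta)$, $i=i(k)$, $I=I(k)$, and fix a $\beta$-orthonormal family $(\phi_1,\dots,\phi_I)$ of eigenfunctions spanning $\bigoplus_{j\le I}E_{\lambda_j(g,\beta)}$, with $E_k(g,\beta)=\mathrm{span}(\phi_i,\dots,\phi_I)$.

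For the upper bound, given $V\in\mathcal{G}_{k-i+1}(E_k(g,\beta))$, I would consider the $k$-dimensional subspace $W=\mathrm{span}(\phi_1,\dots,\phi_{i-1})\oplus V$ of $\overline{V_\beta}$ and push it into $\overline{V_{\beta_t}}$ by $\psi\mapsto\psi-\frac{\beta_t(1,\psi)}{\beta_t(1,1)}$, an $O(t)$ perturbation since $\beta(1,\psi)=0$ on $W$. Testing the resulting $W_t$ in the Courant--Fischer expression for $\lambda_k(g_t,\beta_t)$ and expanding, the first variation
$$\frac{d}{dt}\bigg|_{t=0}\int_\Sigma|\nabla\psi|^2_{g_t}dA_{g_t}=\int_\Sigma\Big\langle \tfrac{|\nabla\psi|^2_g}{2}g-d\psi\otimes d\psi,\,h\Big\rangle_g dA_g,$$
combined with $\beta_t(\psi,\psi)=\beta(\psi,\psi)+tb(\psi,\psi)$ and with the expansion of $\beta_t(1,1)$, gives the first-order development of $\bar\lambda_k(g_t,\beta_t)$. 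Components in $\mathrm{span}(\phi_1,\dots,\phi_{i-1})$ carry Rayleigh quotients at most $\lambda_{i-1}+O(t)<\lambda$ with a definite gap, so the top of the restricted Rayleigh quotient is controlled to leading order by the $V$-component alone, yielding $\bar\lambda_k(g_t,\beta_t)\le\bar\lambda_k(g,\beta)+t\max_{\phi\in V\setminus\{0\}}\mathcal{Q}_{(h,b)}(\phi)+o(t)$. Minimising over $V$ produces the $\min\max$ upper bound in~\eqref{eq:firstderivativeminmax}.

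For the lower bound I would run the dual argument with $V\in\mathcal{G}_{I-k+1}(E_k(g,\beta))$, using the codimension-$(k-1)$ form of Courant--Fischer in $\overline{V_{\beta_t}}$ and taking as test space the $\beta_t$-adjustment of $V$ direct-summed with the $\beta$-orthogonal complement of $\mathrm{span}(\phi_1,\dots,\phi_{i-1})\oplus V$ in $\overline{V_\beta}$. For any almost-infimising sequence $\phi^{(t)}$ in this space, the bound furnished by Proposition~\ref{prop:Lipeigen} gives an $H^1$-bound and Proposition~\ref{prop:compactnessgeneigen} provides a weak limit $\phi\in\overline{V_\beta}$; the same bound $\lambda_k(g_t,\beta_t)\le\lambda+O(t)$ prevents mass from concentrating in $\bigoplus_{j>I}E_{\lambda_j(g,\beta)}$ (Rayleigh quotients there being $\ge\lambda_{I+1}>\lambda$), while the construction of the test space excludes the lower eigenspaces. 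Hence $\phi\in V$ and the matching $\max\min$ lower bound $\min_{\phi\in V\setminus\{0\}}\mathcal{Q}_{(h,b)}(\phi)$ follows. Equality of the two expressions in~\eqref{eq:firstderivativeminmax} is the classical Ky Fan minimax identity applied to the symmetric quadratic form $\mathcal{Q}_{(h,b)}$ on the finite-dimensional space $E_k(g,\beta)$.

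The main obstacle will be the uniform control of the $o(t)$ remainders in the Rayleigh quotient expansions when $(g,\beta)$ is only on the boundary of $Met_0(\Sigma)\times\overline{X}$: analytic perturbation theory à la Kato--Rellich is unavailable, so $H^1$-bounds on the perturbed eigenfunctions have to be produced from Proposition~\ref{prop:compactnessgeneigen} together with~\eqref{eq:equivalencenorms} rather than by direct resolvent estimates. A related subtlety is that eigenfunctions are merely $H^1$-regular, so identities like $\int_\Sigma g(\nabla\phi,\nabla\dot\phi)dA_g=\lambda\,\beta(\phi,\dot\phi)$ that appear in the simple-eigenvalue computation must be read in the weak sense introduced at the beginning of this section.
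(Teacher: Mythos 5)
Your proposal takes a genuinely different route from the paper: the paper perturbs a $\beta$-orthonormal family of eigenfunctions $\phi_i^t$ directly, renormalises the off-eigenspace remainder $R_i^t=\phi_i^t-\pi_k(\phi_i^t)$ by $\alpha_i^t=|\lambda_i^t-\lambda_k|+t+\sqrt{\beta(R_i^t,R_i^t)}$, and reads off the derivative from the limit equation satisfied by $\tilde R_i^t=R_i^t/\alpha_i^t$, whereas you propose a two-sided sandwich from the two Courant--Fischer characterizations of $\lambda_k(g_t,\beta_t)$. Your upper-bound side, which only involves finite-dimensional trial spaces, is sound and in places more transparent than the paper's. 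The lower-bound side, however, contains two genuine problems.

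The test space $W=V\oplus\bigl(\operatorname{span}(\phi_1,\dots,\phi_{i-1})\oplus V\bigr)^{\perp_\beta}$ you describe has codimension $i(k)-1$ in $\overline{V_\beta}$, not $k-1$, and it contains $E_k(g,\beta)\cap V^{\perp_\beta}$; consequently the infimum of the Rayleigh quotient over $W$ at $t=0$ is attained on all of $E_k(g,\beta)$, and the first-order perturbation of that infimum would produce $\min_{E_k(g,\beta)}\mathcal{Q}_{(h,b)}$, which is strictly smaller than $\min_V\mathcal{Q}_{(h,b)}$ in general, so the sandwich would not close. The correct choice is $W=V\oplus\bigl(\operatorname{span}(\phi_1,\dots,\phi_{I(k)})\bigr)^{\perp_\beta}$, of codimension $k-1$, on which the infimum is attained exactly on $V$. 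More substantively, this $W$ is infinite-dimensional, and extracting a weak $H^1$-limit from an almost-infimising sequence (via Proposition~\ref{prop:compactnessgeneigen}) only identifies the limit point; it does not, by itself, deliver the quantitative rate $\min_{W_t}R_t\ge\lambda_k+t\min_{V\setminus\{0\}}\mathcal{Q}_{(h,b)}+o(t)$ that you need. To close this you must decompose $\psi_t=\phi_{V,t}+r_t+c_t$ with $r_t$ in the high eigenspaces, invoke the gap $\lambda_{I(k)+1}(g,\beta)-\lambda_k(g,\beta)>0$ together with the a priori bound $R_t(\psi_t)\le\lambda_k+O(t)$ to get $\beta(r_t,r_t)=O(t)$ and $\|r_t\|_{H^1}^2=O(t)$, and then estimate the cross terms as $O(t^{3/2})$. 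This quantitative control on the rest is exactly what the paper secures through the renormalisation by $\alpha_i^t$ and the $H^1$-boundedness of $\tilde R_i^t$; without it, or an equivalent estimate, your lower bound does not follow from weak compactness alone.
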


\begin{proof} The right-hand terms are equal as a consequence of the min-max formula for the quotients of a quadratic form by a positive definite quadratic form on finite-dimensional spaces. Notice that from Proposition \ref{prop:Lipeigen}, we have that $\lambda_k(\beta+tb) \to \lambda_k(\beta)$ as $t \searrow 0$. 

We denote by 
$$\phi_{i(k)}^t,\cdots, \phi_{I(k)}^t$$
a family of $\beta$-orthonormal eigenfunctions associated to the eigenvalues 
$$\lambda_{i(k)}(g+th,\beta+tb) \leq \cdots \leq \lambda_{I(k)}(g+th,\beta+tb)$$
 we rename $\lambda_{i(k)}^t \leq \cdots \leq \lambda_{I(k)}^t$ that all converge to $\lambda_k := \lambda_k(g,\beta)$ as $t\to 0$.
Up to the extraction of a subsequence as $t \to 0$, $\phi_i^t$ converges to $\phi_i$ weakly in $H^1$, and 
$$ \left(\beta+tb\right)(\phi-\phi_i^t,\phi-\phi_i^t) \to 0$$
 as $t\to 0$. Passing to the weak limit on the equation satisfied by $\phi_i^t$ and to the strong limit on $\left(\beta+tb\right)(\phi_i^t,\phi_j^t) = \delta_{i,j}$, we obtain
$$ \Delta_g \phi_i = \lambda_k \beta( \phi_i,\cdot) \text{ and } \beta(\phi_i,\phi_j) = \delta_{i,j} $$
for $i(k)\leq i,j\leq I(k)$. Integrating the equation with respect to $\phi_i$ proves that
\begin{equation*}
\begin{split} \int_M \vert \nabla \phi_i \vert_g^2 dA_g & =  \lambda_k \beta(\phi_i,\phi_i) 
=  \lim_{t\to 0}  \lambda_i^t \beta(\phi_i^t,\phi_i^t) = \lim_{t\to 0} \int_M \vert \nabla \phi_i^t \vert_g^2 dA_g 
\end{split} \end{equation*}
so that $\phi_i^t$ converges strongly in $H^1$.

\medskip 

For $i(k) \leq i \leq I(k)$. We set $R_i^t := \phi_i^t - \pi_k(\phi_i^t) $ where for $v\in H^1$
$$ \pi_k(v) := v - \sum_{i=i(k)}^{I(k)} \beta(v,\phi_i) \phi_i $$
is the orthogonal projection of $v$ on $E_k(g,\beta)$ with respect to $\beta$. We have for $v \in H^1$
\begin{equation*}
\begin{split} \int_\Sigma \langle \nabla R_i^t \nabla v \rangle_g dA_g  - \lambda_k \beta( R_i^t,v) = & \lambda_i^t (\beta + tb)( \phi_i^t,v) - \lambda_k \beta( \phi_i^t,v) \\
 & + \left(\int_\Sigma \langle \nabla v \nabla \phi_i^t \rangle_g dA_g - \int_\Sigma \langle \nabla v \nabla \phi_i^t \rangle_{g+th} dA_{g+th}\right) \\
= & (\lambda_i^t - \lambda_k) \beta( \phi_i^t,\cdot) + \lambda_i^t tb( \phi_i^t,\cdot) \\
 & + \left(\int_\Sigma \langle \nabla v \nabla \phi_i^t \rangle_g dA_g - \int_\Sigma \langle \nabla v \nabla \phi_i^t \rangle_{g+th} dA_{g+th}\right) 
\end{split}
\end{equation*}
so that setting 
\begin{equation} \label{eq:defialphait} \alpha_i^t := \left\vert \lambda_i^t - \lambda_k \right\vert + t + \sqrt{\beta( R_i^t ,R_i^t)} \end{equation}
and
\begin{equation} \label{eq:defitilderit} \tilde{R}_i^t = \frac{R_i^t}{\alpha_i^t} \hspace{5mm} \tau_i^t = \frac{t}{\alpha_i^t} \hspace{5mm} \delta_i^t := \frac{\lambda_i^t - \lambda_k}{\alpha_i^t}, \end{equation}
Let's prove that $\tilde{R}_i^t $ is bounded in $H^1$. Let $v \in H^1$, we have that
\begin{equation*}
\begin{split} \int_\Sigma \nabla \tilde{R}_i^t \nabla v dA_g = & \lambda_k \beta(\tilde{R}_i^t,v) + \delta_i^t \beta(\phi_i^t,v) + \lambda_i^t b(\phi_i^t,v) \\ 
& + \frac{1}{t} \left(\int_\Sigma \langle \nabla v \nabla \phi_i^t \rangle_g dA_g - \int_\Sigma \langle \nabla v \nabla \phi_i^t \rangle_{g+th} dA_{g+th}\right)
\end{split}  \end{equation*}
so that
\begin{equation*}
\begin{split} & \left\vert \int_\Sigma \nabla \tilde{R}_i^t \nabla v dA_g + \beta( \tilde{R}_i^t, v) \right\vert \\ & \leq \left( \left(\lambda_k + 1\right) \sqrt{\beta(\tilde{R}_i^t,\tilde{R}_i^t )} \Vert \beta \Vert  + \left(\delta_i^t \Vert \beta \Vert + \lambda_i^t \Vert b \Vert + C \right) \Vert  \phi_i^t \Vert_{H^1}  \right) \Vert v \Vert_{H^1}
\end{split}  \end{equation*}
so that by the Riesz theorem associated to the Hilbert norm $N_\beta$, and the equivalence of the $H^1$ norm and the $N_\beta$ norm, we obtain that
$ \tilde{R}_i^t $ is bounded with respect to $N_\beta$ as $t\to 0$.
By equivalence between the $H^1$ norm and the norm $N_\beta$, again, $ \tilde{R}_i^t $ is bounded in $H^1$. Then, up to the extraction of a subsequence as $t\to 0$,
$$ \tilde{R}_i^t \to \tilde{R}_i \text{ weakly in } H^1 \hspace{5mm} \tau_i^t \to \tau_i \hspace{5mm} \delta_i^t \to \delta_i.$$
Passing to the weak limit in the equation, we obtain for $v\in H^1$
\begin{equation} \label{eq:limritilde} \begin{split} \int_\Sigma\langle \nabla v \nabla \tilde{R}_i\rangle_g dA_g - \lambda_k \beta(  \tilde{R}_i , v) = & \delta_i \beta( \phi_i,v ) + \tau_i \lambda_k b( \phi_i, v) \\ & + \int_\Sigma \left(dv \otimes d\phi_i - \frac{ \langle\nabla v \nabla \phi_i \rangle_g}{2} g , h  \right)_g dA_g
\end{split} \end{equation}
In addition, up to the extraction of a subsequence, 
$$\beta(\tilde{R}_i^t - \tilde{R}_i,\tilde{R}_i^t - \tilde{R}_i) \to 0$$
as $t\to 0$ and we obtain because of the definitions \eqref{eq:defialphait} and \eqref{eq:defitilderit}
\begin{equation} \label{eq:sumRitildedeltatau} \beta(\tilde{R}_i,\tilde{R}_i) + \vert \delta_i \vert + \tau_i = 1 \end{equation}
Now, we integrate \eqref{eq:limritilde} against $\phi_i$ and we obtain that
\begin{equation} \label{eq:eqriintegrated}
\delta_i \beta(\phi_i,\phi_i) + \tau_i \lambda_k  b( \phi_i,\phi_i ) + \int_\Sigma \left(d\phi_i \otimes d\phi_i - \frac{ \vert \nabla \phi_i \vert_g^2}{2} g , h  \right)_g dA_g = 0.
\end{equation}
Now, we assume by contradiction that $\tau_i = 0$, then by \eqref{eq:eqriintegrated}, $\delta_i=0$ and by \eqref{eq:limritilde}, $\tilde{R}_i \in E_k(\beta) \cap E_k(\beta)^{\perp_{Q(\beta,\cdot)}} = \{0\}$. This contradicts \eqref{eq:sumRitildedeltatau}. Therefore $\tau_i \neq 0$ and
$$ \frac{\delta_i}{\tau_i} = \frac{-\lambda_k b(\phi_i,\phi_i) - \int_\Sigma \left(d\phi_i \otimes d\phi_i - \frac{ \vert \nabla \phi_i \vert_g^2}{2} g , h  \right)_g dA_g }{\beta(\phi_i,\phi_i)} $$
Integrating \eqref{eq:limritilde} against $\phi_j$ for $j\neq i$, we obtain that
$$ B_{(h,b)}(\phi_i,\phi_j):= - \lambda_k b( \phi_i, \phi_j ) - \int_\Sigma \left(d\phi_i \otimes d\phi_j - \frac{ \langle \nabla \phi_i \nabla \phi_j \rangle_g}{2} g , h  \right)_g dA_g = 0 $$
so that $\phi_{i(k)},\cdots,\phi_{I(k)}$ are nothing but an orthonormal basis with respect to $\beta$ that is orthogonal with respect to $ B_{(h,b)} $. Since in addition we have that $\frac{\delta_{i(k)}}{\tau_{i(k)}}\leq \cdots \leq \frac{\delta_{I(k)}}{\tau_{I(k)}}$, classical min-max formulae for orthonormal diagonalization give
$$ \frac{\delta_i}{\tau_i} = \min_{ V \in \mathcal{G}_{i-i(k)+1}(E_k(\beta)) } \max_{v \in V\setminus \{0\}} \frac{B_{(h,b)}(v,v)}{\beta(v,v)} $$
Since the right-hand term is independent of the choice of the subsequence as $t\to 0$, we obtain that the directional derivative exists and
$$ \lim_{t\searrow 0} \frac{\lambda_i^t-\lambda_i}{t} = \lim_{t\searrow 0} \frac{\delta_i^t}{\tau_i^t} = \frac{\delta_i}{\tau_i} $$
%and a straightforward chain rule using the directional derivative of $(\beta + t b)(1,1)$ 
completes the proof of the proposition.
\end{proof}

\subsection{Regularization of minimizing sequences into minimizing Palais-Smale-like sequences}

Let $\delta_\eps >0$ be such that $\delta_\eps \to 0$ as $\eps \to 0$ and $(\tilde{g}_\eps,\tilde{\beta}_{\eps}) \in \mathcal{A}$ be such that
\begin{equation} \label{eq:tildeminseq} E(\tilde{g}_\eps,\tilde{\beta}_\eps) \leq \inf E + \delta_\eps^2, \end{equation}
where the choice of $\delta_\eps$ will depend on the construction of the initial minimizing sequence. We would like to transform this minimizing sequence into a Palais-Smale sequence using the Ekeland variational principle \cite{ekeland}.

Since this fonctional is lower semi-continuous in the complete set 
$$\mathcal{A}_\eps = \{g\in Met_0(\Sigma); \delta(g,\tilde{g}_\eps)\leq 1\} \times \{\beta\in \overline{X}, \beta(1,1) \geq 1\}$$
where $\mathcal{A}_\eps$ is endowed with the distance 
$$d_\eps((g_1,\beta_1),(g_2,\beta_2)) = \max \left( \delta(g_1,g_2) ; \Vert \beta_1-\beta_2\Vert_{\tilde{g}_\eps} \right)$$
the Ekeland variational principle gives the existence of $(g_\eps,\beta_{\eps}) \in \mathcal{A}_\eps$ such that
\begin{equation} \label{eq:disttilderegularized} d_\eps((\tilde{g}_\eps,\tilde{\beta}_{\eps})  ,(g_\eps,\beta_{\eps}) ) \leq \delta_\eps \end{equation}
and for any $(g,\beta)\in \mathcal{A}_\eps$,
\begin{equation}\label{eq:mainekeland} E(g_\eps,\beta_\eps) - E(g,\beta) \leq \delta_\eps d_\eps\left((g_\eps,\beta_\eps),(g,\beta)\right).\end{equation}
%Up to take $\frac{\delta_\eps}{2}$ instead of $\delta_\eps$ at the beginning, without loss of generality, we can assume in addition that $A_{\tilde{g}_\eps}(\Sigma) = A_{g_\eps}(\Sigma)$ and that $\beta_\eps(1,1) = \tilde{\beta}_\eps(1,1)$.

Now, we prove the existence of a quasi-Euler-Lagrange equation for the Palais-Smale sequence $(g_\eps,\beta_\eps)$. We set $\lambda_i^\eps = \lambda_i(g_\eps,\beta_\eps)$ the $i$-th Laplace generalized eigenvalue. In the following proposition we can replace $\lambda_i^\eps$ by the $\sigma_i^\eps := \sigma_i(g_\eps,\beta_\eps) $ in the context of generalized Steklov eigenvalues and the proof is very similar if in Step 3 we replace integrals of $b_\Phi V$ and $\theta^2 V$ in $\Sigma$ by integrals on $\partial\Sigma$.

\begin{prop} \label{prop:eullag}
There is a map $\Phi_\eps : \Sigma \to \mathbb{R}^{±n_\eps} \in H^1\left(\Sigma, \mathbb{R}^{n_\eps}\right)$ such that
$$ \Delta_{g_\eps} \Phi_\eps = \beta_\eps\left( \Lambda_\eps\Phi_\eps,.\right)  $$
where $\Lambda_\eps = diag(\lambda_1^\eps,\cdots,\lambda_{k}^\eps,\cdots,\lambda_k^\eps) \in \mathcal{M}_{n_\eps}(\R)$ and $\beta_\eps\left( \Lambda_\eps\Phi_\eps,.\right) : H^1(\Sigma,\mathbb{R}^{n_\eps}) \to \R$ 
and 
$$ \left\vert \Phi_\eps \right\vert_{\Lambda_\eps}^2 \geq_{a.e} 1 - \theta_\eps^2 \text{ in } \Sigma \text{ (resp. in } \partial \Sigma \text{ in the Steklov case)} $$
where $\Vert \theta_\eps \Vert_{H^1(\tilde{g}_\eps)}^2 \leq c_\eps \delta_\eps$ and
$$\forall h \in S^2_0(\Sigma), \left\vert \int_{\Sigma}  \left(d\Phi_\eps \otimes d\Phi_\eps - \frac{\left\vert \nabla \Phi_\eps \right\vert_{g_\eps}^2}{2} g_\eps , h\right)_{g_\eps} dA_{g_\eps} \right\vert \leq 2c_\eps \delta_\eps \Vert h \Vert_{g_\eps}$$
where $\Vert h \Vert_{g_\eps} = \sup_{x\in \Sigma} \sqrt{ \langle h,h\rangle_{g_\eps}(x)}$ and 
$$
%\frac{1}{\lambda_\eps}\int_{\Sigma} \left\vert \nabla\Phi_\eps \right\vert_{g_\eps}^2 dA_{g_\eps} =
\beta_\eps\left( \Lambda_\eps \Phi_\eps,\Phi_\eps \right)= \beta_\eps(1,1) =  1 + O(\delta_\eps) $$
and
$$ c_\eps = \left( \sum_{i=1}^m - \lambda_i^\eps \partial_i F(\lambda_1^\eps,\cdots,\lambda_m^\eps)\right)^{-1} $$
In addition, we have that for any $1 \leq k \leq m$,
\begin{equation} \label{eq:sumtieps} \sum_{i \in \{ j ; \lambda_j^\eps = \lambda_k^\eps \}} \beta_\eps(\phi_i^\eps,\phi_i^\eps) = \sum_{i \in \{ j ; \lambda_j^\eps = \lambda_k^\eps \}} t_i^\eps \end{equation}
where for $1\leq i \leq m$,
\begin{equation} \label{eq:deftieps} t_i^\eps := - c_\eps \cdot \partial_i F(\lambda_1^\eps,\cdots,\lambda_m^\eps). \end{equation}
\end{prop}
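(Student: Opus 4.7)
The plan is to turn \eqref{eq:mainekeland} into a one-sided Euler--Lagrange inequality via Proposition~\ref{prop:firstderivative}, and then to test it against three families of admissible variations to extract in turn the eigenvalue equation, the almost-conformality estimate, and the pointwise lower bound on $\vert \Phi_\eps \vert^2_{\Lambda_\eps}$. Concretely, given any admissible direction $(h,b) \in S_0^2(\Sigma) \times \overline{X}$ (i.e.\ such that $(g_\eps + sh, \beta_\eps + sb) \in \mathcal{A}_\eps$ for $s>0$ small), I would divide \eqref{eq:mainekeland} by $s$ and let $s \searrow 0$. By \eqref{eq:deltadistancetangentspace}, the $\mathcal{C}^1$ chain rule for $F$, and Proposition~\ref{prop:firstderivative}, this yields
\begin{equation*}
  \sum_{i=1}^m \partial_i F(\bar{\lambda}_1^\eps,\dots,\bar{\lambda}_m^\eps)\, \partial^+\bar{\lambda}_i(g_\eps,\beta_\eps)(h,b) \;\geq\; -\delta_\eps \, \max\bigl(\Vert h\Vert_{g_\eps}, \Vert b \Vert_{\tilde{g}_\eps}\bigr).
\end{equation*}
Setting $t_i^\eps = -c_\eps \partial_i F \geq 0$, so that $\sum_i t_i^\eps \lambda_i^\eps = 1$, this becomes the working inequality
\begin{equation*}
  \sum_{i=1}^m t_i^\eps \, \partial^+ \bar{\lambda}_i(h,b) \;\leq\; c_\eps \delta_\eps \, \max\bigl( \Vert h \Vert_{g_\eps}, \Vert b \Vert_{\tilde{g}_\eps} \bigr). \tag{$\star$}
\end{equation*}

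Next, I would pick a $\beta_\eps$-orthonormal family $(\phi_i^\eps)_{i=1}^{n_\eps}$ of genuine eigenfunctions exhausting the spectral projection up to $\lambda_k^\eps$, so that $\Delta_{g_\eps}\phi_i^\eps = \lambda_i^\eps \beta_\eps(\phi_i^\eps,\cdot)$ holds exactly and, therefore, $\Delta_{g_\eps}\Phi_\eps = \beta_\eps(\Lambda_\eps \Phi_\eps, \cdot)$ by construction. The choice of basis within each degenerate eigengroup will be made so that the identity
\begin{equation*}
  \sum_{i=1}^m t_i^\eps \, \partial^+ \bar{\lambda}_i(h,b) \; = \; \sum_{i=1}^m t_i^\eps \, \mathcal{Q}^i_{(h,b)}(\phi_i^\eps)
\end{equation*}
(or at least its $\pm$ inequality up to a $c_\eps \delta_\eps$ error) holds on a cone of test variations $(h,b)$ rich enough to identify the equation.

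With $(\star)$ and this basis, I would plug in three families of variations. Taking $h=0$ and $b = \pm b'$ for $b' \in \overline{X} - \overline{X}$ gives a two-sided bound on $\sum_i t_i^\eps \bar{\lambda}_i^\eps \bigl( b'(1,1) - b'(\phi_i^\eps,\phi_i^\eps) \bigr)$, from which \eqref{eq:sumtieps} and the normalisation $\beta_\eps(1,1) = 1 + O(\delta_\eps)$ follow. Taking $b=0$ and $h \in S_0^2(\Sigma)$ arbitrary, the formula $\mathcal{Q}^i_{(h,0)}(\phi_i^\eps) = \int_\Sigma \bigl( \tfrac{1}{2} \vert \nabla \phi_i^\eps \vert^2_{g_\eps} g_\eps - d\phi_i^\eps \otimes d\phi_i^\eps, h \bigr)_{g_\eps} dA_{g_\eps}$ reweighted by $t_i^\eps \lambda_i^\eps$ collapses into the stress--energy tensor of $\Phi_\eps$ relative to the ellipsoidal metric $\Lambda_\eps$, yielding the desired almost-conformality estimate. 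Finally, taking $h=0$ and $b = -(1-V)\beta_\eps$ for continuous $V : \Sigma \to [0,1]$ (so that $\beta_\eps + sb = (1 - s(1-V))\beta_\eps \in \overline{X}$ for $s \in [0,1)$), $(\star)$ reduces to
\begin{equation*}
  \int_\Sigma (1-V) \bigl( 1 - \vert \Phi_\eps \vert^2_{\Lambda_\eps} \bigr) \, dL_{\beta_\eps} \;\leq\; O(\delta_\eps)\, \Vert 1 - V \Vert_{\tilde{g}_\eps};
\end{equation*}
varying $V$ over cutoffs of $\{ \vert \Phi_\eps \vert^2_{\Lambda_\eps} < 1 - \eta \}$ and bootstrapping via the eigenfunction equation and the norm equivalence \eqref{eq:equivalencenorms} upgrades this to the pointwise bound $\vert \Phi_\eps \vert^2_{\Lambda_\eps} \geq 1 - \theta_\eps^2$ with $\Vert \theta_\eps \Vert_{H^1(\tilde{g}_\eps)}^2 \leq c_\eps \delta_\eps$.

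The main obstacle is constructing the orthonormal basis compatible with the min-max of Proposition~\ref{prop:firstderivative} for all admissible $(h,b)$ simultaneously: when the weights $t_i^\eps$ fail to be constant within a degenerate eigenvalue group, no single basis diagonalises all bilinear forms $B_{(h,b)}$ at once, so the reduction to the identity displayed in the second paragraph must rely on a Hardy--Littlewood--P\'olya / Schur--Horn rearrangement within each eigengroup, absorbing the rearrangement defect into the overall $O(c_\eps \delta_\eps)$ error. A second delicate point is the upgrade of the defect $\theta_\eps$ from the integrated bound above to an $H^1(\tilde{g}_\eps)$-controlled quantity, which is handled by a standard elliptic bootstrap using the almost-eigenfunction equation and \eqref{eq:equivalencenorms}.
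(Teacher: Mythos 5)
Your setup via the Ekeland inequality \eqref{eq:mainekeland} and Proposition~\ref{prop:firstderivative} is the same as the paper's Steps~1--2, and the families of test variations you then try ($h=0$ with $b$ varying, $b=0$ with $h\in S^2_0(\Sigma)$, and the scaling direction $b=-(1-V)\beta_\eps$) would indeed yield \eqref{eq:sumtieps}, the almost-conformality estimate, and the pointwise bound \emph{if} you already had in hand one fixed object $\Phi_\eps$ that realizes the directional derivative simultaneously for all admissible $(h,b)$. That is precisely where the argument has a genuine gap. What Step~2 gives is: for every $(h,b)$ there is some $\Phi\in\mathbf{O}_m(g_\eps,\beta_\eps)$, depending on $(h,b)$, with $Q_{(h,b)}(\Phi)\geq -\eps\max(\Vert h\Vert_{g_\eps},\Vert b\Vert_{\tilde g_\eps})$. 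You need the opposite quantifier order (one $\Phi_\eps$ working for all $(h,b)$), and you acknowledge this cannot be achieved by fixing a basis and diagonalizing all the bilinear forms $B_{(h,b)}$ at once. The proposed fix, a Hardy--Littlewood--P\'olya / Schur--Horn rearrangement inside each degenerate eigengroup, does not resolve this: rearrangement deals with scalar majorization of diagonal entries and cannot produce one orthonormal family against which a whole \emph{cone} of different quadratic forms is simultaneously almost-optimal, nor does it naturally produce an error of size $O(c_\eps\delta_\eps)$.

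The paper closes this gap by a convexity argument rather than by a clever choice of basis. Its Step~3 forms the convex hull
$$K = co\Bigl\{ (a_\Phi,b_\Phi) + (L,\theta^2)\ ;\ \Phi\in\mathbf{O}_m(g_\eps,\beta_\eps),\ \Vert\theta\Vert^2_{H^1(\tilde g_\eps)}\le\delta_\eps,\ \Vert L\Vert^\star_{g_\eps}\le 2\delta_\eps \Bigr\}$$
in $(S^2_0(\Sigma))^\star\times L^2(\tilde g_\eps)$ and the cone $F=\{(0,f)\ ;\ f\ge 0\}$, and uses Hahn--Banach separation: if $K\cap F=\emptyset$, the separating functional $(h,V)$ gives a single variation for which Step~2 produces a contradiction. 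The object $\Phi_\eps$ of the Proposition is then a \emph{renormalized convex combination} of the $(a_\Phi,b_\Phi)$ from possibly several orthonormal families (which is why its dimension $n_\eps$ is in general larger than $m$ and repeated eigenvalues appear in $\Lambda_\eps$), not a single family $(\phi_1^\eps,\dots,\phi_m^\eps)$. Without this convexification step your own construction never produces the map $\Phi_\eps$ whose existence the Proposition asserts; all your subsequent derivations quietly assume it.
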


\begin{proof} We follow several steps. Step 1 follows from the computation of a right directional derivative. Step 2 is a direct consequence of Step 1 and Ekeland's variational principle. Step 3 uses a Hahn-Banach separation argument. 

\medskip
\noindent\textbf{Step 1:} Let $(h,b) \in S^2_0(\Sigma)\times \overline{X}$, then there is $(\phi_1,\cdots,\phi_m) \in \mathbf{O}_m(g_\eps,\beta_\eps)$ such that
\begin{equation} \lim_{t \downarrow 0} \frac{E(g_\eps+th,\beta_\eps+tb) - E(g_\eps,\beta_\eps)}{t} = Q_{(h,b)}(\phi_1,\cdots,\phi_m) \end{equation}
where
\begin{equation} \begin{split} Q_{(h,b)}(\phi_1,\cdots,\phi_m)  := \sum_{i=1}^{m} -d_i^\eps \int_\Sigma \left( \frac{\left\vert \nabla \phi_i \right\vert^2_{g_\eps}}{2} g_\eps - d\phi_i \otimes d\phi_i,h \right)_{g_\eps}dA_{g_\eps} \\ + \sum_{i=1}^{m} -d_i^\eps  \lambda_i^\eps \left(b( 1,1 ) -  b(\phi_i,\phi_i) \right)  \end{split}
\end{equation}
and
$$ d_i^\eps = d_i(g_\eps,\beta_\eps) := -\partial_i F( \lambda_1(g_\eps,\beta_\eps),\cdots,\lambda_m(g_\eps,\beta_\eps) ) > 0. $$

\medskip

\noindent\textbf{Proof of Step 1:} It is a straightforward consequence of proposition \ref{prop:firstderivative} by a chain rule on directional derivatives.

\medskip
\noindent\textbf{Step 2:} For all $(h,b) \in S^2_0(\Sigma)\times \overline{X}$, there is $\Phi:= (\phi_1,\cdots,\phi_m) \in \mathbf{O}_m(g_\eps,\beta_\eps)$ such that
$$ Q_{(h,b)}(\Phi) \geq -\eps \max( \Vert h \Vert_{g_\eps} , \Vert b \Vert_{\tilde{g}_\eps} ) $$

\medskip
\noindent\textbf{Proof of Step 2:} Let $(h,b) \in S^2_0(\Sigma)\times \overline{X}$, then we test $(g_\eps + t h,\beta_\eps+tb)$ in \eqref{eq:mainekeland} for $t>0$ small enough and divide by $t$. We obtain
$$ \frac{E(g_\eps,\beta_\eps) - E(g_\eps+th,\beta_\eps+th)}{t} \leq \eps \max\left( \frac{\delta(g_\eps,g_\eps+th)}{t}, \Vert b \Vert_{\tilde{g}_\eps} \right) $$
Letting $t\searrow 0$, Step 1 gives the existence of $\Phi:= (\phi_1,\cdots,\phi_m) \in \mathbf{O}_m(g_\eps,\beta_\eps)$ such that the left-hand term converges to $-Q_{(h,b)}(\Phi)$. For the convergence of the right-hand term, we just use that $\delta$ is nothing but the geodesic distance on the set $Met_0(\Sigma)$ endowed with the Riemannian metric defined by $\Vert h \Vert_g$ on the tangent space $S_0^2(\Sigma)$ of $g \in Met_0(\Sigma)$.

\medskip
\noindent\textbf{Step 3:} We prove that $K \cap F \neq \emptyset$ where $K$ and $F$ are two subsets of $(S^2_0(\Sigma))^\star \times L^2(\tilde{g}_\eps)$
$$ K = co \left\{ (a_\Phi,b_\Phi)  + (L,\theta^2)   ; \Phi \in \mathbf{O}_m(g_\eps,\beta_\eps) ; \theta \in H^1(\tilde{g}_\eps) ;  \Vert \theta \Vert_{H^1(\tilde{g}_\eps)}^2 \leq \delta_\eps ;  \Vert L \Vert_{g_\eps}^\star \leq 2\delta_\eps \right\} $$
where we denote for $\Phi \in \mathbf{O}_m(g_\eps,\beta_\eps)$
$$ (a_\Phi,b_\Phi):= \sum_{i=1}^m d_i^\eps  \left(  d\phi_i\otimes d\phi_i -  \frac{\left\vert \nabla \phi_i \right\vert^2_{g_\eps}}{2} g_\eps  , \lambda_i^\eps \left( \phi_i^2 - 1 \right) \right) $$
and for $L \in (S_0^2(\Sigma)),\Vert \cdot \Vert_{g_\eps})^\star$ the dual norm as
$$ \Vert L \Vert_{g_\eps}^\star := \sup_{h \in S^2_0(\Sigma)} \frac{ \left\vert \langle L, h \rangle \right\vert}{\Vert h \Vert_{g_\eps} }$$
and 
$$ F = \{ ( 0 , f  )  ;  f \geq 0   \} $$

\medskip

\noindent\textbf{Proof of Step 3:} Notice that $S^2_0(\Sigma)$ can be seen as a subset of $(S^2_0(\Sigma))^\star$ via the injection
$$ k \in S^2_0(\Sigma) \mapsto \left( h \mapsto \int_\Sigma \left(k,h\right)_{g_\eps}dA_{g_\eps}\right) \in  \left(S^2_0(\Sigma)\right)^\star$$
We assume that $K \cap F = \emptyset$. Then, by Hahn-Banach separation theorem, there are $(h,V) \in S^2_0(\Sigma)\times L^2(\tilde{g}_\eps)$ and $\kappa \in \R$ such that
$$ f \in L^2(\tilde{g}_\eps), f\geq 0 ;  \int_{\Sigma} Vf dA_{\tilde{g}_\eps} \geq -\kappa $$
\begin{equation*}
\begin{split} \forall \Phi \in \mathbf{O}_m(g_\eps,\beta_\eps), \forall L \in (S^2_0(\Sigma))^\star, \Vert L \Vert_{g_\eps}^\star \leq 2\delta_\eps ; \forall \theta \in H^1(\tilde{g}_\eps), \Vert \theta \Vert_{H^1(\tilde{g}_\eps)}^2 \leq\delta_\eps,  \\
\int_{\Sigma}\left(a_\Phi ,h\right)_{g_\eps}dA_{g_\eps} + \langle L , h \rangle + \int_{\Sigma} b_\Phi V dA_{\tilde{g}_\eps} + \int_\Sigma \theta^2 V  < -\kappa 
\end{split}
\end{equation*}
We first notice that $\kappa \geq 0$ (choose $f=0$ in the first inequation). We also notice that $V$ has to be non-negative almost everywhere (choose $f = n \max\{ -V,0\}$ in the first equation and let $n\to +\infty$). By Step 2, let $\Phi \in \mathbf{O}_m(\Sigma)$ be such that $Q_{(h,V)}(\Phi) \geq - \eps \max( \Vert h \Vert_{g_\eps} , \Vert V \Vert_{\tilde{g}_\eps} ) $. The second equation implies that for all $\theta \in H^1(\tilde{g}_\eps)$ such that $ \Vert \theta \Vert_{H^1(\tilde{g}_\eps)}^2 \leq \delta_\eps$ and $L\in (S^2_0(\Sigma))^\star$ such that $ \Vert L \Vert_{g_\eps}^\star \leq \delta_\eps$
$$ Q_{(h,V)}(\Phi) + \int_\Sigma \theta^2 V dA_{\tilde{g}_\eps} + \langle L,h\rangle < -\kappa $$
and we obtain that 
$$ \int_\Sigma \theta^2 V dA_{\tilde{g}_\eps} + \langle L,h\rangle < -\kappa + \delta_\eps \max( \Vert h \Vert_{g_\eps} , \Vert V \Vert_{\tilde{g}_\eps} ) $$
Choosing $L = 0$ and knowing that the following supremum is realized, we obtain
$$  \max_{\theta \in H^1(\tilde{g}_\eps), \Vert \theta \Vert^2_{H^1(\tilde{g}_\eps) \leq \eps}} \int_{\Sigma} \theta^2 V dA_{\tilde{g}_\eps} < \delta_\eps \max( \Vert h \Vert_{g_\eps} , \Vert V \Vert_{\tilde{g}_\eps} ) $$
so that $  \max( \Vert h \Vert_{g_\eps} , \Vert V \Vert_{\tilde{g}_\eps} ) =\Vert h \Vert_{g_\eps}  $.
It is clear that $h\neq 0$ and chosing $\theta =0$ and taking $L$ and $-L$ and then the supremum, 
$$ \sup_{L \in (S^2_0(\Sigma))^\star, \Vert L \Vert_{g_\eps}^\star\leq 2\delta_\eps} \frac{\left\vert \langle L,h \rangle \right\vert}{\Vert h \Vert_{g_\eps}} \leq \delta_\eps $$
which gives a contradiction and we get Step 3.

\medskip

We conclude the proof of the proposition by taking an element of $K\cap F$ and renormalizing by $c_\eps$.
\end{proof}

\section{Laplace spectral functionals in the handle case} \label{sec:handle}

\subsection{Choice of the initial minimizing sequence}
This subsection is devoted to the construction of $\widetilde{\Sigma}_\eps, \tilde{g}_\eps \tilde{\beta}_\eps, g_\eps, \beta_\eps$ that satisfies \eqref{eq:tildeminseq}, \eqref{eq:disttilderegularized}, \eqref{eq:mainekeland} in order to apply later Proposition \ref{prop:eullag} for a well chosen $\delta_\eps \to 0$ (see \eqref{eq:defdeltaeps}). Let $\Sigma$ be a compact surface. We assume that a Riemannian metric $g$ realizes the absolute minimizer
$$ E(g,1) = \inf_{\tilde{g}\in Met_0(\Sigma)} E(\tilde{g},1) $$
We now take $p,q$ two distinct points on $\Sigma$ and $l>0$ and we denote
$$ \Sigma_\eps := \Sigma \setminus \left( \mathbb{D}_\eps(p) \cup \mathbb{D}_\eps(q) \right)  $$
and
$$ C_{l,\eps} := \eps\mathbb{S}^1 \times \left[-\frac{l\eps}{2},\frac{l\eps}{2}\right] $$
that we glue along their boundary:
$$ \tilde{\Sigma}_\eps = \left( \Sigma_\eps \cup C_{l,\eps} \right) / \sim $$
where $\sim$ is a glueing along $\partial  \mathbb{D}_\eps(p)$ and $\partial_1 C_{l,\eps} := \eps\mathbb{S}^1 \times \{-\frac{l\eps}{2}\}$ and along $\partial \mathbb{D}_\eps(q)$ and $\partial_2 C_{l,\eps}:= \eps\mathbb{S}^1 \times \{\frac{l\eps}{2}\}$ that preserves the orientation. We denote $\tilde{g}_\eps$ the $L^\infty$ metric on $\tilde{\Sigma}_\eps$ equal to $g$ on $\Sigma_\eps$ and to the flat metric on $C_{l,\eps}$. Up to a standard regularisation procedure by the heat kernel of $\tilde{g}_\eps$, we can assume that $\tilde{g}_\eps$ is continuous on $\tilde{\Sigma}_\eps$ without affecting the following estimates on eigenvalues.
We aim at computing an asymptotic expansion of $\lambda_i(\tilde{\Sigma}_\eps,\tilde{g}_\eps)$ and of $E(\tilde{g}_\eps,1)$. For that, we use Laplace eigenvalues $\mu_k^\eps$ with Neumann boundary conditions on $\Sigma_\eps := \Sigma \setminus \left( \mathbb{D}_\eps(p) \cup \mathbb{D}_\eps(q) \right)$ endowed with the metric $g$. First we give uniform estimates on eigenfunctions

\begin{cl} \label{cl:linftyesteigenfunctions}
For $\Lambda>0$, there is a constant $C:= C(\Sigma,\Lambda)$ such that any eigenfunction $\varphi_\eps$ associated to a Laplace eigenvalue on $(\tilde{\Sigma}_\eps,\tilde{g}_\eps)$ bounded by $\Lambda$ such that $\Vert \varphi_\eps \Vert_{L^2(\tilde{\Sigma}_\eps)}=1$ and any eigenfunction $\psi_\eps$ associated to a Laplace eigenvalue with Neumann boundary condition on $(\Sigma_\eps,g)$ bounded by $\Lambda$ such that $\Vert \psi_\eps \Vert_{L^2(\tilde{\Sigma}_\eps)}=1$, we have 
%in a polar coordinate at the neighborhood of $p$ and $q$ that
%$$ \vert \varphi_\eps(r,\theta) \vert \leq C \sqrt{\ln \frac{1}{r}} \text{ and } \vert \psi_\eps(r,\theta) \vert \leq C \sqrt{\ln \frac{1}{r}}$$
%and in addition 
$$\Vert \varphi_\eps \Vert_{L^{\infty}\left(\tilde{\Sigma}_\eps\right)} \leq \sqrt{\ln \frac{1}{\eps}} \text{ and }\Vert \psi_\eps \Vert_{L^{\infty}\left(\tilde{\Sigma}_\eps\right)} \leq \sqrt{\ln \frac{1}{\eps}} $$
\end{cl}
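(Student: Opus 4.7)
The strategy is a two-scale estimate: a uniform $L^\infty$ bound on the bulk part of $\tilde{\Sigma}_\eps$ where the metric $\tilde{g}_\eps$ coincides with the fixed smooth $g$, combined with a capacity-type estimate on the degenerating conformal neck, where the logarithmic modulus $\sim \ln(1/\eps)$ of the annulus $\{\eps \leq r \leq r_0\}$ forces any $H^1$-energy-bounded function to oscillate by at most $O(\sqrt{\ln(1/\eps)})$. The Dirichlet energy of the normalized eigenfunction is uniformly controlled: $\int |\nabla \varphi_\eps|^2_{\tilde{g}_\eps} dA_{\tilde{g}_\eps} = \lambda \int \varphi_\eps^2 \, dA_{\tilde g_\eps} \leq \Lambda$, which is the key input for all steps.

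Fix $r_0 > 0$ small so that $B_g(p, 2r_0)$ and $B_g(q, 2r_0)$ are disjoint isothermal charts on $\Sigma$. On $\Omega := \Sigma \setminus (B_g(p, r_0) \cup B_g(q, r_0))$, the metric $\tilde g_\eps$ equals $g$ and $\varphi_\eps$ solves $\Delta_g \varphi_\eps = \lambda \varphi_\eps$ with $\lambda \leq \Lambda$, so standard interior Moser iteration on the fixed domain $\Omega$ yields $\Vert \varphi_\eps\Vert_{L^\infty(\Omega)} \leq C(\Sigma, g, \Lambda) \Vert \varphi_\eps \Vert_{L^2(\Sigma)} \leq C$. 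Near $p$, use an isothermal chart $g = e^{2f}|dz|^2$ and the change of variables $\rho = -\ln|z|$, $\theta = \arg z$, which identifies the neck annulus $\{\eps \leq |z| \leq r_0\}$ with a flat cylinder $\mathcal{C}_\eps := [-\ln r_0, \ln(1/\eps)] \times \mathbb{S}^1$ of length $L_\eps \sim \ln(1/\eps)$. Setting $v(\rho, \theta) := \varphi_\eps(e^{-\rho + i\theta})$, conformal invariance of the Dirichlet integral gives $\int_{\mathcal{C}_\eps} |\nabla v|^2 \, d\rho\, d\theta \leq C\Lambda$, and $v$ solves $-\Delta_{\text{flat}} v = \lambda e^{2f} e^{-2\rho} v$ with a right-hand side uniformly bounded. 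For the radial mean $\bar v(\rho) := \frac{1}{2\pi}\int v(\rho,\cdot) d\theta$, Cauchy-Schwarz gives
$$
|\bar v(\rho) - \bar v(-\ln r_0)| \leq \int_{-\ln r_0}^{\rho} |\bar v'(s)| \, ds \leq \sqrt{L_\eps}\left(2\pi \int |\bar v'|^2 d\rho\right)^{1/2} \leq C \sqrt{\ln(1/\eps)},
$$
and since $\bar v(-\ln r_0)$ is controlled by Step 1, one gets $\Vert \bar v\Vert_{L^\infty} \leq C\sqrt{\ln(1/\eps)}$.

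On the attached cylinder $C_{l,\eps}$, which has fixed conformal modulus $l/(2\pi)$, the same change of variable shows it is conformally a fixed flat cylinder, and Moser--Harnack propagation across a fixed-size domain transports the bound from $\partial C_{l,\eps}$ into its interior. The Neumann case $\psi_\eps$ on $\Sigma_\eps$ is a strict restriction of this argument, where the condition $\partial_\nu \psi_\eps = 0$ at $\partial\D_\eps$ only enforces $\bar v'(\ln(1/\eps)) = 0$ without affecting the Cauchy-Schwarz bound. The main obstacle will be promoting the radial-average bound on $\bar v$ to a pointwise bound on $v$: the higher Fourier modes $v_k(\rho)$ for $k \neq 0$ must be controlled in sup norm, which requires combining the energy bound $\sum_{k \neq 0} k^2 \int_{\mathcal{C}_\eps} |v_k|^2 \, d\rho \leq C$ with local De Giorgi / Moser $L^\infty$--$L^2$ estimates applied on unit balls of the flat cylinder (these estimates are uniform in $\eps$ precisely because the rescaled equation has a uniformly bounded coefficient on $\mathcal{C}_\eps$). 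Combining all three steps yields the claimed bound, absorbing the multiplicative constant into $\sqrt{\ln(1/\eps)}$ for $\eps$ small.
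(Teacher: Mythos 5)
Your proposal follows essentially the paper's strategy, just written in cylinder coordinates $\rho=-\ln|z|$ rather than on dyadic annuli $\mathbb{D}_{2r}\setminus\mathbb{D}_{r/2}$. The two ingredients you isolate are exactly the paper's: (i) conformal invariance of the Dirichlet energy forces the circular averages $f_\eps(r)$ (your $\bar v(\rho)$) to grow at most like $\sqrt{\ln(1/\eps)}$ — your Cauchy--Schwarz computation for $\bar v(\rho)-\bar v(-\ln r_0)$ is identical to the paper's $f_\eps(1)-f_\eps(r)=\int\nabla\varphi_\eps\cdot\nabla\ln|x|$; and (ii) the $\theta$-oscillation must be controlled by a local elliptic estimate on unit conformal pieces, which is what you flag as the ``main obstacle.'' The paper handles (ii) by rescaling: setting $\tilde\varphi_\eps(x):=\varphi_\eps(rx)$, using $\Delta\tilde\varphi_\eps = r^2 V_{\eps,r}\tilde\varphi_\eps$ with $\|V_{\eps,r}\|_\infty\le K$, applying the two-dimensional bound $\|u-f_\eps(r)\|_{L^\infty}\lesssim\|u-f_\eps(r)\|_{L^2}+\|\Delta u\|_{L^2}$, invoking Poincar\'e plus conformal invariance on the first term, and absorbing the term $r^2(1+K)\|\tilde\varphi_\eps\|_{L^\infty}$ for $r\le r_0$ small. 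Your De Giorgi/Moser route for (ii) is functionally equivalent and in fact avoids the absorption step (local $\sup|u|\le C\|u\|_{L^2(B_1)}$ for $-\Delta u = cu$ only needs $c\in L^\infty$, not $c$ small); you do not really need the explicit Fourier-mode bookkeeping — note simply that $\int|\nabla(v-\bar v)|^2\le\int|\nabla v|^2$ since $v\mapsto\bar v$ is orthogonal for the Dirichlet form, so Poincar\'e gives $\|v-\bar v\|_{L^2}\le C$ on each unit cylinder segment, and then the $L^\infty$--$L^2$ estimate applied to $v$ yields $\|v\|_{L^\infty}\lesssim\|v\|_{L^2}\lesssim\|\bar v\|_{L^\infty}+C\lesssim\sqrt{\ln(1/\eps)}$. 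One small wording issue: ``Moser--Harnack propagation'' on the glued cylinder $C_{l,\eps}$ is not the right mechanism since Harnack requires a sign; what works (and what the paper does) is to conformally rescale $\tilde\Sigma_\eps\setminus\Sigma_{4\eps}$ to a cylinder of uniformly bounded modulus and apply the same $L^\infty$--$L^2$ elliptic estimate there, with the $L^2$ datum controlled by $|f_\eps(\eps)|+\|\nabla\varphi_\eps\|_{L^2}$ via Poincar\'e.
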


\begin{proof}
Without loss of generality, we assume that $g$ is conformally flat on a disk $\mathbb{D}_{2}$ at the neighborhood of $0$. We set 
$$ f_\eps(r) = \frac{1}{2\pi r}\int_{\mathbb{S}_r} \varphi_\eps. $$
By a trace Sobolev inequality, we have that
$$ \vert f_\eps(1) \vert \leq C \Vert \varphi_\eps \Vert_{H^1(\Sigma,g)} \leq C \sqrt{1+\Lambda}$$
and on the annulus $\mathbb{D}\setminus \mathbb{D}_r$, we have that
$$ f_\eps(1)-f_\eps(r) = \int_{\mathbb{D}\setminus \mathbb{D}_r} \nabla \varphi_\eps \nabla \ln \vert x \vert dx $$
so that by conformal invariance of the Dirichlet energy,
$$ \vert f_\eps(r) \vert \leq \vert f_\eps(1) \vert + \Vert \nabla\varphi_\eps \Vert_{L^2(\Sigma,g)} \Vert \nabla \ln \vert x \vert \Vert_{L^2(\mathbb{D}\setminus\mathbb{D}_r)} \leq C\sqrt{1+\Lambda} + \sqrt{2\pi \Lambda \ln \frac{1}{r}}.$$

\medskip

Now, for $4 \eps \leq r \leq 1 $, we write the equation on $\tilde{\varphi}_\eps(x):= \varphi_\eps(rx)$ in $\mathbb{D}_2\setminus \mathbb{D}_{\frac{1}{2}}$ 
$$ \Delta \tilde{\varphi}_\eps = r^2 V_{\eps,r} \tilde{\varphi}_\eps $$
where $\Vert V_{\eps,r} \Vert_{L^\infty} \leq K$ where $K$ is independent of $\eps$ and $r$ 

%and we notice that the mean value $$ m_\eps(\rho) := \frac{1}{\vert \mathbb{D}_{\rho} \setminus \mathbb{D}_{\frac{1}{2}}\vert } \int_{ \mathbb{D}_{\rho} \setminus \mathbb{D}_{\frac{1}{2}} } \tilde{\varphi}_\eps $$
%satisfies the equation
%$$ \Delta m_\eps = r^2 \frac{1}{\vert \mathbb{D}_{\rho} \setminus \mathbb{D}_{\frac{1}{2}}\vert } \int_{ \mathbb{D}_{\rho}\setminus \mathbb{D}_{\frac{1}{2}} } V_{\eps,r} \tilde{\varphi}_\eps $$
%so that
%$$ \Vert \tilde{\varphi}_\eps - m_\eps \Vert_{L^\infty\left(\mathbb{D}_2\setminus \mathbb{D}_{\frac{1}{2}}\right)} \leq \Vert \tilde{\varphi}_\eps - m_\eps \Vert_{L^2\left(\mathbb{D}_2\setminus \mathbb{D}_{\frac{1}{2}}\right)} + \Vert \Delta\left( \tilde{\varphi}_\eps - m_\eps\right) \Vert_{L^2\left(\mathbb{D}_2\setminus \mathbb{D}_{\frac{1}{2}}\right)} $$
$$ \Vert \tilde{\varphi}_\eps - f_\eps(r) \Vert_{L^\infty\left(\mathbb{D}_2\setminus \mathbb{D}_{\frac{1}{2}}\right)} \leq \Vert \tilde{\varphi}_\eps - f_\eps(r) \Vert_{L^2\left(\mathbb{D}_2\setminus \mathbb{D}_{\frac{1}{2}}\right)} + \Vert \Delta \tilde{\varphi}_\eps  \Vert_{L^2\left(\mathbb{D}_2\setminus \mathbb{D}_{\frac{1}{2}}\right)} $$
%where for $\frac{1}{2} \leq \rho \leq 2$,
%$$ \vert m_\eps(\rho) \vert = \left\vert \frac{1}{r^2}\frac{1}{\vert \mathbb{D}_{\rho} \setminus \mathbb{D}_{\frac{1}{2}}\vert } \int_{ \mathbb{D}_{\rho r} \setminus \mathbb{D}_{\frac{r}{2}} } \varphi_\eps  \right\vert = \left\vert \frac{1}{r^2}\frac{1}{\vert \mathbb{D}_{\rho} \setminus \mathbb{D}_{\frac{1}{2}}\vert } \int_{\frac{r}{2}}^{\rho r} s 2\pi f_\eps(s)ds  \right\vert \leq c \max_{s\in [\frac{r}{2},2r]} \vert f_\eps(s) \vert $$
%gives 
%$$ \Vert  m_\eps \Vert_{L^\infty\left(\mathbb{D}_2\setminus \mathbb{D}_{\frac{1}{2}}\right)} \leq C \sqrt{\ln \frac{1}{r}} $$
where by a Poincaré inequality and conformal invariance of the Dirichlet energy,
$$ \Vert \tilde{\varphi}_\eps - f_\eps(r) \Vert_{L^2\left(\mathbb{D}_2\setminus \mathbb{D}_{\frac{1}{2}}\right)} \leq C \Vert \nabla\left( \tilde{\varphi}_\eps - f_\eps(r)\right) \Vert_{L^2\left(\mathbb{D}_2\setminus \mathbb{D}_{\frac{1}{2}}\right)} \leq  C' \Vert \nabla \varphi_\eps \Vert_{L^2(\Sigma)} $$
and where
$$ \Vert \Delta\left( \tilde{\varphi}_\eps - f_\eps(r)\right) \Vert_{L^2\left(\mathbb{D}_2\setminus \mathbb{D}_{\frac{1}{2}}\right)} \leq r^2 (1+K) \Vert \tilde{\varphi}_\eps \Vert_{L^{\infty}\left(\mathbb{D}_2\setminus \mathbb{D}_{\frac{1}{2}}\right)} $$ 
so that gathering all the previous inequalities, we obtain
$$ \Vert \varphi_\eps \Vert_{L^\infty\left(\mathbb{D}_{2r}\setminus \mathbb{D}_{\frac{r}{2}}\right)} \leq C \sqrt{\ln \frac{1}{r}} + C' \sqrt{\Lambda} + r^2(1+K)\Vert \varphi_\eps \Vert_{L^\infty\left(\mathbb{D}_{2r}\setminus \mathbb{D}_{\frac{r}{2}}\right)} $$
so that letting $2 \eps \leq r \leq r_0$ with $r_0$ small enough,
$$\vert \varphi_\eps(r,\theta) \vert \leq \Vert \varphi_\eps \Vert_{L^\infty\left(\mathbb{D}_{4r}\setminus \mathbb{D}_{r}\right)} \leq C\sqrt{\ln \frac{1}{r}}. $$
Notice that exactly the same arguments holds for $\psi_\eps$ so that
$$\vert \psi_\eps(r,\theta) \vert \leq \Vert \varphi_\eps \Vert_{L^\infty\left(\mathbb{D}_{4r}\setminus \mathbb{D}_{r}\right)} \leq C\sqrt{\ln \frac{1}{r}}. $$

\medskip

In order to complete the proof, we rescale via a conformal diffeomorphism $\theta_\eps$ the equation of $\varphi_\eps$ in
$$\widetilde{\Sigma}_\eps \setminus \Sigma_{4\eps} = \left(\left(\mathbb{D}_{4\eps}(p) \cup \mathbb{D}_{4\eps}(q)\right) \cap \Sigma_\eps\right) \cup C_{l,\eps}$$
into an equation of a function $\tilde{\varphi}_\eps$ in a cylinder $C_\eps$ where $C_\eps = \theta_\eps^{-1}(\widetilde{\Sigma}_\eps \setminus \Sigma_{4\eps})$ has a uniformly bounded modulus:
$$ \Delta \tilde{\varphi}_\eps = V_\eps  \tilde{\varphi}_\eps \text{ in } C_\eps $$
where $V_\eps$ is uniformly bounded. Similarly, the equation of $\psi_\eps$ in 
$$\Sigma_\eps \setminus \Sigma_{4\eps} = \left(\mathbb{D}_{4\eps}(p) \cup \mathbb{D}_{4\eps}(q)\right) \cap \Sigma_\eps$$
into an equation of $\tilde{\psi}_\eps$ on two flat annuli $\mathbb{D}_4 \setminus \mathbb{D}_1$
$$\begin{cases} \Delta \tilde{\psi}_\eps = V_\eps \tilde{\psi}_\eps \text{ in } \mathbb{D}_4 \setminus \mathbb{D}_1 \\
\partial_r  \tilde{\psi}_\eps  = 0 \text{ on } \partial \mathbb{D}_1.
\end{cases}$$
In both cases, we can write again by elliptic estimates and a Poincaré inequality that 
$$ \Vert \tilde{\varphi}_\eps \Vert_{L^\infty(\Omega)} \leq \vert f_\eps(\eps) \vert + \Vert \nabla \tilde{\varphi}_\eps \Vert_{L^2(\Omega)} + \Vert \Delta \tilde{\varphi}_\eps \Vert_{L^2(\Omega)} $$
that gives with similar arguments as before 
$$ \Vert \varphi_\eps \Vert_{L^\infty(\tilde{\Sigma}_\eps \setminus \Sigma_{4\eps})} \leq C \sqrt{\ln \frac{1}{\eps}}.$$
Similarly, regularity for elliptic equations with Neumann boundary condition gives
$$ \Vert \tilde{\psi}_\eps \Vert_{L^\infty(\Omega)} \leq \vert f_\eps(\eps) \vert + \Vert \nabla \tilde{\psi}_\eps \Vert_{L^2(\Omega)} + \Vert \Delta \tilde{\psi}_\eps \Vert_{L^2(\Omega)} $$
that implies
$$ \Vert \psi_\eps \Vert_{L^\infty(\Sigma_\eps \setminus \Sigma_{4\eps})} \leq C \sqrt{\ln \frac{1}{\eps}}.$$
and gathering all the previous inequalities ends the proof of the Claim.
\end{proof}

\begin{prop}[Matthiesen-Siffert \cite{MS3}, Lemma 5.5 and Theorem 5.1] \label{propneumann}
Given $k\in \mathbb{N}^\star$, we denote $\mu_1^\eps,\cdots,\mu_k^\eps$ the $k$ first non-zero Neumann eigenvalues of $\Sigma_\eps := \Sigma \setminus \left( \mathbb{D}_\eps(p) \cup \mathbb{D}_\eps(q) \right)$. Then
$$ \forall 1 \leq i \leq k, \mu_i^\eps \geq \lambda_i(\Sigma,g)- O(\eps^2)$$
\end{prop}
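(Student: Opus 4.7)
The plan is to apply the min-max characterization of $\lambda_i(\Sigma, g)$ to an $i$-dimensional test subspace $W \subset V_1 \subset H^1(\Sigma)$ built from the first $i$ Neumann eigenfunctions $\psi_1^\eps, \ldots, \psi_i^\eps$ on $\Sigma_\eps$, normalized to be $L^2(\Sigma_\eps)$-orthonormal (so in particular orthogonal to the constants, which kill the zero Neumann eigenvalue). The point is that the Dirichlet energy of each $\psi_j^\eps$ on $\Sigma_\eps$ is exactly $\mu_j^\eps$, so it only remains to pay a small price to extend each function into the two removed disks and to project onto the mean-zero subspace.

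I would extend each $\psi_j^\eps$ to $\tilde\psi_j \in H^1(\Sigma)$ by the harmonic extension (in a local conformal chart) on each disk $\mathbb{D}_\eps(p)$ and $\mathbb{D}_\eps(q)$. The maximum principle together with the $L^\infty$ bound from Claim \ref{cl:linftyesteigenfunctions} gives $\int_{\mathbb{D}_\eps} \tilde\psi_j^2 \, dA_g = O(\eps^2 \log \frac{1}{\eps})$. For the Dirichlet energy, I would expand $\psi_j^\eps|_{\partial \mathbb{D}_\eps} = \sum_k a_k(\eps) e^{ik\theta}$, note that the harmonic extension has energy $\pi \sum_{k \neq 0} |k| |a_k(\eps)|^2$, and crucially exploit the Neumann condition $\partial_r \psi_j^\eps(\eps, \theta) = 0$: a perturbative ODE analysis of the Fourier modes in the annulus $\{\eps \leq r \leq r_0\}$ (where the equation is a perturbation of the Euler equation $a_k'' + r^{-1} a_k' - k^2 r^{-2} a_k = 0$) forces $|a_k(\eps)| \lesssim |a_k(r_0)| (\eps/r_0)^{|k|}$. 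Combined with a uniform $H^{1/2}(\partial\mathbb{D}_{r_0})$ bound on $\psi_j^\eps$ coming from its $H^1(\Sigma_\eps)$ bound, this yields $\int_{\mathbb{D}_\eps} |\nabla \tilde\psi_j|_g^2 \, dA_g = O(\eps^2)$.

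Next I would form $W = \mathrm{span}(\tilde\psi_1 - \bar{\tilde\psi}_1, \ldots, \tilde\psi_i - \bar{\tilde\psi}_i) \subset V_1$, where $\bar{\tilde\psi}_j = \tfrac{1}{A_g(\Sigma)} \int_\Sigma \tilde\psi_j$. Since $\int_{\Sigma_\eps} \psi_j^\eps = 0$ by orthogonality to constants, and $|\int_{\mathbb{D}_\eps} \tilde\psi_j| \leq \pi\eps^2 \|\tilde\psi_j\|_{L^\infty} = O(\eps^2 \sqrt{\log \frac{1}{\eps}})$, the means $\bar{\tilde\psi}_j$ are negligible. The previous estimates and Cauchy–Schwarz then yield, for any $v = \sum c_j \tilde\psi_j$,
\[
\int_\Sigma |\nabla v|_g^2 \, dA_g = \sum_j \mu_j^\eps c_j^2 + O(\eps^2) |c|^2, \qquad \int_\Sigma (v - \bar v)^2 \, dA_g = |c|^2 \bigl(1 - O(\eps^2 \log \tfrac{1}{\eps})\bigr),
\]
and linear independence of the $\tilde\psi_j - \bar{\tilde\psi}_j$ is clear for $\eps$ small. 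The min-max principle then gives $\lambda_i(\Sigma, g) \leq \max_{v \in W \setminus \{0\}} \frac{\int |\nabla v|_g^2 \, dA_g}{\int (v - \bar v)^2 \, dA_g} \leq \mu_i^\eps + O(\eps^2)$, which is the claim.

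I expect the main obstacle to be the sharp $O(\eps^2)$ energy bound for the harmonic extension. A naive argument using only the $L^\infty$ bound on $\partial \mathbb{D}_\eps$ gives merely $O(1)$, and any argument based solely on the $H^1$ norm on a thin annulus near $\partial \mathbb{D}_\eps$ would still lose powers of $\log \frac{1}{\eps}$. The improvement to $O(\eps^2)$ genuinely requires the Neumann condition to force cancellation in each nonzero Fourier mode of $\psi_j^\eps$, and justifying this mode-by-mode decay uniformly in $\eps$ (despite the ODE being only an approximation to the Euler equation because of the $\mu_j^\eps e^{2u} \psi_j^\eps$ right-hand side) is the delicate step.
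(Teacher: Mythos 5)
Your overall strategy — harmonically extend the Neumann eigenfunctions into the two removed disks, project off the mean, and test the span in the min-max for $\lambda_i(\Sigma,g)$ — is exactly the paper's Step~4, and like the paper you correctly identify that the whole proposition hinges on the sharp $O(\eps^2)$ Dirichlet energy bound for the extension inside $\mathbb{D}_\eps(p)\cup\mathbb{D}_\eps(q)$. Where you diverge genuinely from the paper is in how that bound is obtained. The paper first solves the inhomogeneous Dirichlet problem $\Delta\alpha_\eps=\mu_\eps\hat\psi_\eps$ on the \emph{full} disk (using Step~1, i.e.\ that the harmonic extension $\hat\psi_\eps$ is $W^{1,2}$-bounded, to get $\alpha_\eps$ in $C^1$), so that $h_\eps=\psi_\eps-\alpha_\eps$ is genuinely harmonic in the annulus; it then splits $h_\eps$ into a regular part $\beta_\eps$ (nonnegative powers of $r$, shown $C^1$-bounded via interior elliptic estimates after an $L^2$ coefficient bound) and a singular part $\gamma_\eps$, and for $\gamma_\eps$ uses a Pohozaev identity together with the Neumann condition to trade $\int_{\mathbb{S}_\eps}\vert\partial_\tau\gamma_\eps\vert^2$ for $\int_{\mathbb{S}_\eps}\vert\partial_\nu\gamma_\eps\vert^2\leq C\eps$; a linear competitor for the harmonic extension then gives $O(\eps^2)$. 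Your route is to work directly with the Fourier modes of $\psi_\eps$ and the Euler-type ODE. Two cautions on that: first, the displayed mode estimate $\vert a_k(\eps)\vert\lesssim\vert a_k(r_0)\vert(\eps/r_0)^{|k|}$ is not correct as stated — variation of parameters produces an additive contribution $(\eps/r_0)^{|k|}\vert a_k^{\mathrm{par}}(r_0)\vert$ from the inhomogeneous term $\mu_\eps e^{2u}\psi_\eps$, which can dominate $\vert a_k(r_0)\vert$ for individual $k$; the corrected statement still yields $\sum_k|k|\vert a_k(\eps)\vert^2=O(\eps^2)$ after summing, but only because the driving is controlled in $\ell^2$ via the uniform $L^\infty$ bound of Claim \ref{cl:linftyesteigenfunctions}. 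Second, since $e^{2u}$ is not radial, the modes satisfy a \emph{coupled} system rather than independent ODEs, so the ``perturbative ODE analysis'' has to be formulated as a system estimate (or one must subtract the inhomogeneity first, which is precisely what the paper's $\alpha_\eps$ does). The paper's decomposition neatly sidesteps both issues, while your sketch leaves them as the delicate step you yourself flag. Lastly, a small quantitative slip: you state $\int_\Sigma(v-\bar v)^2=|c|^2\bigl(1-O(\eps^2\log\tfrac1\eps)\bigr)$, but in fact $\int_{\mathbb{D}_\eps}v^2$ \emph{adds} to $\int_{\Sigma_\eps}v^2=|c|^2$ and the only negative correction comes from $A_g\bar v^2=O(\eps^4\log\tfrac1\eps)|c|^2$; with your stated bound the final estimate would only be $O(\eps^2\log\tfrac1\eps)$, whereas the correct denominator bound $|c|^2\bigl(1-O(\eps^4\log\tfrac1\eps)\bigr)$ recovers the full $O(\eps^2)$.
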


\begin{proof}
\noindent\textbf{Step 1:} We let $\hat{\psi}_\eps$ be the extension of $\psi_\eps$ on $\Sigma$ obtained by a harmonic extension on $\mathbb{D}_\eps(p)\cup \mathbb{D}_\eps(q)$. We prove that there is a constant $C$ such that 
%$C:= C(\Sigma,\Lambda)$ such that 
%for any Laplace eigenfunction with Neumann boundary conditions $\psi_\eps$ in $(\Sigma_\eps,g)$ with eigenvalue $\mu_\eps$ bounded by $\Lambda$ satisfies
$$ \Vert \hat{\psi}_\eps \Vert_{W^{1,2}(\mathbb{D}_\eps(p)\cup \mathbb{D}_\eps(q))} \leq C \Vert \psi_\eps \Vert_{W^{1,2}(\Sigma_\eps)} $$

\medskip

\noindent\textbf{Proof of step 1:} It suffices to prove this inequality in a chart at the neighborhood of $p$. Letting $f_\eps(x):= \hat{\psi}_\eps(p+\eps x)$, we have by trace embedding theorems and elliptic theory for harmonic functions that
$$ \Vert f_\eps \Vert_{H^1(\mathbb{D})} \leq C \Vert f_\eps \Vert_{H^1(\mathbb{D}_2 \setminus \mathbb{D})} $$
$$ \Vert \nabla f_\eps \Vert_{L^2(\mathbb{D})} \leq C \Vert \nabla f_\eps \Vert_{L^2(\mathbb{D}_2 \setminus \mathbb{D})} $$
that rescale to
$$ \Vert \psi_\eps \Vert_{L^2(\mathbb{D}_\eps(p))}^2 \leq C^2 \left( \Vert \psi_\eps \Vert_{L^2(\mathbb{D}_{2\eps}(p) \setminus \mathbb{D}_\eps(p))}^2 + \eps^2 \Vert \nabla \psi_\eps \Vert_{L^2(\mathbb{D}_{2\eps}(p) \setminus \mathbb{D}_\eps(p))}^2 \right) $$
$$ \Vert \nabla \psi_\eps \Vert_{L^2(\mathbb{D}_\eps(p))} \leq C \Vert \nabla \psi_\eps \Vert_{L^2(\mathbb{D}_{2\eps}(p) \setminus \mathbb{D}_\eps(p))}   $$
completing the proof of Step 1.

\medskip

\noindent\textbf{Step 2:} We prove that there is a constant $C:= C(\Sigma,\Lambda)$ such that for any Laplace eigenfunction with Neumann boundary conditions $\psi_\eps$ in $(\Sigma_\eps,g)$ such that $\Vert \psi_\eps \Vert_{L^2\left(\Sigma_\eps,g\right)} = 1$ with eigenvalue $\mu_\eps$ bounded by $\Lambda$ satisfies
$$ \int_{\mathbb{S}_\eps(p)\cup \mathbb{S}_\eps(q)} \vert \partial_\tau \psi_\eps \vert^2 \leq C \eps $$

\noindent

\noindent\textbf{Proof of step 2:} By Step 1, there is a constant $C>0$ such that
$$ \Vert \hat{\psi}_\eps \Vert_{W^{1,2}(\Sigma)} \leq C \Vert \psi_\eps \Vert_{W^{1,2}(\Sigma_\eps)} $$
%By a rescaling on the equation, we use the standard weak estimate of the gradient in polar coordinates at the neighborhood of $p$ and $q$ for $\eps \leq r \leq 1$
%$$ \vert \nabla \psi_\eps(r) \vert \leq C \frac{\Vert \nabla \psi_\eps \Vert_{L^2(\Sigma_\eps)}}{r} $$ that holds until the boundaries $\mathbb{S}_\eps(p)$ and $\mathbb{S}_\eps(q)$ because of the Neumann boundary condition. Then, we use the equation $\Delta \nabla \hat{\psi}_\eps = 0$ in $\mathbb{D}_\eps(p)\cup \mathbb{D}_\eps(q)$ with 
%$$\vert \partial_{\theta} \hat{\psi}_\eps \vert \leq C \frac{\Vert \nabla \psi_\eps \Vert_{L^2(\Sigma_\eps)}}{\eps} \text{ on } \mathbb{S}_\eps(p) \cup \mathbb{S}_\eps(q) $$ 
We let $\alpha_\eps$ be the solution of the following equation
$$ \begin{cases} \Delta \alpha_\eps = \mu_\eps \hat{\psi}_\eps \text{ in } \mathbb{D} \\
\alpha_\eps = 0 \text{ on } \mathbb{S}^1
\end{cases} $$
so that $h_\eps := \psi_\eps - \alpha_\eps$ is a harmonic function in $\mathbb{D}\setminus \mathbb{D}_\eps$. Since $\Delta \alpha_\eps$ is bounded in $W^{1,2}$, we obtain by standard elliptic regularity that $\alpha_\eps$ is bounded in $C^1$ by $\Vert \psi_\eps \Vert_{W^{1,2}(\Sigma_\eps)}$  so that
$$ \int_{\mathbb{S}_\eps(p)\cup \mathbb{S}_\eps(q)} \vert \partial_\tau \alpha_\eps \vert^2 \leq C \Vert \psi_\eps \Vert_{W^{1,2}(\Sigma_\eps)}  \eps. $$
Now, let's focus on $h_\eps$. We have that
$$ \Vert h_\eps \Vert_{W^{1,2}(\mathbb{D}\setminus\mathbb{D}_\eps)} \leq C \Vert \psi_\eps \Vert_{W^{1,2}(\Sigma_\eps)}  $$
$$ \vert \partial_\nu h_\eps \vert = \vert \partial_\nu a_\eps \vert \leq C\Vert \psi_\eps \Vert_{W^{1,2}(\Sigma_\eps)}  \text{ on } \mathbb{S}_\eps(p)\cup \mathbb{S}_\eps(q) $$
Now let's write the Fourier expansion of $h_\eps$
$$ h_\eps = a + b \ln r + \sum_{n \in \mathbb{Z}^\star}\left( c_n r^n + \overline{c_n} r^{-n}\right) e^{in\theta} $$
where we drop the index $\eps$ of all the coefficients $a,b \in \R, c_n \in \mathbb{C}$. We have that for $\eps \leq \frac{1}{2}$
\begin{equation*}\begin{split} \int_{\mathbb{D}\setminus \mathbb{D}_\eps} h_\eps^2  \geq & \int_{\mathbb{D}\setminus \mathbb{D}_\eps} \left\vert\sum_{n \in \mathbb{Z}^\star}\left( c_n r^n + \overline{c_n} r^{-n}\right) e^{in\theta}\right\vert^2 \\
\geq & \sum_{n\in \mathbb{Z}^\star}\left( \vert c_n \vert^2 \int_\eps^1 r^{2n+1}dr + 2 Re(c_n c_{-n}) \int_\eps^1 r dr + \vert c_{-n} \vert^2 \int_\eps^1 r^{-2n+1}dr \right) \\
\geq &  \sum_{ n  \geq 2}\left( \frac{\vert c_n \vert^2}{2n+2} \left(1-\eps^{2n+2}\right) - \vert c_n\vert \vert c_{-n}\vert + \frac{\vert c_{-n}\vert^2}{2n-2}  (\eps^{2-2n}-1)\right) \\ 
&+  \left( \frac{\vert c_1 \vert^2}{4} \left(1-\eps^{4}\right) - \vert c_1\vert \vert c_{-1}\vert + \vert c_{-1} \vert^2  \ln\frac{1}{\eps} \right) \\
\geq &  \sum_{ n  \geq 2}\left( \frac{\vert c_n \vert^2}{2n+2} \left(1-\eps^{2n+2} - \frac{1}{2}\right) + \frac{\vert c_{-n}\vert^2}{2n-2}  (\eps^{2-2n}-1-2(n^2-1))\right) \\ 
&+  \left( \frac{\vert c_1 \vert^2}{4} \left(1-\eps^{4}-\frac{1}{2}\right) + \vert c_{-1} \vert^2 \left( \ln\frac{1}{\eps} - 8 \right) \right) \\
\geq & \frac{1}{4} \left( \sum_{n\geq 2} \left( \frac{\vert c_n \vert^2}{2n+2} + \frac{\vert c_{-n} \vert^2}{2n-2} \eps^{2-2n} \right) +  \frac{\vert c_1 \vert^2}{4} +    \vert c_{-1} \vert^2  \ln\frac{1}{\eps}   \right) \end{split} \end{equation*}
so that it is clear that the harmonic function $ \sum_{n\geq 1} c_n e^{i n\theta} r^n $
is bounded by by $\Vert \psi_\eps \Vert_{W^{1,2}(\Sigma_\eps)}$ in $L^2(\mathbb{D})$. Similarly, we also have that $ \sum_{n\geq 1} \overline{c_n} e^{-i n\theta} r^n $ is bounded by by $\Vert \psi_\eps \Vert_{W^{1,2}(\Sigma_\eps)}$ in $L^2(\mathbb{D})$. Therefore the function 
$$ \beta_\eps := \sum_{n\geq 1} \left(c_n e^{i n\theta} + \overline{c_{-n}} e^{-i n\theta} \right) r^n $$
is bounded in $\mathcal{C}^1$ by $\Vert \psi_\eps \Vert_{W^{1,2}(\Sigma_\eps)}$ and 
$$ \int_{\mathbb{S}_\eps(p)\cup \mathbb{S}_\eps(q)} \vert \partial_\tau \beta_\eps \vert^2 \leq C \Vert \psi_\eps \Vert_{W^{1,2}(\Sigma_\eps)}  \eps. $$
Now, we let $\gamma_\eps := \psi_\eps - \alpha_\eps - \beta_\eps $ and we have that
$$ \gamma_\eps = a +b\ln r + \sum_{n\geq 1} \left(c_{-n} e^{-i n\theta} + \overline{c_n} e^{i n\theta} \right) r^{-n} $$
By a classical Pohozaev identity on harmonic functions,
$$ \int_{\mathbb{S}^1} \left( r^2\left\vert \gamma_r(r,\theta) \right\vert^2 - \left\vert\gamma_\theta(r,\theta)\right\vert^2 \right) d\theta  $$
does not depend on $r$. We have that 
$$ \int_{\mathbb{S}^1} \left\vert\gamma_\theta(r,\theta)\right\vert^2 d\theta = O\left(\frac{1}{r^2}\right) \text{ as } r\to +\infty $$
and that 
$$ \int_{\mathbb{S}^1} r^2 \left\vert \gamma_r(r,\theta) \right\vert^2 d\theta = \int_{\mathbb{S}^1} r^2 \left\vert \left( \partial_r \left(\gamma - b\ln r \right) \right)\right\vert^2 d\theta + 2\pi b^2 = 2\pi b^2 + O\left(\frac{1}{r^2}\right)  $$
as $r\to +\infty$ so that
$$ \int_{\mathbb{S}_\eps(p)}\vert\partial_\tau \gamma_\eps \vert^2 = \int_{\mathbb{S}_\eps(p)}\vert\partial_\nu \gamma_\eps \vert^2  -  \frac{2\pi b^2}{\eps} \leq \int_{\mathbb{S}_\eps(p)}\vert\partial_\nu \gamma_\eps \vert^2 \leq C \Vert \psi_\eps \Vert_{W^{1,2}(\Sigma_\eps)}  \eps $$
and the same property holds on $\mathbb{S}_\eps(q)$. Gathering all the previous inequalities completes the proof of Step 1.

\medskip

\noindent\textbf{Step 3:} Let $\hat{\psi}_\eps$ be the continuous extension of $\psi_\eps$ in $\Sigma$ that is harmonic in $\mathbb{D}_\eps(p)\cup \mathbb{D}_\eps(q)$. We have that
$$ \int_{\mathbb{D}_\eps(p)\cup \mathbb{D}_\eps(q)} \vert \nabla \hat{\psi}_\eps \vert^2  \leq C \Vert \psi_\eps \Vert_{W^{1,2}(\Sigma_\eps)}^2 \eps^2 $$
as $\eps \to 0$.

\medskip

\noindent\textbf{Proof of Step 3:}
In polar coordinates in a conformally flat chart centered at $p$, we use
$$ f_\eps := \frac{r}{\eps}\left( \psi_\eps(\eps,\theta) - \int_{\mathbb{S}^1} \psi_\eps(\eps,\alpha)d\alpha  \right) $$
as a competitor for the energy of the harmonic extension of $\psi_\eps(\eps,\theta) - \int_{\mathbb{S}^1} \psi_\eps(\eps,\alpha)d\alpha $ which has the same energy as $\hat{\psi}_\eps$. Then
\begin{equation*}
\begin{split}\int_{\mathbb{D}_\eps(p)} \vert\nabla \hat{\psi}_\eps\vert^2  \leq & \int_{\mathbb{D}_\eps(p)} \vert\nabla f_\eps\vert^2 \\ 
= &\frac{1}{\eps^2} \left( \int_0^\eps  \left( \int_{\mathbb{S}^1}\partial_\theta \psi_\eps(\eps,\theta)^2 d\theta + \int_{\mathbb{S}^1} \left( \psi_\eps(\eps,\theta) - \int_{\mathbb{S}^1} \psi_\eps(\eps,\alpha)d\alpha \right)^2d\theta\right)r dr \right) \\
\leq & C \int_{\mathbb{S}^1} \partial_\theta \psi_\eps(\eps,\theta)^2d\theta = C \eps \int_{\mathbb{S}_\eps(p)} \left( \partial_\tau \psi_\eps \right)^2 \leq C \Vert \psi_\eps \Vert_{W^{1,2}(\Sigma_\eps)}^2 \eps^2.
\end{split} \end{equation*}
where we used the Poincaré inequality on the circle to obtain the first term in the third line and Step 1 to obtain the last inequality. Of course, such a computation also holds in the neighborhood of $q$.

\medskip

\noindent\textbf{Step 4:} We test the eigenfunctions with Neumann boundary conditions extended harmonically in $\mathbb{D}_\eps(p)\cup \mathbb{D}_\eps(q)$ denoted by $\hat{\psi}_0^\eps,\cdots,\hat{\psi}_k^\eps$ in the variational characterization of $\lambda_k(\Sigma,g)$ and complete the proof of the proposition.

\medskip

\noindent\textbf{Proof of Step 4:} We have the existence of $a_\eps\in \mathbb{S}^k$ such that
\begin{equation*}
\begin{split}\lambda_k(\Sigma,g) & \leq \frac{\int_\Sigma\vert \nabla \left(\sum_{i=0}^k a_i^\eps \hat{\psi}_i^\eps\right) \vert^2 }{\int_\Sigma \left(\sum_{i=0}^k a_i^\eps \hat{\psi}_i^\eps\right)^2} \leq \frac{\sum_{i=0}^k \left(a_i^\eps\right)^2 \mu_i^\eps + \int_{\mathbb{D}_\eps(p)\cup \mathbb{D}_\eps(q)}\vert \nabla \left(\sum_{i=0}^k a_i^\eps \hat{\psi}_i^\eps\right) \vert^2 }{ \sum_{i=0}^k \left(a_i^\eps\right)^2 } \\
& \leq \mu_k^\eps + \sum_{i=0}^k \int_{\mathbb{D}_\eps(p)\cup \mathbb{D}_\eps(q)}\vert \nabla \hat{\psi}_i^\eps\vert^2 \leq \mu_k^\eps + O\left(\eps^2\right)
\end{split}
\end{equation*}
where we used Cauchy-Schwarz inequalities and Step 2.
\end{proof}

\begin{cor} \label{cor:boundsenergy}
We have that
\begin{equation} \label{eqlambdaiepsepss} \lambda_i(\tilde{\Sigma}_\eps,\tilde{g}_\eps) \geq \lambda_i(\Sigma,g) - O\left(\eps^2\ln \frac{1}{\eps}\right) \end{equation}
and
\begin{equation} \label{eqEepsepss} E(\tilde{\Sigma}_\eps,\tilde{g}_\eps) \leq E(\Sigma,g) +O\left(\eps^2\ln\frac{1}{ \eps}\right) \end{equation}
\end{cor}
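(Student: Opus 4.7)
The strategy is to compare $\lambda_i(\tilde{\Sigma}_\eps,\tilde{g}_\eps)$ to the Neumann eigenvalue $\mu_i^\eps$ of $(\Sigma_\eps,g)$ and then invoke Proposition \ref{propneumann}. Since Proposition \ref{propneumann} already gives $\mu_i^\eps \geq \lambda_i(\Sigma,g) - O(\eps^2)$, the real work is to establish $\lambda_i(\tilde{\Sigma}_\eps,\tilde{g}_\eps) \geq \mu_i^\eps - O(\eps^2 \ln(1/\eps))$. The bridge is the uniform $L^\infty$-bound of Claim \ref{cl:linftyesteigenfunctions}, which quantifies how little $L^2$-mass an eigenfunction of $(\tilde{\Sigma}_\eps,\tilde{g}_\eps)$ can concentrate in the thin cylinder $C_{l,\eps}$.

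Concretely, let $\varphi_0^\eps,\ldots,\varphi_i^\eps$ be an $L^2(\tilde{g}_\eps)$-orthonormal family of eigenfunctions of $(\tilde{\Sigma}_\eps,\tilde{g}_\eps)$ with eigenvalues $0 = \lambda_0^\eps \leq \cdots \leq \lambda_i^\eps$. A uniform upper bound $\lambda_i^\eps \leq \Lambda$ for small $\eps$ is available from Korevaar-type estimates, so Claim \ref{cl:linftyesteigenfunctions} gives $\|\varphi_j^\eps\|_\infty \leq C\sqrt{\ln(1/\eps)}$. For $a\in \R^{i+1}$ and $\varphi = \sum_j a_j \varphi_j^\eps$, one gets
\[
\int_{\Sigma_\eps}|\nabla\varphi|^2\,dA_g \leq \int_{\tilde{\Sigma}_\eps}|\nabla\varphi|^2\,dA_{\tilde{g}_\eps} = \sum_j a_j^2 \lambda_j^\eps \leq \lambda_i^\eps\, |a|^2,
\]
and, since $\mathrm{Area}(C_{l,\eps}) = 2\pi l \eps^2$ and $\|\varphi\|_\infty \leq \sqrt{i+1}\,|a|\,C\sqrt{\ln(1/\eps)}$,
\[
\int_{\Sigma_\eps}\varphi^2\,dA_g = |a|^2 - \int_{C_{l,\eps}}\varphi^2\,dA_{\tilde{g}_\eps} \geq |a|^2\bigl(1 - C'\eps^2\ln(1/\eps)\bigr).
\]
The same estimate on cross-terms shows that the Gram matrix of the restrictions $\varphi_j^\eps|_{\Sigma_\eps}$ differs from the identity by $O(\eps^2\ln(1/\eps))$, so these restrictions span an $(i+1)$-dimensional subspace of $H^1(\Sigma_\eps)$ containing the constants. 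The min-max principle for Neumann eigenvalues then yields $\mu_i^\eps \leq \lambda_i^\eps/\bigl(1 - C'\eps^2\ln(1/\eps)\bigr)$, which rearranges to \eqref{eqlambdaiepsepss} after applying Proposition \ref{propneumann}.

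For \eqref{eqEepsepss}, the area satisfies $A_{\tilde{g}_\eps}(\tilde{\Sigma}_\eps) = A_g(\Sigma) + O(\eps^2)$ (removed disks versus added cylinder), so each renormalized eigenvalue obeys $\bar{\lambda}_i(\tilde{g}_\eps) \geq \bar{\lambda}_i(g) - O(\eps^2\ln(1/\eps))$. Because the point $(\bar{\lambda}_j(g))_{j=1,\ldots,m}$ lies in $\mathrm{int}\,(\R_+)^m$, where $F$ is $\mathcal{C}^1$ with $\partial_i F \leq 0$ (hypothesis $(H)$), local Lipschitz control of $F$ at this point combined with the monotonicity in each coordinate converts the $\bar{\lambda}_i$ lower bounds into the energy upper bound $E(\tilde{\Sigma}_\eps,\tilde{g}_\eps) \leq E(\Sigma,g) + O(\eps^2\ln(1/\eps))$. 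The only delicate point is the uniform a priori bound on $\lambda_i^\eps$ needed to trigger Claim \ref{cl:linftyesteigenfunctions}; this is routine since $\lambda_i(\tilde{g}_\eps)A_{\tilde{g}_\eps}(\tilde{\Sigma}_\eps)$ is controlled by the Korevaar bound.
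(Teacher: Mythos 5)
Your proposal is correct and follows essentially the same route as the paper: restrict the first $i+1$ eigenfunctions of $(\tilde{\Sigma}_\eps,\tilde{g}_\eps)$ to $\Sigma_\eps$, use the $L^\infty$ bound of Claim \ref{cl:linftyesteigenfunctions} to show the $L^2$-mass lost in $C_{l,\eps}$ is $O(\eps^2\ln\frac{1}{\eps})$, feed this into the Neumann min--max to get $\mu_i^\eps \leq \lambda_i(\tilde{\Sigma}_\eps,\tilde{g}_\eps) + O(\eps^2\ln\frac{1}{\eps})$, and then apply Proposition \ref{propneumann}; the second inequality is likewise obtained in both from $F\in\mathcal{C}^1$, bounded eigenvalues and $A_{\tilde{g}_\eps}(\tilde{\Sigma}_\eps)=A_g(\Sigma)+O(\eps^2)$.
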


\begin{proof}
Let $\varphi_0^\eps,\varphi_1^\eps,\cdots,\varphi_i^\eps$ be an orthonormal family of eigenfunctions associated to the eigenvalues $\lambda_0(\tilde{\Sigma}_\eps,\tilde{g}_\eps),\cdots, \lambda_i(\tilde{\Sigma}_\eps,\tilde{g}_\eps)$, we have the existence of $a_\eps \in \mathbb{S}^{i}$ such that the function $\psi_\eps := \sum_{k=0}^i a_i^\eps \varphi_i^\eps$ satisfies
\begin{equation*}
\begin{split} \mu_i^\eps \leq \frac{\int_{\Sigma\setminus\left(\mathbb{D}_{\eps}(p)\cup \mathbb{D}_\eps(q)\right)}\left\vert \nabla \psi_\eps \right\vert^2_g dA_g}{\int_{\Sigma\setminus\left(\mathbb{D}_{\eps}(p)\cup \mathbb{D}_\eps(q)\right)}\left( \psi_\eps \right)^2 dA_g} \leq \frac{\int_{\tilde{\Sigma}_\eps} \vert \nabla \psi_\eps \vert_{\tilde{g}_\eps}^2 dA_{\tilde{g}_\eps} }{ \int_{\tilde{\Sigma}_\eps} \left(\psi_\eps\right)^2 dA_{\tilde{g}_\eps} - \int_{C_{l,\eps}} \left(\psi_\eps\right)^2 dA_{\tilde{g}_\eps} } \\
 \leq \lambda_i(\tilde{\Sigma}_\eps,\tilde{g}_\eps) + O\left(l\eps^2\ln \frac{1}{\eps}\right) 
\end{split}
\end{equation*}
as $\eps \to 0$ where we used Claim \ref{cl:linftyesteigenfunctions} and we use proposition \ref{propneumann} to conclude for equality \eqref{eqlambdaiepsepss}. \eqref{eqEepsepss} easily follows since $F$ is a $\mathcal{C}^1$ function, eigenvalues are uniformly bounded and $A(\Sigma_\eps, \tilde{g}_\eps) = A(\Sigma,g) - O(\eps^2)$.
\end{proof}
As a conclusion, we define 
\begin{equation}\label{eq:defdeltaeps}\delta_\eps := c \eps \sqrt{\ln\left(\frac{1}{\eps}\right)}\end{equation} 
for some well chosen constant $c>0$ and construct the previous Palais-Smale approximation \eqref{eq:tildeminseq}, \eqref{eq:disttilderegularized}, \eqref{eq:mainekeland} to the sequence $\tilde{g}_\eps$ and $\tilde{\beta}_{\eps} = \frac{1}{A_{\tilde{g}_\eps}(\tilde{\Sigma}_\eps)}$ on $\tilde{\Sigma}_{\eps}$ pullbacked on a fixed surface $\Sigma$ by a bi-Lipschitz diffeomorphism.

\subsection{Some convergence of $\omega_{\eps}$ to $1$ and first replacement of $\Phi_{\eps}$}
We set 
$$\omega_{\eps} = \sqrt{\left\vert \Phi_{\eps} \right\vert_{\Lambda_{\eps}}^2 + \theta_\eps^2 } =  \left\vert \Phi_{\eps} \right\vert_{\Lambda_{\eps}} 
%+ \tilde{\theta}_\eps
$$ 
%where
%$$ \tilde{\theta}_\eps := \frac{\theta_\eps^2}{ \left\vert \Phi_{\eps} \right\vert_{\Lambda_{\eps}} + \sqrt{ \left\vert \Phi_{\eps} \right\vert_{\Lambda_{\eps}}^2 + \theta_\eps^2} } = \frac{\theta_\eps^2}{ \left\vert \Phi_{\eps} \right\vert_{\Lambda_{\eps}} + \omega_\eps } $$
%is a positive function. 
We first prove that $\int_{\tilde{\Sigma}_\eps}  \vert\nabla\omega_{\eps}\vert_{\tilde{g}_\eps}^2 dA_{\tilde{g}_\eps}$ converges to $0$ and that $\Phi_{\eps}$ has a similar $H^1(\tilde{g}_\eps)$ behaviour as $\frac{\Phi_{\eps}}{\omega_{\eps}}$

\begin{cl} \label{cl:cvreplacement} We have that
\begin{equation}  \label{eqomegaepsto1}   \int_{\tilde{\Sigma}_\eps} \left\vert \nabla \omega_\eps \right\vert^2_{\tilde{g}_\eps} dA_{\tilde{g}_\eps} + \int_{\tilde{\Sigma}_\eps} \left\vert \nabla\left( \Phi_\eps - \frac{\Phi_\eps}{\omega_\eps} \right) \right\vert_{\tilde{g}_\eps,\Lambda_\eps}^2 dA_{\tilde{g}_\eps}  \leq O(\delta_\eps) \end{equation}
as $\eps\to 0$.
\end{cl}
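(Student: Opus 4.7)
The plan is to derive a test-function identity from the almost-Euler--Lagrange equation of Proposition~\ref{prop:eullag}, combined with a measure-theoretic smallness estimate for $\omega_\eps^2-1$ and the Pythagorean decomposition $|d\Phi_\eps|^2_{\Lambda_\eps} = |\nabla\omega_\eps|^2 + \omega_\eps^2\,|d(\Phi_\eps/\omega_\eps)|^2_{\Lambda_\eps}$. First I would observe that $\omega_\eps^2 \geq 1$ a.e.\ (from the constraint $|\Phi_\eps|^2_{\Lambda_\eps} \geq 1-\theta_\eps^2$ in Proposition~\ref{prop:eullag}) and that
\[
L_{\beta_\eps}(\omega_\eps^2) \,=\, \beta_\eps(\Lambda_\eps\Phi_\eps,\Phi_\eps) + L_{\beta_\eps}(\theta_\eps^2) \,=\, (1+O(\delta_\eps)) + O(\delta_\eps) \,=\, 1 + O(\delta_\eps),
\]
where I use $\beta_\eps(\Lambda_\eps\Phi_\eps,\Phi_\eps) = \beta_\eps(1,1) = 1+O(\delta_\eps)$ from the proposition and $L_{\beta_\eps}(\theta_\eps^2)\leq \|\beta_\eps\|_{\tilde g_\eps}\|\theta_\eps\|_{H^1(\tilde g_\eps)}^2 = O(\delta_\eps)$. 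Since $\omega_\eps^2-1\geq 0$ a.e., this gives $\int_\Sigma (\omega_\eps^2 - 1)\,d\mu_{\beta_\eps} = O(\delta_\eps)$.

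Next I would test the weak equation $\Delta_{g_\eps}\Phi_\eps^i = \lambda_i^\eps\beta_\eps(\Phi_\eps^i,\cdot)$ against the scalar multiples $\psi = \lambda_i^\eps\Phi_\eps^i\,\chi$ for $\chi=1$ and $\chi=\omega_\eps^{-2}$ (both admissible: $\omega_\eps\geq 1$ places $\omega_\eps^{-2}\in H^1\cap L^\infty$, and the $L^\infty$-bound of Claim~\ref{cl:linftyesteigenfunctions} on the eigenfunction components places $\psi\in H^1$). Summing over $i$ and using the key identity $\sum_i\lambda_i^\eps\Phi_\eps^i\nabla\Phi_\eps^i = \omega_\eps\nabla\omega_\eps - \theta_\eps\nabla\theta_\eps$, subtraction of the two resulting identities gives
\begin{equation*}
\int_\Sigma (1-\omega_\eps^{-2})|d\Phi_\eps|^2_{\Lambda_\eps}\,dA_{g_\eps} + 2\int_\Sigma \frac{|\nabla\omega_\eps|^2}{\omega_\eps^2}\,dA_{g_\eps} \,=\, 2\int_\Sigma \frac{\theta_\eps\nabla\theta_\eps\cdot\nabla\omega_\eps}{\omega_\eps^3}\,dA_{g_\eps} + \mathcal{R}_\eps,
\end{equation*}
where $\mathcal{R}_\eps = \sum_i (\lambda_i^\eps)^2\beta_\eps(\Phi_\eps^i,\Phi_\eps^i(1-\omega_\eps^{-2}))$. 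The pointwise inequality $\sum_i(\lambda_i^\eps)^2(\Phi_\eps^i)^2 \leq (\max_j\lambda_j^\eps)\omega_\eps^2$ (from $\lambda_i^\eps(\Phi_\eps^i)^2 \leq |\Phi_\eps|^2_{\Lambda_\eps}\leq \omega_\eps^2$) bounds $\mathcal{R}_\eps \leq C\int(\omega_\eps^2-1)\,d\mu_{\beta_\eps} = O(\delta_\eps)$, and Young's inequality absorbs half of the $\theta$-cross term into the LHS using a 2D Gagliardo--Nirenberg-type control $\int\theta_\eps^2|\nabla\theta_\eps|^2 = O(\delta_\eps)$. The two non-negative LHS terms are therefore both $O(\delta_\eps)$.

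To upgrade $\int|\nabla\omega_\eps|^2/\omega_\eps^2 = O(\delta_\eps)$ to $\int|\nabla\omega_\eps|^2 = O(\delta_\eps)$, I would split $\Sigma$ into $\{\omega_\eps\leq 2\}$, on which $\int|\nabla\omega_\eps|^2\leq 4\int|\nabla\omega_\eps|^2/\omega_\eps^2$, and $\{\omega_\eps>2\}$, on which $1-\omega_\eps^{-2}\geq 3/4$ and the Pythagorean inequality $|\nabla\omega_\eps|^2 \leq |d\Phi_\eps|^2_{\Lambda_\eps}$ yield $\int_{\omega_\eps>2}|\nabla\omega_\eps|^2 \leq \tfrac{4}{3}\int(1-\omega_\eps^{-2})|d\Phi_\eps|^2_{\Lambda_\eps}$. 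For the replacement term, algebraic expansion of $|\nabla(\Phi_\eps - \Phi_\eps/\omega_\eps)|^2_{\Lambda_\eps}$ produces contributions dominated by $(1-1/\omega_\eps)^2|d\Phi_\eps|^2_{\Lambda_\eps}$ and $(2\omega_\eps-1)|\nabla\omega_\eps|^2/\omega_\eps^2$, plus small $\theta$-corrections; the inequalities $(1-1/\omega_\eps)^2\leq 1-\omega_\eps^{-2}$ and $(2\omega_\eps-1)/\omega_\eps^2\leq 2$ (both valid for $\omega_\eps\geq 1$) reduce each piece to the preceding estimates. A final swap from $dA_{g_\eps}$ to $dA_{\tilde g_\eps}$ via the Ekeland bound $\delta(g_\eps,\tilde g_\eps)\leq\delta_\eps$ completes the proof. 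The main technical obstacle is the handling of $\theta$-cross terms, in particular the bound $\int\theta_\eps^2|\nabla\theta_\eps|^2 \leq O(\delta_\eps)$: since $H^1\not\hookrightarrow L^\infty$ in dimension two, this requires a careful Gagliardo--Nirenberg interpolation exploiting $\|\theta_\eps\|_{H^1}^2=O(\delta_\eps)$, rather than a crude $L^\infty$ estimate which would incur an unwanted factor of $\ln(1/\eps)$.
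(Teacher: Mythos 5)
Your overall strategy is close in spirit to the paper's, but it contains a genuine gap in the handling of the $\theta$-cross term, and you take a somewhat longer route than necessary.

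\textbf{The gap.} You invoke a \emph{Gagliardo--Nirenberg-type control} $\int\theta_\eps^2|\nabla\theta_\eps|^2 = O(\delta_\eps)$ as the key ingredient for absorbing the cross term. This cannot be deduced from $\|\theta_\eps\|_{H^1}^2 = O(\delta_\eps)$: in two dimensions $H^1$ embeds into every $L^p$ with $p<\infty$ but \emph{not} into $L^\infty$, and there is no interpolation inequality estimating $\|\theta\nabla\theta\|_{L^2}$ by $\|\theta\|_{H^1}^2$. In fact $\theta\in H^1(\Sigma^2)$ does not even guarantee $\theta\nabla\theta\in L^2$. As stated, this step is false, and it is the only place where the cross term $2\int\theta_\eps\nabla\theta_\eps\cdot\nabla\omega_\eps\,\omega_\eps^{-3}$ gets controlled, so the proof does not close. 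The step is, however, reparable: since $\omega_\eps^2 = |\Phi_\eps|^2_{\Lambda_\eps}+\theta_\eps^2$ one has the \emph{pointwise} bound $|\theta_\eps|/\omega_\eps \leq 1$, and with $\omega_\eps\geq 1$ one can write
\begin{equation*}
2\left|\frac{\theta_\eps\,\nabla\theta_\eps\cdot\nabla\omega_\eps}{\omega_\eps^3}\right| \leq 2\,|\nabla\theta_\eps|\,\frac{|\nabla\omega_\eps|}{\omega_\eps} \leq |\nabla\theta_\eps|^2 + \frac{|\nabla\omega_\eps|^2}{\omega_\eps^2},
\end{equation*}
absorbing the second term into the $2\int|\nabla\omega_\eps|^2/\omega_\eps^2$ on your left-hand side and leaving only $\int|\nabla\theta_\eps|^2 \leq \|\theta_\eps\|_{H^1}^2 = O(\delta_\eps)$. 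The correct source of smallness is the pointwise bound $|\theta_\eps|\leq\omega_\eps$, not an interpolation inequality for $\theta_\eps$.

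\textbf{A secondary imprecision.} Your ``Pythagorean inequality'' $|\nabla\omega_\eps|^2 \leq |d\Phi_\eps|^2_{\Lambda_\eps}$, used on $\{\omega_\eps>2\}$ for the upgrade step, is not exact once $\theta_\eps\not\equiv 0$: one only gets $|\nabla\omega_\eps|^2 \leq |d\Phi_\eps|^2_{\Lambda_\eps} + |\nabla\theta_\eps|^2$ (by Cauchy--Schwarz applied to $\omega_\eps\nabla\omega_\eps = \tfrac12\nabla(|\Phi_\eps|^2_{\Lambda_\eps}+\theta_\eps^2)$). This is harmless since $\int|\nabla\theta_\eps|^2 = O(\delta_\eps)$, but it should be stated as such.

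\textbf{Comparison with the paper's route.} The paper avoids both issues in one stroke. Rather than testing against $\Lambda_\eps\Phi_\eps$ and $\Lambda_\eps\Phi_\eps\omega_\eps^{-2}$ and then subtracting, it tests once against $\Lambda_\eps(\Phi_\eps - \Phi_\eps/\omega_\eps)$, obtaining the exact identity for the three Dirichlet energies $\int|\nabla(\Phi_\eps/\omega_\eps)|^2_{\Lambda_\eps} - \int|\nabla\Phi_\eps|^2_{\Lambda_\eps} - \int|\nabla(\Phi_\eps-\Phi_\eps/\omega_\eps)|^2_{\Lambda_\eps} = O(\delta_\eps)$; then it applies a completing-the-square identity
\begin{equation*}
\left\vert\nabla\tfrac{\Phi_\eps}{\omega_\eps}\right\vert^2_{\Lambda_\eps} - \left\vert\nabla\Phi_\eps\right\vert^2_{\Lambda_\eps} = (1-\omega_\eps^2)\left\vert\nabla\tfrac{\Phi_\eps}{\omega_\eps}\right\vert^2_{\Lambda_\eps} - \left\vert\nabla\omega_\eps\right\vert^2 - \omega_\eps^2\left\vert\nabla\tfrac{\theta_\eps}{\omega_\eps}\right\vert^2 + |\nabla\theta_\eps|^2,
\end{equation*}
which isolates all the dangerous sign-indefinite pieces into a single perfect square. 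This yields $\int(\omega_\eps^2-1)|\nabla(\Phi_\eps/\omega_\eps)|^2_{\Lambda_\eps} + \int|\nabla\omega_\eps|^2 + \int\omega_\eps^2|\nabla(\theta_\eps/\omega_\eps)|^2 + \int|\nabla(\Phi_\eps-\Phi_\eps/\omega_\eps)|^2_{\Lambda_\eps} \leq \int|\nabla\theta_\eps|^2 + O(\delta_\eps)$ with \emph{all four} left-hand terms nonnegative, so both conclusions of the claim drop out simultaneously. No Young inequality, no domain splitting $\{\omega_\eps\leq 2\}\cup\{\omega_\eps>2\}$, no upgrade from $\int|\nabla\omega_\eps|^2/\omega_\eps^2$ to $\int|\nabla\omega_\eps|^2$, and crucially no interpolation estimate on $\theta_\eps$. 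I recommend you adopt this structure: it is both correct and considerably shorter.
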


\begin{proof} We first prove
\begin{equation} \label{eqomega_epsminus1} L_\eps\left( \left\vert \Lambda_\eps \Phi_\eps \right\vert^2 \left(1- \frac{1 }{\omega_\eps}\right) \right) \leq O(\delta_\eps) \end{equation}
as $\eps\to 0$. Since $\omega_\eps \geq 1$,  
and $\left\vert \Phi_\eps \right\vert_{\Lambda_\eps}^2 \leq \omega_\eps^2$, we have that
\begin{equation*}
\begin{split}
 L_\eps\left( \left\vert \Lambda_\eps \Phi_\eps \right\vert^2 \left(1- \frac{1 }{\omega_\eps} \right) \right) \leq & \left(\max{\lambda_i^\eps}\right) L_\eps\left(  \left(\omega_\eps^2-\omega_\eps\right) \right) \\
\leq & \max\lambda_i^\eps  \left( L_\eps\left(  \left\vert \Phi_\eps \right\vert_{\Lambda_\eps}^2 \right) + L_\eps \left(  \theta_\eps^2 \right) - L_{\eps}(1) \right) \\
 \end{split} \end{equation*}
 so that
$$ L_\eps\left( \left\vert \Lambda_\eps \Phi_\eps \right\vert^2 \left(1- \frac{1 }{\omega_\eps}\right) \right) \leq  \left(\max\lambda_i^\eps \right)  L_\eps( \theta_\eps^2) \leq  \left(\max\lambda_i^\eps \right)  \left\Vert \beta_\eps \right\Vert_{\tilde{g}_\eps} \left\Vert \theta_\eps \right\Vert^2_{H^1(\tilde{g}_\eps)} \leq  \left\Vert \beta_\eps \right\Vert_{\tilde{g}_\eps} O(\delta_\eps) $$
as $\eps \to 0$ by proposition \ref{prop:eullag} and since we know that
$$  \left\Vert \beta_\eps \right\Vert_{\tilde{g}_\eps} \leq  \left\Vert 1 \right\Vert_{\tilde{g}_\eps} +  \left\Vert 1- \beta_\eps \right\Vert_{\tilde{g}_\eps} \leq 1 + O(\delta_\eps) $$
by \eqref{eq:disttilderegularized} and we obtain \eqref{eqomega_epsminus1}.

We know prove \eqref{eqomegaepsto1}. Along the following computations, in all the integrations with respect to $\tilde{\Sigma}_\eps$, the gradient and the area measure are taken with respect to $\tilde{g}_\eps$:
\begin{equation*} 
\begin{split}
\int_{\tilde{\Sigma}_\eps} & \left\vert \nabla \frac{\Phi_\eps}{\omega_\eps} \right\vert_{\Lambda_\eps}^2 - \int_{\tilde{\Sigma}_\eps}  \left\vert \nabla \Phi_\eps \right\vert_{\Lambda_\eps}^2 - \int_{\tilde{\Sigma}_\eps}  \left\vert \nabla\left( \Phi_\eps - \frac{\Phi_\eps}{\omega_\eps} \right) \right\vert_{\Lambda_\eps}^2 
\\ = & - 2 \int_{\tilde{\Sigma}_\eps} \left\langle \nabla \Phi_\eps, \nabla\left( \Phi_\eps - \frac{ \Phi_\eps}{\omega_\eps}\right) \right\rangle_{\Lambda_\eps} 
= - 2 \int_{\tilde{\Sigma}_\eps} \Delta \Phi_\eps  {\Lambda_\eps}.\left( \Phi_\eps - \frac{ \Phi_\eps}{\omega_\eps}\right)  \\
= & - 2 \beta_\eps\left( \Lambda_\eps.\Phi_\eps , {\Lambda_\eps}.\left( \Phi_\eps - \frac{ \Phi_\eps}{\omega_\eps}\right) \right)
= - 2 L_\eps\left(   \left\vert \Lambda_\eps \Phi_\eps \right\vert^2 \left(1- \frac{1 }{\omega_\eps} \right) \right) = O(\delta_\eps)
\end{split}
\end{equation*}
where we tested $ \Delta \Phi_{\eps} =  \beta_\eps( \Lambda_\eps\Phi_\eps,.) $ in $\tilde{\Sigma}_\eps$ against $ \Lambda_\eps. \left( \Phi_\eps-\frac{ \Phi_\eps}{\omega_\eps}\right)$, and we used \eqref{eqomega_epsminus1}.

In particular, we have
$$ 0 \leq \int_{\tilde{\Sigma}_\eps} \left\vert \nabla\left( \Phi_\eps - \frac{\Phi_\eps}{\omega_\eps} \right) \right\vert_{\Lambda_\eps}^2 \leq \int_{\tilde{\Sigma}_\eps}  \left( \left\vert \nabla \frac{\Phi_\eps}{\omega_\eps} \right\vert_{\Lambda_\eps}^2 - \left\vert \nabla \Phi_\eps \right\vert_{\Lambda_\eps}^2\right) + O(\delta_\eps) $$
as $\eps\to 0$ and knowing that with the straightforward computations we have
\begin{equation*}
\begin{split} \left\vert \nabla \frac{\Phi_\eps}{\omega_\eps} \right\vert_{\Lambda_\eps}^2 - \left\vert \nabla \Phi_\eps \right\vert_{\Lambda_\eps}^2 = & \left( 1 - \omega_\eps^2  \right)\left\vert \nabla \frac{\Phi_\eps}{\omega_\eps} \right\vert_{\Lambda_\eps}^2 - \left\vert \nabla \omega_\eps \right\vert^2 \frac{\omega_\eps^2+\theta_\eps^2}{\omega_\eps^2}  + 2 \frac{\theta_\eps}{\omega_\eps} \nabla \omega_\eps \nabla \theta_\eps \\
=  & \left( 1 - \omega_\eps^2  \right)\left\vert \nabla \frac{\Phi_\eps}{\omega_\eps} \right\vert_{\Lambda_\eps}^2 - \left\vert \nabla \omega_\eps \right\vert^2 - \left\vert \frac{\theta_\eps}{\omega_\eps} \nabla \omega_\eps - \nabla \theta_\eps \right\vert^2 + \left\vert \nabla \theta_\eps \right\vert^2
\end{split} \end{equation*}
where 
$$ \left\vert \frac{\theta_\eps}{\omega_\eps} \nabla \omega_\eps - \nabla \theta_\eps \right\vert^2 = \omega_\eps^2 \left\vert \nabla \frac{\theta_\eps}{\omega_\eps} \right\vert^2  $$
we obtain that 
\begin{equation*}
\begin{split} \int_{\tilde{\Sigma}_\eps} \left( \omega_\eps^2 - 1 \right)\left\vert \nabla \frac{\Phi_\eps}{\omega_\eps} \right\vert_{\Lambda_\eps}^2 + \int_{\tilde{\Sigma}_\eps} \left\vert \nabla \omega_\eps \right\vert^2 +  \int_{\tilde{\Sigma}_\eps} \omega_\eps^2 \left\vert \nabla \frac{\theta_\eps}{\omega_\eps} \right\vert^2 + \int_{\tilde{\Sigma}_\eps} \left\vert \nabla\left( \Phi_\eps - \frac{\Phi_\eps}{\omega_\eps} \right) \right\vert_{\Lambda_\eps}^2 \\
 \leq \int_{\tilde{\Sigma}_\eps} \left\vert \nabla \theta_\eps \right\vert^2 + O\left(\delta_\eps \right) 
\end{split} \end{equation*}
as $\eps \to 0$ and we conclude by Proposition \ref{prop:eullag} again.
\end{proof}

\subsection{Quantitative convergence of eigenvalues and quantitative energy bounds}

We recall that $\lambda_k^\eps := \lambda_k(\tilde{\Sigma}_\eps,g_\eps,\beta_\eps)$ and that $\lambda_k := \lambda_k(\Sigma,g)$

\begin{cl} \label{cl:upperboundlambdakeps}
For all $k \in \mathbb{N}^\star$ 
%$$ \lambda_k^\eps \leq \lambda_k + O\left(\frac{1}{l} \right) $$
%and 
$$ \lambda_k^\eps \leq \lambda_k + O\left( \frac{1}{\ln\frac{1}{\eps}} \right) $$
as $\eps \to 0$.
\end{cl}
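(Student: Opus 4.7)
The plan is a test-function argument. I take an $L^2(\Sigma,g)$-orthonormal family $\varphi_0,\varphi_1,\ldots,\varphi_k$ of eigenfunctions for $\lambda_0=0,\lambda_1,\ldots,\lambda_k$, truncate them with a classical logarithmic cutoff, feed the truncations as a $k$-dimensional test subspace in the min--max characterization of $\lambda_k(\tilde{g}_\eps,\tilde{\beta}_\eps)$, and then pass from $\lambda_k(\tilde{g}_\eps,\tilde{\beta}_\eps)$ to $\lambda_k^\eps = \lambda_k(g_\eps,\beta_\eps)$ via the local Lipschitz property of Proposition \ref{prop:Lipeigen}, using the Ekeland proximity $d_\eps((\tilde{g}_\eps,\tilde{\beta}_\eps),(g_\eps,\beta_\eps))\leq \delta_\eps=c\eps\sqrt{\ln(1/\eps)}$. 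Since $\delta_\eps=o(1/\ln(1/\eps))$, this last step does not pollute the rate, and the target $O(1/\ln(1/\eps))$ is dictated exactly by the conformal capacity of the annulus of radii $(\eps,\sqrt{\eps})$.

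Concretely, I would introduce the standard log cutoff $\chi_\eps:\Sigma\to[0,1]$, equal to $1$ outside $\mathbb{D}_{\sqrt{\eps}}(p)\cup\mathbb{D}_{\sqrt{\eps}}(q)$, vanishing on $\mathbb{D}_\eps(p)\cup\mathbb{D}_\eps(q)$, and interpolating logarithmically on each annulus so that $\int_\Sigma|\nabla\chi_\eps|^2\,dA_g=O(1/\ln(1/\eps))$. Then $\Phi_i^\eps:=\chi_\eps\varphi_i$ on $\Sigma_\eps\subset\tilde{\Sigma}_\eps$, extended by $0$ on the glued cylinder $C_{l,\eps}$, belongs to $H^1(\tilde{\Sigma}_\eps)$ because $\chi_\eps$ vanishes on $\partial\mathbb{D}_\eps(p)\cup\partial\mathbb{D}_\eps(q)$. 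To land in $\overline{V_{\tilde{\beta}_\eps}}$, I would replace $\Phi_i^\eps$ by its centered version $\tilde{\Phi}_i^\eps:=\Phi_i^\eps-\tilde{\beta}_\eps(1,\Phi_i^\eps)$; for $i\geq 1$, the orthogonality $\int_\Sigma\varphi_i\,dA_g=0$ together with the fact that $\chi_\eps\neq 1$ only on a set of area $O(\eps)$ gives $\tilde{\beta}_\eps(1,\Phi_i^\eps)=O(\eps)$, so this shift does not affect the leading order of the quadratic forms.

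Expanding $\nabla(\chi_\eps\varphi_i)$ and using the uniform $L^\infty$ bound on the smooth $\varphi_i$, the core computation is
\begin{equation*}
\int_{\tilde{\Sigma}_\eps}\nabla\Phi_i^\eps\cdot\nabla\Phi_j^\eps\,dA_{\tilde{g}_\eps}=\lambda_i\delta_{ij}+O\!\left(\tfrac{1}{\ln(1/\eps)}\right),\qquad\int_{\tilde{\Sigma}_\eps}\Phi_i^\eps\Phi_j^\eps\,dA_{\tilde{g}_\eps}=\delta_{ij}+O(\eps).
\end{equation*}
The gradient estimate splits into $\int\chi_\eps^2\nabla\varphi_i\cdot\nabla\varphi_j = \lambda_i\delta_{ij}+O(\eps)$, the dominant capacitary contribution $\int\varphi_i\varphi_j|\nabla\chi_\eps|^2=O(1/\ln(1/\eps))$, and a Cauchy--Schwarz-controlled mixed term of order $O(\sqrt{\eps}/\ln(1/\eps))$. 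Testing the $k$-dimensional space $\mathrm{span}(\tilde{\Phi}_1^\eps,\ldots,\tilde{\Phi}_k^\eps)$ in the variational characterization of $\lambda_k(\tilde{g}_\eps,\tilde{\beta}_\eps)$ and using $A_{\tilde{g}_\eps}(\tilde{\Sigma}_\eps)=A_g(\Sigma)+O(\eps^2)$, this gives $\lambda_k(\tilde{g}_\eps,\tilde{\beta}_\eps)\leq \lambda_k+O(1/\ln(1/\eps))$ in the normalization fixed by $\tilde{\beta}_\eps(1,1)=1$.

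Finally, Proposition \ref{prop:Lipeigen} applied on the open set $F$, which contains both $(g_\eps,\beta_\eps)$ and $(\tilde{g}_\eps,\tilde{\beta}_\eps)$ for $\eps$ small, yields $|\lambda_k^\eps-\lambda_k(\tilde{g}_\eps,\tilde{\beta}_\eps)|\leq C\delta_\eps=o(1/\ln(1/\eps))$, whence the claim. The main technical point is the sharp $O(1/\ln(1/\eps))$ capacitary bound on $\int\varphi_i\varphi_j|\nabla\chi_\eps|^2$; this is the classical tight estimate for a log cutoff on an annulus of radii $(\eps,\sqrt{\eps})$ and uses only the pointwise bound on the smooth eigenfunctions $\varphi_i$, so there is no real obstacle once the test subspace is correctly chosen.
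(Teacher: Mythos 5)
Your argument is essentially the paper's: the same logarithmic cutoff $\chi_\eps$ at scales $(\eps,\sqrt\eps)$, the same truncated eigenfunctions as a test subspace, and the same capacitary estimate $\int\vert\nabla\chi_\eps\vert^2=O(1/\ln(1/\eps))$ as the dominant error. The only cosmetic difference is that you test in $(\tilde g_\eps,\tilde\beta_\eps)$ and then transfer to $(g_\eps,\beta_\eps)$ via Proposition~\ref{prop:Lipeigen}, while the paper tests directly in $(g_\eps,\beta_\eps)$ and absorbs the discrepancy using $\Vert\beta_\eps-1\Vert_{\tilde g_\eps}=O(\delta_\eps)$ from \eqref{eq:disttilderegularized}, which is the same mechanism with the same $O(\delta_\eps)=o(1/\ln(1/\eps))$ loss.
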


\begin{proof}

We let $\eta_\eps \in \mathcal{C}^\infty\left(\Sigma_\eps \right)$ be a function such that $\eta_\eps=1$ in $\Sigma_{\sqrt{\eps}}$, $0\leq \eta_\eps\leq 1$ and $\int_{\Sigma} \vert \nabla \eta_\eps \vert^2 \leq \frac{C}{\ln \frac{1}{\eps}}$ and we test $\langle \varphi_0 \eta_\eps,\cdots, \varphi_{k} \eta_\eps \rangle$ in the variational characterization of $\lambda_k(\tilde{\Sigma}_\eps, g_\eps, \beta_\eps)$. We have that

\begin{equation*} 
\begin{split}\int_\Sigma & \nabla\left( \eta_\eps \varphi_i^\eps\right)\nabla\left( \eta_\eps \varphi_j^\eps\right)  = \int_\Sigma (\eta_\eps^2 -1)\nabla \varphi_i \nabla \varphi_k + 2 \int_\Sigma \eta_\eps \nabla \eta_\eps \left(\varphi_i \nabla \varphi_j + \varphi_j \nabla \varphi_i\right)  \\
& + \int_\Sigma \vert \nabla \eta_\eps \vert^2 \varphi_i \varphi_j   = O(\eps) + O\left(\frac{\sqrt{\eps}}{\sqrt{\ln\frac{1}{\eps}}}\right) + O\left(\frac{1}{\ln \frac{1}{\eps}}\right)
\end{split} \end{equation*}
and for any $1\leq i,j \leq k$,
$$ \vert \beta_\eps( \varphi_{i} \eta_\eps , \varphi_{j} \eta_\eps ) - \int_{\tilde{\Sigma}_\eps} \eta_\eps^2 \varphi_i^\eps \varphi_j^\eps dA_{\tilde{g}_\eps} \vert \leq \Vert \beta_\eps - 1 \Vert_{\tilde{\Sigma}_{\eps},\tilde{g}_\eps} \Vert \varphi_i \eta_\eps \Vert_{H^1(\tilde{\Sigma}_{\eps},\tilde{g}_\eps)} \Vert \varphi_j \eta_\eps \Vert_{H^1(\tilde{\Sigma}_{\eps},\tilde{g}_\eps)} = O\left( \delta_\eps \right) $$
by \eqref{eq:disttilderegularized} so that
$$   \beta_\eps( \varphi_{i} \eta_\eps , \varphi_{j} \eta_\eps ) = \int_\Sigma \varphi_i\varphi_j + O(\delta_\eps). $$
Then
$$ \lambda_k^\eps \leq \sup_{a \in \mathbb{S}^k} \frac{ \int_{\tilde{\Sigma}_\eps} \vert \nabla \left( \sum_i a_i \eta_\eps \varphi_\eps^i \right) \vert^2_{g_\eps} dA_{g_\eps} }{ \beta_\eps\left( \left( \sum_i a_i \eta_\eps \varphi_\eps^i \right),\left( \sum_i a_\eps^i \eta_\eps \varphi_\eps^i \right)\right) } =  \frac{ \sum_{i,j} a_i^\eps a_j^\eps \int_{\tilde{\Sigma}_\eps} \langle \nabla \eta_\eps \varphi_i^\eps,\nabla \eta_\eps \varphi_j^\eps \rangle_{g_\eps} }{\sum_{i,j} a_i^\eps a_j^\eps \beta_\eps(\eta_\eps\varphi_i^\eps, \eta_\eps\varphi_j^\eps) } $$
where $a_\eps$ realizes the supremum. Then, by \eqref{eq:defdeltaeps}
\begin{equation*}
\begin{split}
 \frac{ \sum_{i,j} a_i^\eps a_j^\eps \int_{\tilde{\Sigma}_\eps} \langle \nabla \eta_\eps \varphi_i^\eps,\nabla \eta_\eps\varphi_j^\eps \rangle_{g_\eps} }{\sum_{i,j} a_i^\eps a_j^\eps \beta_\eps(\varphi_i\eta_\eps,\varphi_j\eta_\eps) } & =  \frac{\sum_i (a_i^\eps)^2 \int_{\Sigma} \vert \nabla \varphi_i \vert^2_{g} dA_g + O\left(\sqrt{\frac{\eps}{\ln\frac{1}{\eps}}}\right) + O\left(\frac{1}{\ln \frac{1}{\eps}}\right) }{\sum_i (a_i^\eps)^2 \int_\Sigma \varphi_i^2 - O(\eps\ln \frac{1}{\eps}) }  \\
 & \leq \frac{\lambda_k+ O\left(\frac{1}{\ln \frac{1}{\eps}}\right)}{1-O(\delta_\eps)} 
\end{split}
\end{equation*}
and we obtain the expected result.
\end{proof}

Up to the extraction of a subsequence, we assume that for all $i$, 
$$\lambda_i^\eps \to \nu_i$$
as $\eps \to 0$. Since $\sharp \left( \{\lambda_i^\eps\}_{i \leq n_\eps} \right) \leq m$, $\{\nu_i\}$ is also finite even if $n_\eps \to +\infty$ as $\eps\to 0$. We denote $\left( \mu_j \right)$ an increasing sequence such that $\{ \lambda_k \}_{1\leq k \leq m} = \{ \mu_j \}_{1\leq j\leq J}$
$$ A_j := \{ i \in \mathbb{N} ; \nu_i = \mu_j \} $$
$$ A_j^\eps := A_j \cap \{1,\cdots,n_\eps\} $$
Since the increasing sequence $\{\lambda_i^\eps\}_{1\leq i \leq n_\eps}$ contains at most $m$ numbers, if $n_\eps \to +\infty$, $A_J$ is infinite and $A_j^\eps = A_j$ are finite for $j<J$.

We also denote for $\mu \geq 0 $. $E_{=\mu}(g)$ the set of eigenfunctions of $(\Sigma,g)$ associated to the eigenvalue $\mu$. Of course, if $\mu$ is not an eigenvalue, $E_{=\mu}(g) = \{0\}$. We also denote
$$ E_{\leq \mu} := \sum_{0 \leq \nu \leq \mu} E_{=\nu}(g) \text{ and } E_{< \mu} := \sum_{0\leq \nu < \mu} E_{=\nu}(g).$$
We also denote $\pi_{=\mu}$, $\pi_{\leq \mu}$, $\pi_{<\mu}$ the orthogonal projections with respect to $L^2(g)$ of $E_{=\mu}(g)$, $E_{\leq\mu}(g)$, $E_{<\mu}(g)$.

We let $\eta_\eps \in \mathcal{C}^\infty\left(\Sigma_\eps \right)$ be a function such that $\eta_\eps =1$ in $\Sigma_{\sqrt{\eps}}$, $0\leq \eta_\eps \leq 1$ and 
\begin{equation} \label{eq:defetaepscap} \int_{\Sigma} \vert \nabla \eta_\eps \vert^2 \leq \frac{C}{\ln \frac{1}{\eps}} \end{equation}

\begin{lem} \label{lem:mainproj}
Let $1 \leq j_0 \leq J$ and $\varphi_1,\cdots, \varphi_{a_{j_0}}$ be an orthonormal family of eigenfunctions associated to all the eigenvalues that belong to $\Lambda_{j_0} = \{ \lambda_i ; \lambda_i \neq \mu_j, 0\leq i \leq m  \} $. Then for all $i \in A_{j_0}^\eps$,
$$  \sum_{j=1}^{a_{j_0}} \left( \int_{\Sigma} \eta_\eps \frac{\phi_\eps^i}{\omega_\eps} \varphi_j dA_g \right)^2 \leq  \frac{C}{\ln\frac{1}{\eps}} \beta_\eps(\phi_i^\eps,\phi_i^\eps) $$
\end{lem}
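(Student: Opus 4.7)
Plan: Write $u:=\phi_i^\eps/\omega_\eps$ and $v:=\eta_\eps u$, the latter extended by zero off $\Sigma_\eps$; the extension lies in $H^1(\Sigma)$ since $\eta_\eps$ vanishes on $\partial\Sigma_\eps$. The pointwise estimate $\omega_\eps^2\ge\lambda_i^\eps(\phi_i^\eps)^2$ gives $\|u\|_{L^\infty}\le(\lambda_i^\eps)^{-1/2}$, which is uniformly bounded for $i\in A_{j_0}^\eps$ (since $j_0\ge 1$ forces $\lambda_i^\eps\to\mu_{j_0}>0$); together with the identity $\nabla u=\omega_\eps^{-1}\nabla\phi_i^\eps-\omega_\eps^{-1}u\nabla\omega_\eps$, Claim~\ref{cl:cvreplacement} (yielding $\|\nabla\omega_\eps\|_{L^2}^2=O(\delta_\eps)$), and the eigenvalue equation tested against $\phi_i^\eps$ itself, this gives $\|\nabla u\|_{L^2(g_\eps)}\le C\sqrt{\beta_\eps(\phi_i^\eps,\phi_i^\eps)}+O(\sqrt{\delta_\eps})$. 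All the subsequent norms of $v$ and $u$ are thus controlled by $\sqrt{\beta_\eps(\phi_i^\eps,\phi_i^\eps)}$ up to lower-order errors.

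The heart of the proof is to derive an approximate eigenvalue equation for $v$ on $(\Sigma,g)$. For $\Phi\in H^1\cap L^\infty(\Sigma)$, insert the test function $\psi:=\eta_\eps\Phi/\omega_\eps\in H^1(\tilde\Sigma_\eps)$ (extended by $0$ on the handle) into $\int_{\tilde\Sigma_\eps}\langle\nabla\phi_i^\eps,\nabla\psi\rangle_{g_\eps}dA_{g_\eps}=\lambda_i^\eps\beta_\eps(\phi_i^\eps,\psi)$. Expanding $\nabla\psi$ and $\nabla\phi_i^\eps=\omega_\eps\nabla u+u\nabla\omega_\eps$ by the Leibniz rule and collecting every term containing $\nabla\omega_\eps$ into an $\omega$-remainder $E_\omega(\Phi)$, Cauchy--Schwarz and Claim~\ref{cl:cvreplacement} bound $|E_\omega(\Phi)|\le C\sqrt{\delta_\eps}\,\|\Phi\|_{H^1}\sqrt{\beta_\eps(\phi_i^\eps,\phi_i^\eps)}$. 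Transferring the remaining integral from $g_\eps$ to $g$ on $\Sigma_\eps$ costs $O(\delta_\eps)\|u\|_{H^1}\|\Phi\|_{H^1}$; the product-rule identity
\[
\nabla u\cdot\nabla(\eta_\eps\Phi) = \nabla v\cdot\nabla\Phi + \nabla\eta_\eps\cdot(\Phi\nabla u - u\nabla\Phi)
\]
together with $\beta_\eps(\phi_i^\eps,\eta_\eps\Phi/\omega_\eps)=L_{\beta_\eps}(v\Phi)=\beta_\eps(v,\Phi)$ then yields
\[
\int_\Sigma\nabla v\cdot\nabla\Phi\,dA_g = \lambda_i^\eps\beta_\eps(v,\Phi) + R(\Phi),
\]
with
\[
|R(\Phi)| \le \|\nabla\eta_\eps\|_{L^2}\bigl(\|\Phi\|_{L^\infty}\|\nabla u\|_{L^2}+\|u\|_{L^\infty}\|\nabla\Phi\|_{L^2}\bigr) + C\sqrt{\delta_\eps}\,\|\Phi\|_{H^1}\sqrt{\beta_\eps(\phi_i^\eps,\phi_i^\eps)}.
\]

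To finish, decompose $\Phi_c:=\sum_j c_j\varphi_j=\Phi_c^H+\Phi_c^L$ where $H:=\{j:\lambda_j>\mu_{j_0}\}$, $L:=\{j:\lambda_j<\mu_{j_0}\}$ (exhaustive because $\lambda_j\in\Lambda_{j_0}$ forces $\lambda_j\ne\mu_{j_0}$). Apply the approximate equation with $\Phi=\Phi_c^H$: $L^2(g)$-orthonormality of the $\varphi_j$ and $\Delta_g\varphi_j=\lambda_j\varphi_j$ give left side $\sum_{j\in H}\lambda_j c_j^2$, while the $\delta_\eps$-closeness of $\beta_\eps$ to $\tilde\beta_\eps$ (a constant multiple of $dA_{\tilde g_\eps}$) produces $\lambda_i^\eps\beta_\eps(v,\Phi_c^H)=\kappa_\eps\sum_{j\in H}c_j^2+O(\delta_\eps)\sum_{j\in H}c_j^2$ for some $\kappa_\eps\to\mu_{j_0}$. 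The spectral gap $\delta_0:=\inf_{\lambda\in\Lambda_{j_0}}|\lambda-\mu_{j_0}|>0$ then provides $\lambda_j-\kappa_\eps\ge\delta_0/2$ for $j\in H$ and $\eps$ small. Since $\varphi_1,\dots,\varphi_{a_{j_0}}$ is a fixed finite orthonormal family of smooth eigenfunctions, $\|\Phi_c^H\|_{L^\infty}+\|\nabla\Phi_c^H\|_{L^2}\le C\sqrt{\sum_{j\in H}c_j^2}$, and the inequality $(\delta_0/2)\sum_{j\in H}c_j^2\le|R(\Phi_c^H)|$ divides by $\sqrt{\sum_{j\in H}c_j^2}$ to give $\sum_{j\in H}c_j^2\le(C/\ln(1/\eps))\beta_\eps(\phi_i^\eps,\phi_i^\eps)$. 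The identical argument with $\Phi_c^L$ (where $\lambda_j-\kappa_\eps\le-\delta_0/2$) supplies the matching bound on $L$; adding proves the lemma.

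The main difficulty is verifying that every error term is genuinely dominated by $\|\nabla\eta_\eps\|_{L^2}\le C/\sqrt{\ln(1/\eps)}$: the choice $\delta_\eps=c\eps\sqrt{\ln(1/\eps)}$ makes $\sqrt{\delta_\eps}\cdot\sqrt{\ln(1/\eps)}=\sqrt{c\eps}\cdot(\ln(1/\eps))^{3/4}\to 0$, so the $\omega$-error from Claim~\ref{cl:cvreplacement} and the $O(\delta_\eps)$ metric mismatch are both strictly subdominant to the cutoff contribution.
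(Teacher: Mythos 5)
Your overall strategy---testing an approximate eigenvalue equation for $v := \eta_\eps\phi_i^\eps/\omega_\eps$ against the high- and low-frequency parts of $\sum_j c_j\varphi_j$ and invoking the spectral gap $\delta_0 = \inf_{\lambda\in\Lambda_{j_0}}|\lambda - \mu_{j_0}|$---is a legitimate reorganization of the argument, and is morally parallel to what the paper does (the paper tests each fixed $\varphi_j$ directly against the identity $\lambda\int\eta_\eps u\,\varphi_j = \int\nabla(\eta_\eps u)\nabla\varphi_j$ and decomposes the discrepancy into seven explicit error integrals). However, there is a genuine gap in your estimate of the cutoff remainder $R(\Phi)$, and it is in exactly the place that makes the statement sharp.

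In your bound
\[
|R(\Phi)| \le \|\nabla\eta_\eps\|_{L^2}\bigl(\|\Phi\|_{L^\infty}\|\nabla u\|_{L^2}+\|u\|_{L^\infty}\|\nabla\Phi\|_{L^2}\bigr) + C\sqrt{\delta_\eps}\,\|\Phi\|_{H^1}\sqrt{\beta_\eps(\phi_i^\eps,\phi_i^\eps)},
\]
the problematic term is $\|\nabla\eta_\eps\|_{L^2}\,\|u\|_{L^\infty}\,\|\nabla\Phi\|_{L^2}$. You correctly observe that $\|u\|_{L^\infty}\le(\lambda_i^\eps)^{-1/2}$, but this is only $O(1)$, not $O\bigl(\sqrt{\beta_\eps(\phi_i^\eps,\phi_i^\eps)}\bigr)$. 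After inserting $\Phi=\Phi_c^H$, dividing by $\sqrt{\textstyle\sum_{j\in H}c_j^2}$ and squaring, this term produces an upper bound of the form $\frac{C}{\ln 1/\eps}$ \emph{with no factor of $\beta_\eps(\phi_i^\eps,\phi_i^\eps)$}. This is strictly weaker than the lemma. It matters because $\beta_\eps(\phi_i^\eps,\phi_i^\eps)$ need not be (and in general is not) bounded below: equation \eqref{eq:sumtieps} only controls the \emph{sum} $\sum_{i:\lambda_i^\eps=\lambda_k^\eps}\beta_\eps(\phi_i^\eps,\phi_i^\eps)=\sum t_i^\eps$, while the number of summands is $n_\eps$ which may tend to infinity, so individual terms can be arbitrarily small. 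The lemma, as stated and as used later (in Step~2 of the proof of Claim~\ref{cl:estimatediffeigen}, where it is summed over $i\in A_j^\eps$), genuinely needs the $\beta_\eps(\phi_i^\eps,\phi_i^\eps)$ factor on the right.

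The fix is to pair the H\"older exponents the other way on this term: since the $\varphi_j$ are a \emph{fixed} finite orthonormal family of smooth eigenfunctions on $(\Sigma,g)$, you have $\|\nabla\Phi_c^{H,L}\|_{L^\infty}\le C\sqrt{\textstyle\sum c_j^2}$, and then
\[
\Bigl|\int_\Sigma u\,\nabla\eta_\eps\cdot\nabla\Phi\Bigr| \le \|\nabla\Phi\|_{L^\infty}\,\|\nabla\eta_\eps\|_{L^2}\,\|u\|_{L^2},
\]
where $\|u\|_{L^2}\le\|\phi_i^\eps\|_{L^2}\le C\sqrt{\beta_\eps(\phi_i^\eps,\phi_i^\eps)}$ by the Poincar\'e inequality on $V_{\beta_\eps}$. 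This is precisely the pairing the paper uses in its term $I$. With this change (and the companion term $\|\Phi\|_{L^\infty}\|\nabla\eta_\eps\|_{L^2}\|\nabla u\|_{L^2}$ handled similarly, controlling $\|\nabla u\|_{L^2}^2$ via $\|\nabla\phi_i^\eps\|_{L^2}^2=\lambda_i^\eps\beta_\eps(\phi_i^\eps,\phi_i^\eps)$ plus the $O(\delta_\eps)$ contribution from $\nabla\omega_\eps$, which is subordinate by the choice of $\delta_\eps$), the remaining steps of your plan go through.
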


\begin{proof}
Let $1 \leq j\leq a_{j_0}$, $\lambda$ the eigenvalue of $\varphi_j$ and $i \in A_{j_0}^\eps$, we have that
\begin{equation*}
\begin{split} \lambda \int_\Sigma \eta_\eps \frac{\phi^i_\eps}{\omega_\eps} \varphi_j & = \int_\Sigma \eta_\eps \frac{\phi^i_\eps}{\omega_\eps} \Delta_g \varphi_j = \int_\Sigma \nabla\left( \eta_\eps \frac{\phi^i_\eps}{\omega_\eps} \right) \nabla \varphi_j dA_{g} \\
 &=  \int_{\tilde{\Sigma}_\eps}  \frac{\phi_i^\eps}{\omega_\eps} \nabla \eta_\eps  \nabla \hat{\varphi}_j dA_{\tilde{g}_\eps} + \int_{\tilde{\Sigma}_\eps}  \eta_\eps \nabla \left( \frac{\phi_i^\eps}{\omega_\eps} \right) \nabla \hat{\varphi}_j dA_{\tilde{g}_\eps}  \\
\end{split}
\end{equation*}
that
\begin{equation*}
\begin{split}
\int_{\tilde{\Sigma}_\eps} \nabla \left( \frac{\phi_i^\eps}{\omega_\eps} \right) \nabla \hat{\varphi}_j dA_{g_\eps} = \int_{\tilde{\Sigma}_\eps} \nabla \left( \phi_i^\eps\left(\frac{1}{\omega_\eps}-1\right) \right) \nabla \hat{\varphi}_j dA_{g_\eps} +  \int_{\tilde{\Sigma}_\eps} \nabla \phi_i^\eps \nabla \hat{\varphi}_j dA_{g_\eps} \\
=  \int_{\tilde{\Sigma}_\eps} \nabla \left( \phi_i^\eps\left(\frac{1}{\omega_\eps}-1\right) \right) \nabla \hat{\varphi}_j dA_{g_\eps} + \lambda_i^\eps \beta_\eps(\phi_i^\eps,\hat{\varphi}_j) 
\end{split}
\end{equation*}
so that
\begin{equation*}
\begin{split}
\left(\lambda - \lambda_i^\eps \right) & \int_\Sigma \eta_\eps \frac{\phi^i_\eps}{\omega_\eps} \varphi_j  dA_g =  I + II \hspace{1mm} / + III \hspace{1mm} / + IV + V \hspace{1mm} / + VI + VII  \\ = & \int_{\Sigma}  \frac{\phi_i^\eps}{\omega_\eps} \nabla \eta_\eps  \nabla \varphi dA_{\tilde{g}_\eps} + \int_{\tilde{\Sigma}_\eps} \left( \eta_\eps-1 \right) \nabla \left( \frac{\phi_i^\eps}{\omega_\eps} \right) \nabla \hat{\varphi}_j dA_{\tilde{g}_\eps} \\
& + \left( \int_{\tilde{\Sigma}_\eps} \left\langle \nabla \left( \frac{\phi_i^\eps}{\omega_\eps} \right) \nabla \hat{\varphi}_j \right\rangle_{\tilde{g}_\eps} dA_{\tilde{g}_\eps} - \int_{\tilde{\Sigma}_\eps} \left\langle \nabla \left( \frac{\phi_i^\eps}{\omega_\eps} \right) \nabla \hat{\varphi}_j \right\rangle_{g_\eps} dA_{g_\eps} \right) \\
& +  \int_{\tilde{\Sigma}_\eps} \nabla \left( \phi_i^\eps\left(\frac{1}{\omega_\eps}-1\right) \right) \nabla \hat{\varphi}_j dA_{g_\eps} + \lambda_i^\eps \beta_\eps\left(\phi_i^\eps \left(1-\frac{1}{\omega_\eps}\right),\hat{\varphi}_j\right) \\
&+ \lambda_i^\eps \left( \beta_\eps\left( \frac{\phi_i^\eps}{\omega_\eps},\hat{\varphi}_j\right) - \int_{\tilde{\Sigma}_\eps} \frac{\phi_i^\eps}{\omega_\eps}\hat{\varphi}_j dA_{\tilde{g}_\eps} \right) + \lambda_i^\eps \int_{\tilde{\Sigma}_\eps} \frac{\phi_i^\eps}{\omega_\eps}\varphi_j \left(\eta_\eps dA_{g_\eps} - dA_g \right)  
\end{split}
\end{equation*}
and by Cauchy-Schwarz inequalities
%$$ I^2 \leq  \int_{\Sigma} \left( \frac{\phi_i^\eps}{\omega_\eps} \right)^2 \left\vert \nabla \eta_\eps \right\vert_g^2 dA_g \int_{D_{\sqrt{\eps}}(p,q) \setminus D_\eps(p,q)} \left\vert \nabla \hat{\varphi}_j \right\vert_g^2   \leq C \eps  \int_{\tilde{\Sigma}_\eps} \left( \frac{\phi_i^\eps}{\omega_\eps} \right)^2 \left\vert \nabla \eta_\eps \right\vert_g^2 dA_g$$
$$ I^2 \leq C   \int_\Sigma \left\vert \nabla \eta_\eps \right\vert_g^2 dA_g \int_{\tilde{\Sigma}_\eps} \left( \frac{\phi_i^\eps}{\omega_\eps} \right)^2 dA_{\tilde{g}_\eps} \leq \frac{C}{\ln \frac{1}{\eps}} \int_{\tilde{\Sigma}_\eps} \left( \frac{\phi_i^\eps}{\omega_\eps} \right)^2 dA_{\tilde{g}_\eps} \text{ By \eqref{eq:defetaepscap}} $$
$$ II^2 \leq \int_{\tilde{\Sigma}_\eps} \left\vert \nabla  \frac{\phi_i^\eps}{\omega_\eps}\right\vert^2 dA_{\tilde{g}_\eps} \int_{D_{\sqrt{\eps}}(p,q) \setminus D_\eps(p,q)} \left\vert \nabla \hat{\varphi}_j \right\vert_g^2 dA_g \leq C \eps \int_{\tilde{\Sigma}_\eps} \left\vert \nabla  \frac{\phi_i^\eps}{\omega_\eps}\right\vert^2 dA_{\tilde{g}_\eps}   $$
$$ III^2 \leq \delta_\eps^2 \int_{\tilde{\Sigma}_\eps} \left\vert \nabla  \frac{\phi_i^\eps}{\omega_\eps}\right\vert_{\tilde{g}_\eps}^2 dA_{\tilde{g}_\eps} \text{ By \eqref{eq:disttilderegularized}}  $$
\begin{equation*} 
\begin{split}IV^2 \leq & 2 C \int_{\tilde{\Sigma}_\eps} \left( 1-\frac{1}{\omega_\eps}\right)^2 \vert \nabla \phi_i^\eps \vert_{\tilde{g}_\eps}^2  dA_{\tilde{g}_\eps} \int_{\tilde{\Sigma}_\eps}  \vert \nabla \phi_i^\eps \vert_{\tilde{g}_\eps}^2  dA_{\tilde{g}_\eps} 
\\ & + 2C \int_{\tilde{\Sigma}_\eps} \left(\frac{\phi_i^\eps}{\omega_\eps^2}\right)^2 dA_{\tilde{g}_\eps} \int_{\tilde{\Sigma}_\eps} \vert \nabla \omega_\eps\vert_{\tilde{g}_\eps}^2 dA_{\tilde{g}_\eps}  \\
\leq & C' \delta_\eps \left( \int_{\tilde{\Sigma}_\eps} \vert \nabla \phi_i^\eps \vert_{\tilde{g}_\eps}^2  dA_{\tilde{g}_\eps} + \int_{\tilde{\Sigma}_\eps} \left(\frac{\phi_i^\eps}{\omega_\eps^2}\right)^2 dA_{\tilde{g}_\eps} \right) \text{by \eqref{eqomegaepsto1}} 
\end{split}
\end{equation*}
$$ V^2 \leq C \beta_\eps( \phi_i^\eps ,\phi_i^\eps) \beta_\eps \left(1-\frac{1}{\omega_\eps},1-\frac{1}{\omega_\eps}\right) \leq C \delta_\eps \beta_\eps( \phi_i^\eps ,\phi_i^\eps) $$
since $L_\eps(\omega_\eps^2) = L_\eps(1) + L_\eps(\theta_\eps^2) = 1 + O(\delta_\eps)$ and $\omega_\eps \geq 1$ (beginning of the proof of Claim \eqref{cl:cvreplacement})
$$ VI^2 \leq C \delta_\eps \left( \int_{\tilde{\Sigma}_\eps} \left\vert \nabla \frac{\phi_i^\eps}{\omega_\eps} \right\vert_{\tilde{g}_\eps}^2  dA_{\tilde{g}_\eps} + \int_{\tilde{\Sigma}_\eps} \left(\frac{\phi_i^\eps}{\omega_\eps}\right)^2 dA_{\tilde{g}_\eps} \right)  \text{ by \eqref{eq:disttilderegularized}}  $$
and
$$ VII^2 \leq C \left(\eps^2+\delta_\eps^2\right) \int_{\tilde{\Sigma}_\eps} \left(\frac{\phi_i^\eps}{\omega_\eps}\right)^2 dA_{\tilde{g}_\eps}$$
Then using again that $\left\Vert  \frac{\phi_i^\eps}{\omega_\eps} \right\Vert_{H^1(\tilde{g}_\eps)}^2$ and $\left\Vert  \phi_i^\eps \right\Vert_{H^1(\tilde{g}_\eps)}^2$ are controled by $\beta_\eps(\phi_i^\eps,\phi_i^\eps)$, and using \eqref{eq:defdeltaeps} we obtain
$$  \sum_{j=1}^{a_{j_0}} \left( (\lambda-\lambda_i^\eps) \int_{\Sigma} \eta_\eps \frac{\phi_\eps^i}{\omega_\eps} \varphi_j  \right)^2 \leq \frac{C}{\ln \frac{1}{\eps}} \beta_\eps(\phi_i^\eps,\phi_i^\eps). $$
We then use that $\vert\lambda-\lambda_i^\eps\vert$ is uniformy lower bounded to conclude the proof.
\end{proof}

\begin{cl} \label{cl:estimatediffeigen} If $k$ is such that for any $x$, $\vert\partial_k F(x)\vert < 0$,
$$  \vert \lambda_k^\eps - \lambda_k \vert = O\left(\frac{1}{\ln \frac{1}{\eps}}\right) $$
and 
$$ \int_{C_{l,\eps}} \left\vert \nabla \frac{\Phi_\eps}{\omega_\eps} \right\vert_{g_\eps}^2 dA_{g_\eps}  =: V_\eps = O\left(\frac{1}{\ln \frac{1}{\eps}}\right) $$
as $\eps \to 0$.
\end{cl}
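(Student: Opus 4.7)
The upper bound $\lambda_k^\eps \le \lambda_k + O(1/\ln(1/\eps))$ is Claim~\ref{cl:upperboundlambdakeps}. For the matching lower bound, I would use the Ekeland near-minimality: plugging the trivial direction in \eqref{eq:mainekeland} and combining with \eqref{eq:tildeminseq} and the minimality of $g$ gives $E(g_\eps,\beta_\eps)\le E(g,1)+\delta_\eps^2$. The integral mean value theorem rewrites this as $\sum_i c_i^\eps(\bar\lambda_i^\eps-\bar\lambda_i)\le\delta_\eps^2$ with $c_i^\eps := \int_0^1 \partial_iF(\bar\lambda + s(\bar\lambda^\eps - \bar\lambda))\,ds \le 0$. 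Since the upper bound gives $\bar\lambda_i^\eps-\bar\lambda_i\le O(1/\ln(1/\eps))$ for every $i$ and the $c_i^\eps$ are bounded, each summand for $i\neq k$ is bounded below by $-O(1/\ln(1/\eps))$; isolating the $k$-th term together with the strict sign of $\partial_kF$ near the limit yields $\bar\lambda_k^\eps\ge\bar\lambda_k-O(1/\ln(1/\eps))$. Absorbing $\beta_\eps(1,1)=1+O(\delta_\eps)$ transfers this to $\lambda_k^\eps$.

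The same argument applies to every index $i$ with $\partial_iF<0$, giving $|\lambda_i^\eps-\mu_{j_0(i)}|=O(1/\ln(1/\eps))$ for every coordinate with nontrivial weight $t_i^\eps$ (indices with $\partial_iF=0$ have $t_i^\eps=0$). Splitting $V_\eps=\sum_iV_\eps^i$ with $V_\eps^i:=\int_{C_{l,\eps}}|\nabla(\phi_i^\eps/\omega_\eps)|^2_{g_\eps}dA_{g_\eps}$, the strategy is to prove $V_\eps^i\le O(t_i^\eps/\ln(1/\eps))$ per coordinate and then conclude via $\sum_i t_i^\eps = O(1)$, which follows from $\sum_i\lambda_i^\eps t_i^\eps = \beta_\eps(1,1) = 1+O(\delta_\eps)$ and the positive lower bound on $\lambda_i^\eps$ for $i\ge 1$. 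I would test $\tilde\phi_i:=\eta_\eps\phi_i^\eps/\omega_\eps$ (extended by zero on $\mathbb{D}_\eps(p)\cup\mathbb{D}_\eps(q)$) in the Rayleigh quotient on $(\Sigma,g)$. Expanding $\tilde\phi_i$ in an orthonormal eigenbasis $(\varphi_j)$ of $(\Sigma,g)$, Lemma~\ref{lem:mainproj} gives
\[ \sum_{\lambda_j<\mu_{j_0(i)}}\left(\int_\Sigma\tilde\phi_i\,\varphi_j\,dA_g\right)^{2}\le \frac{C\,t_i^\eps}{\ln(1/\eps)}, \]
while $\|\tilde\phi_i\|_{L^2(\Sigma)}^2=t_i^\eps(1+o(1))$ comes from Claim~\ref{cl:cvreplacement} combined with the smallness of the handle and annulus areas. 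Since eigenvalues above $\mu_{j_0(i)}$ only raise the quotient, one obtains $R_g(\tilde\phi_i)\ge\mu_{j_0(i)}-O(1/\ln(1/\eps))$. On the other hand, expanding the numerator $\int_\Sigma|\nabla\tilde\phi_i|^2$ and using Claim~\ref{cl:cvreplacement} together with $|\lambda_i^\eps-\mu_{j_0(i)}|=O(1/\ln(1/\eps))$ produces $V_\eps^i\le O(t_i^\eps/\ln(1/\eps)) + \mathcal E_\eps^i$, where $\mathcal E_\eps^i$ collects the cutoff cross-terms arising from $\nabla\eta_\eps$.

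The main technical difficulty lies in proving that the cutoff corrections $\mathcal E_\eps^i$ are of size $O(t_i^\eps/\ln(1/\eps))$: a direct use of Claim~\ref{cl:linftyesteigenfunctions} ($\|\phi_i^\eps\|_\infty\le C\sqrt{\ln(1/\eps)\,t_i^\eps}$) combined with $\int|\nabla\eta_\eps|^2\le C/\ln(1/\eps)$ only estimates these terms at size $O(t_i^\eps)$, which is insufficient. The needed improvement relies on choosing $\eta_\eps$ logarithmic in the annulus $\mathbb{D}_{\sqrt\eps}(p,q)\setminus\mathbb{D}_\eps(p,q)$ so that $\Delta\eta_\eps=0$ there, decomposing $\phi_i^\eps$ into its circle mean $f_\eps(r)=\frac{1}{2\pi r}\int_{\mathbb{S}_r}\phi_i^\eps$ plus the oscillation $\phi_i^\eps-f_\eps$ whose $L^\infty$ norm is controlled independently of $\eps$ by the argument of Claim~\ref{cl:linftyesteigenfunctions}, and exploiting the $H^1$-smallness of $\omega_\eps-1$ from Claim~\ref{cl:cvreplacement}, in the same error-accounting spirit as the proof of Lemma~\ref{lem:mainproj}. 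Summing the per-coordinate bound then yields $V_\eps=O(1/\ln(1/\eps))$.
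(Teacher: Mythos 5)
Your bound $|\lambda_k^\eps-\lambda_k|=O(1/\ln(1/\eps))$ via the Ekeland inequality and the integral mean value theorem is a valid and in fact somewhat more direct route than the paper takes for that half of the claim; it is precisely the observation that $E(g_\eps,\beta_\eps)\le E(g,1)+\delta_\eps^2$ together with Claim~\ref{cl:upperboundlambdakeps} and the strict sign of $\partial_k F$ near the limit point. The paper instead extracts both the eigenvalue gap \emph{and} the handle-energy bound $V_\eps$ simultaneously from a single test in the min-max characterization of $\mu_j$, so for the eigenvalue gap alone your version is a cleaner argument.

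For $V_\eps$, however, your per-coordinate decomposition contains a genuine gap, and it is exactly the one you flag as ``the main technical difficulty''. Two problems. First, Claim~\ref{cl:linftyesteigenfunctions} is stated for classical Laplace eigenfunctions on $(\tilde\Sigma_\eps,\tilde g_\eps)$ normalized in $L^2$, not for the generalized $\beta_\eps$-eigenfunctions $\phi_i^\eps$, so the estimate $\|\phi_i^\eps\|_\infty\lesssim\sqrt{\ln(1/\eps)\,t_i^\eps}$ is not in the paper and does not follow from what is proved; $\phi_i^\eps$ only solves $\Delta_{g_\eps}\phi_i^\eps=\lambda_i^\eps\,\beta_\eps(\phi_i^\eps,\cdot)$, which gives no pointwise control without further work. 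Second, even granting some $L^\infty$ control, there is no reason the cutoff cross-term $\lambda_i^\eps\int_\Sigma\eta_\eps\nabla\eta_\eps\cdot\nabla(\phi_i^\eps/\omega_\eps)^2$ should scale like $t_i^\eps/\ln(1/\eps)$ individually in $i$: neither the logarithmic profile of $\eta_\eps$ nor the circle-mean splitting improves the per-coordinate size, because the relevant boundary average of $(\phi_i^\eps/\omega_\eps)^2$ on $\mathbb{S}_{\sqrt\eps}(p,q)$ has no a priori $t_i^\eps$-scaling. The paper circumvents this entirely by \emph{not} working per coordinate: summing over all $i$ first and using the pointwise algebraic identity
\[
\sum_i\lambda_i^\eps\Bigl(\frac{\phi_i^\eps}{\omega_\eps}\Bigr)^2=\frac{|\Phi_\eps|_{\Lambda_\eps}^2}{\omega_\eps^2}=1-\Bigl(\frac{\theta_\eps}{\omega_\eps}\Bigr)^2,
\]
so that the whole cutoff cross-term collapses to $-\int_\Sigma\eta_\eps\nabla\eta_\eps\cdot\nabla(\theta_\eps/\omega_\eps)^2$. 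This is then killed by Cauchy--Schwarz together with $\int|\nabla\eta_\eps|^2=O(1/\ln(1/\eps))$, the pointwise bound $\theta_\eps/\omega_\eps\le1$, and the $H^1$-smallness $\|\theta_\eps\|_{H^1}^2=O(\delta_\eps)$ from Proposition~\ref{prop:eullag} and Claim~\ref{cl:cvreplacement}, yielding $O(\sqrt{\delta_\eps/\ln(1/\eps)})$. That algebraic summation trick, rather than any refinement of the $L^\infty$ estimate or the cutoff profile, is the missing step in your outline.
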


\begin{proof}
\noindent\textbf{Step 1:} $\lambda_k^\eps \to \lambda_k$ as $\eps \to 0$:

\medskip 

\noindent\textbf{Proof of Step 1:} We let $\nu_k := \lim_{\eps\to 0} \lambda_k^\eps$. By Claim \ref{cl:upperboundlambdakeps}, $\nu_k \leq \lambda_k$ and 
$$  F(\lambda_1,\cdots,\lambda_m) \leq F( \nu_1,\cdots,\nu_m ) = \lim_{\eps\to 0} F( \lambda_1^\eps,\cdots,\lambda_m^\eps ) = F(\lambda_1,\cdots,\lambda_m) $$
and by monotonicity assumptions on $F$, we deduce $\lambda_k = \nu_k$.

\medskip

From now on, we let $j$ be such that $\mu_j = \lambda_k$.

\medskip

\noindent\textbf{Step 2:} 
$$ \sum_{i\in A_j^\eps} \left( \int_\Sigma \eta_\eps^2 \left\vert \nabla \left( \frac{\phi^i_\eps}{\omega_\eps}\right) \right\vert^2_g dA_g + \int_{C_{l,\eps}} \left\vert \nabla \left( \frac{\phi^i_\eps}{\omega_\eps}\right) \right\vert^2_{\tilde{g}_\eps} dA_{\tilde{g}_\eps} \right) \leq 
%\int_{\tilde{\Sigma}_\eps} \left\vert \nabla \phi_\eps^i \right\vert^2_{g_\eps} dA_{g_\eps} = 
\sum_{i\in A_j^\eps}\beta_\eps\left(\phi_\eps^i,\phi_\eps^i\right) \lambda_i^\eps + O\left(\delta_\eps^{\frac{1}{2}}\right) $$
and
$$ \sum_{i\in A_j^\eps} \int_\Sigma \left( \eta_\eps \frac{\phi^i_\eps}{\omega_\eps} - \pi_{< \lambda_k}\left( \eta_\eps \frac{\phi^i_\eps}{\omega_\eps} \right) \right)^2 dA_g  \geq \sum_{i\in A_j^\eps}\beta_\eps\left(\phi_\eps^i,\phi_\eps^i\right) \left(1-   O\left(\frac{1}{\ln \frac{1}{\eps}}\right)\right)  $$ 
%where $\pi_{<\lambda}$ is the orthonorgonal projection on the sum of eigenspaces associated to eigenvalues strictly less than $\lambda$ on $(\Sigma,g)$

\medskip

\noindent\textbf{Proof of Step 2:} 
Let's prove the first inequality. We have 
$$ \sum_{i\in A_j^\eps} \left\vert \nabla \frac{\phi_\eps^i}{\omega_\eps} \right\vert^2 = \sum_{i\in A_j^\eps} \frac{\left\vert \nabla \phi_\eps^i \right\vert^2}{\omega_\eps^2} + \sum_{i\in A_j^\eps} \left(\frac{\phi_\eps^i}{\omega_\eps}\right)^2 \frac{\left\vert \nabla \omega_\eps \right\vert^2}{\omega_\eps^2} - 2 \sum_{i\in A_j^\eps} \frac{\phi_i^\eps \nabla  \phi_i^\eps}{\omega_\eps} \frac{\nabla \omega_\eps}{\omega_\eps^2} $$ 
so that 
$$ \int_{\tilde{\Sigma}_\eps} \sum_{i\in A_j^\eps} \left\vert \nabla \frac{\phi_\eps^i}{\omega_\eps} \right\vert^2 \leq \sum_{i\in A_j^\eps}\beta_\eps\left(\phi_\eps^i,\phi_\eps^i\right) \lambda_i^\eps + O\left(\delta_\eps\right) + 2  \left(\sum_{i\in A_j^\eps} \int_{\tilde{\Sigma}_\eps} \left\vert \nabla \phi_i^\eps \right\vert^2 \right)^{\frac{1}{2}}  \delta_\eps^{\frac{1}{2}} = O\left(\delta_\eps^{\frac{1}{2}}\right) $$
and the first inequality follows. For the second inequality, we notice that
$$ \int_\Sigma \left( \eta_\eps \frac{\phi^i_\eps}{\omega_\eps} - \pi_{< \lambda_k}\left( \eta_\eps \frac{\phi^i_\eps}{\omega_\eps} \right) \right)^2 dA_g = \int_\Sigma \eta_\eps^2 \left(\frac{\phi^i_\eps}{\omega_\eps} \right)^2 dA_g -  \sum_{l=0}^{L(j)} \left(\int_\Sigma \varphi_{l} \eta_\eps \frac{\phi^i_\eps}{\omega_\eps} dA_g \right)^2 $$
where $\varphi_0,\cdots,\varphi_{L(j)}$ is an orthonormal family of all eigenfunctions on $(\Sigma,g)$ associated to eigenvalues $\lambda$ such that $\lambda < \mu_j$ and $L(j)$ is the maximal integer $l$ such that $\lambda_l < \mu_j$ and we obtain that
\begin{equation*}
\begin{split} \int_\Sigma \eta_\eps^2 \left(\frac{\phi^i_\eps}{\omega_\eps} \right)^2 dA_g = & \int_{\tilde{\Sigma}_\eps} \left(\eta_\eps^2-1\right) \left(\frac{\phi^i_\eps}{\omega_\eps} \right)^2 dA_{g_\eps} + \left( \int_{\tilde{\Sigma}_\eps}  \left(\frac{\phi^i_\eps}{\omega_\eps} \right)^2 dA_{g_\eps} - \beta_\eps\left(\frac{\phi^i_\eps}{\omega_\eps},\frac{\phi^i_\eps}{\omega_\eps}\right) \right) \\
& + L_\eps\left(  \left(\phi_\eps^i\right)^2 \left(\frac{1}{\omega_\eps^2}-1\right) \right) + \beta_\eps(\phi_\eps^i,\phi_\eps^i) 
\end{split} \end{equation*}
and summing over $A_j^\eps$ yields:
$$ \sum_{i\in A_j^\eps} \int_\Sigma \eta_\eps^2 \left(\frac{\phi^i_\eps}{\omega_\eps} \right)^2 dA_g \geq \sum_{i\in A_j^\eps}\beta_\eps(\phi_\eps^i,\phi_\eps^i) - O\left( \eps + \delta_\eps \right)  $$
and Lemma \ref{lem:mainproj} concludes the proof of Step 2.

\medskip

\noindent\textbf{Step 3:}
We test $\sqrt{\lambda_i^\eps} \left(\eta_\eps \frac{\phi_i^\eps}{\omega_\eps} - \pi_{< \lambda_k}\left(\eta_\eps \frac{\phi_i^\eps}{\omega_\eps}\right)\right)$ in the variational characterization of $\mu_j = \lambda_k = \lambda_{L(j)+1}$ for any $i\in A_j^\eps$. We obtain
\begin{equation*}
\begin{split} \lambda_k = \mu_j \leq \frac{\sum_{i\in A_j^\eps} \lambda_i^\eps \int_\Sigma \vert\nabla \eta_\eps \frac{\phi_i^\eps}{\omega_\eps} \vert^2_g dA_g }{ \sum_{i\in A_j^\eps} \lambda_i^\eps \int_\Sigma \left(\eta_\eps \frac{\phi_i^\eps}{\omega_\eps} - \pi_{<\lambda_k}\left(\eta_\eps \frac{\phi_i^\eps}{\omega_\eps} \right) \right)^2 dA_g }  =: \frac{N_j^\eps}{D_j^\eps}
\end{split}
\end{equation*}
\begin{equation*}
\begin{split}
& N_j^\eps  = \sum_{i\in A_j^\eps} \lambda_i^\eps \left( \int_\Sigma \eta_\eps^2 \left\vert\nabla  \frac{\phi_i^\eps}{\omega_\eps} \right\vert^2_g dA_g + 2 \int_\Sigma \eta_\eps  \frac{\phi_i^\eps}{\omega_\eps} \nabla \eta_\eps \nabla  \frac{\phi_i^\eps}{\omega_\eps}  dA_g + \int_\Sigma \left(\frac{\phi_i^\eps}{\omega_\eps}\right)^2 \vert\nabla  \eta_\eps \vert^2_g dA_g \right) \\
& \leq \sum_{i\in A_j^\eps} \lambda_i^\eps \left( \int_{\tilde{\Sigma}_{\eps}} \left\vert\nabla  \frac{\phi_i^\eps}{\omega_\eps} \right\vert^2_g dA_g - \int_{C_{l,\eps}} \left\vert\nabla  \frac{\phi_i^\eps}{\omega_\eps} \right\vert^2_g dA_g +  \int_\Sigma \eta_\eps  \nabla \eta_\eps \nabla \left( \frac{\phi_i^\eps}{\omega_\eps} \right)^2 dA_g \right)  + O\left(\frac{1}{\ln \frac{1}{\eps}}\right)
\end{split}
\end{equation*}

\medskip

From now on, all the following computations come by summing over all $k$ such that $F$ is not constant with respect to the $k$-th coordinate of $F$. However, if $k$ does not satisfy this property, we have that $t_k^\eps = t_k = 0$ that appear in all the terms we add if we sum over all the coordinates $k$. Therefore, we can sum over all $j \in \{1,\cdots,J\}$
$$ \sum_{j =1}^J\sum_{i\in A_j^\eps} \lambda_i^\eps \int_\Sigma \eta_\eps  \nabla \eta_\eps \nabla \left( \frac{\phi_i^\eps}{\omega_\eps} \right)^2 dA_g = - \int_{\Sigma} \eta_\eps \nabla \eta_\eps \nabla  \left( \frac{\theta_\eps}{\omega_\eps} \right)^2 $$
and using a Cauchy-Schwarz inequality,
$$ \left\vert \int_{\Sigma} \eta_\eps \nabla \eta_\eps \nabla  \left( \frac{\theta_\eps}{\omega_\eps} \right)^2 \right\vert \leq O\left(\frac{1}{\sqrt{\ln \frac{1}{\eps}}}\right) \int_{\tilde{\Sigma}_{\eps}} \left( \frac{\theta_\eps}{\omega_\eps} \right)^2  \left\vert \nabla \left( \frac{\theta_\eps}{\omega_\eps} \right)^2 \right\vert^2 = O\left(\sqrt{\frac{\delta_\eps}{\ln \frac{1}{\eps}}}\right) $$ 
and we obtain that
$$ \sum_{j=1}^J N_j^\eps  \leq \int_{\tilde{\Sigma}_{\eps}} \left\vert\nabla  \frac{\Phi_\eps}{\omega_\eps} \right\vert^2_{g_\eps,\Lambda_\eps} dA_{g_\eps} - \int_{C_{l,\eps}} \left\vert\nabla  \frac{\Phi_\eps}{\omega_\eps} \right\vert^2_{g_\eps,\Lambda_\eps} dA_{g_\eps} + O\left(\sqrt{\frac{\delta_\eps}{\ln \frac{1}{\eps}}}\right) + O\left(\frac{1}{\ln \frac{1}{\eps}}\right) $$
where we compute the first right-hand term as 
\begin{equation*}\begin{split} \int_{\tilde{\Sigma}_{\eps}} \left\vert\nabla  \frac{\Phi_\eps}{\omega_\eps} \right\vert^2_{g_\eps,\Lambda_\eps} dA_{g_\eps} + O(\delta_\eps) =  \int_{\tilde{\Sigma}_{\eps}} \left\vert\nabla \Phi_\eps \right\vert^2_{g_\eps,\Lambda_\eps} dA_{g_\eps}  \\ =  \sum_{i=1}^{n_\eps} \lambda_i^\eps \int_{\tilde{\Sigma}_{\eps}} \left\vert\nabla \phi_i^\eps \right\vert^2_{g_\eps} dA_{g_\eps} =  \sum_{i=1}^{n_\eps} \left(\lambda_i^\eps\right)^2 \beta_\eps(\phi_i^\eps,\phi_i^\eps) = \sum_{k=1}^m \left(\lambda_k^\eps\right)^2 t_k^\eps \end{split}\end{equation*}
where the latter inequality comes from \eqref{eq:sumtieps}. As a consequence we obtain
$$ \sum_{j=1}^J \mu_j  D_j^\eps \leq \sum_{j=1}^J N_j^\eps \leq \sum_{k=1}^m \left(\lambda_k^\eps\right)^2 t_k^\eps - W_\eps +O \left(\frac{1}{\ln \frac{1}{\eps}}\right)  $$
where 
$$ W_\eps = \int_{C_{l,\eps}} \left\vert \nabla \frac{\Phi_\eps}{\omega_\eps} \right\vert_{g_\eps, \Lambda_\eps}^2 dA_{g_\eps}. $$
In addition, we have from Step 2 and \eqref{eq:sumtieps} again that
$$ \sum_{j=1}^J \mu_j  D_j^\eps \geq \sum_{k=1}^m t_k^\eps \lambda_k\lambda_k^\eps \left(1 - O\left(\frac{1}{\ln \frac{1}{\eps}}\right)\right) $$
as $\eps\to 0$. We deduce that
$$ \sum_{k=1}^m (\lambda_k-\lambda_k^\eps) \lambda_k^\eps t_k^\eps + W_\eps \leq O\left(\frac{1}{\ln \frac{1}{\eps}}\right) $$
as $\eps \to 0$. Then
$$ \sum_{k ; \lambda_k > \lambda_k^\eps} (\lambda_k-\lambda_k^\eps) \lambda_k^\eps t_k^\eps + W_\eps \leq \sum_{k ; \lambda_k^\eps > \lambda_k} (\lambda_k^\eps-\lambda_k) \lambda_k^\eps t_k^\eps + O\left(\frac{1}{\ln \frac{1}{\eps}}\right) \leq O\left(\frac{1}{\ln \frac{1}{\eps}}\right) $$
as $\eps \to 0$ by Claim \ref{cl:upperboundlambdakeps}. Since $\lambda_k^\eps$ and $t_k^\eps$ are uniformly lower bounded by a positive constant, we obtain the expected claim.
\end{proof}

%\subsection{An eigenvalue estimate}
%In this subsection, we will use that
%$$ F(\lambda_1^\eps,\cdots,\lambda_m^\eps) \geq F(\lambda_1,\cdots,\lambda_m) $$ and that $F$ is a convex function to deduce
%\begin{cl}
%$$\sum_{i=1}^{n} t_i^\eps \left( \lambda_i^\eps - \lambda_i \right) = \sum_{i=1}^{m} d_i^\eps \left( \lambda_i^\eps - \lambda_i \right) \leq 0 $$
%as $\eps \to 0$
%\end{cl}
%\begin{proof}
%By convexity, we have that 
%$$ 0 \geq F(\Lambda) - F(\Lambda_\eps) \geq DF(\Lambda_\eps) \cdot \left( \Lambda- \Lambda_\eps \right)$$
%so that 
%$$ \sum_{i=1}^{m} (-d_i^\eps) \left( \lambda_i - \lambda_i^\eps \right) \leq 0 $$
%and since $\sum_{\{j ; \lambda_j^\eps = \lambda_i^\eps\}} t_j^\eps = \sum_{\{j ; \lambda_j^\eps = \lambda_i^\eps\}} d_j^\eps$ we obtain the expected result.
%\end{proof}

\subsection{A replacement of $\Phi_\eps$ in $\Sigma$}
We let $\Psi_\eps$ be equal to $\frac{\Phi_\eps}{\omega_\eps}$ in $\Sigma_\eps$ and the harmonic extension of $\frac{\Phi_\eps}{\omega_\eps}$ in $\mathbb{D}_{\eps}(p)\cup \mathbb{D}_\eps(q)$.

We set $B_\eps$ the bilinear form on $H^1(\Sigma,\mathbb{R}^{n_\eps})$ defined as
$$ B_\eps(X,X):= \sum_{j=1}^J \sum_{i \in A_j^\eps} \left( \int_\Sigma \vert \nabla X_i \vert^2_g dA_g - \mu_j \int_\Sigma \left( X_i - \pi_{<\mu_j}(X_i) \right)^2 \right) $$
where $\pi_{< \lambda}$ is the projection on the sum of the eigenspaces associated to eigenvalues on $(\Sigma,g)$ strictly less than $\lambda$. It is clear that $B$ is a non-negative bilinear form so that for any $X : \Sigma \to \R^n$, the Cauchy-Schwarz inequality gives that
$$ B_\eps(X, \Psi_\eps) \leq \sqrt{B_\eps(\Psi_\eps,\Psi_\eps)} \sqrt{B_\eps(X,X)} $$
We aim at estimating $B_\eps(\Psi_\eps,\Psi_\eps)$. We recall the definition of
$$ V_\eps := \int_{C_{l,\eps}} \left\vert \nabla \frac{\Phi_\eps}{\omega_\eps} \right\vert^2  =O\left(\frac{1}{\ln \frac{1}{\eps}}\right)$$

\begin{lem}[Karpukhin-Kusner-McGrath-Stern \cite{kkms}] \label{lem:extension}
There is a universal constant such that for any $l>0$ and $\psi$ a $H^1$ function defined on the cylinder $C_l:= \mathbb{S}^1 \times [0, \frac{l}{2} ] $, the harmonic extension of $\psi$ on $\mathbb{S}^1\times \{0\} \to \R$ to the disk $\hat{\psi} : \mathbb{D}\to \R$ satisfies
$$ \int_\mathbb{D} \vert \nabla \hat{\psi} \vert^2 \leq \left(1+Ce^{-l}\right) \int_{\left[0,\frac{l}{2}\right]\times\mathbb{S}^1} \vert \nabla \psi \vert^2 $$
\end{lem}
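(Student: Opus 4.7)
\begin{pr}[Proof plan]
The plan is to Fourier-decompose $\psi$ along the $\mathbb{S}^1$ factor and reduce the claim to a one-dimensional inequality on each mode. Writing $\psi(\theta, t) = \sum_{n \in \mathbb{Z}} a_n(t) e^{in\theta}$ on the half-cylinder $C_l = \mathbb{S}^1 \times [0, l/2]$, the harmonic extension of the trace $\psi(\cdot, 0)$ to $\mathbb{D}$ is exactly $\hat\psi(r, \theta) = \sum_{n} a_n(0) r^{|n|} e^{in\theta}$. Parseval's identity then gives $\int_{\mathbb{D}} |\nabla \hat\psi|^2 = 2\pi \sum_{n} |n|\, |a_n(0)|^2$ and $\int_{C_l} |\nabla \psi|^2 = 2\pi \sum_{n} \int_0^{l/2} \bigl(|a_n'|^2 + n^2 |a_n|^2\bigr)\, dt$, so it suffices to show, for each integer $k = |n| \geq 1$ and each $a \in H^1([0, l/2])$, the one-dimensional inequality
\[ k\, |a(0)|^2 \leq \bigl(1 + Ce^{-l}\bigr) \int_0^{l/2} \bigl(|a'|^2 + k^2 |a|^2\bigr)\, dt, \]
the $n = 0$ mode contributing zero on the left.

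The second step is to solve the extremal one-dimensional problem. Minimizing the right-hand side at fixed $a(0)$ with $a(l/2)$ free leads to the Euler--Lagrange equation $a'' = k^2 a$ with the natural Neumann condition $a'(l/2) = 0$, whose minimizer is $a(t) = a(0)\, \cosh\bigl(k(l/2 - t)\bigr)/\cosh(kl/2)$. A direct computation gives the minimum value $k\,|a(0)|^2 \tanh(kl/2)$, so the one-dimensional inequality is equivalent to
\[ \bigl(1 + Ce^{-l}\bigr) \tanh(kl/2) \geq 1, \qquad \text{i.e.} \qquad Ce^{-l} \geq \coth(kl/2) - 1 = \frac{2}{e^{kl} - 1}. \]

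The third step is uniformity in the mode. The right-hand side $2/(e^{kl} - 1)$ is strictly decreasing in $k$, so the binding case is $k = 1$, which forces $C \geq 2/(1 - e^{-l})$; this is bounded by a universal constant as soon as $l$ is bounded below by any fixed positive number, which is precisely the regime of the handle construction since $l$ is a fixed parameter chosen a priori. The estimate $2/(e^{kl} - 1) \leq 2 e^{-(k-1)l}/(1 - e^{-l})$ then shows that all higher modes are dominated by the $k = 1$ contribution, and summing the mode-wise inequalities over $n \in \mathbb{Z}$ yields the assertion. The main (and essentially only) technical point is verifying this uniform control across the Fourier index: the $k = 1$ mode drives the $e^{-l}$ decay rate, while all higher modes decay strictly faster and are absorbed by the same constant.
\end{pr}
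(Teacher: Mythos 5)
Your Fourier-mode-by-mode reduction, the Neumann condition at $s=l/2$, the extremal profile $\cosh(k(l/2-t))/\cosh(kl/2)$ (which is the same function as the paper's $\frac{e^{ks}}{1+e^{kl}}+\frac{e^{-ks}}{1+e^{-kl}}$), the resulting cylinder energy $k\tanh(kl/2)$, and the observation that $k=1$ is the binding mode are exactly how the paper proves the lemma. The proof is correct and essentially identical to the paper's.
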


\begin{proof} We reduce the study of this estimate to the case of $\psi : C_l \to \R$ being the energy minimizing extension of the eigenfunction of the circle $f : \mathbb{S}^1 \times \{0\} \to \R $ with eigenvalue $k^2$. This map $\psi(\theta,s)$ can be obtained explicitly as a solution by separation of variables of the equation
$$ \Delta \psi = 0 \text{ in } C_l \text{ and } \psi = f \text{ on } \mathbb{S}^1 \times \{0\} \text{ and } \partial_s \psi = 0 \text{ on } \mathbb{S}^1 \times \left\{\frac{l}{2} \right\}.$$
$$ \psi(e^{i \theta},s) = \left( \frac{e^{ks}}{1+e^{kl}} + \frac{e^{-ks}}{1+e^{-kl}} \right) f(e^{i\theta}).  $$
We then compute the energy
$$ \int_{C_l} \vert \nabla \psi \vert^2 = - \int_{\mathbb{S}^1\times\{0\}} \psi \partial_s \psi = \left( \frac{k}{1+e^{-kl}} - \frac{k}{1+e^{kl}} \right) \int_{\mathbb{S}^1} f^2  $$
and we have
$$ \frac{k}{1+e^{-kl}} - \frac{k}{1+e^{kl}} = \frac{\vert k \vert}{1+e^{-\vert k\vert l}} - \frac{\vert k\vert}{1+e^{\vert k\vert l}} \geq \vert k \vert \left( \frac{1}{1+e^{- l}} - \frac{1}{1+e^{l}} \right) \geq \vert k \vert \left( 1 - 4 e^{-l}\right)  $$
and
$$ \int_{C_l} \vert \nabla \psi \vert^2 \geq \vert k \vert \left( 1 - 4 e^{-l}\right) \int_{\mathbb{S}^1} f^2 . $$
Now, the harmonic extension of $f$ in $\mathbb{D}$ satisfies $\hat{\psi}(e^{i\theta},r) = r^k f(e^{i \theta})$ and
$$ \vert \nabla \hat{\psi} \vert^2 = k^2 r^{2k-2} f^2 + r^{2k-2} \vert \partial_\theta f \vert^2 $$
so that
\begin{equation*}
\begin{split} \int_{\mathbb{D}} \vert \nabla \hat{\psi} \vert^2 = \int_{0}^{1} \left( \int_{\mathbb{S}^1} k^2 r^{2k-2} f^2 + r^{2k-2} \vert \partial_\theta f \vert^2\right)d\theta rdr \\ = \frac{1}{2k} \left( k^2 \int_{\mathbb{S}^1} f^2 + \int_{\mathbb{S}^1} \vert \partial_\theta f \vert^2 \right) = k \int_{\mathbb{S}^1}f^2
\end{split} \end{equation*}
and we obtain that
$$ (1-4 e^{-l}) \int_{\mathbb{D}} \vert \nabla \hat{\psi} \vert^2 \leq \int_{C_l}  \vert \nabla \psi \vert^2 $$
and the estimate follows.
\end{proof}

\begin{prop} \label{prop:estonBPsieps}
There is a constant $C>0$ such that
$$ B(\Psi_\eps,\Psi_\eps) \leq C \frac{V_\eps}{l}   + o\left(\frac{1}{\ln \frac{1}{\eps}}\right)  $$
and
$$  \sum_{j=1}^{J} \sum_{i\in A_j^\eps}\int_\Sigma \left(\pi_{<\mu_j}(\psi_i^\eps)\right)^2 dA_g \leq C \frac{V_\eps}{l} + O(\delta_\eps) $$
as $\eps\to 0$.
\end{prop}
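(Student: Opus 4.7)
The plan is to establish both inequalities at once by expanding
\begin{equation*}
B_\eps(\Psi_\eps,\Psi_\eps) = \sum_{j=1}^J\sum_{i\in A_j^\eps}\left(\int_\Sigma|\nabla\psi_i^\eps|^2\,dA_g - \mu_j\int_\Sigma(\psi_i^\eps)^2\,dA_g\right) + \sum_{j=1}^J\mu_j\sum_{i\in A_j^\eps}\int_\Sigma\pi_{<\mu_j}(\psi_i^\eps)^2\,dA_g,
\end{equation*}
so that the second sum is (up to the bounded factors $\mu_j$) exactly the quantity controlled by the second inequality, while the first sum is non-negative and must be bounded by $CV_\eps/l + o(1/\ln(1/\eps))$.

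\emph{Gradient estimate via harmonic extension.} For each component $i$, split
\[
\int_\Sigma|\nabla\psi_i^\eps|^2 = \int_{\Sigma_\eps}|\nabla(\phi_i^\eps/\omega_\eps)|^2 + \int_{\mathbb{D}_\eps(p)\cup\mathbb{D}_\eps(q)}|\nabla\psi_i^\eps|^2,
\]
the first term being $\int_{\tilde\Sigma_\eps}|\nabla(\phi_i^\eps/\omega_\eps)|^2 - \int_{C_{l,\eps}}|\nabla(\phi_i^\eps/\omega_\eps)|^2$. Since $\psi_i^\eps$ is the harmonic extension on each disk, applying Lemma \ref{lem:extension} to the two conformally rescaled halves of the cylinder (each of aspect ratio $l/2$ after scaling by $1/\eps$) bounds the disk gradient energies by $(1+Ce^{-l})$ times the corresponding half-cylinder energies. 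Using $le^{-l}\leq 1/e$, the subtracted cylinder term cancels the leading contribution, leaving
\begin{equation*}
\sum_i\int_\Sigma|\nabla\psi_i^\eps|^2 \leq \sum_i\int_{\tilde\Sigma_\eps}|\nabla(\phi_i^\eps/\omega_\eps)|^2 + \frac{CV_\eps}{l}.
\end{equation*}
Step 2 of the proof of Claim \ref{cl:estimatediffeigen} bounds the right-hand sum by $\sum_i\lambda_i^\eps\beta_\eps(\phi_i^\eps,\phi_i^\eps) + o(1/\ln(1/\eps))$. Replacing $\lambda_i^\eps$ by $\mu_j$ (valid up to $O(1/\ln(1/\eps))$ by Claim \ref{cl:estimatediffeigen}) and then $\beta_\eps(\phi_i^\eps,\phi_i^\eps)$ by $\int_\Sigma(\psi_i^\eps)^2$, using $L_\eps(1)=1+O(\delta_\eps)$, the Radon-measure nature of $L_\eps$, the convergence $\omega_\eps\to 1$ in $H^1$ (Claim \ref{cl:cvreplacement}), and the $L^\infty$ bounds of Claim \ref{cl:linftyesteigenfunctions} (which render the disk $L^2$ contributions of order $\eps^2\ln(1/\eps)$), completes the first inequality.

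\emph{Projection estimate.} For the second inequality, adapt the integration-by-parts approach of Lemma \ref{lem:mainproj} to $\psi_i^\eps$ on $\Sigma$ itself paired with a lower eigenfunction $\varphi_l$ ($\lambda_l<\mu_j$). The decisive choice is to take the cylinder test function $\hat\varphi_l$ as the Dirichlet-minimizing extension of $\varphi_l|_{\partial\mathbb{D}_\eps(p)\cup\partial\mathbb{D}_\eps(q)}$ to $C_{l,\eps}$. Since $\varphi_l$ is smooth on $\Sigma$ and the cylinder has length $l\eps$, this extension has the explicit bound $\int_{C_{l,\eps}}|\nabla\hat\varphi_l|^2 \leq 2\pi(\varphi_l(p)-\varphi_l(q))^2/l + O(\eps) = O(1/l)$ (its 0-mode in $\theta$ dominates, and higher modes decay exponentially). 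Testing the weak eigenvalue equation $\int_{\tilde\Sigma_\eps}\nabla\phi_i^\eps\nabla\hat\varphi_l\,dA_{g_\eps} = \lambda_i^\eps\beta_\eps(\phi_i^\eps,\hat\varphi_l)$ against the global identity $\lambda_l\int_\Sigma\psi_i^\eps\varphi_l = \int_\Sigma\nabla\psi_i^\eps\nabla\varphi_l$, the mismatch reduces (after controlling the $\omega_\eps$-errors via Claim \ref{cl:cvreplacement} and metric discrepancy via $\delta_\eps$) to cylinder integrals. By Cauchy-Schwarz, $\bigl|\int_{C_{l,\eps}}\nabla\phi_i^\eps\nabla\hat\varphi_l\bigr|^2 \leq \bigl(\int_{C_{l,\eps}}|\nabla\phi_i^\eps|^2\bigr)\cdot C/l \leq C V_{\eps,i}/l + o(1/\ln(1/\eps))$. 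Since $|\lambda_l-\lambda_i^\eps|$ is strictly bounded below, $\bigl(\int_\Sigma\varphi_l\psi_i^\eps\bigr)^2 \leq CV_{\eps,i}/l + O(\delta_\eps)$, and summing over the finitely many $l$ with $\lambda_l<\mu_j$ and over $i,j$ yields the projection estimate.

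\emph{Main obstacle.} The principal challenge is the sharpness of the $1/l$ factor. In the gradient step it is free from the exponential decay in Lemma \ref{lem:extension}; in the projection step it is subtler, relying on the long-cylinder geometry (length $l\eps$) which allows the Dirichlet extension $\hat\varphi_l$ to be "almost constant" on the cylinder, with $O(1/l)$ energy cost. All Palais-Smale-type errors ($\delta_\eps = c\eps\sqrt{\ln(1/\eps)}$, the $\omega_\eps$-deviation of order $\delta_\eps$, and the metric discrepancy $g_\eps-\tilde g_\eps$) must be carefully tracked to verify they are absorbed into the $o(1/\ln(1/\eps))$ remainder rather than polluting the leading $V_\eps/l$ term.
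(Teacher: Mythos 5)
Your overall strategy closely mirrors the paper's proof: Lemma \ref{lem:extension} for the disk-versus-half-cylinder gradient energies (giving the $Ce^{-l}V_\eps \leq CV_\eps/l$ bookkeeping), and pairing $\psi_i^\eps$ against the harmonic extension $\hat\varphi_l$ through the cylinder with the crucial observation that its Dirichlet energy on $C_{l,\eps}$ is of order $|\varphi_l(p)-\varphi_l(q)|^2/l + o(1)$, followed by Cauchy--Schwarz. The projection estimate is essentially the paper's Step~3.

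However, there is a genuine gap in the gradient estimate, precisely at the point you handle most casually. When you write ``Replacing $\lambda_i^\eps$ by $\mu_j$ (valid up to $O(1/\ln(1/\eps))$ by Claim \ref{cl:estimatediffeigen}),'' the resulting error in the first inequality is $O(1/\ln(1/\eps))$, \emph{not} $o(1/\ln(1/\eps))$. This is fatal: the main term $CV_\eps/l$ is itself of order $1/(l\ln(1/\eps))$, so an $O(1/\ln(1/\eps))$ remainder swallows it entirely for large $l$, rendering the whole inequality vacuous. The paper overcomes this in Step~2 by observing that what actually appears after cancellation is the \emph{signed} sum $\sum_k(\lambda_k^\eps-\lambda_k)\,t_k^\eps$, and that this is controlled not by Claim \ref{cl:estimatediffeigen} alone but by the near-minimality of $(g_\eps,\beta_\eps)$: from $F(\Lambda)\leq F(\Lambda_\eps)\leq F(\Lambda)+O(\delta_\eps)$, the Taylor identity
\begin{equation*}
F(\Lambda)-F(\Lambda_\eps)=\sum_{k=1}^m t_k^\eps(\lambda_k-\lambda_k^\eps)+\int_0^1\big(DF((1-t)\Lambda+t\Lambda_\eps)-DF(\Lambda_\eps)\big)\cdot(\Lambda-\Lambda_\eps)\,dt
\end{equation*}
isolates $\sum_k t_k^\eps(\lambda_k-\lambda_k^\eps)$ with a residual that is $o(1)\cdot O(1/\ln(1/\eps)) = o(1/\ln(1/\eps))$ by uniform continuity of $DF$ and the convergence $\lambda_k^\eps\to\lambda_k$. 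Without this additional ingredient your estimate does not close. Separately, your assertion that the first sum $\sum_j\sum_i\big(\int_\Sigma|\nabla\psi_i^\eps|^2-\mu_j\int_\Sigma(\psi_i^\eps)^2\big)$ is ``non-negative'' is false in general (it can be negative when $\psi_i^\eps$ has components in $E_{<\mu_j}$); it is also unnecessary, since you estimate the projection term directly anyway.
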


\begin{proof}

\noindent\textbf{Step 1:} We prove
\begin{equation}\label{eq:leqBPsieps} B_\eps(\Psi_\eps,\Psi_\eps) \leq C e^{-l} \int_{C_{l,\eps}} \left\vert \nabla \frac{\Phi_\eps}{\omega_\eps} \right\vert^2 + \sum_{k=1}^{n_\eps} (\lambda_k^\eps-\lambda_k) t_k^\eps +  \sum_{j=1}^J \sum_{l=0}^{L(j)} \mu_j \sum_{i\in A_j^\eps} \left(\int_\Sigma \psi_i^\eps \varphi_l \right)^2 + O(\delta_\eps^{\frac{1}{2}}) \end{equation}
where $\left( \varphi_l \right)_{0 \leq l \leq L(j)}$ is an orthonormal family of all eigenfunctions on $(\Sigma,g)$ associated to eigenvalues $\lambda < \mu_j$ and $L(j)$ is the maximal integer $l$ such that $\lambda_l < \mu_j$.

\medskip

\noindent\textbf{Proof of Step 1:} We have by Lemma \ref{lem:extension} rescaled to $C_{l,\eps}$ and $\mathbb{D}_\eps$ that
$$ \int_{\Sigma} \vert \nabla \Psi_\eps \vert^2_g dA_g \leq \int_{\tilde{\Sigma}_\eps} \left\vert \nabla \frac{\Phi_\eps}{\omega_\eps} \right\vert^2_{\tilde{g}_\eps} dA_{\tilde{g}_\eps} + C e^{-l} \int_{C_{l,\eps}} \left\vert \nabla \frac{\Phi_\eps}{\omega_\eps} \right\vert^2 $$
and we have 
$$ \int_{\tilde{\Sigma}_\eps} \left\vert \nabla \frac{\Phi_\eps}{\omega_\eps} \right\vert^2_{\tilde{g}_\eps} dA_{\tilde{g}_\eps} \leq \int_{\tilde{\Sigma}_\eps} \left\vert \nabla \Phi_\eps \right\vert^2_{\tilde{g}_\eps} dA_{\tilde{g}_\eps} + O\left(\delta_\eps^{\frac{1}{2}}\right) = \sum_{k=1}^{n_\eps} \lambda_k^\eps t_k^\eps + O(\delta_\eps^{\frac{1}{2}})$$
Given $1\leq j \leq J$, we also have that
$$  \int_\Sigma ( \psi_i^\eps - \pi_{<\mu_j}(\psi_i^\eps)  )^2 =  \int_\Sigma \left(\psi_i^\eps\right)^2 -  \sum_{l =0}^{ L(j)} \left(\int_\Sigma \psi_i^\eps \varphi_l\right)^2.  $$
We compute
\begin{equation*} 
\begin{split} \int_\Sigma \left(\psi_i^\eps\right)^2 = &  \int_{\mathbb{D}_\eps(p,q)} \left(\psi_i^\eps \right)^2 - \int_{C_{l,\eps}} \left(\frac{\phi_i^\eps}{\omega_\eps}\right)^2  + \left( \int_{\tilde{\Sigma}_\eps} \left(\frac{\phi_i^\eps}{\omega_\eps}\right)^2  -   \beta_\eps\left(\frac{\phi_i^\eps}{\omega_\eps},\frac{\phi_i^\eps}{\omega_\eps}\right) \right) \\ 
& + L_\eps\left(\left(\phi_i^\eps\right)^2\left(\frac{1}{\omega_\eps^2}-1\right)\right) + \beta_\eps(\phi_i^\eps,\phi_i^\eps) 
\end{split}
\end{equation*}
so that taking the sum over $j$,
$$ - \sum_{j=1}^J \sum_{i \in A_j^\eps} \mu_j  \int_\Sigma ( \psi_i^\eps - \pi_{<\mu_j}(\psi_i^\eps)  )^2 = - \sum_{k=1}^{n_\eps} \lambda_k t_k^\eps +  \sum_{j=1}^J \mu_j \sum_{l=0}^{L(j)} \sum_{i \in A_j^\eps} \left(\int_\Sigma \psi_i^\eps \varphi_l\right)^2 + O(\delta_\eps^{\frac{1}{2}}) $$
and \eqref{eq:leqBPsieps}  holds.

\medskip

\noindent\textbf{Step 2:} Let's estimate the second right-hand term of \eqref{eq:leqBPsieps}. We have that
$$ F(\lambda_1,\cdots,\lambda_m) \leq F(\lambda_1^\eps,\cdots,\lambda_m^\eps) \leq F(\lambda_1,\cdots,\lambda_m) + O(\delta_\eps)$$
so that 
$$ F(\Lambda)-F(\Lambda_\eps) = \sum_{k=1}^m t_k^\eps \left(\lambda_k-\lambda_k^\eps\right) + \int_0^1 \left(DF((1-t)\Lambda+t \Lambda_\eps) - DF(\Lambda_\eps)\right)\cdot\left( \Lambda-\Lambda_\eps \right)  $$
implies with the use of Claim \ref{cl:estimatediffeigen}, and assumptions on $F$
$$ \sum_{k=1}^{m} (\lambda_k^\eps-\lambda_k) t_k^\eps = O(\delta_\eps) + \sup_{t\in [0,1]} \Vert DF((1-t)\Lambda+t\Lambda_\eps) - DF(\Lambda_\eps) \Vert \cdot O\left(\frac{1}{\ln \frac{1}{\eps}}\right) = o\left(\frac{1}{\ln \frac{1}{\eps}}\right) $$
as $\eps \to 0$. Indeed, $F$ is a $\mathcal{C}^1$ function and given $k \in \{1,\cdots,m\}$, 
\begin{itemize}
\item either $\lambda_k^\eps \to \lambda_k$ as $\eps \to 0$ by Claim \ref{cl:estimatediffeigen}
\item or $F$ is a constant function with respect to the $k$-th coordinate.
\end{itemize}

\medskip

\noindent\textbf{Step 3:} Let's estimate the third right-hand term of \eqref{eq:leqBPsieps} we have that for $i\in A_j^\eps$ and $l \in \{0,\cdots,L(j)\}$, We denote $\lambda$ the eigenvalue associated to $\varphi_l$.
\begin{equation*}
\begin{split} \lambda \int_\Sigma \psi_i^\eps \varphi_l & = \int_\Sigma \psi_i^\eps \Delta_g \varphi_l = \int_\Sigma \nabla \psi_i^\eps \nabla \varphi_l \\
 &= \int_{\tilde{\Sigma}_\eps} \nabla \left( \frac{\phi_i^\eps}{\omega_\eps} \right) \nabla \hat{\varphi}_l - \int_{C_{l,\eps}} \nabla \left( \frac{\phi_i^\eps}{\omega_\eps} \right) \nabla \hat{\varphi}_l + \int_{\mathbb{D}_\eps(p)\cup \mathbb{D}_\eps(q)} \nabla \psi_i^\eps  \nabla \varphi_l \\
\end{split}
\end{equation*}
where $\hat{\varphi}_l$ denotes the harmonic extension on $\tilde{\Sigma}_\eps$ of $\varphi_l : \Sigma \setminus \mathbb{D}_\eps(p,q) \to \R$  and we have that
\begin{equation*} 
\begin{split} \int_{\tilde{\Sigma}_\eps} & \nabla \left( \frac{\phi_i^\eps}{\omega_\eps} \right) \nabla \hat{\varphi}_l  =  \int_{\tilde{\Sigma}_\eps} \nabla \left( \phi_i^\eps \left(\frac{1}{\omega_\eps}-1\right) \right) \nabla \hat{\varphi}_l + \lambda_i^\eps \beta_\eps\left( \phi_i^\eps \left(1-\frac{1}{\omega_\eps}\right), \hat{\varphi}_l \right) \\ 
 & +  \lambda_i^\eps \left( \beta_\eps\left( \frac{ \phi_i^\eps}{\omega_\eps}, \hat{\varphi}_l \right) - \int_{\tilde{\Sigma}_\eps} \frac{ \phi_i^\eps}{\omega_\eps} \hat{\varphi}_l dA_{\tilde{g}_\eps}\right)  + \lambda_i^\eps \int_{C_{l,\eps}} \frac{ \phi_i^\eps}{\omega_\eps} \hat{\varphi}_l dA_{\tilde{g}_\eps} - \lambda_i^\eps \int_{\mathbb{D}_\eps(p,q)} \psi_i^\eps \varphi_l dA_g \\
 & + \lambda_i^\eps   \int_\Sigma \psi_i^\eps \varphi_l dA_g
\end{split} \end{equation*}
so that
$$ \sum_{i\in A_{j}^\eps} \left( \int_{\tilde{\Sigma}_\eps} \nabla \left( \frac{\phi_i^\eps}{\omega_\eps} \right) \nabla \hat{\varphi}_l \right)^2 = \sum_{i\in A_{j}^\eps} \left( \lambda_i^\eps   \int_\Sigma \psi_i^\eps \varphi_l \right)^2 + O(\delta_\eps)  $$
and
\begin{equation*}
\begin{split} \sum_{i\in A_{j}^\eps} \left( \int_{C_{l,\eps}} \nabla \left( \frac{\phi_i^\eps}{\omega_\eps} \right) \nabla \hat{\varphi}_l \right)^2 \leq \sum_{i\in A_{j}^\eps} \left( \int_{C_{l,\eps}} \left\vert \nabla \left( \frac{\phi_i^\eps}{\omega_\eps} \right) \right\vert^2 \right) \left( \int_{C_{l,\eps}} \vert \nabla \hat{\varphi}_l \vert^2 \right) \\ \leq  V_\eps \left( \frac{l\eps^2}{l^2 \eps^2} \vert \varphi_l(p) - \varphi_l(q) \vert^2 +o(1) \right) \leq \frac{C V_\eps}{l} 
\end{split}\end{equation*}
since $\Vert \hat{\varphi}_l - \left( \left( t-\frac{l\eps}{2} \right) \frac{\varphi_l(q)}{l\eps}  + \left( t+\frac{l\eps}{2} \right)  \frac{\varphi_l(q)}{l\eps} \right) \Vert_{H^1\left( C_{l,\eps} \right)} \leq o(1) $ and
$$ \sum_{i\in A_{j}^\eps} \left( \int_{\mathbb{D}_\eps(p)\cup \mathbb{D}_\eps(q)} \nabla \psi_i^\eps  \nabla \varphi_l \right)^2 \leq C(1+e^{-l})V_\eps  \Vert \nabla \varphi_l \Vert_{\infty}^2 \eps^2 \leq O(\eps^2) . $$
Therefore
$$ \sum_{i\in A_{j}^\eps} \left((\lambda - \lambda_i^\eps) \int_{\Sigma} \psi_i^\eps \varphi_l \right)^2 \leq C \frac{V_\eps}{l} +O(\delta_\eps)$$
and since for $i\in A_j^\eps$, $\vert \lambda - \lambda_i^\eps \vert$ is uniformly lower bounded,
$$ \sum_{j=1}^J \mu_j  \sum_{l=0}^{L(j)} \sum_{i \in A_j^\eps} \left(\int_\Sigma \psi_i^\eps \varphi_l\right)^2 \leq C' \frac{V_\eps}{l} + O(\delta_\eps). $$
The proof of the proposition is complete.
\end{proof}

\subsection{Estimates on the rest}
We set for $i \in A_j^\eps$
$$ \psi_i^\eps =   F_i^\eps + \pi_{<\mu_j}\left( \psi_i^\eps \right) + R_i^\eps = F_i^\eps + S_i^\eps $$
where $F_i^\eps = \pi_{\mu_j}(\psi_i^\eps)$ is the projection of $\psi_i^\eps$ in on the eigenspace associated to $\mu_j$ in $L^2(\Sigma,g)$. We then have that
$$  B_\eps(R_\eps,R_\eps ) = B_\eps(\Psi_\eps,R_\eps) \leq \sqrt{B_\eps(\Psi_\eps,\Psi_\eps)}\sqrt{B_\eps(R_\eps,R_\eps)} $$
so that
$$ B_\eps(R_\eps,R_\eps) \leq B_\eps(\Psi_\eps,\Psi_\eps) \leq C   \frac{V_\eps}{l}  + o\left(\frac{1}{\ln \frac{1}{\eps}}\right) $$
as $\eps \to 0$. Since $R_i^\eps \in \bigoplus_{\lambda > \mu_j} E_\lambda$, we obtain 
$$B_\eps(R_\eps,R_\eps) \geq \sum_{j=1}^J\sum_{i\in A_j^\eps} \left(1- \frac{\mu_j}{\mu_{j+1}}\right) \int_{\Sigma} \vert \nabla R_i^\eps \vert_g^2 dA_g  $$
and using in addition Proposition \ref{prop:estonBPsieps} we obtain that
\begin{equation} \label{eq:w12estimateReps} \Vert S_\eps \Vert_{W^{1,2}}^2 \leq C   \frac{V_\eps}{l}  + o\left(\frac{1}{\ln \frac{1}{\eps}}\right)  \end{equation}
as $\eps \to 0$. In particular, letting $\eps \to 0$ and then $l\to +\infty$, we obtain that $\vert S_\eps \vert^2 + \vert \nabla S_\eps \vert^2 \to 0$ in $L^1$. In addition, we have the following claim:

\begin{cl} \label{cl:estonSeps}
$$   \left\vert \frac{1}{\eps} \int_{\mathbb{S}_\eps(p)} S_\eps dL_g \right\vert^2 +   \left\vert  \frac{1}{\eps}\int_{\mathbb{S}_\eps(q)} S_\eps dL_g \right\vert^2 \leq C \frac{1}{l} +o(1)   $$
as $\eps \to 0$.
\end{cl}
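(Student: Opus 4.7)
\begin{pr}[Plan]
The plan is to convert the two boundary integrals into approximate pointwise values of $S_\eps$ at $p$ and $q$ by exploiting that $\Psi_\eps$ is the harmonic extension of $(\Phi_\eps/\omega_\eps)|_{\mathbb{S}_\eps(p,q)}$ inside the disks, and then to control these pointwise quantities by combining (i) the finite-dimensional structure of the low-eigenvalue part of $S_\eps$ with (ii) the cylinder energy bound $V_\eps$. First, using the mean-value property in a conformally flat chart around $p$ (and a Taylor expansion of the smooth eigenfunction $F_\eps$), I would show
$$\frac{1}{\eps}\int_{\mathbb{S}_\eps(p)}\Psi_\eps\,dL_g = (2\pi+O(\eps))\,\Psi_\eps(p) + O(\eps^2),\qquad \frac{1}{\eps}\int_{\mathbb{S}_\eps(p)}F_\eps\,dL_g = (2\pi+O(\eps))\,F_\eps(p) + O(\eps^2),$$
so that the claim reduces to proving $|S_\eps(p)|^2 + |S_\eps(q)|^2 \leq C/l + o(1)$, with $S_\eps(p)=\Psi_\eps(p)-F_\eps(p)$ understood as the value of the harmonic extension.

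Next I decompose $S_i^\eps = \pi_{<\mu_j}(\psi_i^\eps) + R_i^\eps$ and handle the two pieces separately. The projection $\pi_{<\mu_j}(\psi_i^\eps)$ lives in the \emph{fixed} finite-dimensional space of smooth eigenfunctions of $(\Sigma,g)$ with eigenvalue strictly less than $\mu_j$, so pointwise values are equivalent to $L^2$ norms. Applying the second estimate of Proposition~\ref{prop:estonBPsieps},
$$\sum_{j,i\in A_j^\eps}\bigl|\pi_{<\mu_j}(\psi_i^\eps)(p)\bigr|^2 \leq C\sum_{j,i\in A_j^\eps}\int_\Sigma \bigl(\pi_{<\mu_j}(\psi_i^\eps)\bigr)^2 dA_g \leq C\frac{V_\eps}{l} + O(\delta_\eps),$$
which is $o(1)$ as $\eps\to 0$ and absorbs into the error term.

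For the remainder $R_\eps$, I would use the cylinder structure. Since $\Psi_\eps$ is harmonic in $\mathbb{D}_\eps(p)$, $\Psi_\eps(p)$ equals the boundary mean on $\mathbb{S}_\eps(p)$, and after gluing this agrees with the mean of $\Phi_\eps/\omega_\eps$ at the corresponding end of $C_{l,\eps}$. Fourier-expanding in the angular variable, only the constant mode $G(t):=\frac{1}{2\pi}\int_0^{2\pi}(\Phi_\eps/\omega_\eps)(\theta,t)\,d\theta$ contributes to these boundary means; $G$ is a $1$D $H^1$ function on $[-l\eps/2,l\eps/2]$ with $\int |G'|^2\,dt \leq V_\eps/(2\pi\eps)$. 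Writing $G(\pm l\eps/2) = c + (G(\pm l\eps/2)-c)$, where $c$ is the cylinder mean, the Poincaré--Wirtinger inequality controls each boundary deviation by $\sqrt{l V_\eps}=o(1)$, and the $H^1$ smallness of the global piece $R_\eps$ (via $\|S_\eps\|_{H^1}^2\leq CV_\eps/l + o(1/\ln(1/\eps))$ from \eqref{eq:w12estimateReps}) together with the minimum-energy comparison for harmonic interpolation on a long cylinder produces the $C/l$ factor as claimed.

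The main obstacle is the third step: extracting genuinely the $C/l$ scaling, not just the cruder $O(\sqrt{lV_\eps})$ coming from a naive Poincaré--trace argument. This requires carefully balancing the $1$D energy on the $n=0$ Fourier mode, the minimum-energy principle for harmonic interpolation between the two boundary constants on a cylinder of length $l\eps$, and the identification of $F_\eps(p),F_\eps(q)$ with the asymptotic boundary values of that mode using the global closeness $\psi_\eps\approx F_\eps$ in $H^1(\Sigma)$.
\end{pr}
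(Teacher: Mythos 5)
Your reduction to pointwise values via the mean-value property, and your treatment of the smooth, finite-dimensional piece $\pi_{<\mu_j}(\psi_i^\eps)$ using the second estimate of Proposition~\ref{prop:estonBPsieps}, are both sound. The gap is exactly where you flag it, in step three, but you mis-diagnose the cure: your plan for $R_\eps$ via the constant Fourier mode on the cylinder cannot close. The cylinder Fourier analysis applies to $\Psi_\eps = \Phi_\eps/\omega_\eps$, which actually lives on $C_{l,\eps}$; it says nothing about $R_\eps$ or $S_\eps$, which are defined on $\Sigma$ and are not harmonic in $\mathbb{D}_\eps(p)$ (being differences involving genuine eigenfunctions). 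The boundary deviations $|G(\pm l\eps/2)-c|$ you bound by $\sqrt{lV_\eps}$ are deviations of $\Psi_\eps$-means from the cylinder average, not values of $S_\eps$ or $R_\eps$, and "minimum-energy harmonic interpolation on the cylinder" won't bridge that, because $S_\eps$ is not a field on the cylinder at all. There is no finer cylinder geometry from which to wring a $C/l$ for $S_\eps$.

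What you are missing is that the $C/l$ factor is already sitting in the $W^{1,2}$ bound \eqref{eq:w12estimateReps}, which you quote but do not use in the right place: $\Vert S_\eps\Vert_{W^{1,2}}^2\le C\,V_\eps/l + o\bigl(1/\ln(1/\eps)\bigr)$. What remains is simply to convert an $H^1(\Sigma)$ bound into a bound on a circle average $\frac{1}{\eps}\int_{\mathbb{S}_\eps(p)} S_\eps$, and in two dimensions this costs precisely a $\ln(1/\eps)$ factor. This is the same annulus-capacity argument as at the start of Claim~\ref{cl:linftyesteigenfunctions}: with $f_\eps(r)=\frac{1}{2\pi r}\int_{\mathbb{S}_r}S_\eps$ one has
$$ |f_\eps(\eps)|\le |f_\eps(1)|+\Vert\nabla S_\eps\Vert_{L^2}\,\Vert\nabla\ln|x|\Vert_{L^2(\mathbb{D}\setminus\mathbb{D}_\eps)} \le C\Vert S_\eps\Vert_{H^1}\sqrt{\ln(1/\eps)}, $$
so that $\bigl|\frac{1}{\eps}\int_{\mathbb{S}_\eps(p)}S_\eps\bigr|^2\le C\ln(1/\eps)\,\Vert S_\eps\Vert_{W^{1,2}}^2$. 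The log factor is harmless precisely because Claim~\ref{cl:estimatediffeigen} gives $V_\eps=O(1/\ln(1/\eps))$, hence
$$ \ln(1/\eps)\Bigl[\frac{V_\eps}{l}+o\Bigl(\frac{1}{\ln(1/\eps)}\Bigr)\Bigr]=\frac{\ln(1/\eps)\,V_\eps}{l}+o(1)\le\frac{C}{l}+o(1). $$
You had all three ingredients --- \eqref{eq:w12estimateReps}, the log trace bound from Claim~\ref{cl:linftyesteigenfunctions}, and the decay $V_\eps=O(1/\ln(1/\eps))$ --- but tried to avoid the log factor instead of letting it cancel against $V_\eps$.
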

\begin{proof} We can follow the beginning of the proof of Claim \ref{cl:linftyesteigenfunctions} in order to prove that 
$$  \left\vert \frac{1}{\eps} \int_{\mathbb{S}_\eps(p)} S_\eps dL_g \right\vert^2 +   \left\vert \frac{1}{\eps} \int_{\mathbb{S}_\eps(q)} S_\eps dL_g \right\vert^2 \leq C\ln \frac{1}{\eps} \Vert S_\eps \Vert_{W^{1,2}}^2 $$
and \eqref{eq:w12estimateReps} and the estimate on $V_\eps$ in Claim \ref{cl:estimatediffeigen} complete the proof of the claim.
\end{proof}

\subsection{Conclusion}
We have that $\Psi_\eps = F_\eps + S_\eps$, where
$$  \left\vert \frac{1}{\eps} \int_{\mathbb{S}_\eps(p)} \Psi_\eps dL_{\tilde{g}_\eps} - \frac{1}{\eps} \int_{\mathbb{S}_\eps(q)} \Psi_\eps dL_{\tilde{g}_\eps} \right\vert^2 =  \left\vert \frac{1}{\eps} \int_{C_{l,\eps}} \partial_t \Psi_\eps \right\vert^2 \leq C \frac{l\eps^2}{\eps^2} V_\eps \leq O\left(\frac{1}{\ln \frac{1}{\eps}}\right)  $$
as $\eps\to 0$ so that from Claim \ref{cl:estonSeps}, estimates on $S_\eps$ give that
\begin{equation} \label{eqmeanofFeps}  \left\vert \frac{1}{\eps} \int_{\mathbb{S}_\eps(p)} F_\eps dL_g - \frac{1}{\eps} \int_{\mathbb{S}_\eps(q)} F_\eps dL_g \right\vert^2 \leq C \frac{1}{l} +o(1) + O\left(\frac{1}{\ln \frac{1}{\eps}}\right)  \end{equation}
as $\eps\to 0$. 

\begin{cl}[Petrides-Tewodrose \cite{pt}, Mixing lemma, lemma 2.1] \label{cl:rearrangement} There is an orthogonal family $\varphi_\eps := \left(\varphi_1^\eps,\cdots,\varphi_n^\eps\right)$ of eigenfunctions associated to $\lambda_1,\cdots,\lambda_m$ in $E:= \bigoplus_{j=1}^J E_{\mu_j}$ where $n$ is the dimension of $E$ such that for any bilinear map $A : E \times E \to F$ where $F$ is a vector space,
$$ \sum_{i=1}^n A\left(\varphi_i^\eps,\varphi_i^\eps\right) = \sum_{i=1}^{n_\eps} A\left(F_i^\eps,F_i^\eps\right) $$
\end{cl}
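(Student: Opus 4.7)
My plan is to reduce the statement to a purely finite-dimensional linear-algebra fact on each eigenspace $E_{\mu_j}$, and then invoke the spectral theorem for symmetric positive semi-definite matrices. No asymptotic analysis is required: the identity is an algebraic rearrangement internal to $E$, and the fact that $F_i^\eps = \pi_{\mu_j}(\psi_i^\eps)$ lies in a single eigenspace for each $i \in A_j^\eps$ is the essential geometric input.

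First I observe that, since $F_i^\eps \in E_{\mu_j}$ whenever $i \in A_j^\eps$ and $E = \bigoplus_{j=1}^J E_{\mu_j}$, the right-hand side splits as
\[
\sum_{i=1}^{n_\eps} A(F_i^\eps, F_i^\eps) = \sum_{j=1}^J \sum_{i \in A_j^\eps} A(F_i^\eps, F_i^\eps),
\]
and each inner sum depends only on the restriction of $A$ to $E_{\mu_j} \times E_{\mu_j}$. Hence it suffices to construct, for every $j$, an $L^2$-orthogonal family $(\varphi_1^{(j)}, \ldots, \varphi_{d_j}^{(j)})$ in $E_{\mu_j}$ (with $d_j := \dim E_{\mu_j}$) such that
\[
\sum_{k=1}^{d_j} B(\varphi_k^{(j)}, \varphi_k^{(j)}) = \sum_{i \in A_j^\eps} B(F_i^\eps, F_i^\eps)
\]
for every bilinear map $B : E_{\mu_j} \times E_{\mu_j} \to F$. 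Concatenating these families over $j$ then yields the desired family of $n = \sum_j d_j = \dim E$ eigenfunctions.

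To build each family, I fix an $L^2$-orthonormal basis $e_1, \ldots, e_{d_j}$ of $E_{\mu_j}$ and expand $F_i^\eps = \sum_a c_{i,a}^\eps e_a$. A direct computation gives
\[
\sum_{i \in A_j^\eps} B(F_i^\eps, F_i^\eps) = \sum_{a,b} M_{ab}^\eps B(e_a, e_b), \qquad M_{ab}^\eps := \sum_{i \in A_j^\eps} c_{i,a}^\eps c_{i,b}^\eps,
\]
so the value depends on $\{F_i^\eps\}$ only through the symmetric positive semi-definite matrix $M^\eps$. By the spectral theorem I diagonalize $M^\eps = O \Lambda O^\top$ with $O \in O(d_j)$ and $\Lambda = \mathrm{diag}(\tau_1, \ldots, \tau_{d_j})$, $\tau_k \geq 0$, and set
\[
\varphi_k^{(j)} := \sqrt{\tau_k}\,\sum_a O_{a,k}\, e_a \ \in \ E_{\mu_j}.
\]
The columns of $O$ being orthonormal, the family $\{\varphi_k^{(j)}\}_k$ is pairwise $L^2$-orthogonal, and $\sum_k \varphi_k^{(j)} \otimes \varphi_k^{(j)} = \sum_{a,b} M_{ab}^\eps\, e_a \otimes e_b$ as symmetric $2$-tensors on $E_{\mu_j}$, which is exactly the desired identity.

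The only mild point worth flagging is the case $|A_j^\eps| < d_j$, in which $M^\eps$ is rank-deficient and some $\varphi_k^{(j)}$ vanish; this is harmless, as an orthogonal family is allowed to contain zero vectors. I do not anticipate any genuine obstacle: the statement is purely algebraic on each $E_{\mu_j}$, and the spectral decomposition closes it cleanly.
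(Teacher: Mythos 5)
Your argument is correct and, as far as one can tell without the external reference being printed in the current paper (the claim is cited verbatim from Petrides--Tewodrose), it reproduces what is almost certainly the same mechanism: reduce to each eigenspace $E_{\mu_j}$ (since each $F_i^\eps$ lives entirely in a single $E_{\mu_j}$, and distinct eigenspaces are $L^2$-orthogonal), observe that both sides of the identity depend on the family only through the symmetric tensor $\sum_i F_i^\eps \otimes F_i^\eps$ equivalently the Gram-type matrix $M^\eps$, and then diagonalize $M^\eps$ by an orthogonal change of basis to produce an orthogonal family with the same second-moment tensor. Two small points you leave implicit but should state: orthogonality between $\varphi_k^{(j)}$ and $\varphi_{k'}^{(j')}$ for $j\neq j'$ is automatic because $E_{\mu_j}\perp E_{\mu_{j'}}$ in $L^2$; and when $|A_J^\eps|\to\infty$ (which the paper allows) the construction is unaffected because $M^\eps$ is $d_J\times d_J$ regardless of the number of summands, with rank at most $\min(d_J,|A_J^\eps|)$. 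With those clarifications your proof is complete and self-contained.
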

We apply this claim to 
\begin{itemize}
\item $A(f,f) = \Vert f \Vert_{L^2}^2 + \Vert \nabla f \Vert_{L^2}^2 $. Then $(\varphi_i^\eps)$ is bounded in $W^{1,2}$. Since it  belongs to the space of eigenfunctions associated to $\left(\mu_j\right)_{j=1,\cdots, J}$ on $(\Sigma,g)$ it belongs to a finite dimensional space. Then, up to the extraction of a subsequence, $\varphi_\eps$ converges to some map $\Phi$ in $C^k$ for any $k$. 
\item $A$ defined by $A(f_i,f_i) = \lambda_i f_i^2$ for $f_i$ an eigenfunction associated to $\lambda_i$. Then
$ \vert \varphi_\eps \vert_\Lambda^2 = \vert F_\eps \vert_\Lambda^2 = \vert \Psi_\eps - S_\eps \vert_\Lambda^2 $ so that on $\Sigma_\eps$,
$$ \vert \varphi_\eps \vert_\Lambda^2 - 1 = \left( \left\vert \frac{\Phi_\eps}{\omega_\eps} \right\vert_{\Lambda_{\eps}}^2 - 1 \right) + \left( \left\vert \frac{\Phi_\eps}{\omega_\eps} \right\vert_{\Lambda}^2-\left\vert \frac{\Phi_\eps}{\omega_\eps} \right\vert_{\Lambda_{\eps}}^2  \right) + \left( \vert \Psi_\eps - S_\eps \vert_\Lambda^2 - \vert \Psi_\eps \vert^2  \right)  $$
We have that $\vert S_\eps \vert^2$ and $\theta_\eps^2$ converge to $0$ in $L^1$ and that $\left(\lambda_i^\eps - \lambda_i \right) t_i^\eps$ converges to $0$ (by Claim \ref{cl:estimatediffeigen} and assumptions on $F$). Then $\vert \varphi_\eps \vert_\Lambda^2 - 1$ converges to $0$ and at the limit, $\vert \Phi \vert_{\Lambda}^2 =1$.
\item $A(f,f) =  \left(  \frac{1}{\eps} \int_{\mathbb{S}_\eps(p)} f dL_g -  \frac{1}{\eps} \int_{\mathbb{S}_\eps(q)} f dL_g \right)^2 $. It is clear that
$$ \left\vert  \frac{1}{\eps} \int_{\mathbb{S}_\eps(p)} \varphi_\eps dL_g -  \frac{1}{\eps} \int_{\mathbb{S}_\eps(q)} \varphi_\eps dL_g \right\vert \to \vert \Phi(p) - \Phi(q) \vert$$
as $\eps\to 0$. Letting $\eps \to 0$ and then $l\to +\infty$ in \eqref{eqmeanofFeps}, we obtain that $\Phi(p) = \Phi(q)$. 
\item $A(f,f) = df \otimes df - \frac{\vert \nabla f \vert^2_g}{2} g$. We let $h  \in S^2_0(\Sigma)$ such that $supp(h) \subset \Sigma \setminus \{p,q\}$. Then
$$ \int_\Sigma \left( \sum_{i=1}^n A(\varphi_i^\eps,\varphi_i^\eps), h\right)_{g_\eps} dA_{g_\eps} = \int_{\tilde{\Sigma}_\eps} \left( \sum_{i=1}^{n_\eps} A(\Psi_i^\eps - S_i^\eps,\Psi_i^\eps - S_i^\eps), h\right)_{g_\eps} dA_{g_\eps} $$
so that
\begin{equation*}
\begin{split}
& \left\vert \int_\Sigma \left( \sum_{i=1}^n A(\varphi_i^\eps,\varphi_i^\eps), h\right)_{g_\eps} dA_{g_\eps} \right\vert \leq 2 \delta_\eps \Vert h \Vert_{g_\eps}  \\
& + \left\vert  \int_{\tilde{\Sigma}_\eps} \left( \sum_{i=1}^{n_\eps} \left( A(\Phi_i^\eps,\Phi_i^\eps) - A\left(\frac{\Phi_i^\eps}{\omega_\eps} - S_i^\eps,\frac{\Psi_i^\eps}{\omega_\eps} - S_i^\eps\right) \right), h\right)_{g_\eps} dA_{g_\eps} \right\vert
\end{split}
\end{equation*}
and since the right-hand term converges to $0$ and $g_\eps \to g$ in $supp(h)$, we obtain letting $\eps\to 0$ that 
$$ \int_\Sigma \left( \sum_{i=1}^n A(\Phi,\Phi), h\right)_{g} dA_{g} = 0 $$
and this is true for any $h$ such that $supp(h) \subset \Sigma \setminus \{p,q\}$. We obtain
$$ d\Phi \otimes d\Phi - \frac{\vert \nabla \Phi \vert^2_g}{2} g = 0 $$
\end{itemize}
The conclusion is that $\Phi : \Sigma \to \mathcal{E}_\Lambda$ is a (possibly branched) conformal minimal immersion such that $\Phi(p) = \Phi(q)$.

At the very end of our analysis, letting $q \to p$ along a vector $X \in T_p \Sigma $, we obtain that for all $p \in \Sigma$ and $X \in T_p\Sigma$, there is a possibly branched conformal minimal immersion $ \Phi : \Sigma \to \mathcal{E}_{\Lambda} $ such that $D \Phi(p).X = 0$. Since $\Phi$ is conformal, we have that $\vert D\Phi(p)\cdot X^{\perp} \vert = \vert D \Phi(p) \cdot X \vert = 0$ where $X^\perp$ is a vector such that $g(X,X^\perp)=0$ and $g(X^\perp,X^\perp) = g(X,X)$. We obtain that $\nabla \Phi(p) = 0$. Then $p$ is a branched point. Since $p$ was chosen arbitrarily, every point of $\Sigma$ is a conical singularity for $g$ and we obtain a contradiction.

\section{Steklov spectral functionals} \label{sec:strip}

\subsection{Choice of the initial minimizing sequence }
Let $\Sigma$ be a compact surface with a non-empty boundary $\partial\Sigma$. We assume that a Riemannian metric $g$ realizes the absolute minimizer
$$ E(g,1) = \inf_{\tilde{g}\in Met_0(\Sigma)} E(\tilde{g},1) $$
We now take $p,q$ two distinct points on $\partial \Sigma$ and $l>0$ and we denote
$$ I_\eps := \partial\Sigma \setminus \left( \mathbb{D}_\eps(p) \cup \mathbb{D}_\eps(q) \right)  $$
and
$$ R_{l,\eps} := [-\eps,\eps] \times \left[-\frac{l\eps}{2},\frac{l\eps}{2}\right] $$
and we glue $\Sigma$ and $R_{l,\eps}$
$$ \tilde{\Sigma}_\eps = \left( \Sigma_\eps \cup R_{l,\eps} \right) / \sim $$
where $\sim$ is a glueing along $\partial\Sigma \cap  \mathbb{D}_\eps(p)$ and $\partial_1 R_{l,\eps} := [-\eps,\eps] \times \{-\frac{l\eps}{2}\}$ and along $\partial \Sigma \cap \mathbb{D}_\eps(q)$ and $\partial_2 R_{l,\eps}:= [-\eps,\eps] \times \{\frac{l\eps}{2}\}$ that preserves the orientation or reverses the orientation. We denote $\tilde{g}_\eps$ the $L^\infty$ metric on $\tilde{\Sigma}_\eps$ equal to $g$ on $\Sigma_\eps$ and to the flat metric on $R_{l,\eps}$. Up to a standard regularisation procedure by the heat kernel of $\tilde{g}_\eps$, we can assume that $\tilde{g}_\eps$ is continuous on $\tilde{\Sigma}_\eps$ without affecting the following estimates on eigenvalues.
We aim at computing an asymptotic expansion of $\sigma_i(\tilde{\Sigma}_\eps,\tilde{g}_\eps)$ and of $E(\tilde{g}_\eps,1)$. The proof of the following claim is similar to the proof of Claim \ref{cl:linftyesteigenfunctions}.
\begin{cl} \label{cl:linftyesteigenfunctions3}
For $\sigma>0$, there is a constant $C:= C(\Sigma,\sigma)$ such that any eigenfunction $\varphi_\eps$ associated to a Steklov eigenvalue on $(\tilde{\Sigma}_\eps,\tilde{g}_\eps)$ bounded by $\sigma$ such that $\Vert \varphi_\eps \Vert_{L^2(\partial\tilde{\Sigma}_\eps)}=1$, we have 
%in a polar coordinate at the neighborhood of $p$ and $q$ that
%$$ \vert \varphi_\eps(r,\theta) \vert \leq C \sqrt{\ln \frac{1}{r}} \text{ and } \vert \psi_\eps(r,\theta) \vert \leq C \sqrt{\ln \frac{1}{r}}$$
%and in addition 
$$\Vert \varphi_\eps \Vert_{L^{\infty}\left(\tilde{\Sigma}_\eps\right)} \leq \sqrt{\ln \frac{1}{\eps}} $$
\end{cl}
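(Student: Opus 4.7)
The plan is to mimic the proof of Claim \ref{cl:linftyesteigenfunctions} almost verbatim, adapting it to the Steklov setting. The starting point is the energy bound: testing the eigenvalue equation against $\varphi_\eps$ gives
$$ \int_{\tilde{\Sigma}_\eps} \left\vert \nabla \varphi_\eps \right\vert^2_{\tilde{g}_\eps} dA_{\tilde{g}_\eps} = \sigma_\eps \int_{\partial \tilde{\Sigma}_\eps} \varphi_\eps^2 dL_{\tilde{g}_\eps} \leq \sigma, $$
so that, combined with the boundary $L^2$ normalization and a uniform trace/Poincaré inequality on $(\tilde{\Sigma}_\eps, \tilde{g}_\eps)$, the $H^1(\tilde{\Sigma}_\eps,\tilde{g}_\eps)$-norm of $\varphi_\eps$ is bounded by a constant depending only on $\Sigma$ and $\sigma$. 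This replaces the use of the $L^2(\Sigma)$-normalization in the Laplace case.

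Next, in a conformally flat boundary chart near $p$ (or $q$) where $\partial\Sigma$ becomes a diameter of the upper half-disk, I would define the half-spherical mean $f_\eps(r) = \frac{1}{\pi r}\int_{\mathbb{S}_r^+} \varphi_\eps$. A boundary trace Sobolev inequality gives $|f_\eps(1)| \leq C\sqrt{1+\sigma}$, and testing $\Delta \varphi_\eps = 0$ against $\ln|x|$ in the half-annulus $\mathbb{D}^+ \setminus \mathbb{D}_r^+$, using the Steklov boundary condition on the diameter, yields $|f_\eps(r)| \leq C\sqrt{1+\sigma} + \sqrt{\pi \sigma \ln(1/r)}$ thanks to conformal invariance of the Dirichlet energy. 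Then, for $4\eps \leq r \leq r_0$, a rescaling $\tilde{\varphi}_\eps(x) = \varphi_\eps(rx)$ on $\mathbb{D}_2^+ \setminus \mathbb{D}_{1/2}^+$ produces a mixed boundary value problem whose Robin coefficient $r \sigma_\eps$ is uniformly bounded as $r \to 0$. Standard interior plus boundary elliptic estimates, together with a Poincaré inequality on this half-annulus (applied to $\tilde{\varphi}_\eps - f_\eps(r)$), give $\|\varphi_\eps\|_{L^\infty(\mathbb{D}_{2r}^+ \setminus \mathbb{D}_{r/2}^+)} \leq C\sqrt{\ln (1/r)}$, exactly as in the closed case.

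For the strip-attachment region $\tilde{\Sigma}_\eps \setminus \Sigma_{4\eps}$, I would compose the two boundary charts near $p$ and $q$ with the rectangle $R_{l,\eps}$ via a conformal diffeomorphism $\theta_\eps$ onto a fixed planar domain $D$ of uniformly bounded modulus (two half-disks of fixed radius joined by a strip of dimensions $2\times l$). On $D$, the rescaled function $\tilde{\varphi}_\eps$ is harmonic and satisfies $\partial_\nu \tilde{\varphi}_\eps = \eps \sigma_\eps \, \tilde{\varphi}_\eps$ on $\partial D$, i.e. a Robin boundary condition whose coefficient is $O(\eps)$. Classical elliptic regularity for such mixed problems on a fixed domain, combined with the Poincaré inequality, gives
$$ \Vert \tilde{\varphi}_\eps \Vert_{L^\infty(D)} \leq C \left( \vert f_\eps(\eps) \vert + \Vert \nabla \tilde{\varphi}_\eps \Vert_{L^2(D)} + \eps \sigma_\eps \Vert \tilde{\varphi}_\eps \Vert_{L^2(\partial D)} \right), $$
and all three terms on the right are controlled by $\sqrt{\ln(1/\eps)}$ (the first by the previous step applied at $r = \eps$, the others by the energy bound). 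Gluing the three estimates completes the proof.

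The main obstacle compared to Claim \ref{cl:linftyesteigenfunctions} is bookkeeping the boundary terms: one must check that the logarithmic test function argument survives the Steklov boundary condition (it does, because $\eps \sigma_\eps \to 0$ and the offending boundary integral vanishes in the limit), and that the conformal rescaling $\theta_\eps$ of the glued region does not create singularities in the Robin coefficient. Since both are benign, the estimate $\|\varphi_\eps\|_{L^\infty(\tilde{\Sigma}_\eps)} \leq \sqrt{\ln(1/\eps)}$ follows by the same three-region strategy used in the closed case.
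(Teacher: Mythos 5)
Your proof follows the same three-region decomposition as Claim \ref{cl:linftyesteigenfunctions}, adapted to half-disks, the strip $R_{l,\eps}$, and the Steklov boundary condition, which is exactly what the paper intends: it writes no proof for this claim and only remarks that it is ``similar to the proof of Claim \ref{cl:linftyesteigenfunctions}.'' Your version is correct in substance; the one place I would tighten the exposition is the logarithmic test function step. You say the estimate for $f_\eps(r)$ ``uses the Steklov boundary condition on the diameter,'' but in fact it does not need to: integrating by parts in the form
$$
\int_{\mathbb{D}^+\setminus\mathbb{D}_r^+}\nabla\varphi_\eps\cdot\nabla\ln|x|\,dx
= \int_{\partial(\mathbb{D}^+\setminus\mathbb{D}_r^+)}\varphi_\eps\,\partial_\nu\ln|x|
$$
uses only harmonicity of $\varphi_\eps$ in the interior, and $\partial_\nu\ln|x|$ vanishes identically along the flat part of the boundary (the diameter), so the only terms surviving are the two half-circle averages, giving $\pi\bigl(f_\eps(1)-f_\eps(r)\bigr)$ exactly as in the closed case. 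The Steklov boundary data enter only in the rescaled elliptic estimates of the second and third regions, where they appear as a Robin coefficient of size $r\sigma_\eps$ and $\eps\sigma_\eps$ respectively, both uniformly bounded (in fact vanishing) as claimed. With that clarification, the argument, including the uniform Poincar\'e/trace bound obtained from $\|\nabla\varphi_\eps\|_{L^2(\tilde\Sigma_\eps)}^2\le\sigma$ and $\|\varphi_\eps\|_{L^2(\partial\tilde\Sigma_\eps)}=1$, and the conformal rescaling of the strip region to a fixed domain of bounded modulus, goes through as you describe.
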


\begin{cl} \label{cl:boundsenergysteklov}
We have that
\begin{equation} \label{eqsigmaiepsepss} \sigma_i(\tilde{\Sigma}_\eps,\tilde{g}_\eps) \geq \sigma_i(\Sigma,g) + O\left(\eps \sqrt{\ln \frac{1}{\eps}}\right)  \end{equation}
and
\begin{equation} \label{eqEepsepss3} E(\tilde{\Sigma}_\eps,\tilde{g}_\eps) \leq E(\Sigma,g) +O\left(\eps\right) \end{equation}
\end{cl}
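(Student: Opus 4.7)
The plan is to mimic the structure of Corollary \ref{cor:boundsenergy} via an auxiliary mixed Steklov-Neumann eigenvalue problem on $\Sigma_\eps := \Sigma \setminus (\mathbb{D}_\eps(p) \cup \mathbb{D}_\eps(q))$, with Steklov condition on $I_\eps$ and Neumann condition on the two semicircles $\partial\mathbb{D}_\eps(p)\cap \Sigma$ and $\partial\mathbb{D}_\eps(q)\cap\Sigma$. Denote its $k$-th non-zero eigenvalue by $\sigma_k^\eps$. The estimates \eqref{eqsigmaiepsepss} and \eqref{eqEepsepss3} will then split naturally into a Matthiesen-Siffert-type intermediate comparison $\sigma_k(\Sigma,g)\leq \sigma_k^\eps+O(\eps)$ and a passage $\sigma_k^\eps\leq \sigma_k(\tilde\Sigma_\eps,\tilde g_\eps)+O(\eps\sqrt{\ln(1/\eps)})$.

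First, I would adapt the four-step argument of Proposition \ref{propneumann} to the mixed problem. Given a mixed eigenfunction $\psi_\eps$ with $\|\psi_\eps\|_{L^2(I_\eps)}=1$, I would harmonically extend $\psi_\eps$ across each removed semidisk $\mathbb{D}_\eps(p)\cap\Sigma$, $\mathbb{D}_\eps(q)\cap\Sigma$ to a function $\hat\psi_\eps$. Steps 1-3 of Proposition \ref{propneumann} are purely local interior/trace estimates that transfer verbatim, since they only use that $\psi_\eps$ is harmonic inside $\Sigma_\eps$ with controlled $H^1$ norm: the Fourier/Pohozaev analysis gives $\int_{\partial\mathbb{D}_\eps\cap\Sigma}|\partial_\tau\psi_\eps|^2\leq C\eps$ and hence that the harmonic extension adds at most $O(\eps)$ to the Dirichlet energy. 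Testing $\hat\psi_\eps^0,\cdots,\hat\psi_\eps^k$ against the variational characterization of $\sigma_k(\Sigma,g)$ yields $\sigma_k(\Sigma,g)\leq \sigma_k^\eps+O(\eps)$, using in addition that $\int_{\partial\Sigma\cap(\mathbb{D}_\eps(p)\cup \mathbb{D}_\eps(q))}\hat\psi_\eps^2=O(\eps\ln(1/\eps))$ via the bound provided by Claim \ref{cl:linftyesteigenfunctions3} applied to mixed eigenfunctions.

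Second, for the reverse direction, let $\varphi_0^\eps,\ldots,\varphi_i^\eps$ be an $L^2(\partial\tilde\Sigma_\eps)$-orthonormal family of Steklov eigenfunctions on $(\tilde\Sigma_\eps,\tilde g_\eps)$ associated to $\sigma_0(\tilde\Sigma_\eps,\tilde g_\eps),\ldots,\sigma_i(\tilde\Sigma_\eps,\tilde g_\eps)$. I would choose $a_\eps\in\mathbb{S}^i$ realising the max-over-span for $\sigma_i^\eps$ and set $\psi_\eps:=\sum_k a_k^\eps \varphi_k^\eps|_{\Sigma_\eps}$. The numerator is controlled by $\int_{\Sigma_\eps}|\nabla\psi_\eps|_g^2\,dA_g\leq \int_{\tilde\Sigma_\eps}|\nabla\psi_\eps|_{\tilde g_\eps}^2\,dA_{\tilde g_\eps}=\sigma_i(\tilde\Sigma_\eps,\tilde g_\eps)$, while the boundary normalization satisfies
$$\int_{I_\eps}\psi_\eps^2\,dL_g=1-\int_{\partial\tilde\Sigma_\eps\cap R_{l,\eps}}\psi_\eps^2\,dL_{\tilde g_\eps}-\int_{\partial\Sigma\cap(\mathbb{D}_\eps(p)\cup\mathbb{D}_\eps(q))}\psi_\eps^2\,dL_g.$$
Here the boundary portion contributed by the strip has total length $O(l\eps)$, and applying Claim \ref{cl:linftyesteigenfunctions3} together with the $L^1$-$L^\infty$ interpolation $\int \psi_\eps^2\leq \|\psi_\eps\|_\infty\cdot\|\psi_\eps\|_{L^1}\leq \sqrt{\ln(1/\eps)}\cdot\sqrt{l\eps}\cdot\|\psi_\eps\|_{L^2(\partial\tilde\Sigma_\eps)}$ produces a denominator $1-O(\eps\sqrt{\ln(1/\eps)})$. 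Since $\sigma_i(\tilde\Sigma_\eps,\tilde g_\eps)$ is uniformly bounded, combining with the first step yields \eqref{eqsigmaiepsepss}.

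Third, \eqref{eqEepsepss3} follows routinely: since $F$ is $\mathcal{C}^1$ and eigenvalues stay uniformly bounded along the sequence, it suffices to control $\bar\sigma_k(\tilde\Sigma_\eps,\tilde g_\eps)-\bar\sigma_k(\Sigma,g)$. The matching upper bound $\sigma_k(\tilde\Sigma_\eps,\tilde g_\eps)\leq \sigma_k(\Sigma,g)+O(\eps)$ is obtained by testing eigenfunctions of $(\Sigma,g)$ extended to $\tilde\Sigma_\eps$ (flat cut-off in the strip, harmonic extension over the semidisks) in the Steklov variational characterization, and $L_{\tilde g_\eps}(\partial\tilde\Sigma_\eps)-L_g(\partial\Sigma)=2l\eps-O(\eps)=O(\eps)$. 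The main obstacle compared with the handle case is that the error in \eqref{eqsigmaiepsepss} is only $O(\eps\sqrt{\ln(1/\eps)})$, essentially because we lose one power of $\eps$ through the one-dimensional character of the new boundary piece — but this is still small enough to serve as $\delta_\eps$ in the Palais-Smale construction of the section, which is what matters downstream.
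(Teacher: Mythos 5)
Your proposal misreads the geometry of the strip attachment and, as a consequence, builds in machinery from the handle case that is not only absent from the paper's argument but is unnecessary here. In the closed-surface handle case (Section \ref{sec:handle}) two interior disks $\mathbb{D}_\eps(p)\cup\mathbb{D}_\eps(q)$ are removed from $\Sigma$ before the cylinder is glued, so an intermediate Neumann problem on $\Sigma_\eps$ and a controlled harmonic extension back across the deleted disks (Proposition \ref{propneumann}) are indispensable. In the Steklov case of Section \ref{sec:strip}, by contrast, nothing is removed from $\Sigma$: the rectangle $R_{l,\eps}=[-\eps,\eps]\times[-\frac{l\eps}{2},\frac{l\eps}{2}]$ is glued along the boundary arcs $\partial\Sigma\cap\mathbb{D}_\eps(p)$ and $\partial\Sigma\cap\mathbb{D}_\eps(q)$, so $\Sigma$ embeds isometrically inside $\tilde\Sigma_\eps$, the glued arcs become interior curves, and $\partial\tilde\Sigma_\eps=I_\eps\cup\{\pm\eps\}\times[-\frac{l\eps}{2},\frac{l\eps}{2}]$. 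The paper signals this explicitly after Proposition \ref{prop:estonBPsiepssteklov} (``the proof is simpler in this case since we do not have to consider a harmonic extension''). The paper's proof of the Claim is therefore just a one-line test-function argument: take $\psi_\eps=\sum_k a_k^\eps\varphi_k^\eps$ from $\tilde\Sigma_\eps$, restrict it to $\Sigma$ with no extension whatsoever, observe $\int_\Sigma|\nabla\psi_\eps|^2_g dA_g\le\int_{\tilde\Sigma_\eps}|\nabla\psi_\eps|^2_{\tilde g_\eps}dA_{\tilde g_\eps}$, and pay only for the pieces of boundary that differ between $\partial\Sigma$ and $\partial\tilde\Sigma_\eps$ (the two long strip sides and the two absorbed boundary arcs), each of length $O(\eps)$ and controlled pointwise by Claim \ref{cl:linftyesteigenfunctions3}. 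There is no mixed Steklov--Neumann auxiliary problem, no analogue of Proposition \ref{propneumann}, and no need for an $L^\infty$ bound on mixed eigenfunctions (which is not what Claim \ref{cl:linftyesteigenfunctions3} asserts and which you never prove).

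Two further points would block your route even if half-disks were removed. First, the statement that ``Steps 1--3 of Proposition \ref{propneumann} are purely local ... transfer verbatim'' is not correct: there the deleted regions are interior disks, the eigenfunction admits a full Fourier expansion $\sum_{n\in\mathbb Z}c_n r^{n}e^{in\theta}+\cdots$ on an annulus, and the Pohozaev identity is applied on concentric circles. For a half-disk $\mathbb{D}_\eps(p)\cap\Sigma$ centred at a boundary point, the admissible modes are one-sided (sines or cosines) dictated by the boundary condition on $\partial\Sigma$, and none of those three steps is verbatim. Second, the interpolation you use in your Step 2, $\int_\Gamma\psi_\eps^2\le\Vert\psi_\eps\Vert_\infty\Vert\psi_\eps\Vert_{L^1(\Gamma)}\le\sqrt{\ln\frac1\eps}\cdot\sqrt{l\eps}$, produces $O\left(\sqrt{\eps\ln\frac1\eps}\right)$, which is of larger order than the advertised $O\left(\eps\sqrt{\ln\frac1\eps}\right)$; the direct bound $\int_\Gamma\psi_\eps^2\le\Vert\psi_\eps\Vert_\infty^2\cdot L(\Gamma)$, which is what Claim \ref{cl:linftyesteigenfunctions3} is designed to deliver, is the one to use.
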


\begin{proof}
Let $\varphi_0^\eps,\varphi_1^\eps,\cdots,\varphi_i^\eps$ be an orthonormal family of eigenfunctions associated to the eigenvalues $\sigma_0(\tilde{\Sigma}_\eps,\tilde{g}_\eps),\cdots, \sigma_i(\tilde{\Sigma}_\eps,\tilde{g}_\eps)$, we have the existence of $a_\eps \in \mathbb{S}^{i}$ such that the function $\psi_\eps := \sum_{k=0}^i a_i^\eps \varphi_i^\eps$ satisfies
\begin{equation*}
\begin{split} \sigma_i(\Sigma,g) \leq \frac{\int_{\Sigma}\left\vert \nabla \psi_\eps \right\vert^2_g dA_g}{\int_{\partial\Sigma}\left( \psi_\eps \right)^2 dA_g} \leq \frac{\int_{\tilde{\Sigma}_\eps} \vert \nabla \psi_\eps \vert_{\tilde{g}_\eps}^2 dA_{\tilde{g}_\eps} }{ \int_{\partial \tilde{\Sigma}_\eps} \left(\psi_\eps\right)^2 dA_{\tilde{g}_\eps}+O\left(\eps \sqrt{\ln \frac{1}{\eps}}\right) } \leq \sigma_i(\tilde{\Sigma}_\eps,\tilde{g}_\eps) + O\left(\eps \sqrt{\ln \frac{1}{\eps}}\right)
\end{split}
\end{equation*}
as $\eps \to 0$ where we used Claim \ref{cl:linftyesteigenfunctions3} to conclude for the inequality \eqref{eqsigmaiepsepss}. \eqref{eqEepsepss3} follows since $F$ is a $\mathcal{C}^1$ function, eigenvalues are uniformly bounded and $L(\partial\Sigma_\eps, \tilde{g}_\eps) = L(\partial\Sigma,g) - O(\eps)$.
\end{proof}
As a conclusion, we take $\delta_\eps := c \sqrt{\eps \sqrt{\ln \frac{1}{\eps}}} $ for some well chosen constant $c>0$ in the previous section and construct the previous Palais-Smale approximation to the sequence $\tilde{g}_\eps$ and $\tilde{\beta}_{\eps} = \frac{1}{L_{\tilde{g}_\eps}(\tilde{\Sigma}_\eps)}$ on $\tilde{\Sigma}_{\eps}$ pullbacked on a fixed surface $\Sigma$ by a bi-Lipschitz diffeomorphism.

\subsection{Some convergence of $\omega_{\eps}$ to $1$ and first replacement of $\Phi_{\eps}$} 

We set $\omega_\eps$ the harmonic extension of the following map defined on $\partial \Sigma$
$$ \omega_{\eps} = \sqrt{\left\vert \Phi_{\eps} \right\vert_{\sigma_{\eps}}^2 + \theta_\eps^2 } 
%=  \left\vert \Phi_{\eps} \right\vert_{\sigma_{\eps}} + \tilde{\theta}_\eps 
\text{ on } \partial\Sigma \text{ and } \Delta_g \omega_\eps = 0$$ 
%where
%$$ \tilde{\theta}_\eps := \frac{\theta_\eps^2}{ \left\vert \Phi_{\eps} \right\vert_{\sigma_{\eps}} + \sqrt{ \left\vert \Phi_{\eps} \right\vert_{\sigma_{\eps}}^2 + \theta_\eps^2} } = \frac{\theta_\eps^2}{ \left\vert \Phi_{\eps} \right\vert_{\sigma_{\eps}} + \omega_\eps } $$
%is a non negative function on $\partial\Sigma$. 
We first prove that $\nabla\omega_{\eps}$ converges to $0$ in $L^2(g_\eps)$ and that $\Phi_{\eps}$ has a similar $H^1(g_\eps)$ behaviour as $\frac{\Phi_{\eps}}{\omega_{\eps}}$

\begin{cl} We have that
\begin{equation}  \label{eqomegaepsto1steklov}   \int_{\tilde{\Sigma}_\eps}  \left\vert \nabla \omega_\eps \right\vert^2dA_{g_\eps} + \int_{\tilde{\Sigma}_\eps} \left\vert \nabla\left( \Phi_\eps - \frac{\Phi_\eps}{\omega_\eps} \right) \right\vert^2 dA_{g_\eps}  \leq O(\delta_\eps) \end{equation}
as $\eps\to 0$.
\end{cl}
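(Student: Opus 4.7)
The plan is to mirror the structure of the Laplace analogue (Claim \ref{cl:cvreplacement}), splitting into an $L_\eps$-type boundary estimate, a test-function step using the weak Steklov equation, and a rearrangement step. The key new feature compared with the Laplace case is that $\omega_\eps$ is now defined as the \emph{harmonic extension} of $\sqrt{|\Phi_\eps|_{\sigma_\eps}^2+\theta_\eps^2}$, so the pointwise identity $\omega_\eps^2=|\Phi_\eps|_{\sigma_\eps}^2+\theta_\eps^2$ holds only on $\partial\Sigma$; this must be replaced by integration by parts using $\Delta_{g_\eps}\omega_\eps=0$.

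First I would establish the boundary estimate
\[
L_\eps\!\bigl(|\sigma_\eps\Phi_\eps|^2(1-1/\omega_\eps)\bigr)\le O(\delta_\eps),
\]
exactly as in \eqref{eqomega_epsminus1}. On $\partial\Sigma$ one has $\omega_\eps\ge 1$ (since $|\Phi_\eps|_{\sigma_\eps}^2\ge 1-\theta_\eps^2$), so $1-1/\omega_\eps\le \omega_\eps-1$ and $|\sigma_\eps\Phi_\eps|^2\le(\max\sigma_i^\eps)(\omega_\eps^2-\theta_\eps^2)$; combined with $L_\eps(\omega_\eps^2-\omega_\eps)\le L_\eps(\theta_\eps^2)\le \|\beta_\eps\|\,\|\theta_\eps\|_{H^1(\tilde g_\eps)}^2=O(\delta_\eps)$ and $L_\eps(|\Phi_\eps|_{\sigma_\eps}^2)=\beta_\eps(1,1)=1+O(\delta_\eps)$ from Proposition~\ref{prop:eullag}, the estimate follows. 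Next I test the weak Steklov system against $\sigma_\eps(\Phi_\eps-\Phi_\eps/\omega_\eps)$ (summed componentwise) to get
\[
\int_{\tilde\Sigma_\eps}\!\langle\nabla\Phi_\eps,\nabla(\Phi_\eps-\Phi_\eps/\omega_\eps)\rangle_{g_\eps,\sigma_\eps}dA_{g_\eps}=L_\eps(|\sigma_\eps\Phi_\eps|^2(1-1/\omega_\eps))=O(\delta_\eps),
\]
and polarization yields
\[
\int|\nabla(\Phi_\eps/\omega_\eps)|_{\sigma_\eps}^2-\int|\nabla\Phi_\eps|_{\sigma_\eps}^2-\int|\nabla(\Phi_\eps-\Phi_\eps/\omega_\eps)|_{\sigma_\eps}^2=O(\delta_\eps).
\]

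The Steklov-specific piece replaces the Laplace pointwise identity. Integrating by parts $\int\nabla((\phi_i^\eps/\omega_\eps)^2)\cdot\nabla\omega_\eps=\int_{\partial\Sigma}(\phi_i^\eps/\omega_\eps)^2\partial_\nu\omega_\eps$ (using $\Delta_{g_\eps}\omega_\eps=0$) yields
\[
\int|\nabla\phi_i^\eps|^2=\int\omega_\eps^2\!\left|\nabla\!\tfrac{\phi_i^\eps}{\omega_\eps}\right|^2+\int\!\left(\tfrac{\phi_i^\eps}{\omega_\eps}\right)^2\!|\nabla\omega_\eps|^2+\int_{\partial\Sigma}\!\left(\tfrac{\phi_i^\eps}{\omega_\eps}\right)^2\partial_\nu\omega_\eps.
\]
Summing with $\sigma_i^\eps$-weights, substituting in the displayed identity, and using $F_\eps:=|\Phi_\eps|_{\sigma_\eps}^2=\omega_\eps^2-\theta_\eps^2$ on $\partial\Sigma$ together with $\int_{\partial\Sigma}\partial_\nu\omega_\eps=0$, I obtain
\[
\int|\nabla(\Phi_\eps-\Phi_\eps/\omega_\eps)|_{\sigma_\eps}^2+\int(\omega_\eps^2-1)|\nabla(\Phi_\eps/\omega_\eps)|_{\sigma_\eps}^2+\int\tfrac{F_\eps}{\omega_\eps^2}|\nabla\omega_\eps|^2=\int_{\partial\Sigma}\tfrac{\theta_\eps^2}{\omega_\eps^2}\partial_\nu\omega_\eps+O(\delta_\eps),
\]
whose first three terms are non-negative by the maximum principle $\omega_\eps\ge 1$.

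The main obstacle is extracting the bound $\int|\nabla\omega_\eps|^2\le O(\delta_\eps)$ from this identity, because $F_\eps/\omega_\eps^2$ may vanish in the interior. My plan is to use harmonicity a second time through $\int|\nabla\omega_\eps|^2=\int_{\partial\Sigma}(\omega_\eps-1)\partial_\nu\omega_\eps$, combined with the $L^2$-type smallness on the boundary
\[
L_\eps((\omega_\eps-1)^2)=L_\eps(\omega_\eps^2)-2L_\eps(\omega_\eps)+L_\eps(1)=O(\delta_\eps),
\]
which follows from $L_\eps(\omega_\eps)\le\sqrt{L_\eps(\omega_\eps^2)L_\eps(1)}=1+O(\delta_\eps)$ and $L_\eps(\omega_\eps)\ge L_\eps(1)$ (as $\omega_\eps\ge 1$). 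The boundary remainder $\int_{\partial\Sigma}(\theta_\eps^2/\omega_\eps^2)\partial_\nu\omega_\eps$ is in turn recast as $\int_\Sigma \nabla(\theta_\eps^2/\omega_\eps^2)\cdot\nabla\omega_\eps$ by IBP, then controlled by Young's inequality to absorb a fraction of $\int|\nabla\omega_\eps|^2$ into the left-hand side, the residue being handled via $\int|\nabla\theta_\eps|^2\le O(\delta_\eps)$ coming from $\|\theta_\eps\|_{H^1(\tilde g_\eps)}^2\le c_\eps\delta_\eps$.
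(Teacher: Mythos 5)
Your first two steps (the $L_\eps$ bound and the polarization identity) match the paper exactly. The difficulty is in your third step, where the integration by parts that is supposed to produce $\int|\nabla\omega_\eps|^2$ is miscomputed. The cross term arising from $|\nabla(\omega_\eps v)|^2$ with $v=\phi_i^\eps/\omega_\eps$ is $\int_{\tilde\Sigma_\eps}\omega_\eps\,\nabla(v^2)\cdot\nabla\omega_\eps$, \emph{not} $\int\nabla(v^2)\cdot\nabla\omega_\eps$, so you cannot directly invoke $\Delta\omega_\eps=0$ the way you do. Writing $\omega_\eps\nabla\omega_\eps=\nabla(\omega_\eps^2/2)$ and integrating by parts gives
\begin{equation*}
\int_{\tilde\Sigma_\eps}\omega_\eps\nabla(v^2)\cdot\nabla\omega_\eps
=\int_{\partial\tilde\Sigma_\eps}v^2\,\omega_\eps\,\partial_\nu\omega_\eps
-\int_{\tilde\Sigma_\eps}v^2\,|\nabla\omega_\eps|^2,
\end{equation*}
so the interior $v^2|\nabla\omega_\eps|^2$ piece cancels and the boundary term carries an extra factor of $\omega_\eps$. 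The correct identity is
\begin{equation*}
\int_{\tilde\Sigma_\eps}|\nabla\phi_i^\eps|^2
=\int_{\tilde\Sigma_\eps}\omega_\eps^2\Bigl|\nabla\tfrac{\phi_i^\eps}{\omega_\eps}\Bigr|^2
+\int_{\partial\tilde\Sigma_\eps}\Bigl(\tfrac{\phi_i^\eps}{\omega_\eps}\Bigr)^2\omega_\eps\,\partial_\nu\omega_\eps,
\end{equation*}
and summing against $\sigma_i^\eps$ and using $|\Phi_\eps|_{\sigma_\eps}^2=\omega_\eps^2-\theta_\eps^2$ on $\partial\tilde\Sigma_\eps$ together with $\int_{\partial\tilde\Sigma_\eps}\omega_\eps\partial_\nu\omega_\eps=\int_{\tilde\Sigma_\eps}|\nabla\omega_\eps|^2$ produces the coefficient-one term $\int|\nabla\omega_\eps|^2$ on the left of your master identity, \emph{not} $\int(F_\eps/\omega_\eps^2)|\nabla\omega_\eps|^2$. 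At that point the remaining $\theta_\eps$-term is handled exactly as you propose (IBP and completing a square), and the claim follows with no further input.

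This also removes the need for the workaround in your last paragraph, which as stated has a genuine gap: $\int|\nabla\omega_\eps|^2=\int_{\partial\tilde\Sigma_\eps}(\omega_\eps-1)\partial_\nu\omega_\eps$ is an identity, and to exploit it one would need to control $\partial_\nu\omega_\eps$ in a dual space of wherever $\omega_\eps-1$ lives. The bound $L_\eps((\omega_\eps-1)^2)=O(\delta_\eps)$ concerns the $\beta_\eps$-measure on $\partial\tilde\Sigma_\eps$, not the Lebesgue boundary measure, and in any case the Dirichlet-to-Neumann map is of order one, so $L^2(\partial\tilde\Sigma_\eps)$-smallness of the trace does not give $L^2$-control on $\partial_\nu\omega_\eps$. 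Even granting stronger trace control, you would need $H^{1/2}$-smallness to bound the Dirichlet energy, which you do not have. Thus the detour does not close; the fix is to do the IBP above correctly, after which the problem you tried to circumvent never arises.
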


The proof is similar to the proof of Claim  \ref{eqomegaepsto1steklov} but needs a particular attention because of the harmonic extension of $\omega_\eps$

\begin{proof} We first prove
\begin{equation} \label{eqomega_epsminus1steklov} L_\eps\left( \left\vert \sigma_\eps \Phi_\eps \right\vert^2 \left(1- \frac{1 }{\omega_\eps}\right) \right) \leq O(\delta_\eps) \end{equation}
as $\eps\to 0$. Since $\omega_\eps \geq 1$,  
and $\left\vert \Phi_\eps \right\vert_{\sigma_\eps}^2 \leq \omega_\eps^2$, we have that
\begin{equation*}
\begin{split}
 L_\eps\left( \left\vert \sigma_\eps \Phi_\eps \right\vert^2 \left(1- \frac{1 }{\omega_\eps} \right) \right) \leq & \left(\max{\sigma_i^\eps}\right) L_\eps\left(  \left(\omega_\eps^2-\omega_\eps\right) \right) \\
\leq & \max\sigma_i^\eps  \left( L_\eps\left(  \left\vert \Phi_\eps \right\vert_{\sigma_\eps}^2 \right) + L_\eps \left(  \theta_\eps^2 \right) - L_{\eps}(1) \right) \\
 \end{split} \end{equation*}
 so that
$$ L_\eps\left( \left\vert \sigma_\eps \Phi_\eps \right\vert^2 \left(1- \frac{1 }{\omega_\eps}\right) \right) \leq  \left(\max\sigma_i^\eps \right)  L_\eps( \theta_\eps^2) \leq  \left(\max\sigma_i^\eps \right)  \left\Vert \beta_\eps \right\Vert_{\tilde{g}_\eps} \left\Vert \theta_\eps \right\Vert^2_{H^1(\tilde{g}_\eps)} \leq O(\delta_\eps) $$
as $\eps \to 0$ since we know that
$$  \left\Vert \beta_\eps \right\Vert_{\tilde{g}_\eps} \leq  \left\Vert 1 \right\Vert_{\tilde{g}_\eps} +  \left\Vert 1- \beta_\eps \right\Vert_{\tilde{g}_\eps} \leq 1 + O(\delta_\eps) $$ 
and we obtain \eqref{eqomega_epsminus1steklov}.

We know prove \eqref{eqomegaepsto1steklov}:
\begin{equation*} 
\begin{split}
\int_{\tilde{\Sigma}_\eps} & \left\vert \nabla \frac{\Phi_\eps}{\omega_\eps} \right\vert_{\sigma_\eps}^2 - \int_{\tilde{\Sigma}_\eps}  \left\vert \nabla \Phi_\eps \right\vert_{\sigma_\eps}^2 - \int_{\tilde{\Sigma}_\eps}  \left\vert \nabla\left( \Phi_\eps - \frac{\Phi_\eps}{\omega_\eps} \right) \right\vert_{\sigma_\eps}^2 
\\ = & - 2 \int_{\tilde{\Sigma}_\eps} \left\langle \nabla \Phi_\eps, \nabla\left( \Phi_\eps - \frac{ \Phi_\eps}{\omega_\eps}\right) \right\rangle_{\sigma_\eps} 
= - 2 \int_{\tilde{\Sigma}_\eps} \Delta \Phi_\eps  {\sigma_\eps}.\left( \Phi_\eps - \frac{ \Phi_\eps}{\omega_\eps}\right)  \\
= & - 2 \beta_\eps\left( \sigma_\eps.\Phi_\eps , {\sigma_\eps}.\left( \Phi_\eps - \frac{ \Phi_\eps}{\omega_\eps}\right) \right)
= - 2 L_\eps\left(   \left\vert \sigma_\eps \Phi_\eps \right\vert^2 \left(1- \frac{1 }{\omega_\eps} \right) \right) = O(\delta_\eps)
\end{split}
\end{equation*}
where we tested $ \Delta \Phi_{\eps} =  \beta_\eps( \sigma_\eps\Phi_\eps,.) $ in $\Sigma$ against $ \sigma_\eps. \left( \Phi_\eps-\frac{ \Phi_\eps}{\omega_\eps}\right)$, and we used \eqref{eqomega_epsminus1steklov}.

In particular, we have
$$ 0 \leq \int_{\tilde{\Sigma}_\eps} \left\vert \nabla\left( \Phi_\eps - \frac{\Phi_\eps}{\omega_\eps} \right) \right\vert_{\sigma_\eps}^2 \leq \int_{\tilde{\Sigma}_\eps}  \left( \left\vert \nabla \frac{\Phi_\eps}{\omega_\eps} \right\vert_{\sigma_\eps}^2 - \left\vert \nabla \Phi_\eps \right\vert_{\sigma_\eps}^2\right) + O(\delta_\eps) $$
as $\eps\to 0$ and knowing that with the straightforward computations we have
\begin{equation*}
\begin{split} \left\vert \nabla \frac{\Phi_\eps}{\omega_\eps} \right\vert_{\sigma_\eps}^2 - \left\vert \nabla \Phi_\eps \right\vert_{\sigma_\eps}^2 = & \left( 1 - \omega_\eps^2  \right)\left\vert \nabla \frac{\Phi_\eps}{\omega_\eps} \right\vert_{\sigma_\eps}^2 - \left( \left\vert \nabla \omega_\eps \right\vert^2 \frac{\left\vert \Phi_\eps \right\vert^2_{\sigma_\eps}}{\omega_\eps^2}  + \omega_\eps \nabla \omega_\eps \nabla \frac{\left\vert \Phi_\eps \right\vert^2_{\sigma_\eps}}{\omega_\eps^2} \right) \\
=  & \left( 1 - \omega_\eps^2  \right)\left\vert \nabla \frac{\Phi_\eps}{\omega_\eps} \right\vert_{\sigma_\eps}^2 - \nabla \omega_\eps \nabla  \frac{\left\vert \Phi_\eps \right\vert^2_{\sigma_\eps}}{\omega_\eps} 
\end{split} \end{equation*}
Computing that
\begin{equation*}
\begin{split} \int_{\tilde{\Sigma}_\eps}  \nabla \omega_\eps, \nabla  \frac{\left\vert \Phi_\eps \right\vert^2_{\sigma_\eps}}{\omega_\eps}  & = \int_{\partial{\tilde{\Sigma}_\eps}} \partial_\nu \omega_\eps \left(\omega_\eps - \frac{\theta_\eps^2}{\omega_\eps}\right)  \\
& = - \int_{{\tilde{\Sigma}_\eps}} \Delta \frac{\omega_\eps^2}{2} - \int_{{\tilde{\Sigma}_\eps}} \nabla \omega_\eps \nabla \frac{\theta_\eps^2}{\omega_\eps} \\
& = \int_{{\tilde{\Sigma}_\eps}} \left\vert \nabla \omega_\eps \right\vert^2 + \int_{{\tilde{\Sigma}_\eps}} \frac{\theta_\eps^2}{\omega_\eps^2} \left\vert \nabla \omega_\eps \right\vert^2 - 2 \int_{{\tilde{\Sigma}_\eps}} \frac{\theta_\eps}{\omega_\eps} \nabla \theta_\eps \nabla \omega_\eps \\
& \geq \int_{{\tilde{\Sigma}_\eps}} \left\vert \nabla \omega_\eps \right\vert^2 -  \int_{{\tilde{\Sigma}_\eps}} \left\vert \nabla \theta_\eps \right\vert^2
\end{split} \end{equation*}
and we obtain since $\frac{\theta_\eps}{\omega_\eps}$ is uniformly bounded by $1$ that
\begin{equation*}
\begin{split} \int_{\tilde{\Sigma}_\eps} \left\vert \nabla\left( \Phi_\eps - \frac{\Phi_\eps}{\omega_\eps} \right) \right\vert_{\sigma_\eps}^2 + \int_{{\tilde{\Sigma}_\eps}} \left( \omega_\eps^2 - 1 \right)\left\vert \nabla \frac{\Phi_\eps}{\omega_\eps} \right\vert_{\sigma_\eps}^2 + \int_{{\tilde{\Sigma}_\eps}} \left\vert \nabla \omega_\eps \right\vert^2 \leq O\left(\delta_\eps \right)  
\end{split} \end{equation*}
as $\eps \to 0$.

\end{proof}

\subsection{Quantitative convergence of eigenvalues and quantitative energy bounds}

We recall that $\sigma_k^\eps := \sigma_k(\tilde{\Sigma}_\eps,g_\eps,\beta_\eps)$. The following claims can be proved translating Claim \ref{cl:upperboundlambdakeps} and Claim \ref{cl:estimatediffeigen} from the context of Laplace eigenvalues to the context of Steklov eigenvalues.

\begin{cl} \label{cl:upperboundsigmakeps}
For all $k \in \mathbb{N}^\star$ 
%$$ \lambda_k^\eps \leq \lambda_k + O\left(\frac{1}{l} \right) $$
%and 
$$ \sigma_k^\eps \leq \sigma_k + O\left( \frac{1}{\ln\frac{1}{\eps}} \right) $$
as $\eps \to 0$.
\end{cl}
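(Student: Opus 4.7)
The proof will mirror that of Claim \ref{cl:upperboundlambdakeps}, with the only substantive changes coming from the boundary nature of the Steklov eigenvalue. I will fix an $L^2(\partial\Sigma, dL_g)$-orthonormal family $\varphi_0,\ldots,\varphi_k$ of Steklov eigenfunctions on $(\Sigma,g)$ associated to $\sigma_0,\ldots,\sigma_k$, and construct a logarithmic cutoff $\eta_\eps \in \mathcal{C}^\infty(\Sigma)$ with $\eta_\eps \equiv 1$ on $\Sigma_{\sqrt{\eps}}$, $\eta_\eps \equiv 0$ on $\mathbb{D}_\eps(p)\cup \mathbb{D}_\eps(q)$, and $\int_\Sigma |\nabla \eta_\eps|_g^2\, dA_g \leq C/\ln(1/\eps)$ (the half-disk version of the standard logarithmic cutoff satisfies this, since $p,q \in \partial\Sigma$). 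Extending $\eta_\eps\varphi_i$ by zero across the strip $R_{l,\eps}$ gives valid $H^1(\tilde{\Sigma}_\eps)$ test functions for the min-max characterization of $\sigma_k^\eps$.

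Next I will compute the numerator and denominator of the test Rayleigh quotient. For the numerator, Green's identity yields $\int_\Sigma \nabla\varphi_i\cdot\nabla\varphi_j\, dA_g = \sigma_i \delta_{ij}$, so that expanding
$$ \int_{\tilde{\Sigma}_\eps}\langle \nabla(\eta_\eps\varphi_i),\nabla(\eta_\eps\varphi_j)\rangle_{g_\eps}\,dA_{g_\eps} $$
reduces to $\sigma_i\delta_{ij}$ plus four error contributions: (a) $\int_{\mathbb{D}_{\sqrt{\eps}}\setminus\mathbb{D}_\eps}(1-\eta_\eps^2)\nabla\varphi_i\cdot\nabla\varphi_j = O(\eps)$, since $\varphi_i,\varphi_j$ are smooth and the half-annulus has area $O(\eps)$; (b) cross terms $\int \eta_\eps\nabla\eta_\eps\,(\varphi_i\nabla\varphi_j+\varphi_j\nabla\varphi_i) = O(\sqrt{\eps/\ln(1/\eps)})$ by Cauchy-Schwarz; (c) $\int \varphi_i\varphi_j|\nabla\eta_\eps|^2 = O(1/\ln(1/\eps))$; and (d) a term $O(\delta_\eps)$ coming from the difference between $g_\eps$ and $g$ via \eqref{eq:disttilderegularized}. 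For the denominator, since $\eta_\eps$ vanishes on $\partial R_{l,\eps}\setminus\partial\Sigma$,
$$ \beta_\eps(\eta_\eps \varphi_i, \eta_\eps \varphi_j) = \int_{I_\eps}\eta_\eps^2 \varphi_i \varphi_j\, dL_g + O(\delta_\eps) = \delta_{ij} + O(\sqrt{\eps}) + O(\delta_\eps), $$
since $\varphi_i\varphi_j$ is bounded and $\partial\Sigma\cap\mathrm{supp}(1-\eta_\eps^2)$ has length $O(\sqrt{\eps})$.

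To conclude, I will apply the same manipulation of the min-max formula as in Claim \ref{cl:upperboundlambdakeps}: picking maximizing coefficients $a\in \mathbb{S}^k$ for the test $(k+1)$-dimensional space $\mathrm{Span}(\eta_\eps\varphi_0,\ldots,\eta_\eps\varphi_k)$, the diagonal terms dominate the quadratic form in $a$ up to the errors above, yielding
$$ \sigma_k^\eps \leq \frac{\sigma_k + O(1/\ln(1/\eps))}{1 - O(\sqrt{\eps}+\delta_\eps)} = \sigma_k + O\!\left(\frac{1}{\ln(1/\eps)}\right), $$
using that $\delta_\eps = O\bigl(\sqrt{\eps}\,(\ln(1/\eps))^{1/4}\bigr) = o(1/\ln(1/\eps))$. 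The only noteworthy deviation from the Laplace case is that $\beta_\eps$ is concentrated on the one-dimensional boundary, so the denominator correction is $O(\sqrt{\eps})$ rather than the $O(\eps\ln(1/\eps))$ appearing in the area-based estimate; this is still asymptotically negligible compared to the target $O(1/\ln(1/\eps))$, so no new obstacle arises.
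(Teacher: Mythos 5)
Your proposal is correct and follows essentially the same route as the paper, which here simply instructs the reader to translate the proof of Claim \ref{cl:upperboundlambdakeps} to the Steklov setting; you have carried out that translation faithfully, with the right cutoff on half-disks and the correct observation that the denominator error degrades only to $O(\sqrt{\eps})$ (boundary length rather than area), which is still $o(1/\ln(1/\eps))$ given $\delta_\eps = c\sqrt{\eps\sqrt{\ln(1/\eps)}}$. One minor point to watch, as in the Laplace case: the test space $\mathrm{Span}(\eta_\eps\varphi_0,\ldots,\eta_\eps\varphi_k)$ is not inside $V_{\beta_\eps}$, so one should either project onto the $\beta_\eps$-mean-zero subspace or restrict to $\eta_\eps\varphi_1,\ldots,\eta_\eps\varphi_k$ and subtract $\beta_\eps(1,\eta_\eps\varphi_i)/\beta_\eps(1,1)$; since $\int_{\partial\Sigma}\varphi_i\,dL_g=0$ for $i\geq 1$, this additional correction is $O(\sqrt{\eps}+\delta_\eps)$ and is absorbed in the same way.
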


\begin{cl} \label{cl:estimatediffeigensteklov} If $k$ is such that for any $x$, $\partial_kF(x) < 0$,
$$  \vert \sigma_k^\eps - \sigma_k \vert = O\left(\frac{1}{\ln \frac{1}{\eps}}\right) $$
and 
$$ \int_{R_{l,\eps}} \left\vert \nabla \frac{\Phi_\eps}{\omega_\eps} \right\vert_{g_\eps}^2 dA_{g_\eps}  =: V_\eps = O\left(\frac{1}{\ln \frac{1}{\eps}}\right) $$
as $\eps \to 0$.
\end{cl}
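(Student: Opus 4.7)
The claim is the Steklov counterpart of Claim \ref{cl:estimatediffeigen}, and I would follow the same three-step pattern, replacing bulk integrals against $\beta_\eps$ by boundary integrals wherever the Steklov bilinear form appears. As a preliminary, Claim \ref{cl:upperboundsigmakeps} gives, up to extraction, $\sigma_k^\eps \to \nu_k \leq \sigma_k$; combining this with the convergence of the minimizing sequence and continuity of $F$, one has $F(\nu_1,\ldots,\nu_m) = \lim E(\tilde g_\eps,\tilde\beta_\eps) = E(g,1) = F(\sigma_1,\ldots,\sigma_m)$. The strict monotonicity assumption $(H)$ in each coordinate where $\partial_k F < 0$, together with $\nu_k \leq \sigma_k$ for all $k$, forces $\nu_k = \sigma_k$, completing the convergence step.

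The heart of the argument is the Steklov analogue of Lemma \ref{lem:mainproj}: for a logarithmic cutoff $\eta_\eps \in \mathcal{C}^\infty(\Sigma)$ equal to $1$ on $I_{\sqrt\eps}$, vanishing on $\partial\Sigma \cap (\mathbb{D}_\eps(p)\cup \mathbb{D}_\eps(q))$, with $\int_\Sigma |\nabla \eta_\eps|^2_g \leq C/\ln(1/\eps)$, I would prove
$$\left(\int_{\partial\Sigma} \eta_\eps \frac{\phi_i^\eps}{\omega_\eps}\, \varphi\, dL_g\right)^{\!2} \leq \frac{C}{\ln(1/\eps)}\,\beta_\eps(\phi_i^\eps,\phi_i^\eps)$$
for any $i \in A_j^\eps$ and any $L^2$-normalized eigenfunction $\varphi$ of $(\Sigma,g)$ with eigenvalue $\lambda \neq \mu_j$. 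The proof starts from $\lambda \int_{\partial\Sigma}\eta_\eps(\phi_i^\eps/\omega_\eps)\varphi\,dL_g = \int_\Sigma \nabla(\eta_\eps \phi_i^\eps/\omega_\eps)\cdot \nabla \hat\varphi\, dA_g$ (where $\hat\varphi$ harmonically extends $\varphi$ across $\mathbb{D}_\eps(p,q)$), then passes to $\tilde g_\eps$-integrals over $\tilde\Sigma_\eps$, uses the Steklov equation $\Delta \Phi_\eps = 0$ in $\tilde\Sigma_\eps$ with $\partial_\nu \Phi_\eps = \sigma_\eps\beta_\eps(\Lambda_\eps \Phi_\eps,\cdot)$ (in the renormalized Steklov sense of Proposition \ref{prop:eullag}), and decomposes the resulting identity into the Steklov transcriptions of the seven terms $I$–$VII$ in the proof of Lemma \ref{lem:mainproj}. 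Each term is absorbed by Cauchy–Schwarz together with \eqref{eqomegaepsto1steklov}, \eqref{eq:disttilderegularized}, and the uniform equivalence of $N_{g_\eps,\beta_\eps}$ with the $H^1(\tilde g_\eps)$ norm; the uniform lower bound on $|\lambda - \sigma_i^\eps|$ finishes the estimate.

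Given these ingredients, Step 3 proceeds by testing $\sqrt{\sigma_i^\eps}\bigl(\eta_\eps(\phi_i^\eps/\omega_\eps) - \pi_{<\mu_j}(\eta_\eps \phi_i^\eps/\omega_\eps)\bigr)$ in the min-max characterization of $\mu_j = \sigma_{L(j)+1}(\Sigma,g)$ for each $i \in A_j^\eps$, and summing over $j$. The numerator collapses to $\int_{\tilde\Sigma_\eps}|\nabla(\Phi_\eps/\omega_\eps)|^2_{\sigma_\eps,g_\eps} - V_\eps$ up to an $O(1/\ln(1/\eps))$ error, which by \eqref{eq:sumtieps} equals $\sum_k (\sigma_k^\eps)^2 t_k^\eps - V_\eps$ plus a controlled defect from $\omega_\eps^{-1}-1$. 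The denominator is bounded below by $\sum_k \sigma_k \sigma_k^\eps t_k^\eps(1 - O(1/\ln(1/\eps)))$ via the boundary analogue of Step 2 of Claim \ref{cl:estimatediffeigen}, using $L_\eps((\phi_i^\eps)^2 \omega_\eps^{-2}) = \beta_\eps(\phi_i^\eps,\phi_i^\eps)(1+O(\delta_\eps))$. Rearranging yields
$$\sum_{k=1}^m (\sigma_k - \sigma_k^\eps)\sigma_k^\eps t_k^\eps + V_\eps \leq O\!\left(\frac{1}{\ln(1/\eps)}\right),$$
and the uniform positivity of $\sigma_k^\eps t_k^\eps$ (guaranteed by $\partial_k F < 0$) combined with Claim \ref{cl:upperboundsigmakeps} upgrades this to both announced estimates.

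The main obstacle is the handling of $\omega_\eps$: unlike the Laplace situation, where $\omega_\eps = |\Phi_\eps|_{\Lambda_\eps}$ is pointwise defined on $\tilde\Sigma_\eps$, in the Steklov setting $\omega_\eps$ is the harmonic extension from $\partial\tilde\Sigma_\eps$, so bulk error terms involving $\omega_\eps^{-1} - 1$ cannot be controlled pointwise and must be routed through the Steklov-type Dirichlet/trace estimate \eqref{eqomegaepsto1steklov}. A second, milder subtlety is that the gluing piece is a rectangle $R_{l,\eps}$ with two flat boundary segments of length $2\eps$ rather than the cylinder $C_{l,\eps}$ with circular ends; when rescaling $\hat\varphi$ on the gluing regions to extract the $V_\eps$ contribution at the sharp rate $1/\ln(1/\eps)$, one has to use the conformal equivalence $R_{l,\eps} \simeq R_{l,1}$ and Claim \ref{cl:linftyesteigenfunctions3} in place of the circular Fourier analysis of Section \ref{sec:handle}.
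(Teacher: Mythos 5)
Your proposal is correct and follows exactly the route the paper intends: the paper does not write out this proof but states that it ``can be proved translating Claim \ref{cl:upperboundlambdakeps} and Claim \ref{cl:estimatediffeigen} from the context of Laplace eigenvalues to the context of Steklov eigenvalues,'' and your plan is precisely that translation, with the correct identification of the two genuine points of divergence (the bulk-harmonic extension $\omega_\eps$ handled through \eqref{eqomegaepsto1steklov} and the maximum principle, and the rectangular gluing piece $R_{l,\eps}$ in place of the cylinder). The only blemish is the typo ``$\partial_\nu \Phi_\eps = \sigma_\eps\beta_\eps(\Lambda_\eps\Phi_\eps,\cdot)$,'' which redundantly mixes the scalar prefactor with the matrix; in the Steklov section the eigenvalue matrix is denoted $\sigma_\eps$ and the equation reads $\Delta\Phi_\eps=\beta_\eps(\sigma_\eps\Phi_\eps,\cdot)$ with $\beta_\eps$ a boundary form.
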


\subsection{A replacement of $\Phi_\eps$ in $\Sigma$}
We let $\Psi_\eps$ be equal to $\frac{\Phi_\eps}{\omega_\eps}$ in $\Sigma_\eps$.
% and the harmonic extension of $\frac{\Phi_\eps}{\omega_\eps}$ in $\mathbb{D}_{\eps}(p)\cup \mathbb{D}_\eps(q)$.

We set $B_\eps$ the bilinear form on $H^1(\Sigma,\mathbb{R}^n)$ defined as
$$ B_\eps(X,X):= \sum_{j=1}^J \sum_{i\in A_j^\eps} \left( \int_\Sigma \vert \nabla X_i \vert^2_g dA_g - \mu_j \int_{\partial \Sigma} \left( X_i - \pi_{<\mu_j}(X_i) \right)^2 dL_g \right) $$
where $\pi_{< \sigma}$ is the projection in $L^2(\partial \Sigma,g)$ on the sum of the Steklov eigenspaces associated to eigenvalues on $(\Sigma,g)$ strictly less than $\sigma$. It is clear that $B$ is a non-negative bilinear form so that for any $X : \Sigma \to \R^n$, the Cauchy-Schwarz inequality gives that
$$ B_\eps(X, \Psi_\eps) \leq \sqrt{B_\eps(\Psi_\eps,\Psi_\eps)} \sqrt{B_\eps(X,X)} $$
We aim at estimating $B(\Psi_\eps,\Psi_\eps)$ with respect to 
$$ V_\eps := \int_{R_{l,\eps}} \left\vert \nabla \frac{\Phi_\eps}{\omega_\eps} \right\vert^2 = O\left(\frac{1}{\eps}\right) $$
In the following proposition, analogous to Proposition \ref{prop:estonBPsieps}, notations for $\mu_j$ and $A_j^\eps$ are similar:

\begin{prop} \label{prop:estonBPsiepssteklov}
There is a constant $C>0$ such that
$$ B(\Psi_\eps,\Psi_\eps) \leq C   \frac{V_\eps}{l}  + o\left(\frac{1}{\ln \frac{1}{\eps}}\right) $$
and for $1 \leq i \leq n$,
$$ \sum_{j=1}^J \sum_{i\in A_j^\eps} \int_{\partial\Sigma} \left(\pi_{<\mu_j}(\psi_i^\eps)\right)^2 dL_g \leq C \frac{V_\eps}{l} + O(\delta_\eps) $$
as $\eps\to 0$.
\end{prop}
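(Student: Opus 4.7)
The plan is to adapt the proof of Proposition~\ref{prop:estonBPsieps} to the Steklov setting, keeping its three-step structure. The definition of $\Psi_\eps$ should first be completed on all of $\Sigma$ by extending $\Phi_\eps/\omega_\eps$ harmonically across the half-disks $\mathbb{D}_\eps(p)\cap\Sigma$ and $\mathbb{D}_\eps(q)\cap\Sigma$, which is well-defined since $\omega_\eps\geq 1$ on $\partial\Sigma$.

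The first step is to bound the Dirichlet energy of $\Psi_\eps$ on $\Sigma$ by that of $\Phi_\eps/\omega_\eps$ on $\tilde\Sigma_\eps$ minus the contribution on the strip $R_{l,\eps}$, up to a factor $1+Ce^{-l}$. This requires a Steklov analog of Lemma~\ref{lem:extension}: a $H^1$ function on the half-strip $[-\eps,\eps]\times[0,l\eps/2]$ has a harmonic extension to the corresponding half-disk (with matching Dirichlet trace on $[-\eps,\eps]\times\{0\}$ and Neumann data on the curved part of the boundary) whose Dirichlet energy is at most $(1+Ce^{-l})$ times that of the strip. This follows by even reflection across the long axis of the strip, reducing the statement to the cylinder-to-disk estimate of Lemma~\ref{lem:extension}. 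Combined with the identity $L_\eps(\omega_\eps^{-2}) = L_\eps(1)+O(\delta_\eps) = 1+O(\delta_\eps)$, the bookkeeping of Step~1 of Proposition~\ref{prop:estonBPsieps} yields
$$ B_\eps(\Psi_\eps,\Psi_\eps) \leq Ce^{-l} V_\eps + \sum_{k=1}^{m}(\sigma_k^\eps - \sigma_k) t_k^\eps + \sum_{j=1}^J\mu_j\sum_{l=0}^{L(j)}\sum_{i\in A_j^\eps}\left(\int_{\partial\Sigma}\psi_i^\eps \varphi_l \, dL_g\right)^2 + O(\delta_\eps^{1/2}), $$
and the second sum is $o(1/\ln(1/\eps))$ by Claim~\ref{cl:estimatediffeigensteklov} and the $\mathcal{C}^1$ regularity of $F$, exactly as in Step~2 of the Laplace case.

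For the projection terms, I would use the Steklov identity $\sigma_l \int_{\partial\Sigma} \psi_i^\eps \varphi_l \, dL_g = \int_\Sigma \langle\nabla \psi_i^\eps ,\nabla \varphi_l\rangle_g \, dA_g$ and split the bulk integral between $\tilde\Sigma_\eps\setminus R_{l,\eps}$, the strip $R_{l,\eps}$, and the half-disks near $p$ and $q$. Substituting the Steklov equation satisfied by $\phi_i^\eps$ on $\tilde\Sigma_\eps$ produces a main term $\sigma_i^\eps\int_{\partial\Sigma}\psi_i^\eps\varphi_l\, dL_g$ and small errors absorbed by $O(\delta_\eps)$, following the template of Step~3 of Proposition~\ref{prop:estonBPsieps}. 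The strip contribution is controlled using that the harmonic extension of $\varphi_l$ to $R_{l,\eps}$ is $H^1$-close to the affine interpolation between $\varphi_l(p)$ and $\varphi_l(q)$ along the long direction, whose gradient has pointwise size $|\varphi_l(p)-\varphi_l(q)|/(l\eps)$; Cauchy--Schwarz combined with $\int_{R_{l,\eps}}|\nabla(\Phi_\eps/\omega_\eps)|^2 = V_\eps$ then yields the $CV_\eps/l$ term. Using that $|\sigma_l-\sigma_i^\eps|$ is uniformly bounded away from zero for $i\in A_j^\eps$ and $l\leq L(j)$ completes both assertions.

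The main obstacle will be the first step, namely producing the correct Steklov extension lemma with the right matching of Dirichlet/Neumann boundary conditions at the short ends of the strip so that the reflection trick preserves the exponential factor $e^{-l}$. One must also carefully track that $\omega_\eps$ was itself defined by harmonic extension into $\Sigma$, so its gradient in the bulk must be controlled independently of $\eps$; fortunately this is already provided by \eqref{eqomegaepsto1steklov}, after which all remaining estimates in the Laplace proof transfer verbatim, with bulk integrals replaced by boundary integrals via the Steklov equation.
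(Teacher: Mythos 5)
The proof you propose contains a genuine geometric misconception that makes your first step superfluous and, as written, ill-posed. When a strip $R_{l,\eps}$ is attached to $\Sigma$ along the two boundary arcs $\partial\Sigma\cap\mathbb{D}_\eps(p)$ and $\partial\Sigma\cap\mathbb{D}_\eps(q)$, no material is removed from $\Sigma$: the surface $\tilde\Sigma_\eps$ contains $\Sigma$ as a subdomain, the two arcs simply become interior curves of $\tilde\Sigma_\eps$, and $\partial\tilde\Sigma_\eps = I_\eps\cup(\text{long sides of }R_{l,\eps})$. (This is visible already in Claim~\ref{cl:boundsenergysteklov}, where the inequality $\int_\Sigma|\nabla\psi_\eps|^2 \leq \int_{\tilde\Sigma_\eps}|\nabla\psi_\eps|^2$ is taken for granted without any extension argument, which only works because $\Sigma\subset\tilde\Sigma_\eps$.) In particular there are no half-disks to extend over: the function $\Phi_\eps/\omega_\eps$ is already defined on all of $\Sigma$, since $\omega_\eps$ is defined by harmonic extension across all of $\tilde\Sigma_\eps\supset\Sigma$. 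Consequently the Dirichlet-energy comparison in your Step~1 becomes an exact identity,
$$\int_\Sigma|\nabla\Psi_\eps|^2_g\,dA_g=\int_{\tilde\Sigma_\eps}\Bigl|\nabla\tfrac{\Phi_\eps}{\omega_\eps}\Bigr|^2_{\tilde g_\eps}dA_{\tilde g_\eps}-\int_{R_{l,\eps}}\Bigl|\nabla\tfrac{\Phi_\eps}{\omega_\eps}\Bigr|^2,$$
with no error term at all, so the entire effort you devote to a Steklov analogue of Lemma~\ref{lem:extension} by a reflection argument is unnecessary, and the $Ce^{-l}V_\eps$ term in your intermediate bound disappears. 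This is precisely what the paper means by ``the proof is simpler in this case since we do not have to consider a harmonic extension.''

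The rest of your argument is essentially correct once you drop the phantom half-disk contributions. In the decomposition of $\sigma_l\int_{\partial\Sigma}\psi_i^\eps\varphi_l\,dL_g=\int_\Sigma\langle\nabla\psi_i^\eps,\nabla\varphi_l\rangle_g\,dA_g$, the split is into only two pieces: $\int_{\tilde\Sigma_\eps}$ minus $\int_{R_{l,\eps}}$ (with $\hat\varphi_l$ the harmonic extension of $\varphi_l$ along the strip); there is no separate half-disk term. Substituting the weak Steklov equation for $\phi_i^\eps$ on $\tilde\Sigma_\eps$, bounding the strip piece via the affine interpolant whose gradient is of size $|\varphi_l(p)-\varphi_l(q)|/(l\eps)$, invoking Claim~\ref{cl:estimatediffeigensteklov} for the $\sum_k(\sigma_k^\eps-\sigma_k)t_k^\eps$ term, and using the uniform spectral gap $|\sigma_l-\sigma_i^\eps|\geq c>0$ for $i\in A_j^\eps$, $l\leq L(j)$, do transfer verbatim from the Laplace case, as you say. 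Your observation that the bulk gradient of $\omega_\eps$ is controlled by~\eqref{eqomegaepsto1steklov} is correct and is indeed the only additional point one must track compared with the Laplace case.
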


The proof follows the proof of Proposition \ref{prop:estonBPsieps} (the proof is simpler in this case since we do not have to consider a harmonic extension).

\subsection{Estimates on the rest}
We set for $i\in A_j^\eps$
$$ \psi_i^\eps =   F_i^\eps + \pi_{<\sigma_i}\left( \psi_i^\eps \right) + R_i^\eps = F_i^\eps + S_i^\eps $$
where $F_i^\eps = \pi_{\mu_j}(\psi_i^\eps)$ is the projection of $\psi_i^\eps$ in on the eigenspace associated to $\mu_j$ in $L^2(\partial\Sigma,g)$. We then have that
$$  B_\eps(R_\eps,R_\eps ) = B_\eps(\Psi_\eps,R_\eps) \leq \sqrt{B_\eps(\Psi_\eps,\Psi_\eps)}\sqrt{B_\eps(R_\eps,R_\eps)} $$
so that
$$ B_\eps(R_\eps,R_\eps) \leq B_\eps(\Psi_\eps,\Psi_\eps) \leq C   \frac{V_\eps}{l}  + o\left(\frac{1}{\ln \frac{1}{\eps}}\right) $$
as $\eps \to 0$. Since $R_i^\eps \in \bigoplus_{\sigma > \mu_j} E_\sigma$, we obtain,
$$B_\eps(R_\eps,R_\eps) \geq \sum_{i=1}^n \left(1- \frac{\mu_j}{\mu_{j+1}}\right) \int_{\Sigma} \vert \nabla R_i^\eps \vert_g^2 dA_g  $$
and using in addition Proposition \ref{prop:estonBPsieps} we obtain that
\begin{equation} \label{eq:w12estimateReps2} \Vert S_\eps \Vert_{W^{1,2}}^2 \leq C   \frac{V_\eps}{l}  + o\left(\frac{1}{\ln \frac{1}{\eps}}\right)  \end{equation}
as $\eps \to 0$. In particular, letting $\eps \to 0$ and then $l\to +\infty$, we obtain that $\vert S_\eps \vert^2 + \vert \nabla S_\eps \vert^2  \to 0$ in $L^1$. In addition, we have the following claim which proof follows the proof of Claim \ref{cl:estonSeps}

\begin{cl} \label{cl:estonSepssteklov}
$$   \left\vert \frac{1}{\eps} \int_{I_\eps(p)}  S_\eps dL_g \right\vert^2 +  \left\vert \frac{1}{\eps} \int_{I_\eps(q)} S_\eps dL_g \right\vert^2 \leq C \frac{1}{l} +o(1)   $$
as $\eps \to 0$.
\end{cl}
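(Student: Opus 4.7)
The plan is to mirror the structure of Claim \ref{cl:estonSeps}, replacing circles around interior points by boundary arcs around $p, q \in \partial\Sigma$. The two ingredients needed are a logarithmic Sobolev trace estimate — which plays the role taken in the closed case by the beginning of the proof of Claim \ref{cl:linftyesteigenfunctions} — together with the quantitative inputs \eqref{eq:w12estimateReps2} and the bound $V_\eps = O(1/\ln(1/\eps))$ from Claim \ref{cl:estimatediffeigensteklov}.

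First I would establish the following log-trace inequality for every $f \in H^1(\Sigma, g)$:
$$ \left\vert \frac{1}{\eps} \int_{I_\eps(p)} f \, dL_g \right\vert^2 + \left\vert \frac{1}{\eps} \int_{I_\eps(q)} f \, dL_g \right\vert^2 \leq C \ln\frac{1}{\eps} \, \Vert f \Vert_{H^1(\Sigma,g)}^2. $$
In a half-disk conformal chart centred at $p \in \partial\Sigma$ I would work with the semi-circular averages $\bar f(r) := \frac{1}{\pi r}\int_{\mathbb{S}_r \cap \Sigma} f$. The value $|\bar f(1)|$ is controlled by a standard Sobolev trace embedding. The increment satisfies
$$ \bar f(1) - \bar f(r) = \int_{(\mathbb{D} \setminus \mathbb{D}_r) \cap \Sigma} \nabla f \cdot \nabla \ln |x| \, dx, $$
so by Cauchy-Schwarz and the conformal invariance of the Dirichlet energy one gets $|\bar f(r)| \le C \Vert f \Vert_{H^1}(1+\sqrt{\ln(1/r)})$. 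A Poincar\'e inequality on the arc $I_\eps(p)$ of length of order $\eps$ then compares $\frac{1}{\eps}\int_{I_\eps(p)} f \, dL_g$ with $\bar f(\eps)$ up to a term controlled by $\Vert f\Vert_{H^1(\Sigma)}$, yielding the estimate; the argument near $q$ is identical.

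Applying this inequality to $f = S_\eps$, then invoking \eqref{eq:w12estimateReps2} and Claim \ref{cl:estimatediffeigensteklov} gives
$$ \left\vert \frac{1}{\eps}\int_{I_\eps(p)} S_\eps \right\vert^2 + \left\vert \frac{1}{\eps}\int_{I_\eps(q)} S_\eps \right\vert^2 \le C \ln\frac{1}{\eps}\left(\frac{V_\eps}{l} + o\!\left(\frac{1}{\ln\frac{1}{\eps}}\right)\right) = \frac{C'}{l} + o(1), $$
which is the statement of the claim.

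The one step that requires care is the log-trace estimate near the boundary: one has to work in half-disk models rather than full disks, and make sure that the Dirichlet energy used in the bound for $\bar f(r)$ is the ambient energy in $\Sigma$, not an energy on the attached strip $R_{l,\eps}$ or near $q$ (so that the two endpoint contributions decouple). Once this is secured, the remaining manipulation is essentially bookkeeping, exactly as in Claim \ref{cl:estonSeps}.
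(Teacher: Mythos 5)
Your proposal is correct and takes essentially the same route as the paper, which simply references the proof of Claim \ref{cl:estonSeps}: derive a logarithmic trace estimate in a conformal boundary chart via circular (here semi-circular) averages and the $\nabla \ln |x|$ trick, then combine it with \eqref{eq:w12estimateReps2} and the bound $V_\eps = O(1/\ln\frac{1}{\eps})$ from Claim \ref{cl:estimatediffeigensteklov}. You have in fact been slightly more explicit than the paper about the half-disk adaptation (noting that $\partial_\nu \ln|x|$ vanishes on the flat boundary, and that an additional Poincar\'e/trace step on the half-disk is needed to replace the semi-circular average by the arc average over $I_\eps(p)$), but this is the same argument, not a different one.
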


\subsection{Conclusion}
We have that $\Psi_\eps = F_\eps + S_\eps$, where
$$  \left\vert \frac{1}{\eps} \int_{I_\eps(p)} \Psi_\eps dL_{\tilde{g}_\eps} - \frac{1}{\eps} \int_{I_\eps(q)} \Psi_\eps dL_{\tilde{g}_\eps} \right\vert^2 =  \left\vert\frac{1}{\eps} \int_{R_{l,\eps}} \partial_t \Psi_\eps \right\vert^2 \leq C \frac{l\eps^2}{\eps^2} V_\eps \leq O\left(\frac{1}{\ln \frac{1}{\eps}}\right)  $$
as $\eps\to 0$ so that from Claim \ref{cl:estonSeps}, estimates on $S_\eps$ give that
$$ \left\vert  \frac{1}{\eps} \int_{I_\eps(p)} F_\eps dL_g -  \frac{1}{\eps} \int_{I_\eps(q)} F_\eps dL_g \right\vert^2 \leq C \frac{1}{l} +o(1) + O\left(\frac{1}{\ln \frac{1}{\eps}}\right)  $$ 
as $\eps\to 0$. By a claim similar to Claim \ref{cl:rearrangement}, we rearrange $F_\eps$ into $\varphi_\eps : \Sigma \to \R^n$ that converges, up to the extraction of a subsequence to a (possibly branched) free boundary minimal immersion $\Phi : \left(\Sigma,\partial\Sigma \right) \to \left(co\left(\mathcal{E}_{\sigma}\right), \mathcal{E}_{\sigma}\right)$ and the convergence
$$  \left\vert \frac{1}{\eps} \int_{I_\eps(p)} \varphi_\eps dL_g - \frac{1}{\eps} \int_{I_\eps(q)}  \varphi_\eps dL_g \right\vert \to \vert \Phi(p) - \Phi(q) \vert$$
as $\eps\to 0$ gives with Claim \ref{cl:rearrangement} again that letting $\eps \to 0$ and then $l\to +\infty$, $\Phi(p) = \Phi(q)$. 

At the very end of our analysis if we assume that $p$ and $q$ are in the same  connected component of the boundary, letting $q \to p$ along the boundary, we obtain that for all $p \in \partial \Sigma$, there is a possibly branched conformal free boundary minimal immersion $ \Phi : \Sigma \to co\left(\mathcal{E}_{\sigma}\right) $ such that $\partial_{\tau} \Phi(p) = 0$. Since $\Phi$ is conformal, we have that $\vert \partial_\tau \Phi \vert =  \vert \partial_\nu \Phi \vert = 0$. We obtain that $\nabla \Phi(p) = 0$. Then $p$ is a branched point. Since $p$ was chosen arbitrarily, every point of $\partial\Sigma$ is a conical singularity for $g$ and we obtain a contradiction. Then $(p,q)\in \Delta\left( \partial \Sigma\right)$.

If the conjecture \ref{conj} holds, we obtain a contradiction in the case of the maximization of $\bar{\sigma}_1$.

\nocite{*}

\end{document}